\newcommand{\1}{\mathbbm{1}}
\newcommand{\CC}{\mathbb{C}}
\newcommand{\NN}{\mathbb{N}}
\newcommand{\RR}{\mathbb{R}}
\newtheorem{theo}{Theorem}
\newtheorem{prop}[theo]{Proposition}
\newtheorem{lem}[theo]{Lemma}
\newtheorem{cor}[theo]{Corollary}
\newtheorem{rem}[theo]{Remark}
\newcommand{\beqn}{\begin{equation}}
\newcommand{\eeqn}{\end{equation}}
\newcommand{\bear}{\begin{eqnarray}}
\newcommand{\eear}{\end{eqnarray}}
\newcommand{\bean}{\begin{eqnarray*}}
\newcommand{\eean}{\end{eqnarray*}}
\begin{document}
\title{Local  classical solutions of a kinetic equation for three waves interactions in presence of  a Dirac measure at the origin. }
\maketitle
\begin{center}
{\large M. Escobedo}\\
{\small Departamento de Matem\'aticas,} \\
{\small Universidad del
Pa{\'\i}s Vasco,} \\
{\small Apartado 644, E--48080 Bilbao, Spain.}\\
{\small E-mail~: {\tt miguel.escobedo@ehu.es}}
\end{center}
\noindent
{\bf Abstract}: 
The existence of local, classical solutions is proved,  for  a system of two coupled  equations that describe, in the framework of the wave turbulence theory,  the fluctuations around an equilibrium,  of a system of  nonlinear waves  satisfying the 3-d cubic Schr\"odinger equation, weakly  interacting  in presence of a condensate. The function that describes  the density  of waves behaves like a singular Rayleigh Jeans equilibria near the origin, and   induces a strictly increasing behavior in time of the function describing the condensate's density.\\

\noindent
Subject classification: 45K05, 45A05, 45M05, 82C40, 82C05, 82C22.

\noindent
Keywords: Wave turbulence, three waves collisions, condensate,  classical solution.

\section{Introduction and main result.}
\setcounter{equation}{0}
\setcounter{theo}{0}

In the wave turbulence description  of a  nonlinear system of  waves  satisfying the cubic Schr\"odinger equation (in dimensions larger than one), and weakly interacting  in presence of a condensate, one may want to consider the  
fluctuations of the solutions to the corresponding wave kinetic equations;  these fluctuations being caused by an initial  perturbation of an equilibrium. Then, a nonlinear system of two equations arises, describing the density of waves and the condensate's density (\cite{DNPZ}, \cite{N}). (A  presentation of the wave turbulence theory for  systems of interacting waves  may also be found in \cite{Zbook}. A rigorous  deduction of the kinetic wave equation for waves obeying the cubic Schr\"odinger equation in dimension three may be found in \cite{DH}. See also \cite{EV3, AV1} for some results on the kinetic equation itself).

In the case of dimension three (the only one discussed here), if the four waves or particles  interactions are disregarded (i.e. only interactions involving the condensate are kept) and under suitable assumptions on the physical system (in particular on its temperature; cf. \cite{DNPZ, Eckern, Kirkpatrick}),  the resulting  system of equations reads as follows, 

\begin{align}
&\frac {\partial F} {\partial \tau }(\tau , X)=n(\tau )\widetilde Q(F(\tau ), F(\tau ))(X)\label{S1E00}\\
&n'(\tau )=-n(\tau )\int _0^\infty \widetilde Q(F(\tau ), F(\tau ))(X)\, \sqrt XdX\label{S1E021}\\
&\widetilde Q(F, F)(X)=X^{-1/2}\int _0^X\Big(F(X-Y)F(Y)-F(X)(F(X-Y)+F(Y))\Big)dY+ \nonumber\\
&\hskip 2.5cm +2X^{-1/2}\int _X^\infty\Big((F(X)+F(Y-X))F(Y)-F(X)F(Y-X)\Big)dY.\label{S1E01}
\end{align}
The function  $n(\tau )$ describes  the condensate's density at time $\tau >0$. The variable $X$ is a scaled version of $\omega $,  the approximated frequency of the waves, where the presence of the  condensate's density $n(\tau )$ is neglected. The function  $(\tau , X)\to \sqrt X F(\tau , X)$ describes  the density of waves or particles at time $\tau $, with energy $X$. System (\ref{S1E00})--(\ref{S1E01}) also appears in the context of boson gases (cf.  Appendix \ref{deduction}), where the variable $X$  represent the energy of the particles.

The change of time variable
\begin{align}
t  =\int _0^\tau  n(\sigma  )d\sigma,\,\,\,F(\tau , X)=f(t, X)
\label{S2NewTime}
\end{align}
simplifies somewhat the system since the equation in (\ref{S1E00}) reduces to,
\begin{align}
\frac {\partial f} {\partial t}(t, X)= \widetilde Q(f(t), f(t))(X).\label{lzm75}
\end{align}
The existence of  global weak solutions $(F, n)$ to  (\ref{S1E00}), (\ref{S1E021}) has been proved in \cite{AV1} where $\sqrt X\, f(t, X)$ is a bounded measure on $(0, \infty)$ for all $t>0$. 

However, one basic aspect of the non equilibrium solutions of the system (\ref{S1E00}),(\ref{S1E021}) is  the behavior of the condensate's density, described by the function $n$, and that  crucially depends  on the behavior of $F(\tau, X  )$ as $X\to 0$ (cf.  for example Proposition 2 in \cite{S} and  Theorem 1.7 in  \cite{CE}). Indeed, if a  regular behavior of $F(\tau )$ as $X \to 0$ is assumed, and Fubini's Theorem may be applied in the right hand side of (\ref{S1E021}), 
\begin{align} 
 \frac {dn_c(\tau )} {d\tau  }&=-n_c(\tau  )\int_0^\infty \widetilde Q(F(\tau ), F(\tau ))( X  )\,  \sqrt X  \, dX =0 \label{S1EFNG}
\end{align}
due to the symmetry  properties of the operator $\widetilde Q$, and $n$ would be constant in time.  On the other hand, it is  well  known that the three waves kinetic equations  (\ref{S1E00}) or (\ref{lzm75})  have a one parameter family of stationary solutions $F_C(X)=CX^{-1}$, for $C\in \RR$,  that are singular at the origin and to which the previous argument would not apply. 
For the weak solutions obtained in \cite{AV1} such  information on the pointwise behavior near the origin is not available.

It is then natural to wonder if  non stationary solutions $(F, n)$ to (\ref{S1E00}), (\ref{S1E021}) exist, with $F$ behaving   like $\lambda (\tau )X^{-1}$ as $X\to 0$ for some function $\lambda (\tau )$, and the function $n$ would have a non trivial time dependent behavior. The purpose of this work is to prove the existence of such solutions for a particular  set of initial perturbations  $F _{ \text{in} }$ of an equilibria $F_C$,  that may be taken without loss of generality for $C=1$.

In order to describe our main result  consider, for $\theta\in \RR, \rho \in \RR$, the  spaces $X _{ \theta, \rho  }$ of continuous functions $g\in C(0, \infty)$ such that
\begin{align}
&\exists C>0; |g(X)|\le C\left(X^{-\theta}\1 _{ 0<X<1 }+X^{-\theta-\rho }\1 _{ X>1 }\right)\,\,\forall X>0,\nonumber\\
&\exists C>0; \,\, |g(X)|\le CX^{-\theta}(1+X)^{-\rho }\,\,\forall X>0,\nonumber \\
&\qquad \Longleftrightarrow \sup _{ X>0 } |g(X)|X^\theta (1+X)^\rho <\infty \label{xtr1.0} 
\end{align}
and define as usual $||g|| _{ \theta, \rho  }$ to be the infimum of all the constants $C\ge0$ satisfying (\ref{xtr1.0}).   

As initial data, we shall consider any  function $\varphi \in C_c^2(0,\infty)$  such that for $R>0$ large, $\text{supp}(\varphi )\subset (0, R)$, $\varphi (X)=1$ for all $X\in (0, R/2)$ and $\varphi '\le 0$. The following choice simplifies somewhat some of the calculations below (cf. Remark \ref{SER1})
\begin{align}
&\varphi (X)=\phi \left(\frac {X} {R} \right),\,\,R>1\label{defvfiR}\\
&\phi \in C^2_c(0, \infty),\,\,\text{supp}(\phi )\subset (0, 1),\, \label{deffi1}\\
&\phi (X)=1,\,\,\forall X\in (0, 1/2),\,\,\phi '\le 0.\label{deffi2}
\end{align}
Let us also denote   $L^1 _{ \text{loc} }([0, \infty); XdX)=\{f; \int _0^d|f(X)|XdX<\infty,\,\forall d\in (0, \infty)\}$.
\begin{theo}
\label{Mth2.0} 
For all  $r\in [0, \frac {1} {2}), q\in [0, 3)$, 
there exists $R>0$, $T>0$, and two functions  $(F, \lambda) $ satisfying  $F\in C((0, T\times (0, \infty)$, $\lambda \in C([0, T))$,
\begin{align}
&\sup_{\substack {0\le t\le T \\ X>0}}(X^{-r}+X^{q})\left|F(\tau , X)-\frac {\lambda (\tau )} {X}\right|<\infty; \, F_\tau \in L^\infty((0,  T); L^1 _{ \text{loc} }([0, \infty),XdX)),\label{Mth2.0E1BT}\\
&F(0, X)=\frac {\varphi (X)} {X},\,\forall X>0,\,\,\lambda (0)=1;\,\,\,\lambda (\tau )>0,\,\,\forall \tau \in (0,  T),\,\,\label{Mth2.0E1B}  \\
&\lim _{ X \to 0 }X\left(F(\tau, X )-\frac {\lambda (\tau )} {X}\right)=0,\,\,\forall \tau \in (0,  T)\label{Mth2.0E1}
\end{align}
and such that $F$,  together with the function $n$ defined as
\begin{align}
n(\tau )=n(0)\exp\left(\frac {\pi ^2} {3}\int _0^\tau \lambda ^2(s )ds\right),\,\tau \in (0,  T)\label{SE1Ennn}
\end{align}
satisfy  equation (\ref{S1E00}) in $L _{ \text{loc} }^\infty([0, T); L^1 _{ \text{loc} }([0, \infty); XdX)$ and equation 
(\ref{S1E021}) pointwise, for all $\tau \in (0,  T)$. They also satisfy,
\begin{align}
&\frac {d} {d\tau }\left(n(\tau )+ \int _0^\infty F(\tau , X) X^{1/2} dX\right)=0,\,\,\forall \tau \in (0,  T)\label{Mth2.0E2}\\
&\frac {d} {d\tau } \int _0^\infty F(\tau , X) X^{3/2} dX=0,\,\,\forall \tau \in (0,  T). \label{Mth2.0E3}
\end{align}
\end{theo}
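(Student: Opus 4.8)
The plan is to work in the new time variable $t$ from \eqref{S2NewTime} and seek $f(t,X)$ solving \eqref{lzm75} with $f(0,X)=\varphi(X)/X$, then transform back to $(F,n)$ at the end. The natural ansatz is to split off the singular part by writing $f(t,X)=\lambda(t)X^{-1}+g(t,X)$, where $\lambda$ records the coefficient of the Rayleigh–Jeans singularity and $g$ is the (less singular) remainder living in a space like $X_{\theta,\rho}$ with the exponents dictated by $r$ and $q$. Substituting this ansatz into $\widetilde Q(f,f)$, one uses bilinearity to expand $\widetilde Q(f,f)=\widetilde Q(\lambda X^{-1},\lambda X^{-1})+2\widetilde Q(\lambda X^{-1},g)+\widetilde Q(g,g)$. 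The first term vanishes (or is harmless) because $X^{-1}$ is a stationary solution; the key computation is that the cross term $2\widetilde Q(\lambda X^{-1},g)$, when one extracts its most singular contribution as $X\to0$, produces exactly a term of the form $c\,\lambda(t)(\text{something})X^{-1}$ plus a regularizing part, which forces the ODE $\lambda'(t)=$ (a quadratic-type expression in $\lambda$, $g$). This is the mechanism behind the factor $\pi^2/3$ appearing in \eqref{SE1Ennn}: evaluating the relevant integral of the singular kernel against $X^{-1}$ gives $\int_0^\infty(\cdots)dX$ proportional to $\pi^2/3$ through a dilogarithm/$\zeta(2)$ computation.

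The technical core is then a fixed-point argument. I would set up, for small $T>0$ and suitable $R$, the Banach space of pairs $(\lambda,g)$ with $\lambda\in C([0,T])$, $\lambda(0)=1$, and $g\in C([0,T];X_{\theta,\rho})$ with $g(0,X)=(\varphi(X)-1)/X$ (note this initial remainder is supported away from the origin, which is what makes $r<1/2$ room available), and define a map $\Phi$ by integrating in time the reduced equations: $\lambda(t)=1+\int_0^t(\text{functional of }\lambda,g)$ and $g(t,X)=g(0,X)+\int_0^t\big[2\widetilde Q(\lambda X^{-1},g)+\widetilde Q(g,g)-(\text{subtracted singular part})\big]ds$. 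One must prove: (i) bilinear estimates showing $\widetilde Q$ maps the relevant weighted spaces into themselves with the stated singularity exponents — here the constraints $r\in[0,\tfrac12)$ and $q\in[0,3)$ enter, the upper bound $q<3$ being needed so the $\int_X^\infty$ tail in \eqref{S1E01} converges and the lower/origin behavior $r<1/2$ being compatible with the $X^{-1/2}$ prefactor and the convolution $\int_0^X$; (ii) that the singular extraction is exact, i.e. the subtracted term genuinely cancels the $X^{-1}$ growth so $g$ stays in $X_{\theta,\rho}$ and moreover $X g(t,X)\to0$ as $X\to0$, giving \eqref{Mth2.0E1}; (iii) a contraction estimate on $[0,T]$ for $T$ small, using that all nonlinear terms gain a factor $T$.

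Once the fixed point $(\lambda,g)$ is obtained, $f=\lambda X^{-1}+g$ solves \eqref{lzm75} in $L^\infty_{\rm loc}([0,T);L^1_{\rm loc})$, and I would define $F(\tau,X)=f(t(\tau),X)$ and $n$ by \eqref{SE1Ennn}; one checks $dt/d\tau=n(\tau)$ is consistent with \eqref{S2NewTime} precisely because $n'/n=\tfrac{\pi^2}{3}\lambda^2$ is what the $\lambda$-equation delivers after undoing the time change, and that \eqref{S1E021} then holds pointwise — this requires justifying $\int_0^\infty\widetilde Q(F,F)\sqrt X\,dX=-\tfrac{\pi^2}{3}\lambda^2(\tau)$, i.e. the ``anomaly'' in Fubini's theorem \eqref{S1EFNG} caused by the $X^{-1}$ singularity, which is again the same dilogarithm computation. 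Finally the conservation laws \eqref{Mth2.0E2}–\eqref{Mth2.0E3}: \eqref{Mth2.0E2} follows by testing \eqref{lzm75} against $\sqrt X$, combining with \eqref{S1E021} and the time change, using that the boundary terms at $X=0$ vanish by \eqref{Mth2.0E1}; \eqref{Mth2.0E3} follows by testing against $X^{3/2}$ and the weight symmetry $X+Y$ of $\widetilde Q$ (energy conservation), where $q<3$ guarantees $\int F X^{3/2}dX<\infty$. The main obstacle I expect is step (i)–(ii): proving that $\widetilde Q$ acting on the singular profile plus remainder closes in the weighted space with the exact cancellation of the $X^{-1}$ term, since the $\int_0^X$ convolution of $X^{-1}$ with a function that is itself only $X^{-\theta}$ near the origin is delicate and is where the hypothesis $r<1/2$ is really used.
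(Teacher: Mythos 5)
Your outline correctly identifies the broad shape of the argument — split off a Rayleigh--Jeans singular part, carry the coefficient $\lambda$ as an unknown, close a fixed point in weighted spaces, and recover $n$ from the flux anomaly of the $X^{-1}$ profile computing $\pi^2/3$. But two of the steps as you describe them would not go through, and they are precisely where the paper's machinery is doing real work.

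First, the decomposition. You write $f=\lambda(t)X^{-1}+g$, so that the leading singular piece is the exact stationary solution and $g(0)=(\varphi-1)/X$. That $g$ equals $-1/X$ for $X>R$ and hence decays only like $X^{-1}$ at infinity; it does not lie in the spaces $X_{\theta,\rho}$ with the tail decay $X^{-q}$ for $q$ up to $3$ that the theorem needs, and worse, the moments $\int fX^{1/2}dX$, $\int fX^{3/2}dX$ entering \eqref{Mth2.0E2}--\eqref{Mth2.0E3} would involve the non-integrable tail of $\lambda X^{-1}$ separately, so the decomposition does not even make the conservation laws well-posed term by term. The paper's ansatz is instead $f=(\lambda(t)\varphi(X)+g(t,X))/X$ with the \emph{cutoff} $\varphi$ inside the singular part: this keeps the singular piece compactly supported and integrable at infinity, forces $g(0)=0$, and — crucially — makes $Q_N(\varphi)=X\widetilde Q(\varphi/X,\varphi/X)$ a genuinely small source term of size $R^{-1}$ (Proposition \ref{QQ0}), which is what makes the perturbative construction close. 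The price is that the linearization around $\varphi/X$ is $\mathscr L_\varphi=\mathscr L+T_1+T_2$ rather than the cleaner $\mathscr L$; Sections \ref{SecT}--\ref{SFi0} are entirely devoted to controlling $T_1,T_2$.

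Second, the fixed point. You propose the naive scheme $g(t)=g(0)+\int_0^t[\cdots]\,ds$ and claim contractivity "because all nonlinear terms gain a factor $T$." This fails because $[\cdots]$ contains the \emph{unbounded} singular-integral operator $\mathscr L(g)$: the map $g\mapsto\int_0^t\mathscr L(g(s))\,ds$ is not bounded on $X_{\theta,\rho}$, and no small $T$ saves it. The paper instead solves the linearized flow via Duhamel's formula with the semigroup $S(t)$ (and its perturbation $S_\varphi(t)$), relying on the explicit Mellin representation of the fundamental solution $\Lambda$ (Propositions \ref{S7cor1}--\ref{S7cor2}, \ref{SFi}--\ref{SFi2}). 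Relatedly, $\lambda$ is \emph{not} obtained from an ODE; it is the solution of the Volterra-type integral equation \eqref{pnyx} that enforces the matching $g(t,X)\approx\lambda(t)$ as $X\to0$, and the contraction of Proposition \ref{pnyxB} determines it jointly with $g$. Finally, your sketch of the flux computation (Proposition \ref{Psphn}) is conceptually right, but justifying that the splitting of $A_\delta(f,f)$ converges as $\delta\to0$ with only the $\pi^2/3$ surviving requires the modulus-of-continuity gain for $u/X$ furnished by the new regularizing estimate of Lemma \ref{blabla} and Proposition \ref{blablabcor}; that lemma is also what gives the stated $F_\tau\in L^\infty(L^1_{\mathrm{loc}})$ regularity, and you do not have a substitute for it.
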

Properties (\ref{Mth2.0E2}) and (\ref{Mth2.0E3}) correspond respectively to the expected conservation of the total number of waves (or particles), and of energy in the physical system. The property (\ref{SE1Ennn}) shows how does the function $n$ increases for small values of $\tau $. It is important in view of the comment about identity (\ref{S1EFNG}) above.  It confirms the prediction in \cite{S} based on formal arguments., and reflects the flux of waves towards zero frequency at $\tau >0$,  due to the behavior $\lambda (\tau )X^{-1} $ as $X\to 0$ of $F(\tau, X )$.

The   strategy of the proof of  Theorem (\ref{Mth2.0})   is similar to that already used in [10], [11], i.e. to  look
for solutions as sum of a leading order term (that yields the expected power law
behavior for small $X$) plus some higher order corrective terms (cf.  (\ref{S1EinnD2})). The leading order
term contains a free function $\lambda(\tau)$ that depends on the initial
data. This function, together with the higher order corrections are obtained by means of a
fixed point argument. The asymptotic behavior of the solutions for small $X$ that is required to
close the fixed point argument is obtained by means of a detailed analysis of the linearized
operator near the power law. The function $\lambda$ is determined by means of an integral
equation, in order to recover the asymptotic behavior (\ref{S1EinnD2}).  A new regularizing effect of the equation
is  proved and, used to control the $X$ modulus of continuity of the solution and show that the equation is satisfied in a classical sense.
The arguments used have also  many points in common with those in semigroup theory since, despite 
 the involved structure of the linearized operator, the fundamental solution of the semigroup $S$ that it generates
 may be computed.

\subsection{Functional setting}
The pair  $(F, n)$ in Theorem \ref{Mth2.0}  is obtained from a solution $f$ of (\ref{lzm75}) of  the form,
\begin{align}
\label{S1EinnD2}
f (t, X)=\frac {\lambda (t)\varphi (X)+g(t, X)} {X}
\end{align}
where the functions $g$ and  $\lambda $ must be determined such that $\lambda\in C^1(0, \infty)$, $\lambda (0)=1$, and  $g(0)=0$, from the equation
(\ref{lzm75}). In the variables $g, \lambda $, equation (\ref{lzm75}) reads
\begin{align}
\label{S1E7}
&\lambda '(t)\varphi +\frac {\partial g} {\partial t} = \lambda ^2(t) Q_N(\varphi )+\lambda (t) \mathscr L_\varphi (g)+ Q_N(g)
\end{align}
(the detailed derivation is given in the Appendix),  where the operator $Q_N$ is defined as,
\begin{align}
Q_N(h)(X)=X\widetilde Q_N(h, h)(X)
\label{S1InnD6}
\end{align}
and
\begin{align}
&\mathscr L_\varphi (g)=\mathscr L(g)+T_1(g, \varphi )+T_2(g, \varphi ) \label{S1InnD7}\\
&\mathscr L(g)=X^{-1/2}\int _{0}^\infty (g(t, Y)-g(t, X))\left(\frac {1} {|X-Y|}-\frac {1} {|X+Y|}\right)dY\label{S1InnD8}\\
&T_1(g, \varphi )=X^{-1/2} \int _{0}^\infty\! (g(t, Y)-g(t, X))\left(\frac {\varphi  (|X-Y|)-1} {|X-Y|}-\frac {\varphi  (|X+Y|)-1} {|X+Y|}\right)dY
\label{S1InnD75}\\
&T_2(g, \varphi )=X^{-1/2}\int _{0}^\infty \frac {\text{sign}(X-Y)g(t, Y)} { Y}\left(\varphi  (|X-Y|)-\varphi  (t, X) \right)+\nonumber\\
&\hskip 6cm +\frac {(\varphi  (t, Y)-\varphi  (X))g(t, |X-Y|)} { |X-Y|}dY.\label{S1InnD76}
\end{align}
The  operator $\mathscr L$ was already studied in \cite{m, m3} although with slightly different variables. 

The existence of a local classical  solution $f$ to (\ref{lzm75})  is proved,  through a fixed point argument, in the following result. 
\begin{theo}
\label{Mth1}
For all $r\in (0, 1/2)$ and $q\in (1, 3/2)$, there exists  $R>0$ large enough,  $T^{**}>0$ and two  functions $g, \lambda $ satisfying
\begin{align}
&g\in C\left([0,  T^{**}); X _{ -r, r+q } \right),\,\,\,\left|\frac {\partial  g } {\partial  t} \right|+\left| \mathscr L( g )\right|\in L^\infty _{ \text{loc} }((0, \infty)\times (0, T^*));\,\,\,\,\lambda \in C[0, T^{**}), \label{Mth1ELD0}\\
&g(0)=0;\,\,\,\,\lambda  (0)=1,  \sup _{ 0\le t <  T^{**} }|\lambda  (t )-1|<\frac {1} {4}, \label{Mth1ELD1} \\
&\forall d>0,\,\,\exists C>0;\,\,\, \int _0^d\left| \mathscr L(g(t))\right|dX \le C\sup _{ 0\le s\le t }|| g(s)|| _{-r, r+q  },\, \forall t\in (0, T^*) \label{Mth1ELD}
\end{align}
and such that the functions $\lambda $ and $f$, defined as in (\ref{S1EinnD2}), satisfy properties (\ref{Mth2.0E1BT})--(\ref{Mth2.0E1}),
and also equation (\ref{lzm75}) in $L _{ \text{loc} }^\infty([0, T^{**}); L^1 _{ \text{loc} }([0, \infty);XdX)$.
\end{theo}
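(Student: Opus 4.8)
\emph{Proof strategy.}
The plan is to construct the solution in the $(g,\lambda)$ variables of the reformulated equation (\ref{S1E7}) by a contraction mapping argument built on the semigroup $S(t)=e^{t\mathscr L}$ generated by the linearized operator $\mathscr L$ of (\ref{S1InnD8}), whose fundamental solution is explicitly available (as in \cite{m,m3}). First I would recast (\ref{S1E7}) in mild form, keeping $\mathscr L$ as the principal part and treating everything else as a source: a pair with $\lambda(0)=1$, $g(0)=0$ should solve
\begin{align*}
g(t)=\int_0^t S(t-s)\Big[&\lambda^2(s)\,Q_N(\varphi)-\lambda'(s)\,\varphi+\big(\lambda(s)-1\big)\mathscr L(g(s))\\
&{}+\lambda(s)\big(T_1(g(s),\varphi)+T_2(g(s),\varphi)\big)+Q_N(g(s))\Big]\,ds .
\end{align*}
The fixed point map $\Phi\colon(g,\lambda)\mapsto(\widetilde g,\widetilde\lambda)$ is then set up on the complete metric space of pairs with $g\in C([0,T^{**}];X_{-r,r+q})$, $g(0)=0$, $\sup_t\|g(t)\|_{-r,r+q}\le\delta$, and $\lambda\in C[0,T^{**}]$, $\sup_{[0,T^{**}]}|\lambda-1|\le 1/4$, for $\delta$ and $T^{**}$ small and $R$ large.

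The analytic core is a package of estimates for $S(t)$ read off from its kernel: (i) boundedness $\|S(t)h\|_{-r,r+q}\le C\|h\|_{-r,r+q}$, with $C$ uniform on $[0,T^{**}]$; (ii) a regularizing bound controlling $\mathscr L(S(t)h)$ and $\partial_t S(t)h$ in $L^1_{\text{loc}}(XdX)$, and more generally recovering the space $X_{-r,r+q}$ from data lying only in a weaker weighted space --- this is the ``new regularizing effect'' alluded to in the introduction, and it is what will eventually yield (\ref{Mth1ELD0}) and (\ref{Mth1ELD}); and, crucially, (iii) the behavior near $X=0$: constants lie in the kernel of $\mathscr L$ (they correspond, in the $g$ variable, to the singular stationary state $X^{-1}$), so $S(t)$ preserves the $O(1)$ part of its data at the origin, while it promotes data whose $O(1)$ part vanishes there to $O(X^r)$ decay. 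Since the source $\lambda^2Q_N(\varphi)+\dots$ does not in general decay fast enough near $X=0$ to keep $g$ in $X_{-r,r+q}$, I would \emph{determine} $\lambda$ precisely so as to cancel that obstruction: extracting the relevant limits of $Q_N(\varphi)$, $Q_N(g)$, $T_1$, $T_2$ and of $\varphi$ at $X=0$ gives an ordinary differential equation $\lambda'(t)=a\,\lambda^2(t)+\lambda(t)\,b[g(t)]+c[g(t)]$, with $b,c$ bounded functionals of $g(t)$ and $a$ an explicit constant (the one producing the factor $\pi^2/3$ in (\ref{SE1Ennn})), which together with $\lambda(0)=1$ defines $\widetilde\lambda$ and keeps $|\widetilde\lambda-1|<1/4$ on a short interval. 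This is the integral equation for $\lambda$ announced in the introduction, and it is exactly what forces $g\in X_{-r,r+q}$ and hence produces (\ref{Mth2.0E1}).

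The next step is the nonlinear and perturbative estimates: a quadratic bound controlling $Q_N(g)$ in the appropriate (slightly weaker) weighted norm by $C\|g\|_{-r,r+q}^2$, with the loss recovered by the smoothing in (ii); bounds of the form $C_R\|g\|_{-r,r+q}$ for $(T_1+T_2)(g,\varphi)$ in the same weaker norm; and control of the part of $Q_N(\varphi)$ coming from $\varphi\not\equiv1$. The factors $C_R$ decay as $R\to\infty$ --- because $\varphi'=R^{-1}\phi'(\cdot/R)$ is $O(1/R)$ and supported on $(R/2,R)$ --- which supplies part of the needed smallness, the remainder coming from $T^{**}$. The restrictions $r\in(0,1/2)$ and $q\in(1,3/2)$ enter here to make the defining integrals converge and to accommodate the fact that the scale $X_{-r,r+q}$ is not preserved by the quadratic term. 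Feeding these estimates into the Duhamel formula and the $\lambda$ equation, and using $g(0)=0$ so that $\sup_t\|g(t)\|_{-r,r+q}$ is controlled by $C\,T^{**}(\delta^2+\delta)+C_R$, together with the analogous estimate for differences, one checks that $\Phi$ maps the ball into itself and is a contraction once $T^{**}$ is small and $R$ large. The unique fixed point gives $(g,\lambda)$ with (\ref{Mth1ELD0})--(\ref{Mth1ELD1}); the regularizing estimate (ii) upgrades it to (\ref{Mth1ELD}), shows that $f=(\lambda\varphi+g)/X$ solves (\ref{lzm75}) in $L^\infty_{\text{loc}}([0,T^{**});L^1_{\text{loc}}([0,\infty);XdX))$, and, with (iii) and $r>0$, yields the asymptotics (\ref{Mth2.0E1BT})--(\ref{Mth2.0E1}).

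The main obstacle is (ii)--(iii) and their compatibility with the choice of $\lambda$. One has to describe the kernel of $S(t)$ near $X=0$ finely enough to prove that it genuinely gains the $X^r$ decay, and to check that the $O(1)$ behavior at the origin of $Q_N(\varphi)$, $Q_N(g)$, $T_1$ and $T_2$ is captured \emph{exactly} by the functionals $a,b,c$, so that the cancellation closes at every step of the iteration and not only for the leading term; this is what makes $\lambda$ a genuine unknown coupled to $g$ rather than a prescribed function. The second delicate point is extracting from the same kernel a quantitative $X$ modulus of continuity for $S(t)h$, together with a matching one for the quadratic term, so that $\partial_t g$ is continuous and (\ref{lzm75}) holds in the classical sense; mere boundedness of $S(t)$ on $X_{-r,r+q}$ does not provide this.
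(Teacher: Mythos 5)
Your overall architecture (mild formulation, contraction in $X_{-r,r+q}$, $\lambda$ as a genuine unknown chosen to kill the obstruction at $X=0$, smallness from $R$ large and $T^{**}$ small) matches the paper in spirit, and you correctly anticipate that a fine description of the fundamental solution near $X=0$ and a modulus-of-continuity estimate are the hard technical ingredients. However there are two genuine gaps.

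First, your Duhamel formula places the term $(\lambda(s)-1)\,\mathscr L(g(s))$ in the source. This is a loss-of-derivative: $\mathscr L$ is unbounded on $X_{-r,r+q}$ (it is a singular integral of roughly half-derivative strength), so $\|S(t-s)(\lambda(s)-1)\mathscr L(g(s))\|_{-r,r+q}$ is not controlled by $\|g(s)\|_{-r,r+q}$, and the iteration does not close on the ball you describe; smallness of $|\lambda-1|$ cannot compensate an unbounded operator. The paper sidesteps this entirely by the time change $\overline\tau=\int_0^t\lambda$ (its (\ref{S1E09})), after which the principal part becomes $\partial_{\overline\tau}\overline g - \mathscr L_\varphi(\overline g)$ with coefficient one and the source is $\beta\,Q_N(\varphi)+\beta^{-1}Q_N(\overline g)-\beta'\varphi$ — no $\mathscr L(g)$ left over. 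Without this (or some substitute such as a freezing/bootstrap argument that you would have to supply), the scheme as written does not run.

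Second, your determination of $\lambda$ by ``extracting the relevant limits of $Q_N(\varphi)$, $Q_N(g)$, $T_1$, $T_2$ and $\varphi$ at $X=0$'' and solving an ODE $\lambda'=a\lambda^2+b[g]\lambda+c[g]$ cannot be carried out as stated, because those sources do not have pointwise limits at $X=0$: for $g\in X_{-r,r+q}$ one has $|Q_N(g)(X)|\sim X^{r-1/2}\to\infty$ (Lemma \ref{Lemnonlin}), and $Q_N(\varphi)(X)\sim c\,X^{1/2}\to 0$. What does have a limit at $X=0$ is the time-convolved quantity $\int_0^t S_\varphi(t-s)G(s)\,ds$, whose value at the origin is given by the Mellin-type functional $\mathscr W(t,G)$ of Propositions \ref{S7cor1}–\ref{S7cor2} and \ref{xixixi}; as a consequence $\beta$ must satisfy the nonlocal integral equation (\ref{pnyx}), $\beta(t)=\mathscr W(t,G_{\beta,g})+a(t)-\int_0^t\Lambda(t-s,Q_N(\varphi))\beta(s)\,ds$, not a pointwise ODE. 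Related to this, the paper constructs the $\beta'\varphi$ contribution indirectly (via $\tilde g_2$ and the auxiliary problem $\psi_t=\mathscr L_\varphi(\psi)$, $\psi(0)=\varphi$, see (\ref{lngrsg2}) and around (\ref{defpsi})) precisely to avoid circularity in the regularity of $\beta$; putting $-\lambda'\varphi$ directly into the source presupposes $\lambda'\in L^\infty$, which must be established, not assumed. Your choice to use $S$ rather than $S_\varphi$ and to treat $T_1+T_2$ as a Duhamel source is, by contrast, harmless bookkeeping: the paper builds $S_\varphi$ from $S$ by exactly this Duhamel step.
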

Theorem \ref{Mth2.0} follows from Theorem \ref{Mth1} inverting  the change of variables (\ref{S2NewTime}) and defining the function $n$. 

By Property (\ref{Mth1ELD}) of $g$,  if the function $G$ is defined as $G(\tau , X)=F(\tau , X)-\frac{\lambda (\tau )\varphi (X) }{X}$ for $\tau \in (0, T), X>0$, then
\begin{align}
\forall d>0,\,\,\exists C>0;\,\,\, \int _0^d\left| \mathscr L(G(\tau ))\right|dX \le C\sup _{ 0\le s\le t }|| g(s)|| _{-r, r+q},  \,\,\, \forall t\in (0, T^*)
\end{align}

The property (\ref{Mth1ELD}) is a consequence of the regularizing effect of the operator $\mathscr L$ proved in Lemma \ref{blabla}: under suitable conditions on $\theta>0, \rho >0$, if $u_t-\mathscr L(u(t))=g(t)$, $u(0)=0$ and $g\in L^\infty((0, T); X _{ \theta, \rho  })$,
\begin{align}
\forall d>0,\,\exists C>0;\,\,\int_0^d \int _0^{2X} \left|\frac {u (t, X)} { X}-\frac {u (t, Y)} {Y} \right|dYdX\le C\sup _{ 0\le s\le t }|| g(s)|| _{\theta, \rho  }\label{S1RP0}
\end{align} 
Estimate (\ref{S1RP0}) is also used in the proof of (\ref{SE1Ennn}) in order to prove that the perturbation $G(\tau, X )$ of $X^{-1}$ modifies but does not cancel the flux of waves towards the origin.

Regularizing effects in kinetic equations with singular collision kernels are now well known and have been object of an extensive literature (cf. for example \cite{Al, Al2, Gr, Chen} for the Boltzmann equation, \cite{EV4} for a coagulation equation, and more recently \cite{G} for the so called kinetic MMT equation).

\begin{rem}[\upshape\bfseries{On the initial data}] 
\label{SER1} The idea behind the choice of the initial data $F(0)$ is to have $\widetilde Q(F(0), F(0))$ small. 
Very  probably the proof of  Theorem \ref{Mth1} still works for all  data where $\varphi \in C_c^2([0, \infty))$ such that, for $R>0$ large enough, $\varphi (x)=1$ on $[0, R/2)$, $\varphi (x)=0$  on $(R, \infty)$, $|\varphi '(x)|\le CR^{-1}$ and $ |\varphi ''(x)|\le CR^{-2}$ for some constant $C>0$. Conditions (\ref{defvfiR})--(\ref{deffi2}) simplify somewhat the proof of some estimates in Section  \ref{Q0} but do not modify essentially what our arguments are able to prove. No initial data really more general  (like rapidly decaying at infinity but not compactly supported, or not constantly equal to $X^{-1}$on a large interval containing the origin) have been investigated.
\end{rem}
\begin{rem}[\upshape\bfseries{On regularity}] Although only local in time, the functions  $(F, n)$ and $f$ satisfy respectively  (\ref{S1E00})--(\ref{S1E01}) and   \ref{lzm75} pointiwise and in suitable  $L ^\infty(L _{ \text{loc} }^1)$ space.
In the previous  results  (cf. \cite{AV1} and \cite{SB}) the  solutions of  \ref{lzm75} are global but weak, measure valued (for measures as initial data). A key point in the proof of Theorem \ref{Mth1} (and then Theorem \ref{Mth2.0}), is to obtain  solutions $u$ of $ u_t-\mathscr L(u)=g, u(0)=0$ satisfying (\ref{Mth1ELD}). This follows from the regularizing effect of $\mathscr L$  proved in Lemma \ref{blabla}, using detailed information on the Mellin transform of the fundamental solution of  $\partial_t-\mathscr L$.  We do not know if the regularity of the solutions of  Theorem \ref{Mth2.0}  or Theorem \ref{Mth1} may be improved,  even considering more regular initial data.
\end{rem}
\begin{rem} For the system where, in the equations (\ref{S1E00}) and (\ref{S1E021}), $\widetilde Q$ is replaced by the operator $\widetilde Q_q$ defined in (\ref{PR2}) of Appendix \ref{deduction}, weak measure valued solutions are obtained in \cite{CE}.  We believe that results similar to Theorem \ref{Mth1} and Theorem \ref{Mth2.0} should still be  true in that case. However the linearization of $\widetilde Q_p$ around the corresponding equilibrium $(e^{X}-1)^{-1}$, is more involved than $\mathscr L$(cf. \cite{Ba} for its regularizing effects). The strategy could be perhaps  to consider the term $p_2$, in (\ref{S10.1Ep2}) of Subsection  \ref{deduction}, as a small perturbation of $\widetilde Q$. 
\end{rem}

{\bf Plan of the paper.} Some properties of the operator $\mathscr L$, old and new, are presented in Section \ref{LandS}. Then, sections \ref{SecT} and  \ref{Q0} are devoted to estimate respectively  the operator $T=T_1+T_2$ and the term $Q_N(\varphi )$ for $\varphi $ defined  in (\ref{defvfiR}). The nonlinear operator $Q_N$ and the linear operator $S_\varphi $ acting on the spaces $X _{ \theta, \rho  }$ are respectively  estimated in Section \ref{QN} and  Section \ref{SFi0}. Theorem  \ref{Mth1} is  proved in Section \ref{SMth1} after some preliminary results shown in Section \ref{Sg1g2}. The proof of Theorem \ref{Mth2.0} is given in Section \ref{SMth2.0}. The Appendix contains a detailed deduction of the equation (\ref{S1E7}) and a technical detail on the values of some integrals.
\section{About the operator $\mathscr L$, and operator $S$.}
\label{LandS}
\setcounter{equation}{0}
\setcounter{theo}{0}
This Section begins with  two  Propositions about the operator $\mathscr L$, that were proved in \cite{m3}.
\begin{align}
&\frac {\partial v  (t, X  )} {\partial t}=\mathscr L(v  (t))(X )\label{E2.17hm}\\
&\frac {\partial u  (t, X  )} {\partial t}=\mathscr L(u  (t))(X )+g(t, X).\label{E2.17Nhm}
\end{align}
It may be useful to have in mind the following simple remark when reading the statements:  given $\theta \ge 0$, $\rho \ge 0$ and $\theta+\rho <3/2$  then $X _{ \theta, \rho  }\subset X _{ \theta' , \rho '}$ with continuous injection such that  $||g|| _{ \theta', \rho ' }\le ||g|| _{ \theta, \rho  }$ if
\begin{align}
\label{thetaP}
\theta' >\theta,\, \rho '\in [0, \rho +\theta-\theta'),\,\,0\le \theta'+\rho '<3.
\end{align}
The following result is proved as Corollary 2.7 in \cite{m3}.   Its proof requires the use of several properties of the function $B$  defined in Subsection \ref{FB}, studied in some detail in  \cite{m}. For a function $g$ of $s>0$ and  $X>0$, and $\beta \in (0, 2)$,  we denote, for $t>0, s\in [0, t)$:
\begin{align}
 L(t-s, g(s))=\frac {3 B(1)}{i \pi ^3}\int  _{ \mathscr Re(r )=\beta  }\frac {\Gamma(r )} {B(r )}(t-s)^{-r }
\left(\int _0^{t-s} g(s, \zeta ^2 )\zeta ^{-1+ r }d\zeta  \right)dr. \label{S2ELLg}
\end{align}

\begin{prop}
\label{S7cor1}
For all $v _0\in X _{\theta,  \rho  }$ with $\theta \ge 0$, $\rho \ge 0$, $\theta+\rho <3/2$, and $\theta',\, \rho '$ satisfying (\ref{thetaP}),  there exists a function $v\in  L^\infty((0, \infty); X _{ \theta, \rho  })\cap C((0, \infty); X _{ \theta', \rho ' }))$,   also denoted
\begin{align}
v(t, X)=S(t)v_0(X)
\end{align}
such that 
 \begin{align*}
&\qquad\left|\frac {\partial v } {\partial t} \right|+\left| \mathscr L(v )\right|\in L^\infty _{ \text{loc} }((0, \infty)\times (0, \infty))\\
&\qquad v\,\,\text{satisfies (\ref{E2.17hm})  pointwise for}\,\,t>0, X>0,\\
&\qquad\forall X>0,\,\,\lim _{t\to 0 }v (t, X)=v _0(X),
\end{align*} 
There also exists positive constants $C>0$  such that
\begin{align*}
&(ii)\quad ||v (t)|| _{  \theta, \rho  }\le C ||v|| _{\theta, \rho  }\,\,\forall t>0,\\
&(iii)\quad|v (t, X)|\le C ||v_0|| _{  \theta, \rho  }  t^{-2\theta}(1+t)^{-2\rho } \left(1+\frac {\sqrt X} {t}+\sqrt X \right)\,0<X<t^2,\\
&(iv)\quad |v (t, X)|\le  C||v _0|| _{ \theta, \rho  }t^{-2\theta} (1+t)^{-2\rho }\left(\frac {t} {\sqrt X}\right)^3,\,0<t^2<X.
\end{align*}
Moreover,
\begin{align*}
&(v)\quad v (t, X)=L (t; v _0)+R_2(t, v, X),\,\forall X\in (0, \min(1,t^2))\\
&|R_2(t, v, X)|\le  ||v_0|| _{  \theta, \rho  }t^{-2\theta}(1+t)^{-2\rho } \left(\frac {\sqrt X} {t}+\sqrt X \right),\,0<X<\min (1, t^2)\\
&(vi)\quad \forall T>0, \forall p>-1,\,\,\exists C>0;\,\,|L(t, v _0)|\le C||v _0|| _{ p, \rho  }t^{-2p}\\
&(vii)\quad \text{If for some}\,\, \varepsilon >0:\,\,v _0(X)=\alpha X^{-\theta}(1+\mathcal O(X)^\varepsilon ),\,X\to 0:\\
&\qquad \quad  L(t; v _0)=\frac {12B(1)\Gamma (2\theta)} {\pi^2 B(2\theta)}t^{-2\theta}+\mathcal O(t)^{-2\theta+\varepsilon },\,t\to 0.
\end{align*}
\end{prop}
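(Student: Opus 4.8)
The plan is to construct $S(t)$ from an explicit representation of the fundamental solution of $\partial_t-\mathscr L$, obtained through the Mellin transform, and then to read off every one of (ii)--(vii) from that representation by contour analysis. First I would pass to the variable $\zeta=\sqrt X$, writing $v(t,X)=w(t,\zeta)$. An elementary computation on monomials $X^\alpha$ (substituting $Y=Xu$ in (\ref{S1InnD8})) shows that $\mathscr L$ lowers the degree of homogeneity in $X$ by $1/2$, hence the transformed operator $\widetilde{\mathscr L}$ lowers the degree in $\zeta$ by $1$, and $\partial_t w=\widetilde{\mathscr L}w$ is invariant under $(t,\zeta)\mapsto(\lambda t,\lambda\zeta)$; this scaling is the source of the dichotomy $X\lessgtr t^2$ in (iii)--(iv). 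Taking the Mellin transform in $\zeta$ turns the equation into a first-order (in $t$) difference equation in the Mellin variable $r$ whose coefficient is, up to elementary factors, the function $B$ of Subsection \ref{FB} studied in \cite{m}. Using the functional equation satisfied by $B$ (the analogue of $\Gamma(r+1)=r\Gamma(r)$) together with the analyticity and decay of $\Gamma(r)/B(r)$ along vertical lines, I would solve this difference equation and invert, arriving at a kernel representation $v(t,X)=\int_0^\infty\mathcal G(t,X,Y)\,v_0(Y)\,dY$ in which $\mathcal G$ is a Mellin--Barnes integral of $\Gamma(r)/B(r)$ against suitable powers of $t,X,Y$; for $X$ small this rearranges, after splitting the $\zeta$-integration at $\zeta=t$ and deforming the $r$-contour, into $v=L(t;v_0)+R_2$ with $L$ given by the truncated-Mellin formula (\ref{S2ELLg}) (with $s=0$, $g=v_0$) and $R_2$ collecting the part of the $\zeta$-integral over $(t,\infty)$ together with the contour correction. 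The qualitative statements --- existence, the local boundedness of $|\partial_t v|+|\mathscr L v|$, the pointwise convergence $v(t,X)\to v_0(X)$ as $t\to0$, and the fact that $v$ solves (\ref{E2.17hm}) pointwise --- then follow by differentiating under the integral sign and by dominated convergence, using the exponential decay of $\Gamma(r)$ on $\mathrm{Re}(r)=\beta$.

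For the quantitative bounds I would estimate the representation directly. In the regime $0<X<t^2$ one shifts the $r$-contour to the left, and the answer is dominated by the poles of $\Gamma(r)/B(r)$ lying between the original line and $\mathrm{Re}(r)=2\theta$ (the datum $v_0\in X_{\theta,\rho}$ forcing $\int_0^t v_0(\zeta^2)\zeta^{r-1}d\zeta$ to behave like $t^{\mathrm{Re}(r)-2\theta}$ near $\zeta=0$): this yields the self-similar factor $t^{-2\theta}(1+t)^{-2\rho}$ and, from the correction $\int_t^\infty$ in $R_2$, the loss of one power of $\zeta$ responsible for the $\sqrt X/t+\sqrt X$ term in $(v)$ and for the $1+\sqrt X/t+\sqrt X$ growth in $(iii)$. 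In the regime $X>t^2$ one shifts the contour to the right instead; the decay $(t/\sqrt X)^3$ in $(iv)$ comes from the first zero of $B$ to the right of the contour, which is exactly the obstruction reflected in the constraint $\theta'+\rho'<3$ of (\ref{thetaP}). The weighted estimate $(ii)$ follows by combining the two regimes with the embedding (\ref{thetaP}); $(v)$ is the splitting just described; $(vi)$ is the bound on $L$ obtained by estimating $\int_0^t v_0(\zeta^2)\zeta^{r-1}d\zeta$ by $t^{\mathrm{Re}(r)-2p}$ for $v_0\in X_{p,\rho}$ and pulling it out of the $r$-integral; and $(vii)$ follows by inserting $v_0(X)=\alpha X^{-\theta}(1+\mathcal O(X^\varepsilon))$ into $L$, the leading term producing a simple pole of the integrand at $r=2\theta$ with residue $\Gamma(2\theta)/B(2\theta)$ (whence, after accounting for the prefactor $3B(1)/(i\pi^3)$ and the Jacobian of $X=\zeta^2$, the constant $12B(1)\Gamma(2\theta)/(\pi^2B(2\theta))$) and the $\mathcal O(X^\varepsilon)$ term producing a pole shifted by $\varepsilon$, hence the error $\mathcal O(t^{-2\theta+\varepsilon})$.

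The main obstacle is the first step: identifying the dispersion function $B$ and establishing the complex-analytic facts on which everything rests --- that $\Gamma(r)/B(r)$ is meromorphic with poles and zeros located precisely where the estimates require (no spurious pole between the contour and $r=2\theta$, and first right zero at the place dictated by $\theta'+\rho'<3$), and with enough decay along vertical lines to justify every contour shift and every differentiation under the integral sign. This is the content of the study of $B$ in \cite{m}; once those properties are granted, the remaining steps are a careful but routine analysis of Mellin--Barnes integrals and of residues.
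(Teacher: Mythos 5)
The paper does not actually prove Proposition \ref{S7cor1}; it is explicitly attributed to Corollary 2.7 of \cite{m3}, with the remark that the proof rests on properties of the function $B$ studied in detail in \cite{m}. There is therefore no in-paper proof to compare against, only the framework the paper itself invokes in Section~\ref{LandS} (the fundamental solution $\Lambda$ and $\mathscr S$ from \cite{m}, the contour expansions used in Lemma \ref{blabla}, and the pole/zero structure of $B$ recorded in Proposition \ref{S3PBP}). Your sketch captures that framework faithfully: the passage to $x=\sqrt X$ so that $\mathscr L$ has degree $-1$ and the equation is invariant under $(t,x)\mapsto(\lambda t,\lambda x)$; the Mellin--Barnes kernel with symbol $\Gamma(r)/B(r)$; the split $v=L(t;v_0)+R_2$ along $x\lessgtr t$ leading to the self-similar $t^{-2\theta}$ factor and the $X^{1/2}$ corrections; and the contour shifts left/right, controlled by the poles and zeros of $B$, that give (ii)--(vii). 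Two small imprecisions worth noting. First, calling the Mellin-transformed equation "a first-order (in $t$) difference equation in $r$" is loose: the more accurate statement is that the degree shift of $\mathscr L$ forces the Mellin symbol to satisfy the functional equation (\ref{S5EBX5}), whose solution is $B$ — that is the sense in which a difference equation arises. Second, the constraint $\theta'+\rho'<3$ in (\ref{thetaP}) is redundant (already implied by $\rho'<\rho+\theta-\theta'$ and $\theta+\rho<3/2$) and is not the direct source of the $(t/\sqrt X)^3$ decay in (iv); that decay comes from $B$'s first zero at $s=3$ in the $x$-variable, which matches the doubled exponent condition $2(\theta+\rho)<3$. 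As you yourself flag in your last paragraph, the genuine substance — the precise locations of $B$'s zeros and poles, and the decay of $\Gamma/B$ along vertical lines justifying every contour shift — is offloaded to \cite{m}. Since the paper makes exactly the same outsourcing, your proposal is a correct roadmap of the cited proof rather than a self-contained alternative.
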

\noindent
The next result is proved as Theorem 1.1 in \cite{m3}
\begin{prop}
\label{S7cor2}
Suppose that  $g \in L^\infty  ((0, T);  X _{\theta, \rho })$ for some $T>0$,  $\theta\ge 0$, $\rho >0$ such that $\theta+\rho \in (0, 3/2)$,  and  consider the function $u $ defined as
\begin{align}
&u  (t, X)=\int _0^t ( S(t-s)g(s))(X)ds.\label{S7cor2e1}
\end{align}
Then, for all $t\in (0, T)$ and $\theta'$, $\rho '$ satisfying (\ref{thetaP}),
\begin{align*}
&(i)\quad u \in L^\infty((0, T); X _{ \theta, \rho  })\cap C((0, T); X _{ \theta', \rho ' })\\
&\qquad\left|\frac {\partial  u } {\partial t} \right|+\left| \mathscr L(u )\right|\in L^\infty _{ \text{loc} }((0, \infty)\times (0, \infty))\\
&\hskip 1cm ||u (t)|| _{ \theta, \rho  } \le Ct\,\sup _{ 0\le s\le t  }||g(s)|| _{ X _{ \theta, \rho  }},\,\forall t>0,\\
&(ii)\,\, \text{the function}\,\,u \,\text{satisfies (\ref {E2.17Nhm})  for all}\,\,t\in (0, T), X>0,\,\text{and}\, w(0, X)=0.
\end{align*}
\begin{align*}
&(iii)\quad \forall t\in (0, T),\,s\in (0, t),\, \exists L(t-s, g(s))\in \RR,\, \text{such that for}\, p\in (-1, 1/2), \\
&\qquad \int _0^{t-X^{1/2}} L(t-s; g (s)) ds\le  C\sup _{ 0\le s\le t }||g(s)|| _{ p, \rho  }t^{1-2p},\, \forall X\in (0, t^2),\nonumber\\
& \qquad \int _{t-X^{1/2}}^t L(t-s; g (s)) ds\le C\sup _{ 0\le s\le t }||g(s)|| _{ \theta, \rho  }X^{\frac {1} {2}-\theta},\, \forall X\in (0, t^2)\\
&(iv)\quad \text{For},\,t>0,\,\, X\in (0, \min(1,t^2))\\
&\hskip 1cm  u (t, X)=\int _0^{t } L(t-s; g (s))ds+R_3(t, g, X),\nonumber\\
&\hskip 1cm |R_3(t, g, X)|\le  C\sup _{ 0\le s\le t } ||g (s)|| _{ \theta, \rho  }X^{1/2}  \left( X^{- \theta }+t^{1-2\theta}\right)\\
&(v) \quad  \quad |u (t, X)|\le C\sup _{ 0\le s\le t } ||g (s)|| _{ \theta, \rho  }
X^{-3/2}t^{4-2\theta}(1+t)^{-2\rho },\,\forall X>t^2, t\in (0,  T). \nonumber
\end{align*}
\end{prop}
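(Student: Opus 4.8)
The plan is to reduce everything to Proposition \ref{S7cor1} together with a Duhamel/Fubini argument, and then to transfer the pointwise estimates $(iii)$--$(v)$ of Proposition \ref{S7cor1} into the corresponding estimates for the time-integrated object $u$. First I would set $u(t,X)=\int_0^t (S(t-s)g(s))(X)\,ds$ and justify that the integral makes sense: for each fixed $s$, $g(s)\in X_{\theta,\rho}$, so Proposition \ref{S7cor1}$(ii)$ gives $\|S(t-s)g(s)\|_{\theta,\rho}\le C\|g(s)\|_{\theta,\rho}$ uniformly, whence the integrand is bounded in $X_{\theta,\rho}$ by $C\sup_{0\le s\le t}\|g(s)\|_{\theta,\rho}$ and the $X_{\theta,\rho}$-bound in $(i)$, namely $\|u(t)\|_{\theta,\rho}\le Ct\sup_{0\le s\le t}\|g(s)\|_{\theta,\rho}$, follows immediately by integrating in $s\in(0,t)$. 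The continuity $u\in C((0,T);X_{\theta',\rho'})$ for $\theta',\rho'$ as in (\ref{thetaP}) comes from the continuity statement in Proposition \ref{S7cor1} for $S(t-s)g(s)$ in the weaker space, combined with dominated convergence in $s$ (the dominating function being the $X_{\theta,\rho}$-bound, which embeds continuously into $X_{\theta',\rho'}$ by the remark around (\ref{thetaP})). That $u$ solves (\ref{E2.17Nhm}) with $u(0,\cdot)=0$, and that $|\partial_t u|+|\mathscr L(u)|\in L^\infty_{\text{loc}}$, is the standard Duhamel verification: differentiate under the integral, use $\partial_t S(t-s)g(s)=\mathscr L(S(t-s)g(s))$ from Proposition \ref{S7cor1}, and pick up the boundary term $g(t,X)=\lim_{s\to t}S(t-s)g(s)$ from the initial-data property; the local boundedness of $\mathscr L(u)$ is inherited from the corresponding property of $S(t-s)g(s)$, integrated in $s$.

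For the refined pointwise estimates I would split the $s$-integral at the natural threshold $s=t-X^{1/2}$ (equivalently $t-s=X^{1/2}$), which is exactly where the regime $X<(t-s)^2$ switches to $X>(t-s)^2$ in Proposition \ref{S7cor1}$(iii)$--$(iv)$. On $s\in(0,t-X^{1/2})$ one has $X<(t-s)^2$, and for $X<\min(1,(t-s)^2)$ the expansion in Proposition \ref{S7cor1}$(v)$ gives $S(t-s)g(s)(X)=L(t-s,g(s))+R_2$, with $L$ defined by (\ref{S2ELLg}); integrating this over $s$ produces the term $\int_0^{t}L(t-s;g(s))\,ds$ in $(iv)$, while the remainder $R_2$ and the high-frequency tail $s\in(t-X^{1/2},t)$ (where Proposition \ref{S7cor1}$(iv)$ applies, giving a factor $((t-s)/\sqrt X)^3$) are estimated directly and assembled into $R_3$; the explicit exponents $X^{1/2}(X^{-\theta}+t^{1-2\theta})$ come from integrating $(t-s)^{-2\theta}$-type weights against the window of length $X^{1/2}$ and using $\rho>0$ to absorb the $(1+\cdot)^{-2\rho}$ factors. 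The bounds in $(iii)$ on $\int_0^{t-X^{1/2}}L(t-s;g(s))\,ds$ and $\int_{t-X^{1/2}}^{t}L(t-s;g(s))\,ds$ follow from Proposition \ref{S7cor1}$(vi)$, $|L(t,v_0)|\le C\|v_0\|_{p,\rho}t^{-2p}$: the first integral is $\int_{X^{1/2}}^{t}\sigma^{-2p}\,d\sigma\lesssim t^{1-2p}$ for $p<1/2$, and the second is $\int_0^{X^{1/2}}\sigma^{-2\theta}\,d\sigma\lesssim X^{(1-2\theta)/2}=X^{1/2-\theta}$ (here using $\theta<1/2$). Finally $(v)$, the estimate for $X>t^2$, is obtained by noting that then $X>(t-s)^2$ for every $s\in(0,t)$, so Proposition \ref{S7cor1}$(iv)$ applies on the whole range and $|u(t,X)|\le C\sup_s\|g(s)\|_{\theta,\rho}\,X^{-3/2}\int_0^t(t-s)^{3}(t-s)^{-2\theta}(1+t-s)^{-2\rho}\,ds\lesssim X^{-3/2}t^{4-2\theta}(1+t)^{-2\rho}$.

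The main obstacle I anticipate is not any single estimate but the careful bookkeeping of the Mellin-type object $L(t-s,g(s))$ in (\ref{S2ELLg}) under the $s$-integration: one must check that the contour integral defining $L$ can be differentiated/integrated in the relevant parameters, that the $\zeta$-integral $\int_0^{t-s}g(s,\zeta^2)\zeta^{-1+r}\,d\zeta$ is controlled uniformly using only $g(s)\in X_{\theta,\rho}$ (which forces the shift of the contour line $\mathscr{R}e(r)=\beta$ into a range where $\zeta^{-1+\beta}\cdot\zeta^{-2\theta}$ is integrable near $0$, i.e. $\beta>2\theta$, hence the hypothesis $\theta+\rho<3/2$ and the choice of $p$), and that swapping the order of the $s$-integral and the $r$-contour integral is legitimate. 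Everything else is a fairly mechanical—if lengthy—propagation of the single-time bounds of Proposition \ref{S7cor1} through $\int_0^t\cdots ds$, with the split at $t-s=X^{1/2}$ doing all the real work.
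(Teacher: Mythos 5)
The paper does not actually give a proof of Proposition \ref{S7cor2}: it is imported verbatim as Theorem~1.1 of the author's earlier work \cite{m3}, and the text before the statement says exactly that. So there is no ``paper's own proof'' here to compare against; what can be judged is whether your sketch is a plausible reconstruction of how Theorem~1.1 of \cite{m3} would be derived from the material that \emph{is} summarised in the present paper, namely Proposition~\ref{S7cor1}.

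Your sketch is essentially correct and is almost certainly the intended route. The Duhamel formula plus the $X_{\theta,\rho}$-bound from Proposition~\ref{S7cor1}(ii) gives $(i)$ and $(ii)$; the split of the $s$-integral at $t-s=X^{1/2}$ is exactly the boundary between the regimes $X<(t-s)^2$ and $X>(t-s)^2$ in Proposition~\ref{S7cor1}(iii)--(v), which is what produces the three pieces of $R_3$ in $(iv)$ (the integrated $R_2$ from (v), the integrated tail from (iv), and the subtracted $\int_{t-X^{1/2}}^{t}L\,ds$ from (iii)); and the computations you indicate for the resulting $\sigma$-integrals all check out, including the bound $\int_0^t\sigma^{3-2\theta}(1+\sigma)^{-2\rho}d\sigma\lesssim t^{4-2\theta}(1+t)^{-2\rho}$ that gives $(v)$. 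Two small remarks. First, parts $(iii)$ and $(iv)$ of the proposition only make sense when $\theta<1/2$ (otherwise $\int_0^{X^{1/2}}\sigma^{-2\theta}d\sigma$ diverges and $X^{1/2-\theta}$ is not a useful bound); the statement does not say this explicitly, but you noticed it and it is indeed assumed wherever the result is applied. Second, your verification of $(ii)$ by differentiating under the integral and picking up the boundary term $g(t,X)$ implicitly uses more time-regularity of $g$ than the stated hypothesis $g\in L^\infty((0,T);X_{\theta,\rho})$; in \cite{m3} this is handled with more care (one shows the equation holds almost everywhere and then upgrades via the $L^\infty_{\text{loc}}$ bounds), so this is the one step where a complete write-up would need more than the one-line Duhamel argument, though it is a technical rather than a conceptual gap.
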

\noindent
The proof of Theorem \ref{Mth2.0} requires some newregularizing property of  equation (\ref{E2.17Nhm}). 
Regularizing  effects were shown  in \cite{m} for equation (\ref{E2.17hm}), and in \cite{Ba} for the linearized Nordheim equation (cf. \cite{No} and Subsection \ref{deduction} in the Appendix)
around an equilibrium, as well as for equation (\ref{E2.17Nhm}) already in \cite{m3}. However, the following estimate of the modulus of continuity of the function $u(t)$ defined in  (\ref{S7cor2e1}) is needed here, and proved below.
\begin{lem}
\label{blabla}
For $\theta\in (0, 1/2)$, $\rho \ge 0$ such that $\theta+\rho <3/2$, there exists a constant $C>0$ and a function   $\Omega  _{2\theta }: (0, \infty)\times (0, \infty)\to \RR$  (defined in (\ref{E935})), such that for  all  $d>0$,
\begin{align}
\int _0^d\int_0^{2X'} \Omega_{2\theta} (X^{1/2}, X'^{1/2})\frac {dXdX'} {|X-X'|}<\infty, \label{l2e1E3}
\end{align}
and for all  $g\in L^\infty ((0, T)); X_{\theta,\rho })$, the function $u $ defined in (\ref{S7cor2e1}) satisfies
\begin{align}
\left|\frac {1} { X}u (t, X)-\frac {1} {X'}u (t, X')\right|\le C\sup _{ 0\le s\le t }|| g(s)|| _{\theta, \rho  }\Omega_{2\theta} (X^{1/2}, X'^{1/2})  \label{l2e1E2}
\end{align}
for all $t, X$ and $X'$ such that  $0<\min(X, X')$ and $\max(X, X')\le t<T.$
\end{lem}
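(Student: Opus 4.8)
The plan is to exploit the explicit structure of the fundamental solution $S$ that underlies Propositions \ref{S7cor1} and \ref{S7cor2}, in particular the representation $u(t,X)=\int_0^t (S(t-s)g(s))(X)\,ds$, together with the decomposition $(v)$ of Proposition \ref{S7cor1} into the ``Mellin part'' $L(t;v_0)$ plus a controlled remainder. The key observation is that the singular small-$X$ behaviour of $u(t,X)/X$ is entirely carried by the term $\int_0^t L(t-s;g(s))\,ds$, which \emph{does not depend on $X$} except through the upper endpoint of integration appearing in (\ref{S2ELLg}) (the variable $\zeta$ runs up to $\sqrt{t-s}$, which does not involve $X$) — so when we form the difference $u(t,X)/X-u(t,X')/X'$ the leading singular contributions from $L$ cancel almost entirely, leaving only (a) the difference of two remainder terms $R_3(t,g,X)-R_3(t,g,X')$, controlled by $(iv)$ of Proposition \ref{S7cor2}, and (b) the difference coming from the portions $s\in(t-\sqrt{X},t)$ versus $s\in(t-\sqrt{X'},t)$ where the Mellin representation and the pointwise bounds $(iii)$ overlap differently. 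First I would write, for $X<X'\le t$,
\begin{align*}
\frac{u(t,X)}{X}-\frac{u(t,X')}{X'}
&=\Bigl(\frac{1}{X}-\frac{1}{X'}\Bigr)\int_0^{t-\sqrt{X'}}L(t-s;g(s))\,ds\\
&\quad+\frac{1}{X}\int_{t-\sqrt{X'}}^{t-\sqrt{X}}L(t-s;g(s))\,ds
+\frac{R_3(t,g,X)}{X}-\frac{R_3(t,g,X')}{X'}+\cdots
\end{align*}
and then bound each piece using the estimates $(iii)$, $(iv)$ already available.

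The next step is to make the first term harmless. Using $(iii)$ of Proposition \ref{S7cor2} with the choice $p$ close to $\theta$ (legitimate since $\theta\in(0,1/2)$), the integral $\int_0^{t-\sqrt{X'}}L(t-s;g(s))\,ds$ is bounded by $C\sup_s\|g(s)\|_{p,\rho}\,t^{1-2p}$, a quantity that is $X$-independent; multiplied by $|1/X-1/X'|=|X-X'|/(XX')$ this produces a term of the form $C\,t^{1-2p}|X-X'|/(XX')$, which after the substitution $X=a^2$, $X'=b^2$ and division by $|X-X'|$ is integrable against $dXdX'$ near the diagonal precisely because $2\theta<1$; this is one of the pieces that will be folded into the definition of $\Omega_{2\theta}$. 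The middle term $\frac{1}{X}\int_{t-\sqrt{X'}}^{t-\sqrt{X}}L(t-s;g(s))\,ds$ is estimated directly from the second line of $(iii)$, which gives the bound $C\sup_s\|g(s)\|_{\theta,\rho}\,X^{1/2-\theta}$ for the full tail integral from $t-\sqrt X$, hence a fortiori for the sub-interval; dividing by $X$ yields $C\sup_s\|g(s)\|_{\theta,\rho}X^{-1/2-\theta}$, and the analogous term with $X'$ is handled symmetrically. Finally the remainder terms: $|R_3(t,g,X)/X|\le C\sup_s\|g(s)\|_{\theta,\rho}X^{-1/2}(X^{-\theta}+t^{1-2\theta})$ by $(iv)$, and similarly for $X'$.

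Assembling all contributions, I would \emph{define} $\Omega_{2\theta}(X^{1/2},X'^{1/2})$ to be the sum of the explicit bounding expressions just obtained (this is what equation (\ref{E935}) does), namely a combination of terms like $|X-X'|/(XX')\cdot t^{1-2p}$ (with the $t$-dependence absorbed since $t\le T$), $X^{-1/2-\theta}$, $X'^{-1/2-\theta}$, $X^{-1/2}(X^{-\theta}+t^{1-2\theta})$ and its primed counterpart — all symmetrised in $X,X'$ and made independent of $t$ by using $t\le T$ and $\min(X,X')\le t$. Inequality (\ref{l2e1E2}) is then immediate by construction. The remaining claim (\ref{l2e1E3}), that $\int_0^d\int_0^{2X'}\Omega_{2\theta}(X^{1/2},X'^{1/2})\,|X-X'|^{-1}\,dXdX'<\infty$, reduces to elementary one- and two-variable integrability checks near the origin and near the diagonal: after $X=a^2$, $X'=b^2$ the troublesome factor $|X-X'|^{-1}=|a-b|^{-1}|a+b|^{-1}$ is compensated by the extra power of $a$ from the Jacobian, the terms $X^{-1/2-\theta}$ contribute $\int_0^{\sqrt d}a^{-2\theta}\,da<\infty$ because $\theta<1/2$, and the diagonal singularity of the first term is of order $|a-b|^{-1}$ against $|a-b|$, hence removable.

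\textbf{Main obstacle.} The genuinely delicate point is the cancellation in the first step: one must verify that the $X$-dependence of $L(t-s;g(s))$ enters \emph{only} through the upper endpoint of the $\zeta$-integral in (\ref{S2ELLg}) being $\sqrt{t-s}$ rather than through any factor involving $X$ itself, so that $u(t,X)-\frac{X}{X'}u(t,X')$ really does have the stated structure with the bulk of the singular mass removed. Equivalently, one needs the precise form of the decomposition $(v)$ of Proposition \ref{S7cor1} — that $v(t,X)=L(t;v_0)+R_2$ with $R_2$ genuinely smaller — to survive integration in $s$ and to interact correctly with the two different cutoffs $\sqrt X$ and $\sqrt{X'}$. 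Getting the bookkeeping of these endpoints right, and checking that no logarithmically divergent term is hidden in the overlap region $s\in(t-\sqrt{X'},t-\sqrt{X})$, is where the real work lies; everything downstream is the routine integrability verification sketched above.
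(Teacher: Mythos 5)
Your plan reproduces the high-level shape of the paper's argument — isolate the $X$-independent Mellin part $L$, show the singular mass cancels, control what remains — but the way you propose to control the remainder does not work, and the failure is precisely where (\ref{l2e1E3}) bites.

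The issue is the $R_3$ contribution (and likewise the overlap piece $\frac{1}{X}\int_{t-\sqrt{X'}}^{t-\sqrt X}L$). You propose to bound $R_3(t,g,X)/X$ and $R_3(t,g,X')/X'$ separately by $(iv)$ of Proposition \ref{S7cor2} and fold the resulting $X^{-1/2-\theta}$, $X'^{-1/2-\theta}$, etc.\ into $\Omega_{2\theta}$. But such one-sided terms are \emph{not} integrable against $|X-X'|^{-1}\,dX\,dX'$: in the variables $a=\sqrt X$, $b=\sqrt{X'}$ one gets $X^{-1/2-\theta}\,dX\,dX'/|X-X'|\sim a^{-2\theta}\,da\,db/|a-b|$, which has a logarithmic divergence in $b$ at $b=a$ for every fixed $a$, so $\int_0^d\int_0^{2X'}\cdots$ is infinite. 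Your final paragraph asserts this singularity is removable, but that is only true for the term carrying an explicit factor $|X-X'|$; the remainder terms do not carry one under your proposed bounds. In other words, the black-box estimates $(iv)$ and $(vi)$ simply do not see the $(X,X')$-cancellation that the lemma requires, and the ``main obstacle'' you flag (endpoint bookkeeping in the $L$-terms) is not actually where the difficulty lies.

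What the paper does instead is open up the fundamental solution itself: it passes to the $x=\sqrt X$ variable and the operator $\mathscr S(t)$, splits $\int_0^t(\mathscr S(t-s)h(s))(x)\,ds$ into four regions $J_1,\dots,J_4$ according to whether $x\lessgtr t-s$ and $y\lessgtr t-s$, and uses the explicit power-series expansions of $\Lambda$ (Propositions 3.1, 3.2, 3.5 of \cite{m}) to write each $J_i$ as a sum over $k$ of terms $A_{k,i}\,x^{-\alpha_k}\int(t-s)^{\alpha_k}G_i(t-s,s)\,ds$. Forming $J_i(t,x)/x^2-J_i(t,x')/x'^2$ then produces, term by term, expressions of the explicit form $\left|x^{-\alpha_k}-x'^{-\alpha_k}\right|x^{\alpha_k-1-\theta}+\left|x^{\alpha_k-1-\theta}-x'^{\alpha_k-1-\theta}\right|x'^{-\alpha_k}$. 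Every one of these carries a genuine difference $\left|x^{-\alpha}-x'^{-\alpha}\right|$ or $\left|x^{\beta}-x'^{\beta}\right|$, and it is exactly that built-in factor of $|x-x'|$ which makes the resulting $\Omega_{2\theta}$ integrable against $|X-X'|^{-1}$ — the verification being Proposition \ref{P52} in the appendix. So the fine series structure of $\Lambda$ is not an optional refinement here: it is the mechanism that produces the difference-quotient cancellation, and it cannot be recovered from the pointwise estimates $(iii)$--$(vi)$ alone.

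To repair your plan you would need a ``difference'' version of estimates $(iv)$/$(v)$ of Proposition \ref{S7cor2}, i.e.\ a bound on $R_3(t,g,X)/X - R_3(t,g,X')/X'$ itself that is small when $X$ is close to $X'$ — but proving such a bound is essentially equivalent to doing the $\Lambda$-series computation the paper does, so nothing is saved.
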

The proof of Lemma \ref{blabla} requires some detailed description of the operator $S(t)$. The operator $\mathscr L$ was studied in \cite{m} under the slightly different variables,
\begin{align*}
&U(t, x)=u  (t, X),\,\,X=x^2,\\
&L(U)(x)=\mathscr L(u )(X)\\
&L(U)(x)=\int _0^\infty (U(y)-U(x))\left(\frac {1} {|x^2-y^2|}-\frac {1} {x^2+y^2} \right)\frac {y} {x}dy.
\end{align*}
It was proved in Theorem 1.2 of \cite{m} that the equation 
\begin{align}
\label{S3E1.L1}
\frac {\partial U} {\partial t}=L(U)
\end{align}
has a fundamental solution,  denoted $\Lambda \in C((0,\infty); L^1(0, \infty))$, that satisfies the equation  in $\mathscr D'((0, \infty)\times (0, \infty))$ and also pointwise  for almost every $t>0$, $x>0$, and that $\Lambda(t)\rightharpoonup \delta (x-1)$ in $\mathscr D'(0, \infty)$ as $t\to 0$. It is then proved in Theorem 1.5 of \cite{m}  that for all initial data $U_0\in L^1$  there exists a weak solution of (\ref{S3E1.L1}), denoted $S(t)U_0$, 
\begin{equation}
\label{S2ESG}
\mathscr S(t)U_0(x)=\int _0^\infty \Lambda\left(\frac {t} {y}, \frac {x} {y} \right)U_0(y)\frac {dy} {y},\,\,\forall t>0,\,\,\forall x>0,
\end{equation}
such that $\mathscr S(\cdot )U_0\in C((0, \infty); L^1(0, \infty))$, $\mathscr S(t)U_0\in C((0, \infty))$ for  all $t>0$ and  (\ref{S3E1.L1}) is satisfied in $\mathscr D'((0, \infty)\times (0, \infty))$ and $\mathscr S(t)U_0\rightharpoonup$ in $\mathscr D'(0, \infty)$ as $t\to 0$. It was also proved  that if $U_0\in L^1(0, \infty)\cap L^\infty _{ loc }(0, \infty)$ then  $L(U)\in L^\infty ((0, \infty)\times (0, \infty))$, $U_t\in L^\infty ((0, \infty)\times (0, \infty))$ and  (\ref{S3E1.L1}) is satisfied pointwise, for $t>0$ and $x>0$.  

For all $g _0\in X _{\theta,  \rho  }$ with $\theta \ge 0$, $\rho \ge 0$ and $\theta+\rho <3/2$ the function $h$ defined as $h_0(x)=g_0(x^2)$ belongs to $X _{ 2\theta, 2\rho  }$ and  the function $v (t, X)=( S(t)g_0 )(X) $ is given as
\begin{align*}
&( S(t)g_0 )(X) = (\mathscr S(t)h_0)(X^{1/2}).
\end{align*}
The proof of Lemma \ref{blabla} starts then with  the estimate of 
\begin{align*}
\frac {1} {x}\int _0^t (\mathscr S(t-s)h(s))(x)ds-\frac {1} {x'}\int _0^t (\mathscr S(t-s)h(s))(x')ds
\end{align*}
for all $h\in L^\infty (0, T; X_{\theta,\rho })$, from where the estimate (\ref{l2e1E2}) immediately follows for all $g\in L^\infty (0, T; X_{\frac {\theta} {2},\frac {\rho } {2} })$ with $0<\theta+\rho <3$, and then for $g\in L^\infty (0, T; X_{ \theta,\rho  })$ with $0<\theta+\rho <3/2$

\begin{proof}[\upshape\bfseries{Proof of Lemma \ref{blabla}}]
Consider first a function  $h\in L^\infty (0, T; X_{\theta,\rho })$ for $\theta\ge 0$ and $\theta+\rho <3$, and the operator $\mathscr S(t)$ defined in (\ref{S2ESG})  acting on it.
By the definition of the operator $\mathscr S$, for all  $0<x<x'<t$,
\begin{align*}
&\int _0^t (\mathscr S(t-s)h(s))(x)ds=\int _0^{t-x}\mathscr S(t-s)h(s)(x)ds+\int  _{ t-x }^t\mathscr S(t-s)h(s)(s)ds\\
&=\int _0^{t-x} \int _0^\infty \Lambda\left(\frac {t-s} {y}, \frac {x} {y} \right)h(s, y)\frac {dy} {y}ds+
\int _{t-x}^t \int _0^\infty \Lambda\left(\frac {t-s} {y}, \frac {x} {y} \right)h(s, y)\frac {dy} {y}ds\\
&=\int _0^{t-x} \int _0^{t-s} \Lambda\left(\frac {t-s} {y}, \frac {x} {y} \right)h(s, y)\frac {dy} {y}ds+\int _0^{t-x} \int _{t-s}^\infty \Lambda\left(\frac {t-s} {y}, \frac {x} {y} \right)h(s, y)\frac {dy} {y}ds+\\
&+\int _{t-x}^t \int _0^{t-s} \Lambda\left(\frac {t-s} {y}, \frac {x} {y} \right)h(s, y)\frac {dy} {y}ds+\int _{t-x}^t \int _{t-s}^\infty \Lambda\left(\frac {t-s} {y}, \frac {x} {y} \right)h(s, y)\frac {dy} {y}ds\\
&=J_1+J_2+J_3+J_4.
\end{align*}
where the four integrals depend on whether $x\ge t-s$, $0<x<t-s$ and $y>t-s$ or $0<y<t-s$. Let us consider $J_4$ first, where $x>t-s$, $y>t-s$. By Proposition 3.5 of  \cite{m}, when $y>t-s$ and $x>t-s$,
\begin{align*}
J_4(t, x)= \int _{t-x}^t \int _{t-s}^\infty \Lambda\left(\frac {t-s} {y}, \frac {x} {y} \right)h(s, y)\frac {dy} {y}ds
\end{align*}
\begin{align*}
&\Lambda (t, x) = \mu(t)\sum _{ k=1 }^\infty \Bigg(
\left(\frac {x} {t}\right)^{-k-\beta '_1} \Bigg)A _{ k, 4 } 
+ \sum  _{ n=1 }^\infty \Bigg(\left(\frac {x} {t}\right)^{-4n-1}\Bigg)B _{ n, 4 }\nu  _{ n, 4 }(t)  \\
&A _{ k, 4 }(t)=(-1)^k\frac { B(\beta '_1 +k)} {k!};  \,\,\, \mu (t)=\frac {1} {2i\pi }\int  _{ {\mathscr Re} (z ) =\beta' _1 }\frac {t^{-z }} {B(z  )}dz\\
&B _{ n, 4 }= (4n+1)^2\\
&\nu _{ n, 4 }(t)= \frac {\tilde r_{4n+1}} {2i\pi }\int  _{ {\mathscr Re} (z ) =\beta ' _1 }\frac {\Gamma (z -4n-1)} {B(z  )}t^{-z }dz,\,\,\,\beta _1'\in (0, 2). 
\end{align*}
The function $J_4$ may then be written as follows,
\begin{align*}
J_4(t, x)&=\sum _{ k=1 }^\infty J _{ 4, 1; k }(t, x)+\sum _{ n=1 }^\infty J _{ 4, 2; n }(t, x)\\
J _{ 4, 1; k }(t, x)&=A _{ k, 4 } \int _{t-x}^t \left(\frac {x} {t-s}\right)^{-k-\beta '_1}G_4(t-s, s) ds\\
J _{ 4, 2; n }(t, x)&=B _{ n, 4 }\int _{t-x}^t  \left(\frac {x} {t-s}\right)^{-4n-1}G _{ n, 4 }(t-s, s)ds
\end{align*}
where
\begin{align*}
G_4(t-s, s)=\int _{t-s}^\infty  \mu\left( \frac {t-s} {y}\right) h(s, y)\frac {dy} {y}\\
G _{ n, 4 }(t-s, s)=\int _{t-s}^\infty  \nu _n\left(\frac {t-s} {y}\right)  h(s, y)\frac {dy} {y}.
\end{align*}
Therefore,
\begin{align*}
&\frac {J _{ 4, 1; k }(t, x)} {x^2}-\frac {J _{ 4, 1; k }(t, x')} {x'^2}=
A _{ k, 4 } \left(x^{-k-\beta '_1-2}-x'^{-k-\beta '_1-2} \right)\times \\
&\times  \int _{t-x}^t (t-s)^{k+\beta '_1}G_4(t-s, s) ds+A _{ k, 4 } x'^{k+\beta '_1-2}\int _{t-x'}^{t-x}(t-s )^{k+\beta '_1}G_4(t-s, s) ds
\end{align*}
and 
\begin{align*}
|G(t-s, s)|\le C\sup _{ 0\le s\le t } ||h(s)|| _{ \theta, \rho  }(t-s)^6\int _{t-s}^\infty  y^{-7-\theta}  dy=C\sup _{ 0\le s\le t } ||h(s)|| _{ \theta, \rho  }(t-s)^{-\theta},
\end{align*}
since $
\mu(\sigma )=\rho _1\sigma ^6+\mathcal O (\sigma )^7$ for $\sigma \in (0, 1)$. Then,
\begin{align*}
\left|\frac {J _{ 4, 1; k }(t, x)} {x^2}-\frac {J _{ 4, 1; k }(t, x'^2)} {x}\right|\le C\sup _{ 0\le s\le t } 
||h(s)|| _{ \theta, \rho  }A _{ k, 4 } \left|x^{-k-\beta '_1-2}-x'^{-k-\beta '_1-2} \right|\times \\
\times x^{k+\beta '_1+1-\theta}+C\sup _{ 0\le s\le t } ||h(s)|| _{ \theta, \rho  }A _{ k, 4 } x'^{-k-\beta '_1-2}\left|x^{k+\beta '_1+1-\theta}-x'^{k+\beta '_1+1-\theta}\right|.
\end{align*}
A similar argument using
\begin{align*}
\nu _{n, 4  }(\sigma )=\gamma  _{ n, 4 }\sigma ^2(1+\mathcal O(\sigma )),\,\,\sigma \in (0, 1)\\
|G _{ n, 4 }(t-s, s)|\le C||h(s)|| _{ \theta, \rho  }\gamma _{ n, 4 }(t-s)^\theta
\end{align*}
shows,
\begin{align*}
\left|\frac {J _{ 4, 2; n }(t, x)} {x^2}-\frac {J _{ 4, 2; n }(t, x')} {x'^2}\right|\le C\sup _{ 0\le s\le t } ||h(s)|| _{ \theta, \rho  }
B _{ n, 4 }\gamma  _{ n, 4 }\left|x^{-4n-3}-x'^{-4n-3} \right|\times \\
\times x^{4n+2-\theta}
+C\sup _{ 0\le s\le t } ||h(s)|| _{ \theta, \rho  }B _{ n, 4 }\gamma _{ n, 4 } x'^{-4n-3}\left|x^{4n+2-\theta}-x'^{4n+2-\theta}\right|.
\end{align*}
It follows,
\begin{align*}
\left|\frac {J_4(t, x)} {x}-\frac {J_4(t, x')} {x'}\right| \le C\sup _{ 0\le s\le t } ||h(s)|| _{ \theta, \rho  }\sum _{ k=1 }^\infty A _{ k, 4 }
\Bigg(  \left|x^{-k-\beta '_1-2}-x'^{-k-\beta '_1-2} \right|\times \\
\times x^{k+\beta '_1+1-\theta}
+ x'^{-k-\beta '_1-2}\left|x^{k+\beta '_1+1-\theta}-x'^{k+\beta '_1+1-\theta}\right|\Bigg)+\\
+C \sup _{ 0\le s\le t }||h(s)|| _{ \theta, \rho  }\sum _{ n=1 }^\infty B _{ n, 4 }\gamma _{ n, 4 }\Bigg(
 \left|x^{-4n-3}-x'^{-4n-3} \right|x^{4n+2-\theta}+\\
+x'^{-4n-3}\left|x^{4n+2-\theta}-x'^{4n+2-\theta}\right|\Bigg)
\end{align*}

Since $y>t-s$ and $0<s<t-x$  in $J_2(t, x)$, by Proposition 3.5 of  \cite{m}, there exists two sequences $\{A _{ k, 2 }\} _{ k\ge 0 }\subset \RR$ and $\sigma^* _k\in (-4k-2, -4k-1)$ such that
\begin{align*}
\Lambda\left(\frac {t-s} {y}, \frac {x} {y}\right)=\sum _{ k=0 }^\infty A _{ k, 2 }\left(\frac {x} {t-s} \right)^{-\sigma^* _k}\mu  _{ k, 2 }\left(\frac {t-s} {y}\right)\\
\mu  _{ k, 2 }(t)=\int  _{ \Re e(z )=\beta _1' }\Gamma(z -\sigma^* _k)t ^{-z }dz .
\end{align*}
and the function  $J_2(t, x)$ may be written,
\begin{align*}
&J_2(t, x)=\sum _{k=0 }^n J _{ k, 2 } (t, x)\\
&J _{ k, 2 }(t, x)=A _{ k, 2 }\int _0^{t-x}\left(\frac {x} {t-s} \right)^{-\sigma^* _k}G _{ 2, k }(t-s, s)ds\\
&G _{ 2, k }(t-s, s)=\int  _{ t-s }^\infty
\mu  _{ k, 2 }\left(\frac {t-s} {y}\right)h(s, y)\frac {dy} {y}
\end{align*}
Then,
\begin{align*}
&\frac {J _{ k, 2 }(t, x)} {x^2}-\frac {J _{k, 2  }(t, x')} {x'^2}= A _{ k, 2 }x^{-2}\int _{ 0 }^{t-x} \left(\frac {x} {t-s} \right)^{-\sigma^* _k}
G _{ k, 2 }(t-s, s)ds-\\
&\hskip 5cm -A _{ k, 2 }x'^{-2}\int _{ 0 }^{t-x'} \left(\frac {x'} {t-s} \right)^{-\sigma _k}
G _{ k, 2 }(t-s, s)ds
\end{align*}
and since,
\begin{align*}
&|\mu  _{ k, 2 }(s )|\le \gamma  _{ k, 2 }s^{6},\,\,s \in (0, 1)\\
&|G _{ k, 2 }(t-s, s)|\le C||h(s)|| _{ \theta, \rho  }\gamma _{ k, 2 }(t-s)^{-\theta}
\end{align*}
it follows,
\begin{align*}
\left|\frac {J _{ k, 2 }(t, x)} {x^2}-\frac {J _{k, 2  }(t, x')} {x'^2}\right|\le CA _{ k, 2 }\int _0^{t-x}
\left| \frac {1} {x^2}\left(\frac {x} {t-s} \right)^{-\sigma ^*_k}-\frac {1} {x'^2}\left(\frac {x'} {t-s} \right)^{-\sigma ^*_k}\right|\times \\
\times  |G _{ 2, k }(t-s, s)|ds+
CA _{ k, 2}\frac {1} {x'^2}\int  _{ t-x }^{t-x'} \left(\frac {x'} {t-s} \right)^{-\sigma ^*_k} |G _{ 2, k }(t-s, s)|ds\\
\le CA _{ k, 2 }\gamma  _{ k, 2 }\sup _{ 0\le s\le t }||h(s)|| _{ \theta, \rho  } 
\left|x^{-\sigma ^*_k-2}-x'^{-\sigma ^*_k-2}\right|\int _0^{t-x} (t-s)^{\sigma ^*_k-\theta } ds+\\
+CA _{ k, 2}\gamma  _{ k, 2 }\sup _{ 0\le s\le t }||h(s)|| _{ \theta, \rho  } x'^{-\sigma ^*_k-2}\int  _{ t-x }^{t-x'}  (t-s)^{\sigma ^*_k-\theta} ds.
\end{align*}
Therefore,
\begin{align*}
&\left|\frac {J _{ k, 2 }(t, x)} {x^2}-\frac {J _{k, 2  }(t, x')} {x'^2}\right|\le 
 CA _{ k, 2 }\gamma  _{ k, 2 }\sup _{ 0\le s\le t }||h(s)|| _{ \theta, \rho  } 
\left|x^{-\sigma ^*_k-2}-x'^{-\sigma ^*_k-2}\right|\times \\
&\times x^{\sigma ^*_k-\theta +1}+CA _{ k, 2}\gamma  _{ k, 2 }\sup _{ 0\le s\le t }||h(s)|| _{ \theta, \rho  }\, x'^{-\sigma ^*_k-2}
\left|x^{\sigma ^*_k-\theta+1}-x'^{\sigma ^*_k-\theta +1}\right|
\end{align*}
and,
\begin{align*}
&\left|\frac {J _{ 2 }(t, x)} {x^2}-\frac {J _{2  }(t, x')} {x'^2}\right|\le
C\sup _{ 0\le s\le t }||h(s)|| _{ \theta, \rho  }\sum _{ k=0 }^\infty A _{ k, 2 }\gamma  _{ k, 2 }\times \\
&\times \Bigg(
\left|x^{-\sigma ^*_k-2}-x'^{-\sigma ^*_k-2}\right|x^{\sigma ^*_k-\theta +1}
+x'^{-\sigma ^*_k-2}\left|x^{\sigma ^*_k-\theta+1}-x'^{\sigma ^*_k-\theta +1}\right|
\Bigg).
\end{align*}
Since $y\in (0, t-s)$ and $s<t-x$ in $J_1$, Proposition 3.1 and Proposition 3.2 in \cite{m} yield,
\begin{align*}
\Lambda\left(\frac {t-s} {y}, \frac {x} {y}\right)=\sum _{ k=0 }^\infty A _{ k, 1 }\left(\frac {x} {t-s} \right)^{-\sigma ^*_k}\mu  _{ k, 1 }\left( \frac {t-s} {y}\right)\\
\mu  _{ k, 1 }\left(t\right)=\int  _{ \Re e(z  )= \beta }\frac {\Gamma (z -\sigma ^*_k)t^{-z }} {B(z  )}dz 
\end{align*}
where the sequence $\{\sigma ^*_k\} _{ k\in \NN }$ is as before. The function  $J_1(t, x)$ may be written,
\begin{align*}
&J_1(t, x)=\sum _{k=0 }^n J _{ k, 1 } (t, x)\\
&J _{ k, 1 }(t, x)=A _{ k, 1 }\int _0^{t-x}\left(\frac {x} {t-s} \right)^{-\sigma ^*_k}G _{ k, 1 }(t-s, s)ds\\
&G _{ k, 1 }(t-s, s)=\int  _0^{ t-s }
\mu  _{ k, 1 }\left(\frac {t-s} {y}\right)h(s, y)\frac {dy} {y}
\end{align*}
and,
\begin{align*}
&\frac {J _{ k, 1 }(t, x)} {x^2}-\frac {J _{k, 1  }(t, x')} {x'^2}= A _{ k, 1 }x^{-2}\int _{ 0 }^{t-x} \left(\frac {x} {t-s} \right)^{-\sigma ^*_k}
G _{ k, 1 }(t-s, s)ds-\\
&\hskip 5cm -A _{ k, 1 }x'^{-2}\int _{ 0 }^{t-x'} \left(\frac {x'} {t-s} \right)^{-\sigma ^*_k}
G _{ k, 1 }(t-s, s)ds.
\end{align*}
Since,
\begin{align*}
&|\mu  _{ k, 1 }(s)|\le \gamma  _{ k, 1 }s^{-3},\,\,s>1\\
&|G _{ k, 1 }(t-s, s)|\le C||h(s)|| _{ \theta, \rho  }\gamma _{ k, 1 }(t-s)^{-\theta}
\end{align*}
we obtain arguing as for $J_2$,
\begin{align*}
&\left|\frac {J _{ k, 1 }(t, x)} {x^2}-\frac {J _{k, 1  }(t, x')} {x'^2}\right|\le 
 CA _{ k, 1 }\gamma  _{ k, 1 }\sup _{ 0\le s\le t }||h(s)|| _{ \theta, \rho  } 
\left|x^{-\sigma ^*_k-2}-x'^{-\sigma ^*_k-2}\right|\times \\
&\times x^{\sigma ^*_k-\theta +1}+CA _{ k, 1}\gamma  _{ k, 1 }\sup _{ 0\le s\le t }||h(s)|| _{ \theta, \rho  }\, x'^{-\sigma ^*_k-2}
\left|x^{\sigma ^*_k-\theta+1}-x'^{\sigma ^*_k-\theta +1}\right|
\end{align*}
and
\begin{align*}
&\left|\frac {J _{ 1 }(t, x)} {x^2}-\frac {J _{1  }(t, x')} {x'^2}\right|\le
C\sup _{ 0\le s\le t }||h(s)|| _{ \theta, \rho  }\sum _{ k=0 }^\infty A _{ k, 1 }\gamma  _{ k, 1 }\times \\
&\times \Bigg(
\left|x^{-\sigma ^*_k-2}-x'^{-\sigma ^*_k-2}\right|x^{\sigma ^*_k-\theta +1}
+x'^{-\sigma ^*_k-2}\left|x^{\sigma ^*_k-\theta+1}-x'^{\sigma ^*_k-\theta +1}\right|
\Bigg).
\end{align*}
The function $\Lambda$ in  $J_3$, where $y\in (0, t-s)$ and $s>t-x$ and then $(t-s)/y>1$ and $x>t-s$, is slightly more involved due to the distributions of poles and zeros of the function $\Lambda$ in  (3.8) of \cite{m}. 

However, there exists three sequences of real numbers $\{A _{ k, 3 }\} _{ k=1 }^\infty$, $\{B _{ n, 3 }\} _{ n=1 }^\infty$, $\{C _{ k, n}\} _{ n=0 }^\infty$, and a sequence $\{\sigma _{ k }\} _{ k=1 }^\infty$, $\sigma _ k\in (4k+4, 4k+5)$ such that,
\begin{align*}
\Lambda\left(\frac {t-s} {y}, \frac {x} {y} \right)=\sum _{ n=0 }^\infty B _{ n, 3 }\left( \frac {x} {t-s}\right)^{-4n-1} \sum _{ k=1 }^\infty A _{ k, 3 }\left( \frac {t-s} {y}\right)^{-\sigma _k}+\\
+\sum _{ n=0 }^\infty  \sum _{ k=1 }^\infty A _{ k, 3 }C _{ k, n }\left( \frac {t-s} {y}\right)^{-\sigma _k}\left( \frac {x} {t-s}\right)^{-\sigma _k-n}.
\end{align*}
The function $J_3(t, x)$ may then be written,
\begin{align*}
J_3(t, x)=\sum _{ n=0 }^\infty J _{ 3, n }(t, x)+\sum _{ n=0 }^\infty \sum _{ k=1 }^\infty J _{ 3, n, k }(t, x)
\end{align*}
where,
\begin{align*}
J _{ 3, n }(t, x)=B _{ n, 3 }\int  _{ t-x }^t \left( \frac {x} {t-s}\right)^{-4n-1} G _{ n, 3 }(t-s, s)ds\\
G _{ n, 3 }(t-s, s)=\sum _{ k=1 }^\infty A _{ k, 3 }\int  _{ 0}^{t-s}\left( \frac {t-s} {y}\right)^{-\sigma _k}h(y)\frac {dy} {y}
\end{align*}
\begin{align*}
J _{ 3, n, k }(t, x)=A _{ k, 3 }C _{ k, n }\int  _{ t-x }^t\left( \frac {x} {t-s}\right)^{-\sigma _k-n}\widetilde G _{k, 3 }(t-s, s)ds\\
\widetilde G _{k, 3 }(t-s, s)=\int_0^{t-s}\left( \frac {t-s} {y}\right)^{-\sigma _k}h(y)\frac {dy} {y}.
\end{align*}
It follows,
\begin{align*}
\left|\frac {J _{ 3, n }(t, x)} {x^2}-\frac {J _{ 3, n }(t, x'^2)} {x'^2}\right|\le |B _{ n, 3 }|\int  _{ t-x }^t
\left|x^{-2}\left( \frac {x} {t-s}\right)^{-4n-1} -x'^{-2}\left( \frac {x'} {t-s}\right)^{-4n-1}  \right|\times \\
\times  |G _{ n, 3 }(t-s, s)|ds+|B _{ n, 3 }| x'^{-2}\int  _{ t-x }^{t-x'}\left( \frac {x'} {t-s}\right)^{-4n-1} |G _{ n, 3 }(t-s, s)|ds.
\end{align*}
Following the same arguments as above, use of
\begin{align*}
|G _{ n, 3 }(t-s, s)|\le C||h(s)|| _{ \theta, \rho  } \sum _{ k=1 }^\infty |A _{ k, 3 }|
\int  _{ 0}^{t-s}\left( \frac {t-s} {y}\right)^{-\sigma _k}y^{-1-\theta}dy\\
\le C||h(s)|| _{ \theta, \rho  }(t-s)^{-\theta} \sum _{ k=1 }^\infty\frac { |A _{ k, 3 }|} {(\sigma  _k-\theta)}
\end{align*}
yields the following estimate,

\begin{align*}
&\left|\frac {J _{ 3, n }(t, x)} {x^2}-\frac {J _{ 3, n }(t, x')} {x'^2}\right|\le C|B _{ n, 3 }|\sup _{ 0\le s\le t  }||h(s)|| _{ \theta, \rho  }
 \Bigg(\left|x^{-4n-3} -x'^{-4n-3}\right|\times 
 \\
&\hskip 2.4cm \times\int  _{ t-x }^t  (t-s)^{4n+1-\theta}ds+x'^{-4n-3} \int  _{ t-x }^{t-x'} (t-s )^{4n+1-\theta}ds\Bigg)\\
&\le \frac {C|B _{ n, 3 }|} {(4n+2-\theta)}\sup _{ 0\le s\le t  }||h(s)|| _{ \theta, \rho  }
 \Bigg(\left|x^{-4n-3} -x'^{-4n-3}\right| x^{4n+2-\theta}+\\
 &\hskip 6cm +x'^{-4n-3}\left|x^{4n+2-\theta}-x'^{4n+2-\theta} \right|\Bigg).
\end{align*}
By similar arguments,
\begin{align*}
|\widetilde G _{k, 3 }(t-s, s)|\le C||h(s)|| _{ \theta, \rho  }(t-s)^{-\sigma _k}\int _0^{t-s} y^{\sigma _k-1-\theta}dy\\
=
C||h(s)|| _{ \theta, \rho  }(\sigma _k-\theta)^{-1}(t-s)^{-\theta}
\end{align*}
and then
\begin{align*}
&\left|\frac {J _{ 3, n, k }(t, x)} {x^2}-\frac {J _{ 3, n, k }(t, x')} {x'^2}\right|\le C|A _{ k, 3 }||C _{ k, n }|
\sup _{ 0\le s\le t }||h(s)|| _{ \theta, \rho  }(\sigma _k-\theta)^{-1}\times \\
&\hskip 4cm \times \left|x^{-\sigma _k-n-2}-x'^{-\sigma _k-n-2} \right|\int  _{ t-x }^t
 (t-s)^{\sigma _k+n-\theta}ds+\\
&\hskip 0.5cm+C|A _{ k, 3 }||C _{ k, n }|\sup _{ 0\le s\le t }||h(s)|| _{ \theta, \rho  }(\sigma _k-\theta)^{-1}x'^{-\sigma _k-n-2}\int  _{ t-x }^{t-x'} (t-s )^{\sigma _k+n-\theta}ds\le 
\end{align*}

\begin{align*}
&\le C|A _{ k, 3 }||C _{ k, n }|\sup _{ 0\le s\le t }||h(s)|| _{ \theta, \rho  }(\sigma _k-\theta)^{-1}(\sigma _k+n+1-\theta)^{-1}\times \\
&\times \Bigg(\left|x^{-\sigma _k-n-2}-x'^{-\sigma _k-n-2} \right| x^{\sigma _k+n+1-\theta}+
x'^{-\sigma _k-n-2}\left|x^{\sigma _k+n+1-\theta}-x'^{\sigma _k+n+1-\theta} \right| \Bigg)
\end{align*}
from where,
\begin{align*}
&\left|\frac {J _{ 3 }(t, x)} {x^2}-\frac {J _{ 3 }(t, x')} {x'^2}\right|\le C\sum _{ n=0 }^\infty 
 \frac {|B _{ n, 3 }|} {(4n+2-\theta)}\sup _{ 0\le s\le t  }||h(s)|| _{ \theta, \rho  }\times \\
 &\times \Bigg(\left|x^{-4n-3} -x'^{-4n-3}\right| x^{4n+2-\theta}
 +x'^{-4n-3}\left|x^{4n+2-\theta}-x'^{4n+2-\theta} \right|\Bigg)+\\
 &+C\sum _{ n=0 }^\infty\sum _{ k=1 }^\infty  |A _{ k, 3 }||C _{ k, n }|\sup _{ 0\le s\le t }||h(s)|| _{ \theta, \rho  }(\sigma _k-\theta)^{-1}(\sigma _k+n+1-\theta)^{-1}\times \\
&\times \Bigg(\left|x^{-\sigma _k-n-2}\!\!-x'^{-\sigma _k-n-2} \right| x^{\sigma _k+n+1-\theta}+
x'^{-\sigma _k-n-2}\left|x^{\sigma _k+n+1-\theta}-x'^{\sigma _k+n+1-\theta} \right|\Bigg).
\end{align*}
We have then, for $0<x<t<1$ and $0<x'<t<1$

\begin{align*}
&\left|\frac {1} {x}\int _0^t (\mathscr S(t-s)h(s))(x)ds-\frac {1} {x'}\int _0^t (\mathscr S(t-s)h(s))(x')ds\right|\le \sup _{ 0\le s\le t }||h(s)|| _{ \theta, \rho  }\times \\ 
&\hskip 10.6cm \times \widetilde \Omega (t, x, x'),\\
&\widetilde \Omega_\theta (x, x')=\\
&= \Bigg[\sum _{ j=1 }^3\sum _{ k=0 }^\infty D_k\Bigg(
\left|x^{-\alpha  _{ k, j }} -x'^{-\alpha  _{ k, j }}\right|x^{\alpha  _{ k, j }-1-\theta}+
\left|x^{\alpha  _{ k, j }-1-\theta} -x'^{\alpha  _{ k, j }-1-\theta}\right|x'^{-\alpha  _{ k, j }}
\Bigg)+\\
&+\sum _{ k=0 }^\infty\sum _{ n=0 }^\infty D _{ k, n }
\Bigg(
\left|x^{-\gamma   _{ k, n }} -x'^{-\gamma   _{ k, n }}\right|x^{\gamma   _{ k, n }-1-\theta}+
\left|x^{\gamma   _{ k, n }-1-\theta} -x'^{\gamma   _{ k, n }-1-\theta}\right|x'^{-\gamma   _{ k, n }}
\Bigg)\Bigg]
\end{align*}
for some real constants $D_k$ and $D _{ k, n }$ and with
\begin{align*}
&\alpha  _{ k, 1 }=k+\beta '_1+2;\,\,\,\,
\alpha  _{ k, 2 }=4k+3,\\
&\alpha  _{ k, 3}=-\sigma _k^*+1+\theta;\,\,\,\,\,\gamma   _{ k, n }=\sigma _k+n+2.
\end{align*}
The change of variables $x^2=X$, $g(t, X)=h(t, x)$ yields $g\in L^\infty((0, T); X _{ \frac {\theta} {2}, \frac {\rho } {2} }$ and for $t\in (0, T)$,
$u (t, X)=\int _0^t\mathscr (S(t-s)h(s))(x)ds$ from where, for $0<X, X'<t^2<1$,

\begin{align}
\left|\frac {1} { X}u (t, X)-\frac {1} {X'}u (t, X')\right|& \le \sup _{ 0\le s\le t }||h(s)|| _{ \theta, \rho  } \widetilde\Omega_\theta (X^{1/2}, X'^{1/2})\nonumber \\
&=\sup _{ 0\le s\le t }|| g(s)|| _{ \frac {\theta} {2},\frac {\rho} {2}  }\widetilde \Omega_\theta (X^{1/2}, X'^{1/2}).\label{E933}
\end{align}
Since the left hand side of (\ref{E933}) is symmetric with respect to $X$ and $X'$ it follows
\begin{align}
& \left|\frac {1} { X}u (t, X)-\frac {1} {X'}u (t, X')\right|\le \sup _{ 0\le s\le t }|| g(s)|| _{ \frac {\theta} {2},\frac {\rho} {2}  }\Omega_\theta (X^{1/2}, X'^{1/2}),\nonumber \\
&\Omega_\theta (X^{1/2}, X'^{1/2})= \min\left(\widetilde\Omega_\theta (X^{1/2}, X'^{1/2}); 
\widetilde\Omega_\theta (X'^{1/2}, X^{1/2})\right), \label{E935}
\end{align}
which is just (\ref{l2e1E2}) written for $\theta/2$, $\rho /2$  instead of $\theta$ and $\rho $. Property (\ref{l2e1E3}) follows now from Proposition \ref{P52} in the Appendix.
\end{proof}
\begin{prop}
\label{blablabcor}
Under the same hypothesis on $\theta, \rho $ as in Lemma \ref{blabla}, for all $d>0$ there exists a constant $C>0$ such that, for all $g\in L^\infty ((0, T)); X_{\theta,\rho })$ the function $u$ defined in (\ref{S7cor2e1}) satisfies
\begin{align*}
\int _0^d\left| \mathscr L(u)\right|dYdX\le C\sup _{ 0\le s\le t }|| g(s)|| _{\theta, \rho  } \left(1+  \int _0^d\int_0^{rX} \Omega_{2\theta} (X^{1/2}, Y^{1/2})\frac {dYdX} {|X-Y|}\right).
\end{align*}
\end{prop}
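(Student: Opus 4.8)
The plan is to bound $|\mathscr{L}(u(t))(X)|$ pointwise on $(0,d)$ by splitting the defining integral of $\mathscr{L}$ into an inner and an outer piece, estimating the outer piece and the ``lower order'' part of the inner piece with the pointwise bounds of Proposition \ref{S7cor2}, and estimating the genuinely singular part of the inner piece with the modulus of continuity bound of Lemma \ref{blabla}. Writing $\frac1{|X-Y|}-\frac1{X+Y}=\frac{2\min(X,Y)}{|X^2-Y^2|}=:K(X,Y)$, one has by (\ref{S1InnD8})
\[
\mathscr{L}(u(t))(X)=X^{-1/2}\int_0^\infty\bigl(u(t,Y)-u(t,X)\bigr)K(X,Y)\,dY=:\mathscr{L}_{\mathrm{in}}(X)+\mathscr{L}_{\mathrm{out}}(X),
\]
the two pieces corresponding to $\int_0^{rX}$ and $\int_{rX}^\infty$. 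On $Y>rX$ one has $K(X,Y)\le\frac{2r^2}{r^2-1}XY^{-2}$, so $|\mathscr{L}_{\mathrm{out}}(X)|\le CX^{1/2}\int_{rX}^\infty|u(t,Y)|Y^{-2}\,dY+CX^{-1/2}|u(t,X)|$; applying Fubini's theorem together with the pointwise estimates of Proposition \ref{S7cor2} — namely $|u(t,X)|\le C\sup_{0\le s\le t}\|g(s)\|_{\theta,\rho}(X^{1/2-\theta}+t^{1-2\theta})$ for $X<\min(1,t^2)$, from parts (iii)--(iv), and $|u(t,X)|\le C\sup_{0\le s\le t}\|g(s)\|_{\theta,\rho}X^{-3/2}t^{4-2\theta}(1+t)^{-2\rho}$ for $X>t^2$, from part (v) — the hypothesis $\theta<1/2$ makes all ensuing $X$-integrals over $(0,d)$ convergent, whence $\int_0^d|\mathscr{L}_{\mathrm{out}}(X)|\,dX\le C\sup_{0\le s\le t}\|g(s)\|_{\theta,\rho}$ with $C=C(d,T)$. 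This produces the ``$1$'' on the right-hand side.

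For the inner piece I would use the identity $u(t,Y)-u(t,X)=Y\bigl(\tfrac{u(t,Y)}{Y}-\tfrac{u(t,X)}{X}\bigr)+(Y-X)\tfrac{u(t,X)}{X}$. On $0<Y<rX$ a direct computation gives $|(Y-X)K(X,Y)|\le 2$, so the second term contributes at most $CX^{-1/2}|u(t,X)|$ to $|\mathscr{L}(u(t))(X)|$, which again integrates over $(0,d)$ to $C(d,T)\sup_{0\le s\le t}\|g(s)\|_{\theta,\rho}$. For the first term, $|X^2-Y^2|\ge|X-Y|\max(X,Y)$ yields the key pointwise inequality $X^{-1/2}Y\,K(X,Y)\le 2X^{1/2}|X-Y|^{-1}\le 2d^{1/2}|X-Y|^{-1}$ for $0<Y<rX<rd$; combined with Lemma \ref{blabla}, which applies as soon as $rX\le t$ (so that $\max(X,Y)\le t$ throughout the inner range), this gives
\[
\Bigl|X^{-1/2}\!\!\int_0^{rX}\!\!Y\Bigl(\tfrac{u(t,Y)}{Y}-\tfrac{u(t,X)}{X}\Bigr)K(X,Y)\,dY\Bigr|\le Cd^{1/2}\sup_{0\le s\le t}\|g(s)\|_{\theta,\rho}\int_0^{rX}\frac{\Omega_{2\theta}(X^{1/2},Y^{1/2})}{|X-Y|}\,dY,
\]
and integration in $X\in(0,d)$ reproduces precisely the double-integral term of the claim, finite by (\ref{l2e1E3}).

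It then remains to treat the set $X\in[t/r,d]$, nonempty only when $t<rd$, which is exactly where the hypothesis of Lemma \ref{blabla} fails. There $X>t^2$ for $t$ small, so Proposition \ref{S7cor2}(iii)--(v) shows that $u(t,\cdot)$ and its constant leading term $\int_0^tL(t-s,g(s))\,ds$ are $O\bigl(\sup_{0\le s\le t}\|g(s)\|_{\theta,\rho}\bigr)$ uniformly there; using this, the inclusion $|\mathscr{L}(u(t))|\in L^\infty_{\mathrm{loc}}((0,\infty)\times(0,\infty))$ and the linearity of $g\mapsto u$ (together with a direct estimate of the singular integral defining $\mathscr{L}$ away from the origin) one bounds $\int_{t/r}^d|\mathscr{L}(u(t))|\,dX$ by $C(d,T)\sup_{0\le s\le t}\|g(s)\|_{\theta,\rho}$ as well, and collecting these estimates yields the proposition. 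I expect the main obstacle to be the pairing, in the inner region, of the borderline-singular kernel $|X-Y|^{-1}$ against the modulus-of-continuity function $\Omega_{2\theta}$: one must split $u(t,Y)-u(t,X)$ so that only the regular difference $\tfrac{u(t,Y)}{Y}-\tfrac{u(t,X)}{X}$ — the quantity controlled by Lemma \ref{blabla} — multiplies the singular factor, while the $(Y-X)$-remainder is neutralised by the bound $|(Y-X)K(X,Y)|\le 2$; a secondary difficulty is to reconcile the restriction $\max(X,Y)\le t$ in Lemma \ref{blabla} with an arbitrary $d$, which is what forces the separate treatment of the region $X\ge t/r$ through the decay estimates of Proposition \ref{S7cor2}.
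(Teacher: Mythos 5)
Your proposal follows essentially the same route as the paper's proof: split the $Y$-integral at a multiple of $X$, bound the outer piece using the pointwise decay of $u$ from Proposition \ref{S7cor2}, and in the inner piece rewrite $u(t,Y)-u(t,X)$ so that the singular factor $|X-Y|^{-1}$ is paired with the quotient difference $\tfrac{u(t,Y)}{Y}-\tfrac{u(t,X)}{X}$, which is exactly what Lemma \ref{blabla} controls, while the remainder is killed by $|(Y-X)K(X,Y)|\le 2$. The paper uses the algebraically equivalent identity $\tfrac{u(X)}{X}-\tfrac{u(Y)}{Y}=\tfrac{u(X)-u(Y)}{X}-\tfrac{u(Y)(X-Y)}{XY}$ and, cosmetically, inserts an intermediate radius $M>2rd$; these are the same computation. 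One point where you are actually \emph{more} careful than the paper: you flag that Lemma \ref{blabla}'s estimate is only stated under $\max(X,Y)\le t$, and therefore the strip $X\in[t/r,d]$ needs a separate argument; the paper applies Lemma \ref{blabla} over all of $(0,d)\times(0,rX)$ without addressing this. That said, your proposed treatment of that strip is the one soft spot: invoking $|\mathscr L(u(t))|\in L^\infty_{\mathrm{loc}}$ from Proposition \ref{S7cor2} plus linearity does not by itself yield a bound proportional to $\sup_{0\le s\le t}\|g(s)\|_{\theta,\rho}$, because that $L^\infty_{\mathrm{loc}}$ statement carries no quantitative dependence on $g$, and the singular kernel near $Y=X$ still requires a modulus of continuity rather than just the pointwise decay of $u$. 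You would need either to extend the $J_1$--$J_4$ decomposition in the proof of Lemma \ref{blabla} to cover $x>t$ (the series expansions of $\Lambda$ are still available there) or to extract a Hölder-type bound for $u(t,\cdot)$ on $(t^2,\infty)$ directly from the representation $u=\int_0^t S(t-s)g(s)\,ds$; as written, that final step is a gesture rather than a proof.
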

\begin{proof}
Given $g\in X _{\theta, \rho  }$,  denote,
\begin{align*}
\ell_u (X, Y)=(u(t, Y)-u(t, X))\left(\frac {1} {|X-Y|}-\frac {1} {|X+Y|}\right).
 \end{align*}
Fix now $r >1$, $M>2rd$  and write,
\begin{align*}
\int _0^d\left| \mathscr L(u)\right|dYdX=I_1+I_2+I_3,\,\,\,
I_1=\int _0^d\frac {1} {\sqrt X}\int _0^{r X}|\ell_u (X, Y)|dYdX\\
I_2=\int _0^d\frac {1} {\sqrt X}\int _{r X}^M|\ell_u (X, Y)|dYdX
I_3=\int _0^d\frac {1} {\sqrt X}\int _M^{\infty}|\ell_u (X, Y)|dYdX.
\end{align*}
Let us bound $I_1$ first. Since,
\begin{align*}
\frac {u(t, X)} {X}-\frac {u(t, Y)} {Y}= \frac {(u(t, X)-u(t, Y))} {X}-\frac {u(t, Y)(X-Y)} {XY},
\end{align*}
\begin{align*}
 \frac {|u(t, Y)-u(t, X)|} {XY|X-Y|} \le \left|\frac {u(t, X)} {X}-\frac {u(t, Y)} {Y}\right|\frac {1} {|X-Y|}+\frac {|u(t, Y)|} {XY}
 \end{align*}
 By Lemma \ref{blabla}
 \begin{align*}
 \int _0^d&\int _0^{r X}\frac {|u(t, Y)-u(t, X)|} {|X-Y|}\frac{dYdX}{XY}\le C\sup _{ 0\le s\le t }|| g(s)|| _{\theta, \rho  }\times \\
& \times \int _0^d\int_0^{r X} \Omega_{2\theta} (X^{1/2}, Y^{1/2})\frac {dYdX} {|X-Y|}+ \int _0^d\int _0^{r X} \frac {|u(t, Y)|} {XY}dYdX.\\
&\mathscr L(g)=X^{-1/2}\int _{0}^\infty (g(t, Y)-g(t, X))\left(\frac {1} {|X-Y|}-\frac {1} {|X+Y|}\right)dY
\end{align*}
And since,
\begin{align*}
 \int _0^d\int _0^{rX} \frac {|u(t, Y)|} {XY}dYdX\le \sup _{ 0\le s\le t }|| g(s)|| _{\theta, \rho  }\int _0^d\frac {1} {X}\int _0^{rX}X Y^{\theta-1}dY\\
 =\frac {(rd)^\theta} {\theta(1+\theta)}\sup _{ 0\le s\le t }|| g(s)|| _{\theta, \rho  }
\end{align*}
\begin{align*}
\int _0^d\int _0^{rX}\frac {|u(t, Y)-u(t, X)|} {|X-Y|}&\frac{dYdX}{XY}\le C\sup _{ 0\le s\le t }|| g(s)|| _{\theta, \rho  }\times \\
& \times \left(1+  \int _0^d\int_0^{rX} \Omega_{2\theta} (X^{1/2}, Y^{1/2})\frac {dYdX} {|X-Y|}\right)
\end{align*}
and  it easily  follows,
\begin{align*}
I_1\le C\sup _{ 0\le s\le t }|| g(s)|| _{\theta, \rho  } \left(1+  \int _0^d\int_0^{rX} \Omega_{2\theta} (X^{1/2}, Y^{1/2})\frac {dYdX} {|X-Y|}\right).
\end{align*}
On the other hand,
\begin{align*}
I_2&\le \frac {2} {r-1}\int _0^d |u(t, X)|\int  _{ rX }^M\frac {dY} {Y}dX+ \frac {2} {r-1} \int _0^d\int  _{ rX }^\infty \frac {|u(Y)|dYdX} {X+Y}\\
&\le  C\sup _{ 0\le s\le t }|| g(s)|| _{\theta, \rho  }\left( \int _0^dX^{-\theta}\log (rX)dX+\int _0^d X^{-\theta} \int _r^\infty \frac {dZ} {Z^\theta (1+Z)}\right).
\end{align*}
In $I_3$, since $\frac {1} {|X-Y|}-\frac {1} {|X+Y|}=\frac {2X} {(Y-X)(X+Y)}\le $
\begin{align*}
&\int _0^d \int _M^\infty |u(t, X)|\left(\frac {1} {|X-Y|}-\frac {1} {|X+Y|}\right)dYdX\le \\
&\le \frac {2r} {r-1} \sup _{ 0\le s\le t }|| g(s)|| _{\theta, \rho  }
\int _0^d X^{-\theta} \int _M^\infty \frac {dY} {Y^2}dX=C\sup _{ 0\le s\le t }|| g(s)|| _{\theta, \rho  }\\
&\int _0^d \int _M^\infty |u(t, Y)|\left(\frac {1} {|X-Y|}-\frac {1} {|X+Y|}\right)dYdX\\
&\le 2(M-d)^{-1})\sup _{ 0\le s\le t }|| g(s)|| _{\theta, \rho  } \int _0^d \int _M^\infty Y^{-\theta-1}(1+Y)^{-\rho }dYdX
\end{align*}
and Proposition follows.
\end{proof} 
 
\section{Estimates of $T=T_1+T_2$.}
\label{SecT}
\setcounter{equation}{0}
\setcounter{theo}{0}

In order to solve (\ref{S1E7})  it is necessary to  solve first 
 \begin{align}
\label{S1E8B}
&\frac {\partial h(t) } {\partial t} = \mathscr L_\varphi (h(t) )\equiv \mathscr L(h(t))+T(h(t))\\
&h(0)=h_0\in X _{ \theta, \rho  }
\end{align}
where $\varphi $ is defined in (\ref{defvfiR}),  and then to estimate the operators $T=T_1+T_2$ acting on the spaces 
$X _{ \theta, \rho  }$.
We suppose in this Section that for some $r>-2, q>0$ such that $q+r>0$,
\begin{align*}
&|g (X)|\le C_gX^{r},\,\,0<X<1\\
&|g (X)|\le C_gX^{-q},\,\,X>1
\end{align*}
and wish to estimate $T_1(g , \varphi )$ and $T_2(g , \varphi )$, defined in (\ref{S1InnD75}) and (\ref{S1InnD76}). Denote, as before
\begin{align*}
||g|| _{ -r, q+r }=\sup _{ 0<X<1 }X^{-r}|g(X)|+\sup _{ X>1 }X^{q}|g(X)|\\
\sim  \sup _{ X>0 }X^{-r}\left(1+X\right)^{q+r}|g(X)|.
\end{align*}
In order for our calculations to be  easier  to follow,  let us  in this Section  consider more generally any  function  $\varphi(X) \equiv \phi (X/R)$ where $\phi \in C^2(0, \infty)\cap C([0, \infty)$, $\phi (X)=1$ on $[0, \alpha ]$ for some $\alpha >0$, $\text{supp}(\phi )=[0, \beta]$ for some $\beta >\alpha $. In Definition (\ref{defvfiR}), $\alpha =1/2$ and $\beta =1$, so that the estimates of this Section apply.
\subsection{The operator  $T_1$.}

If we denote $\chi  \equiv \varphi -1$ then, $\chi =0$ on $(0, \alpha R)$,  $\chi =-1$  on $(\beta R, \infty)$,  and
\begin{align*}
T_1(g, \varphi )=X^{-1/2} \int _{0}^\infty (g(t, Y)-g(t, X))\left(\frac {\chi   (|X-Y|)} {|X-Y|}-\frac {\chi   (|X+Y|)} {|X+Y|}\right)dY.
\end{align*}
\begin{prop} 
\label{T1}
There exists a constant $C>0$ such that,
\begin{align*}
|T_1(g, \varphi )(X)|\le C||g|| _{ -r, q+r }\left(X^{\frac {1} {2}-r^-}\1 _{ 0<X<1 }+X^{-q-\frac {1} {2}}(\log X)\,\1 _{ X>1 }\right).
\end{align*}
\end{prop}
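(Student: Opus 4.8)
\textbf{Proof plan for Proposition \ref{T1}.}

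The plan is to exploit the fact that $\chi=\varphi-1$ is supported in $\{X\ge \alpha R\}$ and bounded by $1$, so that the weight $\chi(|X\pm Y|)/|X\pm Y|$ is nonzero only when $|X-Y|\ge\alpha R$ (resp.\ $X+Y\ge \alpha R$), which forces $Y$ to be large relative to $X$ whenever $X$ is small. I would split
\[
T_1(g,\varphi)(X)=X^{-1/2}\int_0^\infty g(t,Y)\,K_\chi(X,Y)\,dY-X^{-1/2}g(t,X)\int_0^\infty K_\chi(X,Y)\,dY,
\qquad K_\chi(X,Y)=\frac{\chi(|X-Y|)}{|X-Y|}-\frac{\chi(|X+Y|)}{|X+Y|},
\]
and estimate the two pieces separately. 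For the first piece, since $|K_\chi(X,Y)|\le \tfrac{1}{|X-Y|}\1_{|X-Y|\ge\alpha R}+\tfrac{1}{X+Y}\1_{X+Y\ge\alpha R}$, one has in particular $|K_\chi(X,Y)|\le C_R\,(1+Y)^{-1}$ uniformly (using $R>1$ and the support condition), and moreover $K_\chi(X,Y)=0$ once $X<1$ and $Y$ is moderate; I would bound $\int_0^\infty |g(t,Y)|\,|K_\chi(X,Y)|\,dY\le \|g\|_{-r,q+r}\int_0^\infty Y^{-r}(1+Y)^{-q-r}|K_\chi(X,Y)|\,dY$ and analyze the $Y$-integral by regions $Y\in(0,1)$, $Y\in(1,\infty)$, keeping track of the logarithmic divergence that appears when the integrand behaves like $Y^{-1}$ near the relevant scale — this is the source of the $\log X$ factor in the $X>1$ regime and of the $r^-$ correction (i.e.\ losing an arbitrarily small power) in the $0<X<1$ regime when $r$ is close to $0$.

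For the second piece, I would compute (or bound) $m(X):=\int_0^\infty K_\chi(X,Y)\,dY$. Writing $\int_0^\infty \frac{\chi(|X-Y|)}{|X-Y|}dY=\int_{-X}^\infty \frac{\chi(|z|)}{|z|}dz$ after $z=Y-X$ and $\int_0^\infty\frac{\chi(X+Y)}{X+Y}dY=\int_X^\infty\frac{\chi(w)}{w}dw$ after $w=X+Y$, the two logarithmically divergent tails at $+\infty$ (where $\chi\equiv-1$) cancel, leaving $m(X)=\int_{-X}^X\frac{\chi(|z|)}{|z|}dz+(\text{a convergent remainder})$; since $\chi$ vanishes on $(-\alpha R,\alpha R)$ this is $O(1)$ for $X$ bounded and $O(\log X)$ for $X$ large. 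Then $X^{-1/2}|g(t,X)|\,|m(X)|\le \|g\|_{-r,q+r}\bigl(X^{r-1/2}\1_{0<X<1}+X^{-q-1/2}|\log X|\,\1_{X>1}\bigr)$, and I would absorb $X^{r-1/2}$ into $X^{1/2-r^-}\1_{0<X<1}$ (note $r-1/2=1/2-r-(1-2r)\le 1/2-r^-$ for $0<X<1$ since the extra power only helps). Collecting both pieces and the worst of the two regional bounds gives the stated estimate.

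The main obstacle is the careful bookkeeping of the first piece's $Y$-integral against $K_\chi$: one must see precisely where the behavior $|K_\chi(X,Y)|\sim 1/|X-Y|$ produces a logarithm versus where it is integrable, and confirm that for $0<X<1$ the combined power of $X$ coming out is at least $X^{1/2-r^-}$ (with the $r^-$ needed only to kill a genuine logarithmic factor when $r$ is near $0$, since $X^{-\epsilon}\log(1/X)$ is bounded near $0$), while for $X>1$ a single clean $\log X$ survives. The cancellation of the divergent tails in $m(X)$ is the other point that must be checked explicitly rather than bounded crudely, since bounding $\chi/|X-Y|$ and $\chi/(X+Y)$ separately would each give a divergent integral. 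Everything else is elementary estimation using only the definition of $\|\cdot\|_{-r,q+r}$ and the support and size properties of $\varphi$.
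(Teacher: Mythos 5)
Your decomposition $T_1 = X^{-1/2}\bigl(\int g(Y)K_\chi\,dY - g(X)\,m(X)\bigr)$ with $m(X)=\int_0^\infty K_\chi(X,Y)\,dY$ is a genuine alternative to the paper's region-by-region approach, and your observation that the divergent tails of $m$ cancel is correct and in fact can be made exact: after the changes of variable one gets $m(X)=2\int_0^X \chi(u)/u\,du$, which \emph{vanishes} for $X<\alpha R$ (not merely $O(1)$) and is $\sim -2\log X$ for large $X$. This cleanly isolates the $\log X$ from the mass term, and the vanishing for small $X$ also rescues an incorrect step of yours: you claim $X^{r-1/2}$ absorbs into $X^{1/2-r^-}$, but for $r\in(0,1/2)$ and $X<1$ the inequality goes the wrong way ($X^{r-1/2}>X^{1/2-r^-}$). (Separately, note that $r^-=\max(0,-r)$ is the negative part, not an "arbitrarily small loss"; for $r>0$ the target exponent is simply $1/2$.)

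The genuine gap is in the estimate of the first piece $\int g(Y)K_\chi\,dY$. Your proposed bound $|K_\chi(X,Y)|\le C_R(1+Y)^{-1}$ is false — take $X$ large and $Y=X-\gamma R$ with $\gamma\in(\alpha,\beta)$; then $|K_\chi|\sim|\chi(\gamma R)|/(\gamma R)=O(1/R)$ while $(1+Y)^{-1}\sim 1/X\to 0$. More importantly, even the \emph{correct} crude bound $|K_\chi|\le C/(\alpha R)$ cannot work for $0<X<1$: since $K_\chi\neq0$ only for $Y\gtrsim \alpha R$ and $|g(Y)|\lesssim\|g\|Y^{-q}$, a size-only estimate gives the first piece as $O(\|g\|)$, hence $X^{-1/2}\cdot O(1)=O(X^{-1/2})$, not the claimed $O(X^{1/2})$. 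The extra factor of $X$ must come from a \emph{cancellation inside $K_\chi$}, namely that $\Psi(z):=\chi(z)/z$ is Lipschitz with $|\Psi'|\le C$, so $|K_\chi(X,Y)|=|\Psi(|X-Y|)-\Psi(X+Y)|\le C\cdot 2X$ on the critical range $Y\in(\alpha R+X,\beta R-X)$, and that $K_\chi=-2X/((Y-X)(Y+X))$ (again $O(X)$) once $Y>X+\beta R$. Without invoking this regularity of $\chi$ (equivalently of $\Psi$), the first piece cannot be brought below $X^{-1/2}$ for small $X$. The paper's proof uses exactly this Lipschitz control on $\Psi$; your plan as written omits it.
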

\subsubsection{In the region where $X$ small.}
For $X<\alpha R/2$ the integral in $T_1$ may be decomposed as follows:
\begin{align}
\label{S3E10}
\int _0^\infty[\cdots]dy&=\int _0^{\alpha R-X}[\cdots]dY+\int _{\alpha R-X}^ {\alpha R+X}[\cdots]dY+\nonumber\\
&+\int_{\alpha R+X}^{\beta R-X}[\cdots]dY+\int _{\beta R-X}^{\beta R+X}[\cdots]dY+\int  _{ \beta R+X }^\infty[\cdots]dY. 
\end{align}
The first integral in the right hand side of (\ref{S3E10}) is identically zero. In the second integral, $Y\in (\alpha R-X, X+\alpha R)$. Then $\chi (|X-Y|)=0$. Since $-r-q<0$, if  $r>0$,
\begin{align*}
|g(Y)|\le ||g|| _{ -r, q+r }Y^r(1+Y)^{-r-q}\le ||g|| _{ -r, q+r }\left(X+\alpha R\right)^r\left(1+\alpha R-X\right)^{-r-q}\\
\le ||g|| _{ -r, q+r }\left(X+\alpha R\right)^{r}\left( 1+\frac {\alpha R} {2}\right)^{-r-q}\le ||g|| _{ -r, q+r }C(R).
\end{align*}
Then,
\begin{align*}
&X^{-1/2}\int _{\alpha R-X}^ {\alpha R+X} |g(t, Y)-g(t, X)|\left(\frac {\chi   (|X-Y|)} {|X-Y|}-\frac {\chi   (|X+Y|)} {|X+Y|}\right)dY\le \\
&\le X^{-1/2}\int _{\alpha R-X}^ {\alpha  R+X} (|g(t, Y)|+|g(t, X)|) \frac {dY} {|X+Y|}\\
&\le ||g|| _{ -r, q+r }(C(R)+X^{r})X^{-1/2}\int _{\alpha R-X}^ {\alpha  R+X}  \frac {dY} {|X+Y|}\\
&=||g|| _{ -r, q+r }(C(R)+X^{r})X^{-1/2}\log\left(1+\frac {2 X} {\alpha R} \right)\le C(R) ||g|| _{ -r, q+r } X^{1/2}.
\end{align*}
In the third integral  in the right hand side of (\ref{S3E10}), $Y\in \left(X+ \alpha R, \beta R-X \right)$. If we denote $\Psi (z)=\chi (z)/z$ then $\Psi \in C^\infty (0, \infty)$, $\Psi '(z)=\chi '(z)/z-\chi (z)/z^2$ and there exists a constant $C>0$ such that
\begin{align*}
|\psi '(z)|\le C,\,\,\forall  z>0.
\end{align*}
It follows, for  $Y\in \left(X+ \alpha R, R-\beta X \right)$
\begin{align*}
|\Psi (|Y-X|)-\Psi (X+Y)|\le C X
\end{align*}
and,
\begin{align*}
&X^{-1/2}\int _{X+\alpha R}^ { \beta R-X} |g(t, Y)-g(t, X)|\left(\frac {\chi   (|X-Y|)} {|X-Y|}-\frac {\chi   (|X+Y|)} {|X+Y|}\right)dY\le \\
&\le C X^{1/2}\int _{X+ \alpha R}^ {\beta R-X} (|g(t, Y)|+|g(t, X)|)  dY\le 
C X^{1/2}\int _{X+\alpha R}^ {\beta R-X} (|g(t, Y)|+||g|| _{ -r, q+r }X^r)  dY.
\end{align*}
For  $Y\in \left(X+\alpha R, \beta R-X \right)$,
\begin{align*}
|g(Y)|\le ||g|| _{ -r, q+r }Y^r(1+Y)^{-r-q}\le C||g|| _{ -r, q+r }(R-X)^r(X+R )^{-r-q}\le C(R)||g|| _{ -r, q+r }
\end{align*}
\begin{align*}
&X^{-1/2}\int _{X+\alpha  R}^ { \beta R-X} |g(t, Y)-g(t, X)|\left(\frac {\chi   (|X-Y|)} {|X-Y|}-\frac {\chi   (|X+Y|)} {|X+Y|}\right)dY\le \\
&\le 
C(R)||g|| _{ -r, q+r } X^{1/2}(1+X^r)\int _{X+\alpha  R}^ {\beta R-X}  dY\le C(R)||g|| _{ -r, q+r } X^{1/2}.
\end{align*}
In the fourth integral  in the right hand side of (\ref{S3E10}), $Y\in (\beta R-X, \beta R+X)$, the same argument as for the third integral gives the same estimate,
\begin{align*}
&X^{-1/2}\int _{\beta R-X }^ { \beta R+X} |g(t, Y)-g(t, X)|\left(\frac {\chi   (|X-Y|)} {|X-Y|}-\frac {\chi   (|X+Y|)} {|X+Y|}\right)dY\le C(R)||g|| _{ -r, q+r } X^{1/2}.
\end{align*}
For the fifth  integral  in the right hand side of (\ref{S3E10}), $Y>\beta R+X$, then, $Y-X>\beta R, Y+X>\beta R$, $\chi (Y-X)=\chi (Y+X)=1$ and
\begin{align*}
|g(Y)|\le ||g|| _{ -r, q+r }Y^r(1+Y)^{-r-q}\le ||g|| _{ -r, q+r }Y^{-q}\le ||g|| _{ -r, q+r }(X+R)^{-q}.
\end{align*}
Therefore,
\begin{align*}
&X^{-1/2}\int _{\beta R+X }^ {\infty} |g(t, Y)-g(t, X)|\left(\frac {\chi   (|X-Y|)} {|X-Y|}-\frac {\chi   (|X+Y|)} {|X+Y|}\right)dY\le \\
&\le ||g|| _{ -r, q+r }(R^{-q}+X^{r})X^{-1/2}\int  _{ \beta R+X }^\infty \frac {2XdY} {(Y-X)(X+Y)}\\
&= ||g|| _{ -r, q+r }(R^{-q}+X^{r})X^{1/2}\log\left(\frac {\beta R+X} {R-X} \right)\le   ||g|| _{ -r, q+r }(R^{-q}+X^{r})X^{3/2}.
\end{align*}
We deduce,
 \begin{align*}
&|T_1(g, \varphi )|\le  C(R)||g|| _{ -r, q+r }X^{1/2},\,\,\forall X\in (0, \alpha R/2).
\end{align*}
The same arguments show that if $r<0$,
 \begin{align*}
&|T_1(g, \varphi )|\le  C(R)||g|| _{ -r, q+r }X^{\frac {1} {2}+r},\,\,\forall X\in (0, \alpha R/2),
\end{align*}
and then for $r\in \RR$,
\begin{align*}
&|T_1(g, \varphi )|\le  C(R)||g|| _{ -r, q+r }X^{\frac {1} {2}-r^-},\,\,\forall X\in (0, \alpha R/2),
\end{align*}

\subsubsection{In the region where $X$ large.}
For $X>> R$ the integral in $T_1$ may be decomposed as follows:
\begin{align}
\label{S3E11}
\int _0^\infty[\cdots]dy&=\int _0^{X-\beta R}[\cdots]dY+\int _{X-\beta R}^ {X- \alpha R}[\cdots]dY+\nonumber\\
&+\int_{X-\alpha R}^{X+\alpha R}[\cdots]dY+\int _{X+\alpha R}^{X+\beta R}[\cdots]dY+\int  _{ \beta R+X }^\infty[\cdots]dY. 
\end{align}

In the first integral at the right hand side of  (\ref{S3E11}), $Y\in (0, X-\beta R)$, then $X-Y>\beta R$, $X+Y>\beta R$ and then $\chi (Y-X)=\chi (X+Y)=1$. Then,
\begin{align*}
X^{-1/2}\int _0^{X-\beta R} |g(t, X)|\left(\frac {\chi   (|X-Y|)} {|X-Y|}-\frac {\chi   (|X+Y|)} {|X+Y|}\right)dY\le\\
\le 2||g|| _{ -r, q+r }X^{-q-\frac {1} {2}}\int _0^{X-\beta R}\frac {YdY} {(X-Y)(X+Y)}\le C||g|| _{ -r, q+r }X^{-q-\frac {1} {2}}\log X.
\end{align*}
Since also,
\begin{align*}
|g(Y)|\le ||g|| _{ -r, q+r }Y^r(1+Y)^{-r-q}
\end{align*}
we may argue as follows,
\begin{align*}
\int _0^{X-R}[\cdots]dY=\int _0^{1}[\cdots]dY+\int _1^{X-R}[\cdots]dY.
\end{align*}
Now, for $Y\in (0, 1)$, $X>>1$ and $r>-2$,

\begin{align*}
&\int _0^{1}  |g(t, Y)|\left(\frac {\chi   (|X-Y|)} {|X-Y|}-\frac {\chi   (|X+Y|)} {|X+Y|}\right)dY=\\
&=\int _0^{1} |g(t, Y)|\left(\frac {1} {|X-Y|}-\frac {1} {|X+Y|}\right)dY
\le ||g|| _{ -r, q+r }\int _0^1\frac {Y^{1+r}(1+Y)^{-q-r}dY} {(X-Y)(X+Y)}\\
&\le C||g|| _{ -r, q+r }X^{-2}\int _0^1Y^{1+r}(1+Y)^{-q-r}dY\le C||g|| _{ -r, q+r }X^{-2}.
\end{align*}

On the other hand, for $X>>R$,
\begin{align*}
&\int _1^{X-\beta R}  |g(t, Y)|\left(\frac {\chi   (|X-Y|)} {|X-Y|}-\frac {\chi   (|X+Y|)} {|X+Y|}\right)dY\le\\
&\le ||g|| _{ -r, q+r }\int _1^{X-\beta R} \frac {Y^{1-q}dY} {(X-Y)(X+Y)}
\le C||g|| _{ -r, q+r }\left(X^{-2} +X^{-q}\log X\right).
\end{align*}
It follows for the first term at the right hand side of  (\ref{S3E11}),
\begin{align*}
X^{-1/2} \int _{0}^{X-\beta R} |g(t, Y)-g(t, X)|\left(\frac {\chi   (|X-Y|)} {|X-Y|}-\frac {\chi   (|X+Y|)} {|X+Y|}\right)dY\le \\
\le C||g|| _{ -r, q+r }X^{-1/2}\left(X^{-2} +X^{-q}\log X\right).
\end{align*}
In the second term at the right hand side of  (\ref{S3E11}) $Y\in (X-\beta R, X-\alpha R)$. Then, for $X>>R$: $Y>X-\beta R>\beta R$ and then, $X-Y\in (\alpha R, \beta R)$ from where $|g(Y)|\le ||g|| _{ -r, q+r }Y^r(1+Y)^{-r-q}\le C||g|| _{ -r, q+r }X^{-q}$. It follows,
\begin{align*}
&X^{-1/2} \int _{X-\beta R}^{X- \alpha R} |g(t, Y)-g(t, X)|\left(\frac {\chi   (|X-Y|)} {|X-Y|}-\frac {\chi   (|X+Y|)} {|X+Y|}\right)dY\le\\
&\le C||g|| _{ -r, q+r }X^{-q-1/2}\int _{X-\beta R}^{X-\alpha R} \left(\frac {1} {X-Y}+\frac { 1} {X+Y}\right)dY\\
&=C||g|| _{ -r, q+r }X^{-q-1/2}\log\left(\frac {4X-R} {2X-R}\right)\le C||g|| _{ -r, q+r }X^{-q-1/2}.
\end{align*}
In the third integral  at the right hand side of  (\ref{S3E11}) $Y\in (X-\alpha R, X+ \alpha R)$ from where $\chi (|X-Y|)=0$. Since $X-\alpha R>X/2$ and  $|g(Y)|\le ||g|| _{ -r, q+r }Y^r(1+Y)^{-q-r}\le ||g|| _{ -r, q+r }Y^{-q}\le C(R)||g|| _{ -r, q+r }X^{-q}$,
\begin{align*}
&X^{-1/2} \int _{X-\alpha R}^{X+\alpha R} |g(t, Y)-g(t, X)|\left(\frac {\chi   (|X-Y|)} {|X-Y|}-\frac {\chi   (|X+Y|)} {|X+Y|}\right)dY\le\\
&\le C(R)||g|| _{ -r, q+r }X^{-q-\frac {1} {2}} \int _{X-\alpha R}^{X+\alpha R}\frac {dY} {X+Y}\le C(R)||g|| _{ -r, q+r }X^{-q-\frac {3} {2}}.
\end{align*}
Consider now the fourth  integral  at the right hand side of  (\ref{S3E11}), where $Y\in (X+\alpha R, X+\beta R)$ and then $|g(y)|\le C||g|| _{ -r, q+r }X^{-q}$,
\begin{align*}
&X^{-1/2} \int _{X+ \alpha R}^{X+\beta R} |g(t, Y)-g(t, X)|\left(\frac {\chi   (|X-Y|)} {|X-Y|}-\frac {\chi   (|X+Y|)} {|X+Y|}\right)dY\le\\
&\le C(R)||g|| _{ -r, q+r }X^{-q-\frac {1} {2}} \int _{X+ \alpha R}^{X+\beta R}\left(\frac {1} {Y-X}+\frac {1} {Y+X}\right)dY\le C(R)||g|| _{ -r, q+r }X^{-q-\frac {1} {2}}.
\end{align*}
In the fifth  integral  at the right hand side of  (\ref{S3E11}), where $Y> X+\beta R$ from where $|g(y)|\le C||g|| _{ -r, q+r }X^{-q}$ and $\chi (Y-X)=\chi (Y+X)=1$ from where it follows,
\begin{align*}
&X^{-1/2} \int _{X+\beta R}^\infty |g(t, Y)-g(t, X)|\left(\frac {\chi   (|X-Y|)} {|X-Y|}-\frac {\chi   (|X+Y|)} {|X+Y|}\right)dY\le\\
&\le C(R)||g|| _{ -r, q+r }X^{-q-\frac {1} {2}} \int _{X+\beta R}^\infty \left(\frac {1} {Y-X}-\frac {1} {Y+X}\right)dY\\
&=C(R)||g|| _{ -r, q+r }X^{-q-\frac {1} {2}}\log\left( 1+\frac {2X} {\beta  R}\right).
\end{align*}
We have then proved that
\begin{align*}
|T_1(g, \varphi )(X)|\le C||g|| _{ -r, q+r }\left(X^{\frac {1} {2}-r^-}\1 _{ 0<X<1 }+X^{-q-\frac {1} {2}}(\log X)\,\1 _{ X>1 }\right).
\end{align*}

\subsection{The operator $T_2$.}
Acting on $X _{ -r, q+r }$, the operator $T_2$ a may be bounded as follows.
\begin{prop} 
\label{T2}
There exists a constant $C>0$ such that,
\begin{align*}
|T_2(g, \varphi )(X)|\le C||g|| _{ -r, q+r }\left(X^{\frac {1} {2}}\1 _{ 0<X<1 }+X^{-q-\frac {3} {2}}(\log X)\,\1 _{ X>1 }\right).
\end{align*}
\end{prop}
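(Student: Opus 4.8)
The plan is to bound $|T_2(g,\varphi)(X)|$ separately in the regime $0<X<\alpha R/2$ (where $\varphi(X)=1$) and in the regime $X>\alpha R/2$ (where, once $X>\beta R$, one has $\varphi(X)=0$), in complete analogy with the proof of Proposition~\ref{T1}. First I would record that the integral in (\ref{S1InnD76}) is a principal value around $Y=X$: the summand with $g(Y)/Y$ is harmless there since $\varphi(|X-Y|)-\varphi(X)$ and $g(Y)/Y$ stay bounded, whereas in the summand with $g(|X-Y|)/|X-Y|$ the points $Y=X\pm\eta$ contribute $\frac{g(\eta)}{\eta}\bigl(\varphi(X+\eta)+\varphi(X-\eta)-2\varphi(X)\bigr)$, and since $\varphi\in C^2$ this is $O\bigl(\|\varphi''\|_\infty\,\eta\,|g(\eta)|\bigr)=O\bigl(C(R)\|g\|_{-r,q+r}\,\eta^{1+r}\bigr)$, integrable near $0$ precisely because $r>-2$. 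This second-order cancellation is the reason the hypothesis asks only $r>-2$ rather than $r>-1$, and it is genuinely needed only when $\varphi(X)\in(0,1)$, i.e. when $X$ is comparable to $R$.

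For $0<X<\alpha R/2$ I would split $\int_0^\infty dY$ at the break-points $X$, $\alpha R$, $X+\alpha R$, $\beta R$, $X+\beta R$, mirroring (\ref{S3E10}). On $(0,\alpha R)$ all of $\varphi(|X-Y|),\varphi(Y),\varphi(X)$ equal $1$, so the integrand vanishes. On the short transition intervals $(\alpha R,X+\alpha R)$ and $(\beta R,X+\beta R)$ one argument of $\varphi$ still lies where $\varphi$ is constant while the other crosses a transition, so only one summand survives; there $|\varphi(Y)-1|$ resp. $|\varphi(Y-X)|$ is $\le\|\varphi'\|_\infty X\le C(R)X$ and the interval has length $X$, giving contributions $\le C(R)\|g\|_{-r,q+r}X^2$ resp. $\le C(R)\|g\|_{-r,q+r}X$. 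The essential point is the middle interval $(X+\alpha R,\beta R)$ and the tail $(X+\beta R,\infty)$, where neither summand is individually small (each being merely $O(C(R)\|g\|_{-r,q+r})$): here I would align the two summands via the shift $Z=Y-X$, after which their combination consists of (i) terms carrying the $\varphi$-increment $\varphi(Z+X)-\varphi(Z-X)=O(C(R)X)$ over a bounded-length interval, and (ii) boundary terms left over from the shift, over intervals of length $X$ on which $g$ and the kernels $1/Z,1/(Z\pm X)$ are bounded by $C(R)\|g\|_{-r,q+r}$; on the tail one additionally uses $q>0$ (and $q+r>0$) so that $\int^\infty|g(Z)|Z^{-1}dZ<\infty$, the two a priori $O(1)$ pieces cancelling down to such a length-$X$ boundary term. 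Each piece is thus $\le C(R)\|g\|_{-r,q+r}X$, and the prefactor $X^{-1/2}$ gives $|T_2(g,\varphi)(X)|\le C(R)\|g\|_{-r,q+r}X^{1/2}$ on $(0,\alpha R/2)$; taking $R$ large, this covers $X\in(0,1)$.

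For $X>\beta R$ the picture collapses nicely: $\varphi(X)=0$, so the first summand is $\text{sign}(X-Y)\frac{g(Y)}{Y}\varphi(|X-Y|)$, supported in $Y\in(X-\beta R,X+\beta R)$, and the second is $\frac{g(|X-Y|)}{|X-Y|}\varphi(Y)$, supported in $Y\in(0,\beta R)$. In either support one of $Y,|X-Y|$ is $\ge X/2$ while the other is $\le\beta R$, so the relevant value of $|g|$ is $\le\|g\|_{-r,q+r}(\cdot)^{-q}\le C\|g\|_{-r,q+r}X^{-q}$, the reciprocal factor is $\le 2/X$, $\varphi\le1$, and integrating over an interval of length $O(R)$ gives $\le C(R)\|g\|_{-r,q+r}X^{-q-1}$; the prefactor $X^{-1/2}$ then yields $C(R)\|g\|_{-r,q+r}X^{-q-3/2}$, which is $\le C\|g\|_{-r,q+r}X^{-q-3/2}\log X$ for $X>\beta R$. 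In the remaining bounded range $\alpha R/2\le X\le\beta R$ all quantities are $O_R(1)$, and a direct estimate—isolating a neighbourhood of $Y=X$ and invoking the principal-value cancellation above, and controlling the $\int dY/|X-Y|$-type integrals by logarithms—gives $|T_2(g,\varphi)(X)|\le C(R)\|g\|_{-r,q+r}$, again dominated by the claimed bound there. Assembling the three ranges proves the Proposition.

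I expect the main obstacle to be the bookkeeping in the small-$X$ regime: on $(X+\alpha R,\beta R)$ and on the tail a naive summand-by-summand bound gives only $C(R)\|g\|_{-r,q+r}$, which the $X^{-1/2}$ prefactor turns into something unbounded as $X\to0$, so the extra factor $X$ must be extracted by carefully pairing the two summands of $T_2$ under the shift $Z=Y-X$ and keeping precise track of the endpoints of the resulting shifted intervals. The only other delicate point is the principal-value analysis at $Y=X$, where one must use the full $C^2$-regularity of $\varphi$ to reach $r>-2$; everything else is a routine, if lengthy, region-by-region computation in the spirit of the proof of Proposition~\ref{T1}.
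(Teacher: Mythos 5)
Your proposal follows the same route as the paper: vanishing of the integrand where all three arguments of $\varphi$ equal $1$, a shift $Z=Y-X$ to pair the two summands of $T_2$ and expose either a $\varphi$-increment $\varphi(Z+X)-\varphi(Z-X)=O(X/R)$ or a length-$X$ boundary interval, and then a direct bound when $X\gg R$ using that $\varphi(X)=0$ confines the two summands to $O(R)$-length windows on which $|g|\le C\|g\|X^{-q}$. This matches the paper's decomposition into $J_1,J_2$ for small $X$ (including the identity $\int_{X+\beta R}^\infty(g(Y)/Y-g(Y-X)/(Y-X))\,dY=-\int_{\beta R}^{X+\beta R}g(Z)/Z\,dZ$) and its three-window split for large $X$. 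Two additions of yours go beyond what the paper writes: the principal-value observation at $Y=X$ using $\varphi\in C^2$ (indeed the only place $r>-2$ rather than $r>-1$ matters, namely where $\varphi'\neq 0$, i.e.\ $X\in(\alpha R,\beta R)$), and an explicit treatment of the intermediate range $\alpha R/2\le X\le\beta R$; the paper handles only $X<\alpha R/2$ and $X\gg R$. One caveat on the latter: your claim that the resulting $C(R)\|g\|_{-r,q+r}$ bound on $\alpha R/2\le X\le\beta R$ is dominated by $X^{-q-3/2}\log X$ cannot be literally true, since $T_2$ there is of size $\approx R^{-q-1/2}$ while $X^{-q-3/2}\log X\approx R^{-q-3/2}\log R$, so the Proposition's stated rate is not actually achieved for $1<X\lesssim R$; what does hold there is the $T_1$-type rate $X^{-q-1/2}$, which is precisely what Corollary~\ref{T} records, so nothing downstream is affected. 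Apart from that looseness (shared with the paper's statement) your argument is correct and essentially the paper's.
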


\subsubsection{In the region where $X$ small.}
For $Y\in (0, X)$, and $X<<1$ it follows that $Y<X<\alpha R$ and then $\varphi (X-Y)=\varphi (X)=\varphi (Y)=1$. Therefore,
\begin{align*}
&T_2(g, \varphi )(X)= -X^{-1/2}\int _{X}^\infty\Bigg (\frac {g(t, Y)} { Y}\left(\varphi  (Y-X)-\varphi  (t, X) \right)+\\
&\hskip 6cm +\frac {(\varphi  (t, Y)-\varphi  (X))g(t, Y-X)} { Y-X}\Bigg)dY.
\end{align*}
This integral is now split as follows,
\begin{align*}
T_2(g, \varphi )(X)=X^{-1/2}\int _X^{X+\beta R}[\cdots]dY+X^{-1/2}\int _{X+\beta R}^\infty [\cdots]dY=J_1+J_2
\end{align*}
We notice in the second, $\varphi (Y)=\varphi (Y-X)=0$, $\varphi (X)=1$ and then for all $r$,
\begin{align*}
|J_2|\le X^{-1/2}\left|\int _{X}^\infty \frac {g(t, Y)} { Y} -\frac {g(t, Y-X)} { Y-X}dY\right|\\
\le X^{-1/2}\int _{\beta R}^{X+\beta R}\frac {|g(Z)|dZ} {Z}\le C||g|| _{ -r, q+r }X^{-1/2}\int _{\beta R}^{X+\beta R}Z^{r-1}dZ\le  C||g|| _{ -r, q+r }X^{\frac {1} {2}}.
\end{align*}
The term $J_1$ may be written as
\begin{align*}
&J_1= X^{-1/2}\int _{X}^{X+\beta R} \Bigg(\frac {g(t, Y)} { Y}\left(1-\varphi  (Y-X)\right)+\frac {(\varphi  (Y)-1)g(t, Y-X)} { Y-X}\Bigg)dY\\
&=X^{-1/2}\int _{X}^{X+\beta R} \frac {g(t, Y)} { Y}\left(1-\varphi  (Y-X)\right)dY+\\
&\hskip 5cm +X^{-1/2}\int _{X}^{X+\beta R} \frac {(\varphi  (Y)-1)g(t, Y-X)} { Y-X}dY\\
&=X^{-1/2}\int _{X}^{X+\beta R} \frac {g(t, Y)} { Y}\left(1-\varphi  (Y-X)\right)dY+\\
&\hskip 5.5cm+X^{-1/2}\int _0^{\beta R} \frac {(\varphi  (X+Y)-1)g(t, Y)} { Y}dY.
\end{align*}
If  $Y\in (X, X+ \alpha R)$  one has $\varphi (Y-x)=1$ and  if $X+Y>\alpha R$, then $\varphi (X+Y)=1$. Therefore $J_1$  may still written as follows,
\begin{align*}
&J_1=X^{-1/2}\int _{\alpha R+X}^{X+\beta R} \frac {g(t, Y)} { Y}\left(1-\varphi  (Y-X)\right)dY+\\
&\hskip 4cm+X^{-1/2}\int  _{\alpha R-X }^{\beta R} \frac {(\varphi  (X+Y)-1)g(t, Y)} { Y}dY\\
&=X^{-1/2}\int _{ \beta R}^{X+ \beta R} \frac {g(t, Y)} { Y}\left(1-\varphi  (Y-X)\right)dY+\\
&\hskip 4cm+X^{-1/2}\int  _{ \alpha R-X }^{\alpha X+X} \frac {(\varphi  (X+Y)-1)g(t, Y)} { Y}dY+\\
&\hskip 5.5cm+\int  _{\alpha R+X }^{\beta R}(\varphi (X+Y)-\varphi (Y-X))\frac {g(t, Y)\, dY} {Y}
\end{align*}
from where it  follows 
\begin{align*}
|J_1(t, X)|\le C(R)||g|| _{ -r, q+r }X^{\frac {1} {2}}.
\end{align*}

\subsubsection{In the region where $X$ large.}
Let us write,
\begin{align}
\label{S3E12}
T_2(g, \varphi )(X)=X^{-1/2}\int _0 ^{ X-\beta R }[\cdots]dY+X^{-1/2}\int _{ X-\beta R }^X[\cdots]dY+\nonumber \\
+X^{-1/2}\int _X^{ \infty}[\cdots]dY
\end{align}
In the first term, $Y\in (0, X-\beta R)$ then $X-Y>\beta R$. It follows that  $\varphi (X)=\varphi (X-Y)=0$ and
\begin{align*}
&X^{-1/2}\int _0 ^{ X-\beta R }[\cdots]dY=X^{-1/2}\int _{0}^{X-\beta R}\frac {\varphi  (t, Y)g(t, X-Y)} { |X-Y|}dY
\end{align*}
If $Y>\beta R$ then $\varphi (Y)=0$ and then
\begin{align*}
&X^{-1/2}\left|\int _0 ^{ X-\beta R }[\cdots]dY\right|\le X^{-1/2}\int _{0}^{\beta R}\frac { |g(t, X-Y)|} { |X-Y|}dY\\
&=X^{-1/2 }\int _{X-\beta R}^{X}\frac {|g(t, Z)|dZ} {Z}\le C||g|| _{ -r, q+r }X^{-\frac {1} {2}-q}\int _{X-\beta R}^X \frac {dZ} {Z}\le  C||g|| _{ -r, q+r }X^{-\frac {3} {2}-q}
\end{align*}
In the second term at the right hand side of (\ref{S3E12})  term, $Y\in (X-\beta R, X)$, then $\varphi (Y)=\varphi (X)=0$ and
\begin{align*}
&X^{-1/2}\left|\int _{ X-\beta R }^X[\cdots]dY\right|\le X^{-1/2}\int _{X-\beta R}^{X}\frac { |g(t,Y)|\varphi (X-Y)} {Y}dY\\
&\le C ||g|| _{ -r, q+r }X^{-\frac {1} {2}-q}\int _{X-\beta R}^{X}\frac {dZ} {Z}\le  C||g|| _{ -r, q+r }X^{-\frac {3} {2}-q}.
\end{align*}
In the last term at the right hand side of (\ref{S3E12})  term, $Y>X$, then $\varphi (Y)=\varphi (X)=0$ and
\begin{align*}
&X^{-1/2}\left|\int _{ X }^\infty[\cdots]dY\right|\le X^{-1/2}\int _{X}^{\infty}\frac { |g(t,Y)|\varphi (Y-X)} {Y}dY
\end{align*}
If $Y>X+\beta R$ then $\varphi (Y-X)=0$ from where,
\begin{align*}
&X^{-1/2}\left|\int _{ X }^\infty[\cdots]dY\right|\le X^{-1/2}\int _{X}^{X+\beta R}\frac { |g(t,Y)|} {Y}dY\le C||g|| _{ -r, q+r }X^{-\frac {1} {2}-q-1}.
\end{align*}
We have then proved that
\begin{align*}
|T_2(g, \varphi )(X)|\le C||g|| _{ -r, q+r }\left(X^{\frac {1} {2}}\1 _{ 0<X<1 }+X^{-q-\frac {3} {2}}\,\1 _{ X>1 }\right).
\end{align*}

\begin{cor}
\label{T}
There exists a constant $C>0$ such that
\begin{align*}
|T(g, \varphi )(X)|\le C||g|| _{ -r, q+r }\left(X^{\frac {1} {2}-r^-}\1 _{ 0<X<1 }+X^{-q-\frac {1} {2}}\,\1 _{ X>1 }\right).
\end{align*}
\end{cor}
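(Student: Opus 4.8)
The plan is short: since $T=T_1+T_2$, the bound for $T$ comes straight out of the two preceding propositions by the triangle inequality, with no new computation — only a comparison of the exponents that appear in the two ranges $0<X<1$ and $X>1$.

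Concretely, I would start from $|T(g,\varphi )(X)|\le |T_1(g,\varphi )(X)|+|T_2(g,\varphi )(X)|$ and insert Proposition \ref{T1} and Proposition \ref{T2}. On $0<X<1$ this gives a bound by $C||g|| _{-r,q+r}(X^{\frac12-r^-}+X^{\frac12})$; since $r^-=\max(-r,0)\ge 0$ we have $\frac12-r^-\le\frac12$, so $X^{\frac12}\le X^{\frac12-r^-}$ on $(0,1)$ and the whole quantity is controlled by $C||g|| _{-r,q+r}X^{\frac12-r^-}$. On $X>1$ the two propositions give a bound by $C||g|| _{-r,q+r}(X^{-q-\frac12}\log X+X^{-q-\frac32}\log X)$, and because $-q-\frac32<-q-\frac12$ the first summand dominates the second for $X>1$; hence on $X>1$ the combined bound is $C||g|| _{-r,q+r}X^{-q-\frac12}\log X$. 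Multiplying by the indicators $\1 _{0<X<1}$ and $\1 _{X>1}$ and adding the two ranges then yields the stated inequality, once the logarithm is disposed of.

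The one point that deserves a comment — and the step I expect a reader to question — is precisely the extra factor $\log X$ produced by Proposition \ref{T1} on $X>1$, which is absent from the statement of Corollary \ref{T}. I would reconcile this by noting that $\log X$ is dominated on $[1,\infty)$ by any positive power of $X$: for every $q'<q$ there is $C_{q'}>0$ with $X^{-q-\frac12}\log X\le C_{q'}X^{-q'-\frac12}$ for $X\ge1$, while $X _{-r,q+r}\subset X _{-r,q'+r}$ with $||g|| _{-r,q'+r}\le ||g|| _{-r,q+r}$; thus the bound in fact holds with $q$ replaced by any strictly smaller exponent, which is all that is needed in the sequel. (Alternatively one simply keeps the harmless $\log X$ throughout.) Beyond this bookkeeping there is no real obstacle, since the substantial work — the five-region splittings in the small- and large-$X$ regimes — has already been carried out in the proofs of Propositions \ref{T1} and \ref{T2}.
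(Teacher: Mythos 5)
Your proposal is exactly what the paper does implicitly: Corollary \ref{T} has no written proof because it is meant to follow from Propositions \ref{T1} and \ref{T2} by the triangle inequality, and your comparison of exponents on $0<X<1$ (using $r^-\ge 0$) is the right simplification. You have also correctly spotted a genuine imprecision in the printed statement: Proposition \ref{T1} yields $X^{-q-\frac12}\log X$ on $X>1$, and inspection of its proof (the first and fifth integrals in the decomposition (\ref{S3E11}) each produce a $\log(X/R)$ factor) shows this logarithm is not removable, so the bound in Corollary \ref{T} with exponent exactly $-q-\frac12$ and no logarithm is, as printed, slightly too strong. Your two repairs are both reasonable; the cleaner one is to carry the logarithm through. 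This does matter a little downstream: in Remark \ref{lzm453} the condition $q+\frac12-\alpha-\beta\ge 0$ of Corollary \ref{TT} is applied at the endpoint $\alpha+\beta=q+\frac12$, where $\log X$ is not harmless. The final conclusion there, $T(g,\varphi)\in X_{\theta,\rho}$, survives via the strict inclusion (\ref{thetaP}) with positive margin, so the later arguments are unaffected, but the intermediate claim $T(g,\varphi)\in X_{\theta-\frac12,\rho+1}$ should be read with a logarithmic correction (or a small loss in the second index).
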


\begin{cor} 
\label{TT}
Suppose that $\alpha $ and $\beta $ are such that
\begin{align}
\alpha +\frac {1} {2}-r^-\ge 0,\,\,\text{and}\,\,q+\frac {1} {2}-\beta -\alpha \ge 0.
\end{align}
for $r$ and $q$ as before.  Then, there exists a constant $C>0$ such that the map
 \begin{align*}
& T:\,\,X _{ -r, r+q }\longrightarrow X _{ \alpha , \beta  }\\
& \hskip 1.1cm g   \xrightarrow{\hspace*{1cm}}  T(g, \varphi )
 \end{align*}
is linear and continuous, such that
\begin{align*}
||T(g, \varphi )|| _{ \alpha , \beta   }\le C||g|| _{-r, q+r  }.
\end{align*}
\end{cor}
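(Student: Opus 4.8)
The plan is to derive Corollary \ref{TT} directly from the pointwise estimate of Corollary \ref{T}, with essentially no additional analytic work: the whole content reduces to matching the two power-law regimes of that bound against the defining inequality (\ref{xtr1.0}) of the target space $X_{\alpha,\beta}$. First I would note that the map $g\mapsto T(g,\varphi)$ is linear, since each of $T_1(g,\varphi)$ and $T_2(g,\varphi)$ in (\ref{S1InnD75}), (\ref{S1InnD76}) depends linearly on $g$ for $\varphi$ held fixed; so all that remains is the norm bound $||T(g,\varphi)||_{\alpha,\beta}\le C||g||_{-r,q+r}$.

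By Corollary \ref{T} one already has
\begin{align*}
|T(g,\varphi)(X)|\le C||g||_{-r,q+r}\left(X^{\frac12-r^-}\1_{0<X<1}+X^{-q-\frac12}\1_{X>1}\right),\qquad\forall X>0,
\end{align*}
where the constant $C$ depends only on the fixed parameters ($R$, and — through $\alpha,\beta$ — the cut-off shape), not on $g$. Using the supremum form of the norm in (\ref{xtr1.0}), it therefore suffices to show that the right-hand side above is dominated, up to such a constant, by $||g||_{-r,q+r}\,X^{-\alpha}(1+X)^{-\beta}$ for all $X>0$; I would verify this separately in the two regimes.

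On $(0,1)$, where $(1+X)^{-\beta}$ is bounded above and below by positive constants, one needs $X^{\frac12-r^-}\le C X^{-\alpha}$, equivalently boundedness of $X\mapsto X^{\alpha+\frac12-r^-}$ on $(0,1)$; since a negative power blows up as $X\to0$, this holds exactly when the exponent $\alpha+\frac12-r^-$ is $\ge0$, which is precisely the first hypothesis. On $(1,\infty)$, where $(1+X)^{-\beta}\sim X^{-\beta}$, one needs $X^{-q-\frac12}\le C X^{-\alpha-\beta}$, equivalently boundedness of $X\mapsto X^{-q-\frac12+\alpha+\beta}$ on $(1,\infty)$, i.e.\ the exponent $-q-\frac12+\alpha+\beta$ must be $\le0$; that is the second hypothesis $q+\frac12-\beta-\alpha\ge0$. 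Combining, $|T(g,\varphi)(X)|\le C||g||_{-r,q+r}\,X^{-\alpha}(1+X)^{-\beta}$ for all $X>0$, and taking the supremum in the form of the last line of (\ref{xtr1.0}) gives $||T(g,\varphi)||_{\alpha,\beta}\le C||g||_{-r,q+r}$, together with linearity this is exactly the assertion.

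\textbf{Expected obstacle.} There is none of real substance here: once Propositions \ref{T1} and \ref{T2}, hence Corollary \ref{T}, are established, Corollary \ref{TT} is pure exponent bookkeeping plus the trivial linearity of $g\mapsto T(g,\varphi)$. The only point worth stating with care is that the constant obtained is permitted to depend on $R$ (and on $\alpha,\beta$) but not on $g$, which is automatic because all of those are fixed before $g$ is chosen; and that the two admissibility conditions on $(\alpha,\beta)$ in the statement are precisely the ones forced by the two regimes above, so the hypotheses cannot be relaxed without a better pointwise bound than the one in Corollary \ref{T}.
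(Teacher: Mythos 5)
Your proposal is correct and follows essentially the same route as the paper: invoke the pointwise bound from Corollary \ref{T}, then check the two exponent conditions separately on $(0,1)$ and $(1,\infty)$ against the weight defining $X_{\alpha,\beta}$. The only (harmless) addition is that you make the linearity of $g\mapsto T(g,\varphi)$ explicit, which the paper leaves implicit.
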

\begin{proof}
By Corollary \ref{T}, for $r>-2$ and $q+r>0$,
\begin{align*}
|T(g, \varphi )(X)|\le C||g|| _{ -r, q+r }\left(X^{\frac {1} {2}-r^-}\1 _{ 0<X<1 }+X^{-q-\frac {1} {2}}\,\1 _{ X>1 }\right). 
\end{align*}
If $\alpha +\frac {1} {2}-r^-\ge 0$ then, for all $X\in (0, 1)$,
\begin{align*}
X^\alpha |T(g, \varphi )(X)|\le C||g|| _{ -r, q+r }X^{\frac {1} {2}-r^-+\alpha }\le C.
\end{align*}
If $q+\frac {1} {2}-\beta -\alpha \ge 0$,
\begin{align*}
X^{\alpha+\beta } |T(g, \varphi )(X)|\le C||g|| _{ -r, q+r }X^{-q-\frac {1} {2}+\alpha+\beta  }\le C
\end{align*}
and then, $T(g, \varphi )\in X _{ \alpha , \beta  }$.
\end{proof}
\begin{rem}
\label{lzm453}
It follows in particular, for all $g\in X _{ \theta, \rho  }$ with $\theta\ge 0$ and $\rho >0$, $T(g, \varphi )\in X _{ \theta-\frac {1} {2}, \rho +1 }$ and $||T(g, \varphi )|| _{ \theta-\frac {1} {2} , \rho +1}\le C||g|| _{ \theta, \rho  }$.  Since $\theta >\theta-\frac {1} {2}$ and $\theta+\rho <(\theta-\frac {1} {2})+\rho +1>\theta+\rho $, it then also follows that $T(g, \varphi )\in X _{ \theta, \rho }$ .
\end{rem}
\section{Estimate of $Q_N(\varphi )$.}
\label{Q0}
\setcounter{equation}{0}
\setcounter{theo}{0}
For the same reason as in Section \ref{SecT}, consider any function $\varphi(X) \equiv \phi (X/R)$ where $\phi \in C^2(0, \infty)\cap C([0, \infty)$, $\phi (X)=1$ on $[0, \alpha ]$ for some $\alpha >0$, $\text{supp}(\phi )=[0, \beta]$ for some $\beta >\alpha $. In Definition (\ref{defvfiR}), $\alpha =1/2$ and $\beta =1$.
\begin{prop}
\label{QQ0}
There exists a positive constant $C>0$ such that, for all $R>1$,
\begin{align*}
|Q_N(\varphi )(X)|\le  CR^{-1}X^{1/2},\,\,\forall X\in  (0, \alpha R),\\
|Q_N(\varphi )(X)|\le  C X^{-1/2},\,\,\forall X\in  (\alpha R, 2\beta R)\\
Q_N(\varphi )(X)=0,\,\,\forall X>2\beta R,
\end{align*}
and then,
\begin{align*}
&Q_N(\varphi )\in X _{ \theta, \rho},\,\,\forall \theta\ge  -1/2,\,\,\forall \rho >0,\\
&||Q_N (\varphi )|| _{ \theta, \rho  }\le  CR^{-1}+CR^{-\frac {1} {2}+\theta+\rho },\,\,\theta\ge -1/2,\,\rho \ge 0,\\
&Q_N(\varphi )(X)=C(\varphi )X^{1/2}\xi (X)+q _{ 0, R }(X),\,\,X\to 0,\\
&C(\varphi )=\left(\int  _{\alpha R }^{\infty}\frac {\chi ^2(Y)dY} {Y^2} -\int  _{\alpha R }^{\infty}\frac {\chi (Y)dY} {Y^2}
-\int  _{ \alpha R }^{\beta R}\frac {\chi (Y)dY} {Y^2}-\frac {\beta -\alpha } {\alpha \beta R}\right)\\
&q _{ 0, R }(X)=\mathcal O(X)^{3/2},\,\,X\to 0.
\end{align*}
\end{prop}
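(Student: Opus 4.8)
The plan is to evaluate $Q_N(\varphi)(X)=X\widetilde Q_N(\varphi,\varphi)(X)$ directly from the definition \eqref{S1E01} of $\widetilde Q$, exploiting the special structure of $\varphi=\phi(X/R)$. For $X\in(0,\alpha R)$ one has $\varphi(X)=1$, so many cancellations occur. First I would split the first integral $\int_0^X$ in \eqref{S1E01}: since $0<Y<X<\alpha R$ gives $\varphi(X-Y)=\varphi(Y)=\varphi(X)=1$, the integrand $\varphi(X-Y)\varphi(Y)-\varphi(X)(\varphi(X-Y)+\varphi(Y))=1-2=-1$, so this contributes $-X\cdot X^{-1/2}\cdot X = -X^{3/2}$ after multiplying by the outer $X$; wait — more carefully, $X^{-1/2}\int_0^X(-1)dY=-X^{1/2}$, times the outer factor $X$ is $-X^{3/2}$, which is $\mathcal O(X^{3/2})$ and hence absorbed into $q_{0,R}$. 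For the second integral $2X^{-1/2}\int_X^\infty[(\varphi(X)+\varphi(Y-X))\varphi(Y)-\varphi(X)\varphi(Y-X)]dY$, again using $\varphi(X)=1$ the bracket becomes $(1+\varphi(Y-X))\varphi(Y)-\varphi(Y-X)=\varphi(Y)+\varphi(Y-X)(\varphi(Y)-1)$. Writing $\chi=\varphi-1$, this is $\varphi(Y)-\chi(Y-X)+\chi(Y-X)\chi(Y)+\chi(Y-X)$... I would organize the terms so that the leading small-$X$ behavior is extracted: the part $\int_X^\infty\varphi(Y)dY=\int_0^\infty\varphi(Y)dY-\int_0^X\varphi(Y)dY$ is a constant minus $\mathcal O(X)$, contributing (through the factor $X\cdot X^{-1/2}$) a term of order $X^{1/2}$; and the remaining pieces involving $\chi$ need to be expanded to first order in $X$, which produces the coefficient $C(\varphi)$ via integrals like $\int_{\alpha R}^\infty \chi^2(Y)Y^{-2}dY$ after a change of variables and the observation that $\chi$ vanishes below $\alpha R$. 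The precise bookkeeping of which pieces give $X^{1/2}$ and which give $\mathcal O(X^{3/2})$ is the delicate part.

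For the pointwise bounds I would proceed more crudely. On $(0,\alpha R)$: in the integrals, factor out $\|\varphi\|_\infty\le 1$ and use that the nonconstant part $\chi$ is supported in $[\alpha R,\beta R]$ with $|\chi|\le 1$, $|\chi'|\le CR^{-1}$; each surviving term after cancellation carries at least one factor of $\chi$ or a difference $\varphi(\cdot)-\varphi(\cdot)$ that is $O(R^{-1}X)$ by the mean value theorem, and a factor $Y^{-1}$ or $Y^{-2}$ integrable against $dY$ over $[\alpha R,\infty)$ giving an extra $R^{-1}$; tracking the outer $X\cdot X^{-1/2}=X^{1/2}$ yields $|Q_N(\varphi)(X)|\le CR^{-1}X^{1/2}$. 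On $(\alpha R,2\beta R)$: here $\varphi(X)$ may be any value in $[0,1]$, so no cancellation is available, but the support of $\varphi$ is contained in $[0,\beta R]$, so all integrals are over bounded regions of length $O(R)$ with integrands bounded by $O(R^{-1})$ (from the $Y^{-1}$ factors, noting $Y\gtrsim R$ in the relevant ranges, or from direct bounds), and the outer $X^{1/2}\cdot X^{-1/2}$-type scaling together with $X\sim R$ gives $|Q_N(\varphi)(X)|\le CX^{-1/2}$; one checks $R\cdot R^{-1}\cdot 1\sim 1$ and $X^{-1/2}\sim R^{-1/2}$ — so the bound $CX^{-1/2}$ actually allows a constant of size $CR^{1/2}$, which is consistent since the claim is just $CX^{-1/2}$ with $C$ independent of $R$ only after noting $X\gtrsim R$ makes $X^{-1/2}\lesssim R^{-1/2}$ small; I would double-check the $R$-dependence here carefully. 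For $X>2\beta R$: both $\int_0^X$ and $\int_X^\infty$ see only arguments $X-Y$, $Y$, $Y-X$, $X$, $X+Y$; since $\varphi$ is supported in $[0,\beta R]$ and $X>2\beta R$, one verifies $\varphi(X)=0$, $\varphi(X-Y)=0$ whenever $\varphi(Y)\ne 0$ in the first integral (as $X-Y>\beta R$ when $Y<\beta R<X-\beta R$... need $X>2\beta R$), and similarly in the second integral $\varphi(Y)\varphi(Y-X)$ and $\varphi(Y-X)$ force $Y-X\in[0,\beta R]$ hence $Y\in[X,X+\beta R]$ where $\varphi(Y)=0$; hence every integrand vanishes and $Q_N(\varphi)(X)=0$.

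The $X_{\theta,\rho}$ membership and norm bound follow by combining the three pointwise estimates: $\|Q_N(\varphi)\|_{\theta,\rho}=\sup_X X^\theta(1+X)^\rho|Q_N(\varphi)(X)|$; on $(0,\alpha R)$ the sup of $X^\theta(1+X)^\rho\cdot R^{-1}X^{1/2}$ is controlled, using $\theta\ge-1/2$ so that $X^{\theta+1/2}$ is bounded near $0$, by $CR^{-1}\cdot R^{\theta+\rho+1/2}=CR^{-1/2+\theta+\rho}$ (the sup being attained near $X\sim R$); on $(\alpha R,2\beta R)$ the sup of $X^\theta(1+X)^\rho X^{-1/2}\sim R^{\theta+\rho-1/2}$, same order; and it vanishes beyond, so $\|Q_N(\varphi)\|_{\theta,\rho}\le CR^{-1}+CR^{-1/2+\theta+\rho}$. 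The main obstacle, as noted, is the asymptotic expansion: carefully splitting the $\int_X^\infty$ integral in \eqref{S1E01}, justifying the first-order Taylor expansion in $X$ of the shifted arguments uniformly (using $\varphi\in C^2$ and $|\varphi''|\le CR^{-2}$), and collecting exactly the right boundary and bulk terms to identify $C(\varphi)$ while bounding the remainder by $\mathcal O(X^{3/2})$ — this requires care because several $\mathcal O(X)$ contributions appear and one must see that they either cancel or combine into the stated coefficient; everything else is routine integral estimation.
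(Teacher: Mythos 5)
You interpret $Q_N(\varphi)=X\widetilde Q(\varphi,\varphi)$ with $\widetilde Q$ taken as in (\ref{S1E01}), i.e.\ you plug $F=\varphi$ directly into the density form of the collision operator. But $Q_N$ is $X$ times $\widetilde Q$ acting on $f=\varphi/X$; equivalently it is $X$ times (\ref{S1EinnD}) with $V=\varphi$. These are not the same object. The whole point of the construction is that $F=X^{-1}$ is a stationary Rayleigh--Jeans solution of $\widetilde Q$, so that $Q_N(\mathbf 1)=0$ (the paper says explicitly ``using again that $Q(1)=0$''); with your reading, $\varphi\equiv 1$ does not produce zero. Concretely, in the region $0<Y<X<\alpha R$ the correct integrand (first line of (\ref{S1EinnD}) with $V\equiv 1$) vanishes identically, whereas you compute a nonzero $-X^{3/2}$ contribution. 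Even worse, the region $X<Y<\alpha R$ in your second integral produces a term of order $\alpha R\cdot X^{1/2}$ after multiplying by $X\cdot X^{-1/2}$, which grows with $R$ rather than decaying like the claimed $CR^{-1}X^{1/2}$. So this is not a bookkeeping subtlety: the object you are estimating is a different function, and it fails the bound that the proposition asserts.

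Once you switch to the correct formula, the proof becomes the paper's: the $Y\in(X/2,X)$ piece vanishes outright, and after writing $\varphi=1-\chi$ (with $\chi$ supported in $[\alpha R,\infty)$, $\chi'$ and $\chi''$ of size $R^{-1}$ and $R^{-2}$) one collects exactly three terms $J_1,J_2,J_3$ whose first-order Taylor expansions in $X$ give the stated coefficient $C(\varphi)\sim R^{-1}$ and an $\mathcal O(X^{3/2}R^{-1})$ remainder. You are also imprecise about the middle region $\alpha R<X<2\beta R$: there the key trick is the scaling $X=RX'$, $Y=RY'$, which reduces $Q_N(\varphi)(X)$ to $R^{-1}X^{1/2}$ times an $R$-independent integral that is $\mathcal O(X'^{-2})$ by the $C^1$ regularity of $\phi$; your dimensional-analysis remarks do not establish that the constant in $CX^{-1/2}$ is $R$-independent. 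Your verification that $Q_N(\varphi)(X)=0$ for $X>2\beta R$ and your passage from the pointwise bounds to the $X_{\theta,\rho}$ norm estimate are fine and essentially agree with the paper, but they sit atop the wrong definition and would need to be redone with the right one.
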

\begin{proof} 
\subsection{In the region where $X\in (0, \alpha R)$.} For $X\in (0, \alpha R)$  and $Y\in (0, X)$, $\varphi (X-Y)=\varphi (Y)=\varphi (X)=1$. Then $Q_N(\varphi )(X)=0$. Using again that $Q(1)=0$,
\begin{align*}
Q_N(\varphi )=X^{1/2}\int _{X}^\infty\Bigg(\frac {1-\chi   ( |X-Y|)} {Y-X} \left(\frac {1-\chi    (Y)} {Y}-\frac {1- \chi    (X)} {X} \right)+\\+\frac {(1-\chi    (X))(1-\chi (Y))} {XY}\Bigg)dY
\end{align*}

\begin{align*}
=-X^{1/2}\int _{X}^\infty \frac {\chi   ( |X-Y|)} {Y-X} \left(\frac {1-\chi    (Y)} {Y}-\frac {1- \chi    (X)} {X} \right)dY+\\
+X^{1/2}\int _{X}^\infty \frac {1} {Y-X} \left(\frac {-\chi    (Y)} {Y}-\frac {- \chi    (X)} {X} \right)dY+\\
+X^{1/2}\int _{X}^\infty \frac {\chi  (X)\chi (Y)dY} {XY}-X^{1/2}\int _{X}^\infty \frac {\chi  (X)+\chi (Y)dY} {XY}
\end{align*}
and then, using that $\chi  (X)=0$ for $X<R/2$,
\begin{align*}
Q_N(\varphi )=X^{1/2}\int _{X}^\infty \frac {\chi   ( Y-X)} {Y-X}\frac {\chi    (Y)} {Y}dY
-X^{1/2}\int _{X}^\infty \frac {\chi   ( Y-X)} {Y-X} \left(\frac {1 } {Y}-\frac {1 } {X} \right)dY+\\
-X^{1/2}\int _{X}^\infty \frac {1} {Y-X} \frac {\chi    (Y)} {Y} dY
-X^{1/2}\int _{X}^\infty \frac {\chi (Y)dY} {XY}\Longrightarrow
\end{align*}

\begin{align*}
Q_N(\varphi )=X^{1/2}\int _{X}^\infty \frac {\chi   ( Y-X)} {Y-X}\frac {\chi    (Y)} {Y}dY+X^{1/2}\int _X^\infty\frac {\chi (Y-X)dY} {XY}-\\
-X^{1/2}\int _{X}^\infty \frac {1} {Y-X} \frac {\chi    (Y)} {Y} dY
-X^{1/2}\int _{X}^\infty \frac {\chi (Y)dY} {XY}
\end{align*}
That we re write,
\begin{align*}
&Q_N(\varphi )=X^{1/2}\int _{X}^\infty \frac {\chi   ( Y-X)} {Y-X}\frac {\chi    (Y)} {Y}dY+\\
&+X^{1/2}\int _X^\infty\frac {(\chi (Y-X)-\chi (Y))dY} {XY}
-X^{1/2}\int _{X}^\infty \frac {\chi    (Y)dY} {Y(Y-X)} =J_1+J_2-J_3
\end{align*}
Let us begin with the estimate of $J_3$. Since $\chi (Y)=0$ for $Y\in (0, \alpha R)$, and $X<\alpha R$,
\begin{align*}
J_3&= X^{1/2}\int _{\alpha R}^\infty \frac {\chi (Y)dY} {Y(Y-X)}\\
&= X^{1/2}\left(\int _{\alpha R}^\infty \frac {\chi (Y)dY} {Y^2}+X\int _{\alpha R}^\infty \frac {\chi (Y)dY} {Y^2(Y-X)}\right)
\end{align*}
then,
\begin{align*}
\left|J_3-X^{1/2}\int _{\alpha R}^\infty \frac {\chi (Y)dY} {Y^2}\right|
&\le  X^{3/2} \int _{\alpha R}^\infty \frac {dY} {Y^2(Y-X)}\\
&=-X^{3/2} \frac {X+\alpha R\log(1-\frac {X} {\alpha R})} {\alpha RX^2}
\end{align*}
and since $z+\log(1-z)=-z^2/2+\mathcal O(z)^3$,
\begin{align*}
\left|J_3-X^{1/2}\int _{\alpha R}^\infty \frac {\chi (Y)dY} {Y^2}\right|
&\le CX^{3/2}R^{-2}.
\end{align*}
In the term $J_2$, if $Y<\alpha R$ then $Y-X<\alpha R$ and  $\chi  (Y-X)=\chi (Y)=0$. And if $Y>\beta X+\beta R$ then $Y-X>\beta R$ and $\chi (Y-X)=\chi (Y)=1$. It follows,
\begin{align*}
J_2&=X^{1/2}\int _{\alpha R}^{X+\beta R}\frac {(\chi (Y-X)-\chi (Y))dY} {XY}.
\end{align*}
For all $Y\in (\alpha R, X+\beta R)$,
\begin{align*}
&\chi (Y)-\chi (Y-X)=X\int _0^1\chi '(Y-zX)dz\\
&\chi '(Y-zX)=\chi '(Y)-zX\chi ''(\xi ),\,\,\xi \in (Y-X, Y).
\end{align*}
Then, 
\begin{align*}
&\int _0^1\chi '(Y-zX)dz-\int _0^1\chi '(Y)dz=-X\int _0^1 z\chi ''(\xi )dz
\end{align*}
and
\begin{align*}
&\left|\chi (Y)-\chi (Y-X)-X\int _0^1\chi '(Y)dz\right|=X^2\left|\int _0^1\chi ''(\xi )dz \right|\le ||\chi ''|| _{ \infty }X^2\int _0^1 z dz.
\end{align*}
By the definition of $\varphi $ in (\ref{defvfiR}) and the definition of $\chi (X)=1-\varphi (X)$,
\begin{align*}
\chi ''(\xi )=-\frac {1} {R^2}\phi ''\left(\frac {X} {R} \right),\,\,\, ||\chi ''|| _{ \infty }\le CR^{-2}
\end{align*}
it follows,
\begin{align*}
&\left|\chi (Y)-\chi (Y-X)-X \chi '(Y)\right| \le CX^2R^{-2},\\
&\left|J_2+X^{1/2}\int  _{ \alpha R }^{X+\beta R}\frac {\chi' (Y)dY} {Y}\right| \le 
CX^{-1/2}X^2R^{-2}\int  _{ \alpha R }^{X+\beta R}\frac {dY} {Y}\le CX^{3/2} R^{-2}
\end{align*}
Since $\chi  (\alpha R)=0$, $\chi (X+\beta R)=1$, integration by parts yields,
\begin{align*}
\int  _{ \alpha R }^{X+\beta R}\frac {\chi' (Y)dY} {Y}=\int  _{ \alpha R }^{X+\beta R}\frac {\chi (Y)dY} {Y^2}+\frac {1} {X+\beta R}
\end{align*}
 then,
\begin{align*}
\left|\int  _{ \alpha R }^{X+\beta R}\frac {\chi' (Y)dY} {Y}\right|\le \frac {CX^{1/2}} {R}.
\end{align*}

In the term $J_1$, if $Y\in (X, X+\frac {R} {2})$ then $\chi (Y-X)=0$, then
\begin{align*}
J_1= X^{1/2}\int  _{ X+\alpha R }^\infty\frac {\chi (Y)\chi (Y-X)dY} {Y(Y-X)}
\end{align*}
since
\begin{align*}
\frac {\chi (Y)\chi (Y-X)} {Y(Y-X)}=\frac {\chi ^2(Y)} {Y^2}-\frac {X\chi (Y)\chi '(\xi )} {Y^2}+\frac {X\chi (Y)\chi (Y-X)} {Y^2(Y-X)}\\
\left|\frac {\chi (Y)\chi (Y-X)} {Y(Y-X)}-\frac {\chi ^2(Y)} {Y^2}\right|\le \frac {\chi (Y)\chi (Y-X)X} {Y^2(Y-X)}+\frac {X\chi '(\xi )} {Y^2}.
\end{align*}
for some $\xi \in (Y-X, X)$. Since $Y>X+R/2$,  then $\xi \ge Y-X\ge R/2$. And, since $\xi =1-\varphi $, by the definition of $\phi $ in (\ref{defvfiR}), $|\chi '(\xi )|\le CR^{-1}$.  Therefore,
\begin{align*}
\left|\frac {\chi (Y)\chi (Y-X)} {Y(Y-X)}-\frac {\chi ^2(Y)} {Y^2}\right|\le \frac {\chi (Y)\chi (Y-X)X} {Y^2(Y-X)}+\frac {X\chi '(\xi )} {Y^2}\le \frac {CX} {RY^2}
\end{align*}
and,
\begin{align*}
\left|J_1-X^{1/2}\int  _{ X+ \alpha R }^\infty\frac {\chi ^2(Y)dY} {Y^2}\right|\le
CX^{1/2} \int  _{ X+\alpha R}^\infty \frac {X} {RY^2}dY\le \frac {CX^{3/2}} {R}
\end{align*}
All this shows,
\begin{align*}
&\Bigg|Q_N(\varphi )(X)-X^{1/2} \Bigg(\int  _{\alpha R }^{\infty}\frac {\chi ^2(Y)dY} {Y^2} -
\int  _{ \alpha R }^{\infty}\frac {\chi (Y)dY} {Y^2}-\\
&\hskip 4cm -\int  _{\alpha R}^{\beta R}\frac {\chi (Y)dY} {Y^2}-\frac {1} { \beta R}\Bigg)\Bigg|\le  \frac {CX^{3/2}} {R}\\
&\left|\int  _{\alpha R }^{\infty}\frac {\chi ^2(Y)dY} {Y^2} -\int  _{\alpha R}^{\infty}\frac {\chi (Y)dY} {Y^2}
-\int  _{\alpha R }^{\beta R}\frac {\chi (Y)dY} {Y^2}-\frac {1 } {  \beta R}\right|\le \frac {C} {R}.
\end{align*}
Since $\chi (Y)=1$ for $Y\ge \beta $,
\begin{align*}
\int  _{\alpha R }^{\infty}\frac {\chi ^2(Y)dY} {Y^2} -\int  _{\alpha R }^{\infty}\frac {\chi (Y)dY} {Y^2}=\int  _{ \alpha R}^{\beta R}\frac {\chi ^2(Y)dY} {Y^2}-
\int  _{\alpha R }^{\beta R}\frac {\chi (Y)dY} {Y^2}
\end{align*}
and then,
\begin{align*}
&\int  _{\alpha R }^{\infty}\frac {\chi ^2(Y)dY} {Y^2} -\int  _{ \alpha R }^{\infty}\frac {\chi (Y)dY} {Y^2}
-\int  _{\alpha R }^{\beta R}\frac {\chi (Y)dY} {Y^2}-\frac {1 } { \beta R}=\\
&=\int  _{\alpha R}^{\beta R}\frac {\chi (Y)^2-2\chi (Y)-\frac {\alpha } {2\beta -\alpha }} {Y^2}dY
\end{align*}
By definition,
\begin{align*}
\chi (Y)^2-2\chi (Y)-\frac {\alpha } {\beta -\alpha }=(1-\varphi )^2-2(1-\varphi )-\frac {\alpha } {\beta-\alpha }\\
=1-2\varphi+\varphi ^2 -2+2\varphi -\frac {\alpha } {\beta -\alpha }=\varphi ^2-\frac {\beta } {\beta-\alpha }
\end{align*}
with
\begin{align*}
&C\equiv C(\varphi )=\int  _{ \alpha R }^{\beta R}\frac {\varphi ^2(Y)dY} {Y^2}-\frac {\beta } {\beta -\alpha }
\int  _{ \alpha R }^{ \beta R}\frac {dY} {Y^2}\\
&=\int  _{ \alpha R }^{ \beta R}\frac {\varphi ^2(Y)dY} {Y^2}-\frac {\beta } {R(\beta -\alpha )}\left(\frac {1} {\alpha }-\frac {1} {\beta }\right)
=\frac {1} {R}\left( \int  _{ \alpha  }^{ \beta }\frac {\phi  ^2(z)dz} {z^2}-\frac {1} {\alpha }\right)
\end{align*}
\subsection{In the region where $X$ large.}
If $X>2\beta R$ and $Y>X$ then $\varphi (Y)=\varphi (X)=0$ and
\begin{align*}
Q_N(\varphi )(X)=X^{1/2}\int _{X/2}^\infty\Bigg(\frac {\varphi  (|Y-X|)} {|X-Y|} \left(\frac {\varphi   (Y)} {Y}-\frac { \varphi   (X)} {X} \right)+\\
+\text{sign}(Y-X)\frac {\varphi   (X) \varphi  (Y)} {XY}\Bigg)dY=0.
\end{align*}
If $X\in (\beta R, 2\beta R)$ then $\varphi (X)=0$ and,
\begin{align*}
Q_N(\varphi )(X)=X^{1/2}\int _{X/2}^\infty \frac {\varphi  (|Y-X|)} {|X-Y|}\frac {\varphi   (Y)} {Y} 
dY,
\end{align*}
If $Y>\beta R$ then $\varphi (Y)=0$ and so,
\begin{align*}
Q_N(\varphi )(X)=X^{1/2}\int _{X/2}^{\beta R} \frac {\varphi  (|X-Y|)} {|X-Y|} 
\left(\frac {\varphi   (Y)} {Y} -\frac {\varphi (X)} {X}\right)dY,
\end{align*}
where the term $\frac {\varphi (X)} {X}$, although equal to zero, has been added in order to make the argument more transparent. The change of variables $X=RX'$, $Y=RY'$ gives,
\begin{align}
\label{lzm478}
Q_N(\varphi )(X)=R^{-1}X^{1/2}\int _{X'/2}^{\beta } \frac {\phi  (|X'-Y'|)} {|X'-Y'|} 
\left(\frac {\phi   (Y')} {Y'} -\frac {\phi (X')} {X'}\right)dY'.
\end{align}
The integral in the right hand side of (\ref{lzm478}) is estimated using the regularity of $\phi $ as follows,
\begin{align*}
&\int _{X'/2}^{\beta } \frac {\phi  (|X'-Y'|)} {|X'-Y'|} 
\left(\frac {\phi   (Y')} {Y'} -\frac {\phi (X')} {X'}\right)dY'=I_1+I_2\\
&I_1=\int _{X'/2}^{\beta } \frac {\phi  (|X'-Y'|)} {|X'-Y'|} 
\frac {(X'-Y')\phi (X')} { X'Y'}dY'\\
&I_2=\int _{X'/2}^{\beta } \frac {\phi  (|X'-Y'|)} {|X'-Y'|} 
\frac {X'(\phi (Y')-\phi (X'))} { X'Y'}dY'
\end{align*}
with,
\begin{align*}
|I_1|\le C ||\phi || ^2_{ \infty  }X'^{-2},\quad |I_2|\le C ||\phi || _{ \infty  } ||\phi' || _{ \infty  }X'^{-2}
\end{align*}
and then,
\begin{align*}
&|Q_N(\varphi )(X)|\le C R^{-1}X^{1/2} X'^{-2}
\left( ||\phi || ^2_{ \infty  }+||\phi || _{ \infty  } ||\phi' || _{ \infty  } \right)\\
&\le C \left( ||\phi || ^2_{ \infty  }+||\phi || _{ \infty  } ||\phi' || _{ \infty  } \right) (RX^{-1})X^{-1/2},
\quad \forall X\in (\beta R, 2\beta R).
\end{align*}

and, for $\theta\ge -1/2$, $\rho \ge 0$,
\begin{align*}
&||Q_N(\varphi )|| _{ \theta, \rho  }=\sup _{ X>0 }|Q_N(\varphi )(X)|X^{\theta}(1+X)^\rho \le
C\sup _{ 0<X<\alpha R }X^{1/2}R^{-1}(1+X)^\rho+\\
&\hskip 5cm  +\sup _{ \alpha R<X<2\beta R }X^{-1/2}X^\theta(1+X)^\rho \le CR^{-\frac {1} {2}+\theta +\rho },\\
 &C\equiv C(\varphi )=\frac {1} {R}\left( \int  _{ \alpha  }^{ \beta }\frac {\phi  ^2(z)dz} {z^2}-\frac {1} {\alpha }\right).
\end{align*}
\vskip -0.5cm 
\end{proof}

\section{Estimate of $Q_N(g )$.}
\label{QN}
\setcounter{equation}{0}
\setcounter{theo}{0}
Suppose that $g $ is such that for some $r>0, q>0$,
\begin{align*}
&|g(X)|\le CX^{r},\,\,0<X<1\\
&|g(X)|\le CX^{-q},\,\,X>1.
\end{align*}
and consider the four different terms of $N(g)$ as $X\to 0$ and $X\to \infty$.

(i) If $Y\in (0, X/2$ and $X<<1$  it follows by hypothesis $|g(Y)|\le CY^r$.  Since in that case  $1>>|X-Y|>X/2$ also, $|g(X-Y)|\le C|X-Y|^r$ and  
$|g(X-Y)|/(X-Y)\le  (X-Y)^{r-1}\le CX^{r-1}$ we deduce,
\begin{align*}
&\sqrt X \left| \int _0^{X/2} \frac {g(Y)} {Y}\left( \frac {g(X-Y)} {X-Y}-\frac {g(X)} {X} \right)dY\right|\le\\
&\le  C||g||^2 _{ -r, q+r }X^{-\frac {1} {2}+r} \int _0^{X/2}  Y^{r-1}dY\le CX^{-\frac {1} {2}+2r},\,\,X<<1.
\end{align*} 

(ii) For the same reason as in (i) just above, for $X<<1$,
\begin{align*}
\sqrt X \frac {|g(X)|} {X} \left| \int _0^{X/2} \frac {g(X-Y)} {X-Y}dY\right| \le  C||g||^2 _{ -r, q+r }X^{-\frac {1} {2}+r} \int _{X/2}^X  \frac {g(Y)} {Y}dY\\
\le  C||g||^2 _{ -r, q+r }X^{-\frac {1} {2}+2r}.
\end{align*} 

(iii) If $Y>X$,
\begin{align*}
&\sqrt X \left| \int _X^{\infty } \frac {g(Y-X)} {Y-X}\left( \frac {g(X)} {X} -\frac {g(Y)} {Y}\right)dY\right|
\le   C||g||^2 _{ -r, q+r }\sqrt X\left(I_1+I_2 \right)\\
&I_1=\sqrt X \int _X^{2X } \frac {|g(Y-X)|} {Y-X}\left| \frac {g(X)} {X} -\frac {g(Y)} {Y}\right|dY\\
&I_2=\sqrt X \int _{2X}^{\infty } \frac {|g(Y-X)|} {Y-X}\left| \frac {g(X)} {X} -\frac {g(Y)} {Y}\right|dY
\end{align*} 
For $Y\in (X, 2X)$, $|g(Y)|\le C||g|| _{ -r, q+r }Y^r\le C||g|| _{ -r, q+r }X^r$ and then,
\begin{align*}
I_1&=\sqrt X \int _X^{2X } \frac {|g(Y-X)|} {Y-X}\left| \frac {g(X)} {X} -\frac {g(Y)} {Y}\right|dY\\
&\le C||g|| _{ -r, q+r }X^{\frac {1} {2}+r-1}\int _X^{\infty } \frac {|g(Y-X)|} {Y-X} dY\\
&=C||g|| _{ -r, q+r }X^{\frac {1} {2}+r-1}\int _0^{\infty } \frac {|g(Y)|} {Y} dY\le C||g||^2 _{ -r, q+r }X^{\frac {1} {2}+r-1}.
\end{align*}
When $Y>2X$,
\begin{align*}
I_2\le \frac {|g(X)|} {\sqrt X}\int  _{ 2X }^\infty \frac {|g(Y-X)|} {Y-X}dY+\sqrt X\int  _{ 2X }^\infty \frac {|g(Y-X)|} {Y-X} \frac {|g(Y)|} {Y}dY=J_1+J_2,
\end{align*}
with
\begin{align*}
J_1= \frac {|g(X)|} {\sqrt X}\int  _{ X }^\infty \frac {|g(Y)|} {Y}dY\le \frac {|g(X)|} {\sqrt X} \int _X^\infty\frac {|g(Y)|} {Y}dY\le C||g||^2 _{ -r, q+r }X^{-\frac {1} {2}+r}.
\end{align*}

\begin{align*}
J_2\le \sqrt X\int  _{ 2X }^2 \frac {|g(Y-X)|} {Y-X} \frac {|g(Y)|} {Y}dY
+\sqrt X\int  _2^\infty  \frac {|g(Y-X)|} {Y-X} \frac {|g(Y)|} {Y}dY
\end{align*}
When $Y\in (2X, 2)$, the hypothesis on $g$ give $|g(Y-X)|\le C|Y-X)|^r$ and when $Y>2>2X$, then $Y-X>Y/2$ and $|g(Y-X)|/(Y-X)\le (Y-X)^{-1-q}<CY^{-1-q}$ Then,
\begin{align*}
J_2\le C||g||^2 _{ -r, q+r }\sqrt X\int  _{ 2X }^2 (Y-X)^{r-1} Y^{r-1}dY+C||g||^2 _{ -r, q+r }\sqrt X\int  _2^\infty Y^{-2-2q} dY\\
\le C||g||^2 _{ -r, q+r }\sqrt X\left(X^{\min (2r-1, 0)}+1 \right)\le C||g||^2 _{ -r, q+r }X^{-\frac {1} {2}+r},\,X<<1.
\end{align*}
It follows, $I_2\le J_1+J_2\le C||g||^2 _{ -r, q+r }X^{-\frac {1} {2}+r}$ and,
\begin{align*}
&\sqrt X \left| \int _X^{\infty } \frac {g(Y-X)} {Y-X}\left( \frac {g(X)} {X} -\frac {g(Y)} {Y}\right)dY\right|\le 
C||g||^2 _{ -r, q+r }X^{-\frac {1} {2}+r},\,\,X<<1.
\end{align*}

(iv) In the last term,
\begin{align*}
\sqrt X \left|\frac {g(X)} {X}\int _X^\infty \frac {g(Y)dY} {Y}\right|\le C ||g|| _{ -r, q+r }X^{-\frac {1} {2}+r}\int _0^\infty \frac {|g(Y)|dY} {Y}\le C||g||^2 _{ -r, q+r }X^{-\frac {1} {2}+r}.
\end{align*}
All this shows that $|Q_N(g)(X)|\le C||g||^2 _{ -r, q+r }X^{-\frac {1} {2}+r}$ for $X<<1$. \\

\noindent
Consider now the case where $X>>1$.

(i) If $Y \in (0, X/2)$  then $X-Y>X/2>>1$, and,
$$|g(Y-X)|\le ||g|| _{ -r, q+r }(Y-X)^{-q}\le C||g|| _{ -r, q+r }X^{-q}$$ 
and  then
\begin{align*}
\sqrt X \left| \int _0^{X/2} \frac {g(Y)} {Y}\left( \frac {g(X-Y)} {X-Y}-\frac {g(X)} {X} \right)dY\right|\le
C ||g|| _{ -r, q+r }X^{-\frac {1} {2}-q}   \int _0^{X/2} \frac {g(Y) dY} {Y} \\ \le C ||g|| _{ -r, q+r }X^{-\frac {1} {2}-q}   \int _0^{\infty} \frac {g(Y) dY} {Y}\le C ||g|| ^2_{ -r, q+r }X^{-\frac {1} {2}-q}.
\end{align*} 
\begin{align*}
(ii)\qquad &\sqrt X\left|\frac {g(X)} {X}\int_0^{X/2}\frac {g(X-Y)dY} {X-Y}\right|\le C ||g||_{ -r, q+r }X^{-\frac {1} {2}-q}\int  _{ X/2 }^X\frac {|g(Y)|dY} {Y}\\
&\le C ||g||^2_{ -r, q+r }X^{-\frac {1} {2}-2q}.\\
(iii) \qquad &\sqrt X \left| \int _X^{\infty } \frac {g(Y-X)} {Y-X}\left( \frac {g(X)} {X} -\frac {g(Y)} {Y}\right)dY\right|
\le   \sqrt X\left(I_3+I_4 \right),\\
&I_3= \frac {|g(X)|} {\sqrt X} \int _X^{\infty } \frac {|g(Y-X)|} {Y-X} dY=\frac {|g(X)|} {\sqrt X} \int _0^{\infty } \frac {|g(Y)|} {Y} dY\\
&\hskip 5.5cm \le C ||g||^2_{ -r, q+r }X^{-q-\frac {1} {2}} ,\\
&I_4=\sqrt X \int _{2X}^{\infty } \frac {|g(Y-X)|} {Y-X} \frac {|g(Y)|dY} {Y}=\sqrt X \int _{X}^{\infty } \frac {|g(Y)|} {Y} \frac {|g(Y+X)|dY} {Y+X}\\
&\quad \le  C ||g||_{ -r, q+r }X^{-\frac {1} {2}-q} \int _{X}^{\infty } \frac {|g(Y)|dY} {Y}
\le C ||g||^2_{ -r, q+r }X^{-q-\frac {1} {2}}
\end{align*} 

(iv) And in the fourth term,
\begin{align*}
\sqrt X \left|\frac {g(X)} {X}\int _X^\infty \frac {g(Y)dY} {Y}\right|\le  CX^{-\frac {1} {2}-q}\int _X^\infty \frac {|g(Y)|dY} {Y}\le C ||g||^2_{ -r, q+r }X^{-\frac {1} {2}-2q}.
\end{align*}
All this shows that $|N(g)(X)|\le CX^{-\frac {1} {2}-q}$ for $X>>1$. 

\begin{lem}
\label{Lemnonlin}
For all $g$ such that
\begin{align*}
&|g(X)|\le CX^{r},\,\,0<X<1\\
&|g(X)|\le CX^{-q},\,\,X>1,
\end{align*}
\begin{align*}
\hskip -1cm \text{it follows}\qquad &|Q_N(g )(X)|\le CX^{-\frac {1} {2}+r},\,\,0<X<1\\
&|Q_N(g ) (X)|\le CX^{-\frac {1} {2}-q},\,\,X>1.
\end{align*}
If $0<2r\le 1$, in the notation of the $X _{ \theta, \rho  }$ spaces: 
\begin{align*}
g(s) \in X _{-r, r+q}\Longrightarrow Q_N(g(s))\in X _{ \frac {1} {2}-r, r+q }:|| Q_N(g(s))|| _{ \frac {1} {2}-r, r+q }\le C ||g(s)||^2 _{-r, r+q}
\end{align*}
\end{lem}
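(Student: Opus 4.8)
The plan is to assemble the pointwise bounds established above into the two power laws, and then read them off in the $X_{\theta,\rho}$ scale. Recall that $Q_N(g)(X)=X\widetilde Q_N(g,g)(X)$ is obtained by feeding $g/X$ into $\widetilde Q$; symmetrizing the integral over $(0,X)$ under $Y\mapsto X-Y$, one checks that $Q_N(g)(X)$ is a fixed linear combination of the four elementary bilinear expressions
\begin{align*}
&A_1(X)=X^{1/2}\int_0^{X/2}\frac{g(Y)}{Y}\Bigl(\frac{g(X-Y)}{X-Y}-\frac{g(X)}{X}\Bigr)\,dY,\\
&A_2(X)=X^{1/2}\frac{g(X)}{X}\int_0^{X/2}\frac{g(X-Y)}{X-Y}\,dY,\\
&A_3(X)=X^{1/2}\int_X^\infty\frac{g(Y-X)}{Y-X}\Bigl(\frac{g(X)}{X}-\frac{g(Y)}{Y}\Bigr)\,dY,\\
&A_4(X)=X^{1/2}\frac{g(X)}{X}\int_X^\infty\frac{g(Y)}{Y}\,dY,
\end{align*}
which are exactly the four terms whose estimates are carried out in items (i)--(iv) above. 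Since the claimed bound is itself \emph{bilinear}, it suffices to control each $A_j$ by $C\,||g||_{-r,r+q}^{2}\,X^{-1/2+r}$ on $(0,1)$ and by $C\,||g||_{-r,r+q}^{2}\,X^{-1/2-q}$ on $(1,\infty)$, and then take $C$ to be the largest of the four constants.

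For $0<X<1$ the recipe is to split each $Y$-integral at $Y=X/2$, $Y=2X$ and $Y=1$, and on each sub-interval invoke the power-law bound on $g$ that is valid there: on $(0,X/2)$ both $Y$ and $X-Y$ lie in $(0,1)$, so $|g(Y)|\le ||g||_{-r,r+q}Y^{r}$ and $|g(X-Y)|\le C\,||g||_{-r,r+q}X^{r}$; on $(X/2,1)$ one still uses the $X^{r}$ behaviour near $Y=X$, and for $Y>1$ the $Y^{-q}$ behaviour. The one analytic input that recurs throughout is the convergence of $\int_0^\infty\frac{|g(Y)|}{Y}\,dY\le C\,||g||_{-r,r+q}$ --- finite precisely because $r>0$ controls the origin and $q>0$ controls infinity --- to which $A_2$ and $A_4$, and (after estimating the difference quotients crudely) the far parts of $A_1$ and $A_3$, all reduce, times a power of $X$. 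The slowest decaying exponent thus produced is $-1/2+r$ (the competitors carry $-1/2+2r$, which is smaller on $(0,1)$), which gives $|Q_N(g)(X)|\le C\,||g||_{-r,r+q}^{2}X^{-1/2+r}$. The case $X>1$ is the mirror image: split at $Y=X/2$, $Y=1$, $Y=2X$, use that for $X>2$ one has $X-Y>X/2>1$ (the bounded window $X\in(1,2)$ being harmless, as everything there is controlled by absolute constants), apply $|g(X-Y)|,|g(Y-X)|\le C\,||g||_{-r,r+q}X^{-q}$ wherever the argument exceeds $1$, reduce once more to $\int_0^\infty\frac{|g(Y)|}{Y}\,dY$, and keep the worst exponent $-1/2-q$.

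The last step is only bookkeeping. The pointwise bound just assembled,
\[
|Q_N(g)(X)|\le C\,||g||_{-r,r+q}^{2}\Bigl(X^{-1/2+r}\,\1_{0<X<1}+X^{-1/2-q}\,\1_{X>1}\Bigr),
\]
is precisely the defining inequality (\ref{xtr1.0}) for membership in $X_{\theta,\rho}$ with $\theta=1/2-r$ and $\theta+\rho=1/2+q$, i.e.\ $\rho=r+q$; the hypothesis $0<2r\le1$ is what ensures $\theta=1/2-r\ge0$, so that $X_{1/2-r,\,r+q}$ is an admissible space in the scale used in the fixed-point argument, and one obtains $||Q_N(g)||_{1/2-r,\,r+q}\le C\,||g||_{-r,r+q}^{2}$ with the same $C$.

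The only genuine work is the case analysis in the middle step: across the dozen or so sub-regions one must keep straight which of the two power-law regimes governs each of $g(Y)$, $g(X-Y)$, $g(Y-X)$ and $g(X)$, and check that the resulting one-variable integral both converges and produces the advertised power of $X$. There is no hidden cancellation to exploit beyond elementary rearrangements of difference quotients, so the difficulty is organizational; the trap to watch is a sub-region where $Y-X$ is small while $Y$ is large (or, near the origin, $X-Y$ small while $Y\to0$).
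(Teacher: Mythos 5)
Your decomposition of $Q_N(g)$ into the four bilinear terms $A_1,\dots,A_4$ is exactly the one used in the paper (the paper labels them (i)--(iv) in Section \ref{QN}), and your case analysis — split at $Y=X/2$, $Y\sim X$, $Y\sim 1$; exploit the two power-law regimes of $g$ on each piece; reduce to the finiteness of $\int_0^\infty |g(Y)|Y^{-1}\,dY$ (which holds since $r>0$, $q>0$); keep the slowest-decaying exponent $X^{-1/2+r}$ near $0$ and $X^{-1/2-q}$ near $\infty$ — is the same argument carried out there. The final bookkeeping identifying $\theta=1/2-r$, $\rho=r+q$ and the role of $2r\le 1$ is also correct.
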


\begin{prop}
\label{lrds01}
\begin{align*}
||Q_N(g)-Q_N(h)|| _{ \frac {1} {2}-r, r+q}\le C||g-h|| _{ -r, r+q }(||g|| _{ -r, r+q }+||h|| _{ -r, r+q }).
\end{align*}
\end{prop}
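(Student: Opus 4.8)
The plan is to use that $Q_N$ is a quadratic operator. Writing $Q_N(g)=B(g,g)$, where $B(u,v)(X)=X\,\widetilde Q_N(u,v)(X)$ is the symmetric bilinear operator obtained by polarization (so that $Q_N$ is homogeneous of degree two, as is clear from the derivation of (\ref{S1E7}) in the Appendix), one has the algebraic identity
\[
Q_N(g)-Q_N(h)=B(g,g)-B(h,h)=B(g-h,g)+B(g-h,h).
\]
Hence the proposition will follow, by the triangle inequality applied with the pairs $(g-h,g)$ and $(g-h,h)$, from the bilinear estimate
\[
\|B(u,v)\|_{\frac12-r,\,r+q}\le C\,\|u\|_{-r,\,r+q}\,\|v\|_{-r,\,r+q}
\]
for $u,v\in X_{-r,r+q}$ with $0<2r\le1$ (the range of $r$ used in Theorem \ref{Mth1}).

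To prove the bilinear estimate I would first invoke Lemma \ref{Lemnonlin}, which is precisely the diagonal bound $\|B(w,w)\|_{\frac12-r,r+q}\le C\|w\|_{-r,r+q}^2$. By bilinearity and symmetry of $B$, polarization gives $B(u,v)=\tfrac14\bigl(B(u+v,u+v)-B(u-v,u-v)\bigr)$, so $\|B(u,v)\|_{\frac12-r,r+q}\le C\bigl(\|u\|_{-r,r+q}^2+\|v\|_{-r,r+q}^2\bigr)$. To turn this sum of squares into a product, note that $B(u,v)=B(\mu u,\mu^{-1}v)$ for every $\mu>0$ by homogeneity, whence $\|B(u,v)\|_{\frac12-r,r+q}\le C\bigl(\mu^2\|u\|_{-r,r+q}^2+\mu^{-2}\|v\|_{-r,r+q}^2\bigr)$; choosing $\mu^2=\|v\|_{-r,r+q}/\|u\|_{-r,r+q}$ (the cases $u=0$ or $v=0$ being trivial) yields the claimed product bound.

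Alternatively --- and more transparently --- one may rerun the estimates (i)--(iv) from the proof of Lemma \ref{Lemnonlin}, in both regimes $X\to0$ and $X\to\infty$, after splitting each bilinear monomial by $g(a)g(b)-h(a)h(b)=(g-h)(a)\,g(b)+h(a)\,(g-h)(b)$. Every term that was bounded there by $C\|g\|_{-r,r+q}^2$ times the appropriate power of $X$ is now bounded, by the same computations, by $C\|g-h\|_{-r,r+q}\bigl(\|g\|_{-r,r+q}+\|h\|_{-r,r+q}\bigr)$ times that power, because all the pointwise majorizations used there (namely $|g(X)|\le\|g\|_{-r,r+q}X^{r}$ for $X<1$, $|g(X)|\le\|g\|_{-r,r+q}X^{-q}$ for $X>1$, and the resulting bounds on $|g(Y)|/Y$, $|g(X-Y)|/(X-Y)$ and $\int_0^\infty|g(Y)|Y^{-1}dY$) are linear in the function. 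The one point that needs a moment's care is exactly this: one must check that in the product split neither factor is ever forced into a norm stronger than $\|\cdot\|_{-r,r+q}$, which indeed it is not, since in each of (i)--(iv) each of the two factors of every monomial is controlled using only those linear pointwise inequalities. Collecting the contributions gives $|Q_N(g)(X)-Q_N(h)(X)|\le C\|g-h\|_{-r,r+q}(\|g\|_{-r,r+q}+\|h\|_{-r,r+q})(X^{-1/2+r}\1_{0<X<1}+X^{-1/2-q}\1_{X>1})$, which is the asserted estimate on $\|Q_N(g)-Q_N(h)\|_{\frac12-r,r+q}$. The main (mild) obstacle is thus purely bookkeeping --- either the homogeneity rescaling in the polarization route, or verifying the norm compatibility of the product split in the direct route --- with no new analytic difficulty beyond Lemma \ref{Lemnonlin}.
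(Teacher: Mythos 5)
Your proposal is correct and takes essentially the same route as the paper: the paper's proof is the one line $N_j(g,g)-N_j(h,h)=N_j(g-h,g)+N_j(h,g-h)$ followed by an appeal to Lemma \ref{Lemnonlin}, which is exactly your second (``rerun the estimates'') route, while your polarization-plus-rescaling argument is a slightly more elaborate but equivalent way of packaging the bilinear bound. (Note the paper's split $N_j(g-h,g)+N_j(h,g-h)$ does not assume symmetry of the bilinear form, whereas yours does, but this makes no difference since the underlying estimates (i)--(iv) control each monomial factor linearly.)
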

\begin{proof}
In all the four terms of $Q_N$, $N_j$, $j=1, \cdots 4$,
\begin{align*}
N_j(g, g)-N_j(h, h)=N_j(g-h, g)+N_j(h, g-h)
\end{align*}
and the Proposition \ref{lrds01}  follows from Lemma \ref{Lemnonlin}.
\end{proof}

\section{The operator $S_\varphi $.} 
\label{SFi0}
\setcounter{equation}{0}
\setcounter{theo}{0}
Given $h_0\in X _{ \theta, \rho  }$ we must solve now the problem,
\begin{align}
&\frac {\partial h} {\partial t}=\mathscr L_\varphi (h)\equiv \mathscr L(h)+T(h) \label{S5Eqfi}\\
&h(0, X)=h_0\in X _{ \theta, \rho  }. \label{S5Eqfi2}
\end{align}
\begin{prop}
\label{SFi}
For all $h_0\in X _{ \theta, \rho  }$ with $\theta\ge 0$, $\rho\ge 0$ such that $\theta+\rho <3$ there exists a unique  $h\in C((0, \infty); X _{ \theta, \rho  })\cap L^\infty _{ \text{loc} }( [0, \infty) ; X _{ \theta, \rho  })$ mild solution of  (\ref{S5Eqfi}), (\ref{S5Eqfi2}), (i.e satisfying
\begin{align*}
h(t)=S(t)h_0+\int _0^tS(t-s)(T(h(s)))ds,\,\,t>0
\end{align*}
in $X _{ \theta, \rho  }$), and there exists a constant $\kappa >0$, independent of $h_0$, such that
\begin{align}
||h(t)|| _{ \theta, \rho  }\le ||h_0|| _{ \theta, \rho  } e^{\kappa t},\,\,\forall t\ge 0. \label{minvstr1}
\end{align}
We denote $h(t)=S_\varphi (t)h_0$. The function  $h$  satisfies (\ref{S5Eqfi}) pointwisely for $X>0$, $t>0$. 
\end{prop}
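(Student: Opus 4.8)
The plan is to realize $S_\varphi(t)h_0$ as the fixed point of the Duhamel map attached to the semigroup $S$ generated by $\mathscr L$ and the bounded perturbation $T$, to pass from a local (in time) solution to a global one by linearity, and to read off the growth bound (\ref{minvstr1}) from a Gronwall inequality. Two ingredients are already available: (a) by Corollary \ref{TT}, applied with $\alpha=\theta$ and $\beta=\rho$ (so that both conditions there reduce to $\tfrac12\ge 0$), the map $g\mapsto T(g,\varphi)$ is bounded on $X_{\theta,\rho}$, say $\|T(g,\varphi)\|_{\theta,\rho}\le C_T\|g\|_{\theta,\rho}$; and (b) by Proposition \ref{S7cor1}(ii) the operators $S(t)$ are uniformly bounded on $X_{\theta,\rho}$, while by Proposition \ref{S7cor2}(i) the Duhamel operator obeys $\|\int_0^t S(t-s)g(s)\,ds\|_{\theta,\rho}\le C\,t\,\sup_{0\le s\le t}\|g(s)\|_{\theta,\rho}$.

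First I would fix $\tau>0$ and work in $Y_\tau:=L^\infty([0,\tau];X_{\theta,\rho})$, on which I consider
\[
\Phi(h)(t)=S(t)h_0+\int_0^t S(t-s)\,T(h(s),\varphi)\,ds .
\]
By (a) and (b), $\|\Phi(h)(t)-\Phi(\tilde h)(t)\|_{\theta,\rho}\le C\,C_T\,\tau\,\sup_{0\le s\le\tau}\|h(s)-\tilde h(s)\|_{\theta,\rho}$ for all $t\in[0,\tau]$, and $\Phi$ leaves invariant the closed ball of $Y_\tau$ of radius $2C\|h_0\|_{\theta,\rho}$ as soon as $2C\,C_T\,\tau\le 1$. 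Since this threshold depends only on the fixed constants $C,C_T$ and not on $h_0$, Banach's theorem produces a unique mild solution $h$ on $[0,\tau]$, and restarting the construction successively from $h(\tau),h(2\tau),\dots$ (legitimate because the time step $\tau$ never shrinks) yields a solution on each $[n\tau,(n+1)\tau]$, hence on $[0,\infty)$; step-by-step uniqueness gives global uniqueness. By construction this $h$ is the mild solution $h(t)=S(t)h_0+\int_0^tS(t-s)T(h(s),\varphi)\,ds$ in $X_{\theta,\rho}$, and we set $S_\varphi(t)h_0:=h(t)$.

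For the bound (\ref{minvstr1}) I would argue directly on the mild formula. Taking norms, using that $S(t)$ does not increase the $X_{\theta,\rho}$–norm (the non‑expansivity of $S$, a sharpening of Proposition \ref{S7cor1}(ii) that follows from the explicit form of the fundamental solution) together with (a),
\[
\|h(t)\|_{\theta,\rho}\le \|h_0\|_{\theta,\rho}+C_T\int_0^t \|h(s)\|_{\theta,\rho}\,ds ,
\]
so Gronwall's lemma gives $\|h(t)\|_{\theta,\rho}\le \|h_0\|_{\theta,\rho}e^{\kappa t}$ with $\kappa=C_T$, independent of $h_0$. (If one only invokes the bound $\|S(t)g\|_{\theta,\rho}\le C\|g\|_{\theta,\rho}$ as literally stated in Proposition \ref{S7cor1}(ii), the same computation still yields $\|h(t)\|_{\theta,\rho}\le C\|h_0\|_{\theta,\rho}e^{\kappa t}$.) The regularity assertions then follow a posteriori: once $h$ is known, $s\mapsto T(h(s),\varphi)$ belongs to $L^\infty_{\mathrm{loc}}((0,\infty);X_{\theta,\rho})$, so Proposition \ref{S7cor1} applied to the term $S(t)h_0$ and Proposition \ref{S7cor2}(ii) applied to the term $\int_0^t S(t-s)T(h(s),\varphi)\,ds$ show that $\partial_t h$ and $\mathscr L(h)$ lie in $L^\infty_{\mathrm{loc}}((0,\infty)\times(0,\infty))$, and adding the two equations gives $\partial_t h=\mathscr L(h)+T(h,\varphi)=\mathscr L_\varphi(h)$ pointwise for $t>0$, $X>0$.

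I expect the routine part to be the contraction argument and the global continuation; the delicate point is the regularity bookkeeping — in particular that $h\in C((0,\infty);X_{\theta,\rho})$ (the cited propositions only guarantee continuity into the weaker spaces $X_{\theta',\rho'}$ with $\theta'>\theta$) and the genuinely pointwise, classical validity of (\ref{S5Eqfi}). These have to be extracted from the quantitative pointwise estimates (iii)--(iv) of Proposition \ref{S7cor1} and parts (ii),(iv),(v) of Proposition \ref{S7cor2} applied to the now-known source term $T(h(\cdot),\varphi)$, possibly after one further pass through the Duhamel formula, using dominated convergence against those bounds.
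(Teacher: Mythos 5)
Your proposal is correct and follows essentially the same route as the paper: a Banach fixed point argument applied to the Duhamel map $\Phi(h)(t)=S(t)h_0+\int_0^t S(t-s)T(h(s),\varphi)\,ds$ on $L^\infty([0,\tau];X_{\theta,\rho})$ (with $T$ bounded on $X_{\theta,\rho}$ via Corollary \ref{TT} and the Duhamel term controlled via the $S$-estimates), global continuation with a uniform time step, Gronwall for (\ref{minvstr1}), and the pointwise regularity read off from the $L^\infty_{\mathrm{loc}}$ properties supplied by Propositions \ref{S7cor1} and \ref{S7cor2} (the paper invokes Lemma 2.2 and Corollary 2.7 of \cite{m3} for the same content). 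You are in fact a bit more explicit than the paper on two points the proof glosses over — the non-expansivity of $S(t)$ needed to get the clean prefactor in (\ref{minvstr1}), and the question of continuity of $h$ into $X_{\theta,\rho}$ itself rather than only into the weaker $X_{\theta',\rho'}$ — and you flag both appropriately.
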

\begin{proof}
Suppose that  $h(s)\in X _{ \theta, \rho  }$ for all $s>0$, $\sup _{ s>0 }||h(s)|| _{ \theta, \rho  }<\infty$ and look for a fixed point of
\begin{align*}
\Psi (h)(t, X)=S(t)h_0(X)+\int _0^tS(t-s)(T(h(s)))(X)ds.
\end{align*}
By Lemma 2.2 of \cite{m3},

\begin{align*}
||\Psi (h)|| _{ \theta, \rho  }\le ||h_0|| _{ \theta, \rho  }+\int _0^t||T(h(s))|| _{ \theta, \rho  }ds.
\end{align*}
By Corollary  \ref{TT}, $T(h(s))\in X _{ \theta-\frac {1} {2}, \rho +1 }\subset X _{ \theta, \rho  }$  and $||T(h(s)|| _{ \theta, \rho  }\le C||h(s)|| _{ \theta, \rho  }$, from where
\begin{align*}
||\Psi (h)|| _{ \theta, \rho  }\le ||h_0|| _{ \theta, \rho  }+C\int _0^t||h(s)|| _{ \theta, \rho  }ds\le ||h_0|| _{ \theta, \rho  }+Ct \sup _{ 0<s<t}||h(s)|| _{ \theta, \rho  }
\end{align*}
and then by a classical fixed point argument one first obtains a local  mild solution in  in $C( (0, T]; X _{ \theta, \rho  })\cap
L^\infty( (0, T]; X _{ \theta, \rho  })$, and then a global solution in $C((0, \infty); X _{ \theta, \rho  })\cap L^\infty _{ \text{loc} }( [0, \infty) ; X _{ \theta, \rho  })$. By Corollary 2.7 of \cite{m3}, $h_t$ and $\mathscr L_\varphi (h)$ belong to $L^\infty _{ \text{loc} }$ and $h$ satisfies the equation (\ref{S5Eqfi}) pointwise for $X>0$ and $T>0$. Uniqueness follows from Gronwall's Lemma.
\end{proof}

Similar arguments show the following,
\begin{prop}
\label{SFi2}
For all $F\in C([0, T); X _{ \theta, \rho  })$  for $\theta\ge 0$ and $\rho \ge 0$ and $\theta+\rho <3$ there exists a unique function $h\in C\left((0, T); X _{0, \min (3/2, \rho ) }\right)$, satisfying
\begin{align*}
&h(t)=\int _0^tS_\varphi  (t-s)F(s)ds,\,\,t\in (0, T).
\end{align*}
The function $h$ is such that,
\begin{align}
&\left|\left|h(t)-\int _0^tS(t-s)( F(s))ds\right|\right| _{\theta- \frac {1} {2}, \rho +1 }\le Ct^2\sup _{ 0\le s\le t } ||F(s)|| _{ \theta, \rho  }\,\,\forall t\in (0, T) \label{E6.3}\\
&||h(t)|| _{ 0, \min (3/2, \rho ) }\le C \sup _{ 0\le s\le t } ||F(s)|| _{ \theta, \rho  }
C\left(t^{1-2\theta}+t^{4-2\theta}\right).  \label{E6.4}
\end{align}
Moreover $h$ and $h_t$ belong to $L^\infty _{ \text{loc} }((0, T)\times (0, \infty))$ and,
\begin{align}
\frac {\partial h(t, X)} {\partial t}=\mathscr L_\varphi (h(t))(X)+F(t, X),\,t>0, X>0  \label{E6.5}
\end{align}
\end{prop}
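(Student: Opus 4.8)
The plan is to reduce everything to the inhomogeneous problem for the operator $\mathscr L$ alone, for which Proposition \ref{S7cor2} already provides all the information needed, and to treat the lower order operator $T$ perturbatively, exactly as in the proof of Proposition \ref{SFi}. First I would simply \emph{define} $h$ by the formula in the statement, $h(t)=\int_0^tS_\varphi(t-s)F(s)\,ds$, read as a Bochner integral in $X_{\theta,\rho}$; by the bound $\|S_\varphi(\sigma)g\|_{\theta,\rho}\le e^{\kappa\sigma}\|g\|_{\theta,\rho}$ of Proposition \ref{SFi} and the continuity of $F$, this is well defined, uniqueness is automatic, and
\begin{align*}
\sup_{0\le s\le t}\|h(s)\|_{\theta,\rho}\le e^{\kappa T}\,t\,\sup_{0\le s\le t}\|F(s)\|_{\theta,\rho},\qquad t\in(0,T).
\end{align*}
Inserting the Duhamel identity $S_\varphi(\sigma)g=S(\sigma)g+\int_0^\sigma S(\sigma-\tau)\big(T(S_\varphi(\tau)g)\big)\,d\tau$ — which is exactly the mild formulation defining $S_\varphi$ in Proposition \ref{SFi} — using the linearity of $T$ (Corollary \ref{TT}), the change of variables $\sigma=s+\tau$ and Fubini's theorem, one obtains the key identity
\begin{align*}
h(t)=\int_0^tS(t-s)F(s)\,ds+\int_0^tS(t-s)\big(T(h(s))\big)\,ds,\qquad t\in(0,T),
\end{align*}
which exhibits $h$ as the $\mathscr L$-convolution of the source $F$ plus a smoother correction, since $T(h(s))\in X_{\theta-1/2,\rho+1}$ with $\|T(h(s))\|_{\theta-1/2,\rho+1}\le C\|h(s)\|_{\theta,\rho}$ by Corollary \ref{TT} and Remark \ref{lzm453}.

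Estimate (\ref{E6.3}) is then immediate: subtracting $\int_0^tS(t-s)F(s)\,ds$ from the key identity and applying Proposition \ref{S7cor2}(i) to $g(s)=T(h(s))\in X_{\theta-1/2,\rho+1}$,
\begin{align*}
\Big\|h(t)-\int_0^tS(t-s)F(s)\,ds\Big\|_{\theta-\frac12,\rho+1}\le Ct\sup_{0\le s\le t}\|T(h(s))\|_{\theta-\frac12,\rho+1}\le Ct\sup_{0\le s\le t}\|h(s)\|_{\theta,\rho},
\end{align*}
and the bound on $\|h(s)\|_{\theta,\rho}$ above turns the right-hand side into $C\,t^{2}\sup_{0\le s\le t}\|F(s)\|_{\theta,\rho}$, with $C$ depending on $T$ through $\kappa$.

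For (\ref{E6.4}) I would estimate the two terms of the key identity separately in the norm $\|\cdot\|_{0,\min(3/2,\rho)}$ by combining the pointwise bounds of Proposition \ref{S7cor2}: in the region $X\in(0,\min(1,t^2))$ the decomposition $u=\int_0^tL(t-s;g(s))\,ds+R_3$ of part (iv), together with part (iii) used with $p=\theta$, controls $\int_0^tS(t-s)g(s)\,ds$ by $C(t^{1-2\theta}+X^{1/2-\theta}+X^{1/2}t^{1-2\theta})\sup\|g\|_{\theta,\rho}$, which stays bounded as $X\to0$ and is of size $\lesssim t^{1-2\theta}$; for $t^2<X$ part (v) gives the decay $\lesssim X^{-3/2}t^{4-2\theta}\sup\|g\|_{\theta,\rho}$; the intermediate regime is covered by parts (iii)--(iv) as well. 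After absorbing the harmless $(1+t)^{-2\rho}$ factors into a $T$-dependent constant this yields $\|\int_0^tS(t-s)F(s)\,ds\|_{0,\min(3/2,\rho)}\le C(t^{1-2\theta}+t^{4-2\theta})\sup\|F(s)\|_{\theta,\rho}$. The same bounds with $(\theta,\rho)$ replaced by $(\theta-\frac12,\rho+1)$, together with $\sup_{0\le s\le t}\|T(h(s))\|_{\theta-\frac12,\rho+1}\le Ct\sup\|F(s)\|_{\theta,\rho}$, bound the second term by $C(t^{3-2\theta}+t^{6-2\theta})\sup\|F(s)\|_{\theta,\rho}$, which on $[0,T)$ is dominated by $C(t^{1-2\theta}+t^{4-2\theta})\sup\|F(s)\|_{\theta,\rho}$; summing the two gives (\ref{E6.4}).

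Finally, each of the two convolution terms in the key identity satisfies the corresponding inhomogeneous $\mathscr L$-equation pointwise for $t>0$, $X>0$, with $\partial_t$ and $\mathscr L(\cdot)$ in $L^\infty_{\text{loc}}((0,T)\times(0,\infty))$, by Proposition \ref{S7cor2}(ii) applied to $F$ and to $T(h)\in X_{\theta-\frac12,\rho+1}$ respectively; adding them gives $\partial_th=\mathscr L(h)+F+T(h)=\mathscr L_\varphi(h)+F$ and $h,h_t\in L^\infty_{\text{loc}}$, i.e. (\ref{E6.5}), while the membership $h\in C((0,T);X_{0,\min(3/2,\rho)})$ follows from the continuity statements in Proposition \ref{S7cor2}(i) combined with the uniform bound just obtained. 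The main obstacle I expect is the bookkeeping in (\ref{E6.4}): matching the several pointwise regimes of the $\mathscr L$-semigroup convolution to a clean $\|\cdot\|_{0,\min(3/2,\rho)}$ bound with exactly the powers $t^{1-2\theta}$ and $t^{4-2\theta}$, and keeping track of the admissible range $\theta+\rho<3/2$ needed for the finer estimates of Proposition \ref{S7cor2} (reducing $\rho$ beforehand, using $X_{\theta,\rho}\subset X_{\theta,\rho'}$ for $\rho'\le\rho$, whenever necessary). The perturbative handling of $T$ is otherwise the same computation as in Proposition \ref{SFi}.
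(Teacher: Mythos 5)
Your proof is correct, but the route is genuinely different from the paper's. After inserting the Duhamel formula you apply the change of variables $\tau\mapsto s+\tau$ followed by Fubini, which contracts the double integral back into the self-referential mild identity
\begin{align*}
h(t)=\int_0^tS(t-s)F(s)\,ds+\int_0^tS(t-s)\big(T(h(s))\big)\,ds,
\end{align*}
and then closes all the estimates by combining this with the a priori bound $\|h(s)\|_{\theta,\rho}\le Cs\sup\|F\|_{\theta,\rho}$. The paper instead keeps the correction expanded in terms of $S_\varphi$ and $F$, namely $\int_0^t\int_s^t S(s'-s)T[S_\varphi(t-s')F(s)]\,ds'\,ds$, bounds the integrand pointwise by $Ce^{\kappa(t-s')}\|F(s)\|_{\theta,\rho}$ using Proposition \ref{S7cor1}(ii) and Corollary \ref{TT}, and obtains (\ref{E6.3}) directly from $\kappa^{-2}(e^{\kappa t}-1-\kappa t)=\mathcal{O}(t^2)$. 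Both routes are legitimate and yield the same $t^2$; yours has the advantage of making the connection with the inhomogeneous equation $h_t=\mathscr L h+T(h)+F$ transparent at a glance, while the paper's keeps everything explicit in $F$, avoiding the circular-looking appearance of $h$ on both sides (though, as you note, this is harmless once the a priori bound on $h$ is in hand). One caution shared by both arguments but not made explicit by you: applying Proposition \ref{S7cor2}(i) directly to $g(s)=T(h(s))\in X_{\theta-1/2,\rho+1}$ formally requires $\theta\ge 1/2$, which fails in the relevant range $\theta\in[0,1/2)$; the paper circumvents this by first passing through the embedding $\|\cdot\|_{0,\theta+\rho+1/2}\le\|\cdot\|_{\theta-1/2,\rho+1}$ (valid precisely because $\theta<1/2$) and then applying the semigroup bound in $X_{0,\theta+\rho+1/2}$, which also shows where the effective restriction $\theta+\rho<1$ enters. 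You flag this "bookkeeping" at the end but should spell out that detour explicitly, since it is where the proof would otherwise break.
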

\begin{proof}
By definition, the function $h$ is  given by
\begin{align*}
h(t)&=\int _0^t S_\varphi (t-s)F(s)ds\\
&=\int _0^t S (t-s)F(s)ds
+\int _0^t\int _0^{t-s} S (t-s-\sigma )T[S_\varphi (\sigma )F(s)]d\sigma ds\\
&=\int _0^t S (t-s)F(s)ds
+\int _0^t\int _s^{t} S (s'-s)T[S_\varphi (t-s')F(s)]ds'ds.
\end{align*}
By $(ii)$ in Proposition \ref{S7cor1} and  Corollary \ref{TT}, for $\theta\in [0, 1/2)$, $\rho \ge 0$, $\rho +\theta<3$,
\begin{align*}
||S(s'-s) T[S_\varphi (t-s')F(s)]|| _{ \theta-\frac {1} {2}, \rho +1 }\le ||S(s'-s) T[S_\varphi (t-s')F(s)]|| _{ 0, \theta+\rho +\frac {1} {2} }\\
\le C ||T[S_\varphi (t-s')F(s)]||_{ 0, \theta+\rho +\frac {1} {2} }\le C ||[S_\varphi (t-s')F(s)]|| _{ \theta , \rho }
\le Ce^{\kappa (t-s')}||F(s)]|| _{ \theta , \rho }.
\end{align*}
Then,
\begin{align*}
\left|\left|\int _0^t\int _s^{t} S (s'-s)T[S_\varphi (t-s')F(s)]ds'ds\right|\right| _{ \theta-\frac {1} {2}, \rho +1 }\le
\sup _{0\le s\le t}  ||F(s)|| _{ \theta, \rho  }\kappa ^{-2} \left(e^{\kappa t}-1-\kappa t \right)
\end{align*}
and (\ref{E6.3}) follows.
On the other hand, by Theorem  1.1 of \cite{m3}, if $t\in (0, 1)$,
\begin{align*}
\left|\left|\int _0^t S (t-s)F(s)ds \right|\right| _{ 0, \frac {3} {2} }\le
\sup _{ 0<X<t^2 }\left|\int _0^t S (t-s)F(s)ds \right|+
\sup _{ t^2<X<1 }\left|\int _0^t S (t-s)F(s)ds \right|+\\
+\sup _{ t^2<X<1 }X^{3/2}\left|\int _0^t S (t-s)F(s)ds \right|
\le Ct^{1-2\theta}\sup _{ 0\le s\le T }||F(s)|| _{ \theta, \rho  }+Ct^{4-2\theta}\sup _{ 0\le s\le T }||F(s)|| _{ \theta, \rho  }.
\end{align*}
and, if $t>1$,
\begin{align*}
\left|\left|\int _0^t S (t-s)F(s)ds \right|\right| _{ 0, \frac {3} {2} }\le
\sup _{ 0<X<1 }\left|\int _0^t S (t-s)F(s)ds \right|+
\sup _{ 1<X<t^2 }\left|\int _0^t S (t-s)F(s)ds \right|+\\
+\sup _{ t^2<X }X^{3/2}\left|\int _0^t S (t-s)F(s)ds \right|
\le Ct^{2-2\theta}\sup _{ 0\le s\le T }||F(s)|| _{ \theta, \rho  }+Ct^{4-2\theta}\sup _{ 0\le s\le T }||F(s)|| _{ \theta, \rho  }.
\end{align*}
which proves (\ref{E6.4}). By Proposition \ref{SFi} and classical arguments,
\begin{align*}
\frac {\partial h(t)} {\partial t}&=F(t)+\int _0^t  \mathscr L _{ \varphi  } \Big( S_\varphi (t-s) F(s)\Big)ds=F(t)+
\mathscr L _{ \varphi  } \Bigg( \int _0^t   S_\varphi (t-s) F(s)ds\Bigg)\\
&=F(t)+\mathscr L _{ \varphi  }(h(t))
\end{align*}
and property (\ref{E6.4}) follows. By Proposition \ref{S7cor2}  both $h_t$ and $\mathscr L_\varphi (h)$ belong to  $L^\infty _{ \text{loc} }((0, \infty)\times (0, \infty))$ and $h$ satisfies (\ref{E6.5}) for $t>0$, $X>0$.
\end{proof}

Denote  from now on  $\eta\in C^1_c([0, \infty])$ a function such that $\text{supp} \,(\eta)\subset [0, 1]$, $\eta(t, 0)=1$ and $\eta' \le 0$.
\begin{prop}
\label{xixixi}
For all $\theta\ge 0$, $\rho \ge 0$, $\theta+\rho <3$ and $T>0$ there exists a constant $C>0$ such that, the function $r(t, g, X)$ defined as
\begin{align}
\label{Exixixi}
&r_0(t, g, X)=\int _0^t S_\varphi (t-s)g(s)(X)ds-\mathscr W(t, g) \eta (X),\\
&\mathscr W(t, g)=\int _0^{t }L(t-s; g(s))ds+\int _0^t \int _0^{t -s}\Big(L(t-s-\sigma;  T[S_\varphi  (\sigma )g(s)])\Big)d\sigma ds.\label{Exixixi2}
\end{align}
satisfies, for $t\in (0, \min (1,T))$
\begin{align}
|r_0(t, g, X)|\le 
\begin{cases}
\label{casesr}
C\sup _{ 0\le s\le t }||g(s)|| _{ \theta, \rho  }\Bigg(\left( X^{\frac {1} {2}-\theta}+X^{\frac {1} {2}}t^{1-2\theta}\right)\1 _{ 0<X<t^2 } +\\
\hskip 5cm +t^{1-2\theta}\1 _{ t^2<X<1 }\Bigg),\,\forall X\in (0, 1)\\
\displaystyle{C\sup _{ 0\le s\le t }||g(s)|| _{ \theta, \rho  }t^{4-2\theta} X^{-\frac {3} {2} },\,\forall X>1.}
\end{cases}
\end{align}
\end{prop}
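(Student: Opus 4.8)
The plan is to reduce Proposition~\ref{xixixi} to Propositions~\ref{S7cor1} and~\ref{S7cor2} by using the Duhamel identity for $S_\varphi$ exploited in the proof of Proposition~\ref{SFi2}, namely $S_\varphi(\tau)=S(\tau)+\int_0^\tau S(\tau-\sigma)\,T[S_\varphi(\sigma)\,\cdot\,]\,d\sigma$. Substituting it into $\int_0^t S_\varphi(t-s)g(s)(X)\,ds$ splits
\[
\int_0^t S_\varphi(t-s)g(s)(X)\,ds=u(t,X)+w(t,X),\qquad \mathscr W(t,g)=\ell_1(t)+\ell_2(t),
\]
where $u(t,X)=\int_0^t S(t-s)g(s)(X)\,ds$, $w(t,X)=\int_0^t\int_0^{t-s}S(t-s-\sigma)T[S_\varphi(\sigma)g(s)](X)\,d\sigma\,ds$, $\ell_1(t)=\int_0^t L(t-s;g(s))\,ds$ and $\ell_2(t)=\int_0^t\int_0^{t-s}L(t-s-\sigma;T[S_\varphi(\sigma)g(s)])\,d\sigma\,ds$; consequently $r_0(t,g,X)=A(t,X)+B(t,X)$ with $A=u-\ell_1\eta$ and $B=w-\ell_2\eta$, and the two pieces are estimated separately.

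For $A$ one applies Proposition~\ref{S7cor2} directly to $u$ (passing, if necessary, to a slightly smaller pair of indices in the range of validity of that Proposition via the embedding~(\ref{thetaP})). On $0<X<\min(1,t^2)$, part~(iv) gives $u=\ell_1(t)+R_3(t,g,X)$ with $|R_3|\le C\sup_{0\le s\le t}||g(s)||_{\theta,\rho}\,X^{1/2}(X^{-\theta}+t^{1-2\theta})$, so that $A=R_3+\ell_1(t)(1-\eta(X))$; since $\eta\in C^1$ with $\eta(0)=1$ one has $|1-\eta(X)|\le\|\eta'\|_\infty X$, and $|\ell_1(t)|\le C\sup_s||g(s)||_{\theta,\rho}\,t^{1-2\theta}$ by Proposition~\ref{S7cor1}(vi) with $p=\theta$ integrated in $s$; using $X^{1/2}\le t\le1$ to absorb $X\,t^{1-2\theta}$ into $X^{1/2}t^{1-2\theta}$ gives the bound claimed in~(\ref{casesr}) on $\{X<t^2\}$. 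On $t^2<X<1$, part~(v) gives $|u|\le C\sup_s||g(s)||_{\theta,\rho}\,X^{-3/2}t^{4-2\theta}\le C\sup_s||g(s)||_{\theta,\rho}\,t^{1-2\theta}$ (since $X^{-3/2}<t^{-3}$), and $|\ell_1(t)\eta(X)|$ is controlled the same way; on $X>1$ one has $\eta(X)=0$, hence $A=u$, and part~(v) gives precisely $C\sup_s||g(s)||_{\theta,\rho}\,t^{4-2\theta}X^{-3/2}$.

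For $B$ the key point is that, for each fixed $s$, the inner integral is exactly an instance of Proposition~\ref{S7cor2} with ``time'' $t-s$ and ``source'' $G_s(\sigma):=T[S_\varphi(\sigma)g(s)]$. By Remark~\ref{lzm453} and the exponential bound~(\ref{minvstr1}), $||G_s(\sigma)||_{\theta-\frac12,\rho+1}\le Ce^{\kappa\sigma}||g(s)||_{\theta,\rho}$, and since $\theta-\frac12<0$ this embeds, with the same bound, into $X_{0,\rho'}$ for a fixed $\rho'>0$; so Proposition~\ref{S7cor2}(iv)--(v) applies with first index $0$. I would then split the $s$-integral at $s=t-X^{1/2}$. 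On $(0,t-X^{1/2})$, nonempty only when $X<t^2$, one has $X<\min(1,(t-s)^2)$, the inner integral equals $\int_0^{t-s}L(t-s-\sigma;G_s(\sigma))\,d\sigma+R_3(t-s,G_s,X)$, and using $|1-\eta(X)|\le CX$, $\int_0^{t-s}|L|\,d\sigma\le C(t-s)e^{\kappa(t-s)}||g(s)||_{\theta,\rho}$ and $|R_3|\le Ce^{\kappa(t-s)}||g(s)||_{\theta,\rho}\,X^{1/2}(1+(t-s))$ one bounds this part by $C\sup_s||g(s)||_{\theta,\rho}\,X^{1/2}t$. On $(t-X^{1/2},t)\cap(0,t)$ one has $(t-s)^2<X$, so part~(v) gives $|\mathrm{inner}(s)|\le Ce^{\kappa(t-s)}||g(s)||_{\theta,\rho}\,X^{-3/2}(t-s)^4\le Ce^{\kappa(t-s)}||g(s)||_{\theta,\rho}\,X^{1/2}$, which integrated over a set of length $\le X^{1/2}$, together with $\big|\big(\int_0^{t-s}L(t-s-\sigma;G_s(\sigma))d\sigma\big)\eta(X)\big|\le C X^{1/2}e^{\kappa(t-s)}||g(s)||_{\theta,\rho}$, again gives $\le C\sup_s||g(s)||_{\theta,\rho}\,X$; and when $X\ge t^2$ the whole range has $(t-s)^2<X$, whence $|w(t,X)|\le C\sup_s||g(s)||_{\theta,\rho}\,X^{-3/2}t^{5}$, with $|\ell_2(t)\eta(X)|\le C\sup_s||g(s)||_{\theta,\rho}\,t^2$ handled identically. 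Since $\theta\in(0,1/2)$ and $0<t<1$ one has $t\le t^{1-2\theta}$, $t^2\le t^{1-2\theta}$, $t^5\le t^{4-2\theta}$, and (when $X<t^2$) $X\le tX^{1/2}$; hence all contributions to $B$ are dominated by the right-hand side of~(\ref{casesr}), and adding the $A$ and $B$ estimates proves the Proposition.

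The only genuine work is the bookkeeping in the estimate of $B$: recognising the inner integral as an application of Proposition~\ref{S7cor2}, coping with the fact that $T[S_\varphi(\sigma)g(s)]$ lies only in a space with \emph{negative} first index (forcing the passage to $X_{0,\rho'}$ and hence the choice of first index $0$ in that Proposition), and organising the split $s=t-X^{1/2}$ so that the regimes $X<(t-s)^2$ and $X\ge(t-s)^2$ each produce a contribution compatible with~(\ref{casesr}). The estimate of $A$ is essentially immediate from Proposition~\ref{S7cor2} and the regularity of $\eta$ at the origin.
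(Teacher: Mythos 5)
Your proposal follows essentially the same route as the paper's proof: the Duhamel expansion of $S_\varphi$, which splits $\int_0^t S_\varphi(t-s)g(s)\,ds$ and $\mathscr W$ into matching pieces $u,\ell_1$ and $w,\ell_2$; an application of Proposition~\ref{S7cor2}(iii)--(v) to $u-\ell_1$; and, for the double-integral piece, the same splitting of the $s$-integral at $s=t-X^{1/2}$, using Proposition~\ref{S7cor1}(vi) for the $L$-tail and Proposition~\ref{S7cor2}(iv)--(v) for the remainder $R_3$ and the far-field estimate. Two cosmetic differences are worth noting, neither of which changes the method: (i) the paper applies Proposition~\ref{S7cor2} to $T[S_\varphi(\sigma)g(s)]$ viewed in $X_{\theta,\rho}$ (using the last part of Remark~\ref{lzm453}), whereas you pass to $X_{0,\rho'}$ after observing $\theta-\tfrac12<0$; this produces powers $(t-s)^4$, $X^{1/2}t$ etc.\ instead of $(t-s)^{4-2\theta}$, $X^{1/2-\theta}$ etc., but for $0<X<t^2<1$ and $\theta\ge 0$ the former are dominated by the latter, so the final estimate is the same. (ii) You make explicit the contribution of $(\ell_1+\ell_2)(1-\eta(X))$ using $|1-\eta(X)|\le\|\eta'\|_\infty X$; the paper passes over this step silently (it is harmless, and your treatment is the correct way to fill it in when $\eta$ is only $C^1$ with $\eta(0)=1$ rather than $\eta\equiv1$ near the origin). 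In short, the proposal is correct, essentially identical in strategy, and if anything slightly more explicit at a point the paper glosses over.
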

\begin{proof}
By definition,
\begin{align}
\label{mntrvll43}
\int _0^tS_\varphi (t-s)g(s)ds&=\int _0^t S(t-s)g(s)ds+\nonumber \\
&+\int _0^t\int _0^{t-s}S(t-s-\sigma   )T[S_\varphi (\sigma )g(s)]d\sigma ds.
\end{align}
Let us write several estimates, that  easily follow from our previous results and from where the Proposition follows. 
By $(iii)$ of Proposition \ref{S7cor2},
\begin{align}
\int _0^t\Big(L(t-s, g(s))\Big)ds\le C\sup _{ 0\le s\le t } ||g(s)|| _{ \theta, \rho  }t^{1-2\theta},\,\,\forall t\in (0, T).
\end{align}
On the other hand, suppose $X\in (0, t^2)$. For $X<(t-s)^2$, by $(iv)$ in Proposition \ref{S7cor2},

\begin{align*}
&\int _0^{t-s}S(t-s-\sigma)T[S_\varphi (\sigma )g(s)](X)d\sigma=\int _0^{t-s}
L\Big(t-s-\sigma; T[S_\varphi (\sigma )g(s)]\Big)d\sigma +\\
&\hskip 6.5cm + R_3\Big(t-s, T[S_\varphi (\cdot )g(s)], X \Big) \\
&|R_3(t-s, T[S_\varphi (\cdot )g(s)], X)|\le C\sup _{ 0\le \sigma \le t }||T[S_\varphi (\sigma  )g(s)]|| _{ \theta, \rho  }\times \\
&\hskip 6cm \times X^{1/2}\left(X^{-\theta}+(t-s  )^{1-2\theta}\right).
\end{align*}
and for $X>(t-s)^2$,
\begin{align*}
\int _0^{t-s}S(t-s-\sigma)T[S_\varphi (\sigma )g(s)](X)d\sigma&\le CX^{-3/2}(t-s)^{4-2\theta}\times \\
&\times \sup _{ 0\le \sigma \le t }||T[S_\varphi (\sigma )g(s)] || _{ \theta, \rho  }.
\end{align*}
By  Remark \ref{lzm453} and (\ref{minvstr1}),
\begin{align}
||T[S_\varphi (\sigma )g(s)]|| _{ \theta, \rho  }\le C||S_\varphi (\sigma )g(s)|| _{ \theta, \rho  }\le Ce^{\kappa \sigma }||g(s)|| _{ \theta, \rho  }.
\label{minvstdr2}
\end{align}
and then, for all $X\in (0, t^2)$,
\begin{align}
\int _0^t\int _0^{t-s}&S(t-s-\sigma)T[S_\varphi (\sigma )g(s)](X)d\sigma ds=\nonumber \\
&=\int _0^{t-X^{1/2}} \int _0^{t-s}[\cdots]d\sigma ds
+\int _{t-X^{1/2}}^t\int _0^{t-s}[\cdots]d\sigma ds=I_1+I_2.\label{gstnbrrl11}
\end{align}
where
\begin{align}
|I_2|\le  CX^{-3/2}\sup _{ 0\le s \le t }||T[S_\varphi (\cdot )g(s)] || _{ \theta, \rho  }dsX^{\frac {5-2\theta} {2}}=
Ce^{\kappa t}X^{1-\theta }\sup _{ 0\le s \le t }|| g(s) || _{ \theta, \rho  }. \label{gstnbrrl23}
\end{align}
and

\begin{align}
I_1&=\int _0^{t-X^{1/2}}\int _0^{t-s}
L\Big(t-s-\sigma; T[S_\varphi (\sigma )g(s)]\Big)d\sigma ds+\nonumber\\
&\hskip 2cm +\int _0^{t-X^{1/2}}R_3\Big(t-s, T[S_\varphi (\cdot )g(s)], X \Big)ds\nonumber\\
&=\int _0^{t}\int _0^{t-s}
L\Big(t-s-\sigma; T[S_\varphi (\sigma )g(s)]\Big)d\sigma ds+\label{gstnbrrl1}\\
&+\int _{t-X^{1/2}}^t\int _0^{t-s}
L\Big(t-s-\sigma; T[S_\varphi (\sigma )g(s)]\Big)d\sigma ds+\label{gstnbrrl2}\\
&+\int _0^{t-X^{1/2}}R_3\Big(t-s, T[S_\varphi (\cdot )g(s)], X \Big)ds.\label{gstnbrrl3}
\end{align}
In the term (\ref{gstnbrrl2}), $X>(t-s)^2$;  then  by $(vi)$ of  Proposition \ref{S7cor1},
\begin{align}
&\int _{t-X^{1/2}}^t\int _0^{t-s}
\left|L\Big(t-s-\sigma; T[S_\varphi (\sigma )g(s)]\Big)\right|ds\le \label{gstnbrrl5}\\
&\le C\!\!\!\!\sup _{ 0\le \sigma, s\le t }||T[S_\varphi (\sigma )g(s)]|| _{ \theta, \rho  } \int _{t-X^{1/2}}^t\!\!\!(t-s)^{-2\theta}ds
\le C e^{\kappa t }\!\!\!\!\sup _{ 0\le\sigma,  s\le t }||g(s)|| _{ \theta, \rho  }X^{\frac {1} {2}-\theta},\nonumber
\end{align}
and in the term  (\ref{gstnbrrl3}), by $(iv)$ in Proposition \ref{S7cor2},
\begin{align}
\int _0^{t-X^{1/2}}&\left|R_3\Big(t-s, T[S_\varphi (\cdot )g(s)], X \Big)\right|ds\le \nonumber\\
&\le C \sup _{ 0\le \sigma, s\le t }||T[S_\varphi (\sigma )g(s)]|| _{ \theta, \rho  }X^{1/2} \int _0^{t-X^{1/2}}
 \left( X^{- \theta }+t^{1-2\theta}\right)\nonumber\\
 &\le C e^{\kappa t }\!\!\!\!\sup _{ 0\le s\le t }||g(s)|| _{ \theta, \rho  }X^{1/2}\left(X^{-\theta}t+t^{2-2\theta} \right). \label{gstnbrrl8}
\end{align}
It follows from, (\ref{gstnbrrl11}), (\ref{gstnbrrl2})--(\ref{gstnbrrl8}) that there exists a constant $C>0$ such that
\begin{align}
&\forall X\in (0, t^2):\,\,\left | I_1-\int _0^{t}\int _0^{t-s}
L\Big(t-s-\sigma; T[S_\varphi (\sigma )g(s)]\Big)d\sigma ds\right|\le \nonumber \\
&\quad C e^{\kappa t }\!\!\!\!\sup _{ 0\le\sigma,  s\le t }||g(s)|| _{ \theta, \rho  }X^{\frac {1} {2}-\theta}+
C e^{\kappa t }\!\!\!\!\sup _{ 0\le\sigma,  s\le t }||g(s)|| _{ \theta, \rho  }X^{1/2}\left(X^{-\theta}t+t^{2-2\theta} \right). \label{gstnbrrl16}
\end{align}
and using (\ref{gstnbrrl23}) too,
\begin{align}
&\Bigg | \int _0^t\int _0^{t-s}S(t-s-\sigma)T[S_\varphi (\sigma )g(s)](X)d\sigma ds-\nonumber \\
-&\int _0^{t}\int _0^{t-s}
L\Big(t-s-\sigma; T[S_\varphi (\sigma )g(s)]\Big)d\sigma ds \Bigg|\le
\nonumber \\
& \le C e^{\kappa t }\!\!\!\!\sup _{ 0\le\sigma,  s\le t }||g(s)|| _{ \theta, \rho  }X^{\frac {1} {2}}(X^{-\theta}+X^{-\theta}t+t^{2-2\theta}),\,\,\forall X\in (0, t^2). \label{gstnbrrl34}
\end{align}

When $X>t^2$, then $X>(t-s)^2$ for all $s\in (0, t)$ and by $(v)$ of Proposition \ref{S7cor2}
\begin{align}
\int _0^t\int _0^{t-s}&\left|S(t-s-\sigma   )T[S_\varphi (\sigma )g(s)]\right|d\sigma ds \nonumber\\
&\le 
C\sup _{ 0\le \sigma, s\le t }||T[S_\varphi (\sigma )g(s)]|| _{ \theta, \rho  }
X^{-3/2}\int _0^t(t-s)^{4-2\theta} ds \nonumber \\ 
&\le C e^{\kappa t }\!\!\!\!\sup _{ 0\le s\le t }||g(s)|| _{ \theta, \rho  }X^{-3/2}t^{5-2\theta},\,\,\forall X>t^2. \label{gstnbrrl17}
\end{align}

By estimate $(vi)$ of Proposition \ref{S7cor1} and Remark  \ref{lzm453})  it easily follows,
\begin{align}
\left|\int _0^tL(t-s, g(s)ds\right|+\left|\int _0^t\int _0^{t-s }L\big(t-s-\sigma; T(g(s)\big)d\sigma ds\right|\le \nonumber\\
\le C\sup _{ 0\le s\le t }||g(s)|| _{ \theta, \rho  }t^{1-2\theta}+C\sup _{ 0\le s\le t }||T(g(s))|| _{ \theta, \rho  }t^{2-2\theta}\nonumber\\
\le C\sup _{ 0\le s\le t }||g(s)|| _{ \theta, \rho  }t^{1-2\theta}.
\label{gstnbrrl27}
\end{align}
Let us prove now (\ref{casesr}). Suppose first that $X>1$ and then $\eta(X)=0$. By  $(v)$ of Proposition \ref{S7cor2}, and  (\ref{gstnbrrl17}),
\begin{align}
|r_0(t, g, X)|\le C\sup _{ 0\le s\le t } ||g(s)|| _{ \theta, \rho  }\left(X^{-3/2}t^{4-2\theta}+e^{\kappa t }X^{-3/2}t^{5-2\theta} \right)\label{gstnbrrl29}
\end{align}
If $X\in (t^2, 1)$ then, by   (\ref{gstnbrrl17}) and (\ref{gstnbrrl27})
\begin{align}
|r_0(t, g, X)|\le C\sup _{ 0\le s\le t } ||g(s)|| _{ \theta, \rho  }t^{1-2\theta}\le C\sup _{ 0\le s\le t } ||g(s)|| _{ \theta, \rho  }X^{\frac {1} {2}-\theta}\label{gstnbrrl31}
\end{align}
When $X\in (0, t^2)$, by  (\ref{gstnbrrl34})
\begin{align}
\label{gstnbrrl37}
|r_0(t, g, X)|\le C\sup _{ 0\le s\le t } ||g(s)|| _{ \theta, \rho  }X^{\frac {1} {2}}(X^{-\theta}+t^{1-2\theta}+X^{-\theta}t+t^{2-2\theta})
\end{align}
and  (\ref{casesr}) follows from (\ref{gstnbrrl29}), (\ref{gstnbrrl31}) and (\ref{gstnbrrl37}).
\end{proof}

\begin{cor}
\label{cor9.3}
For all $h\in L^\infty((0, T); X _{ \theta, \rho  })$
\begin{align}
\left| \frac {1} {X}\int _0^t(S_\varphi (t-s)h(s))(X)ds-\frac {1} {X'}\int _0^t(S_\varphi (t-s)h(s))(X')ds \right| \le \nonumber
\\ \le C\sup _{ 0\le s\le t }\left(||h(s)|| _{ \theta', \rho ' }+||h(s)|| _{ \theta, \rho }\right)\Omega_ {2\theta}(X^{1/2}, X'^{1/2}) \label{cor9.3.2}
\end{align}
for $\theta', \rho'$ such that
\begin{align*}
(\theta')^+\le \frac {\theta+1} {2};\,\,\,\,\frac {\theta+\rho } {2}\le \theta'+\rho '+\frac {1} {2}
\end{align*}
where the function $\Omega_ {2\theta}$ is introduced  in Lemma \ref{blabla}.
\end{cor}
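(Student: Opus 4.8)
The plan is to decompose $\int_0^t (S_\varphi(t-s)h(s))(X)\,ds$ into the "free" part $\int_0^t S(t-s)h(s)\,ds$ plus the correction coming from the operator $T$, exactly as in identity (\ref{mntrvll43}) used in the proof of Proposition \ref{xixixi}. For the free part I would apply Lemma \ref{blabla} directly: since $h\in L^\infty((0,T);X_{\theta,\rho})$, the function $u(t,X)=\int_0^t S(t-s)h(s)(X)\,ds$ satisfies
\begin{align*}
\left|\frac{1}{X}u(t,X)-\frac{1}{X'}u(t,X')\right|\le C\sup_{0\le s\le t}\|h(s)\|_{\theta,\rho}\,\Omega_{2\theta}(X^{1/2},X'^{1/2}),
\end{align*}
which is already of the desired form. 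The remaining term is $w(t,X)=\int_0^t\int_0^{t-s}S(t-s-\sigma)\,T[S_\varphi(\sigma)h(s)]\,d\sigma\,ds$. Writing $\sigma'=t-\sigma$ and Fubini as in the proof of Proposition \ref{SFi2}, this is $\int_0^t\big(S(\cdot)G(s')\big)(X)\,ds'$ evaluated appropriately, where $G(s')=\int_0^{s'}T[S_\varphi(s'-s)h(s)]\,ds$ lives in $X_{\theta-\frac12,\rho+1}$ with norm $\le C e^{\kappa t}\sup_s\|h(s)\|_{\theta,\rho}$ by Remark \ref{lzm453}, Corollary \ref{TT} and (\ref{minvstr1}).

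Then I would again invoke Lemma \ref{blabla}, but now applied with the shifted indices $\theta-\frac12$ and $\rho+1$ (this is why the hypotheses on $\theta',\rho'$ in the statement appear: one needs $(\theta')^+\le\frac{\theta+1}{2}$ and $\frac{\theta+\rho}{2}\le\theta'+\rho'+\frac12$ so that, after the embedding $X_{\theta,\rho}\hookrightarrow X_{\theta',\rho'}$ of (\ref{thetaP}) and the $T$-gain of half a power, the resulting space still satisfies the regime $0<\theta''+\rho''<3/2$ required by Lemma \ref{blabla}, and so that the output function $\Omega_{2\theta}$ with the same index $2\theta$ controls the modulus of continuity). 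Concretely: $T$ maps $X_{\theta',\rho'}$ into $X_{\theta'-\frac12,\rho'+1}$, and since the convolution with $S(t-s)$ is integrated over $s$ the worst behaviour is governed by $\theta'-\frac12<\theta/2$, so that $\Omega_{2\theta'-1}(X^{1/2},X'^{1/2})\le C\,\Omega_{2\theta}(X^{1/2},X'^{1/2})$ up to adjusting constants, using the monotonicity properties of $\Omega$ encoded through the series defining $\widetilde\Omega$ in (\ref{E935}). Collecting the two contributions and using that $\sup_s\|h(s)\|_{\theta',\rho'}\le\sup_s\|h(s)\|_{\theta,\rho}$ when the embedding applies, while keeping both norms in the final bound to cover the boundary cases, yields (\ref{cor9.3.2}).

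The main obstacle is bookkeeping with the two families of indices: one must check that the index shift caused by $T$ (loss of $1/2$ in the first index, gain of $1$ in the second) together with the time integration keeps every intermediate space inside the admissible range $0<\theta''+\rho''<3/2$ for Lemma \ref{blabla}, and that the $\Omega$ produced at the shifted level is dominated by $\Omega_{2\theta}$. The precise inequalities $(\theta')^+\le\frac{\theta+1}{2}$ and $\frac{\theta+\rho}{2}\le\theta'+\rho'+\frac12$ are exactly what makes this work, so the proof is essentially: (i) split via (\ref{mntrvll43}); (ii) apply Lemma \ref{blabla} to the free part; (iii) rewrite the $T$-correction as a single time integral of $S$ applied to a function in a shifted $X_{\cdot,\cdot}$ space using Corollary \ref{TT}, Remark \ref{lzm453} and (\ref{minvstr1}); (iv) apply Lemma \ref{blabla} a second time at the shifted level and absorb the resulting $\Omega$ into $\Omega_{2\theta}$; (v) add up. No new ideas beyond what is already in the excerpt are needed.
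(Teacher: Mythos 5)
Your high-level structure is the same as the paper's: split $\int_0^t S_\varphi(t-s)h(s)\,ds$ via (\ref{mntrvll43}), apply Lemma \ref{blabla} to the free part, rewrite the $T$-correction as $\int_0^t S(t-\sigma')\tilde G(\sigma')\,d\sigma'$ with $\tilde G(\sigma')=\int_0^{\sigma'}T[S_\varphi(\sigma'-s)h(s)]\,ds$, and apply Lemma \ref{blabla} a second time. Up to that point you match the paper.

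The gap is in step (iv). You want to place $\tilde G$ in $X_{\theta'-\frac12,\,\rho'+1}$ and invoke Lemma \ref{blabla} with first index $\theta'-\frac12$, producing a modulus $\Omega_{2\theta'-1}$ that you then ``absorb into $\Omega_{2\theta}$'' by monotonicity. Two problems. First, $\theta'-\frac12$ can be negative: the hypotheses permit $\theta'\le 0$ (only $(\theta')^+\le\frac{\theta+1}{2}$ is imposed), and even your first formulation, ``shifted indices $\theta-\frac12$ and $\rho+1$'', gives $\theta-\frac12<0$ since $\theta\in(0,\frac12)$. Lemma \ref{blabla} is stated for a first index in $(0,\frac12)$, so it does not apply at that level. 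Second, the asserted inequality $\Omega_{2\theta'-1}\le C\,\Omega_{2\theta}$ is nowhere established in the paper and is not a triviality: the exponents $\alpha_{k,3}=-\sigma_k^*+1+\theta$ in $\widetilde\Omega_\theta$ themselves depend on $\theta$, and the coefficients $D_k$ carry factors like $(\sigma_k-\theta)^{-1}$, so monotonicity in the index would need its own argument.

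The paper sidesteps both difficulties by a sharper choice of target space. Note that the stated conditions $(\theta')^+\le\frac{\theta+1}{2}$ and $\frac{\theta+\rho}{2}\le\theta'+\rho'+\frac12$ are, via Corollary \ref{TT} with $\alpha=\frac{\theta}{2}$, $\beta=\frac{\rho}{2}$, $r^-=(\theta')^+$, $q=\theta'+\rho'$, \emph{exactly} the conditions under which $T:X_{\theta',\rho'}\to X_{\theta/2,\,\rho/2}$ is bounded. One therefore estimates $\|\tilde G(\sigma')\|_{\theta/2,\rho/2}\le C\sup_s\|h(s)\|_{\theta',\rho'}$ directly (using Corollary \ref{TT} and (\ref{minvstr1})), and since $\theta/2\in(0,\frac14)$ lies comfortably in the admissible range, Lemma \ref{blabla} applied at level $(\theta/2,\rho/2)$ yields the modulus $\Omega_\theta(\cdot,\cdot)$, which is already dominated by $\Omega_{2\theta}$. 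This removes both the out-of-range index and the unproved monotonicity from the argument. You correctly identified that the index conditions on $\theta',\rho'$ come from the boundedness of $T$, but you should recognize them as saying that $X_{\theta/2,\rho/2}$ itself is an admissible target, which makes the absorption step unnecessary.
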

\begin{proof}
Write, by definition of $S_\varphi $,
\begin{align*}
\int _0^t(S_\varphi (t-s)h(s))(X)ds=\int _0^t (S(t-s)h(s))(X)ds+\\
+\underbrace{\int _0^t\int _0^{t-s}\Big[S(t-s-\sigma )T\big(S_\varphi(\sigma )h(s)  \big)\Big](X)d\sigma ds} _{ H(t, X) }
\end{align*}
and
\begin{align*}
\left|\frac {H(t, X)} {X}-\frac {H(t, X')} {X'}\right|\le \sup _{ 0\le s\le t }
\left| \left| \int _0^{t-s}S(t-s-\sigma )T\big(S_\varphi(\sigma )h(s)  \big)d\sigma\right|\right| _{\frac { \theta} {2}, \frac {\rho} {2}  }\times \\
\times  \Omega_\theta (X^{1/2}, X'^{1/2})
\end{align*}
and, using Corollary \ref{TT} and (\ref{minvstr1}) in  Proposition \ref{SFi},
\begin{align*}
\left| \left| \int _0^{t-s}S(t-s-\sigma )T\big(S_\varphi(\sigma )h(s)  \big)d\sigma\right|\right| _{\frac {\theta} {2}, \frac {\rho} {2}   }&\le 
\int _0^{t-s}\left| \left|T\big(S_\varphi(\sigma )h(s)  \big)\right|\right| _{\frac {\theta} {2}, \frac {\rho} {2}   }d\sigma \\
\le C\int _0^{t-s}\left| \left| S_\varphi(\sigma )h(s) \right|\right| _{\theta', \rho ' }d\sigma&\le 
C\int _0^{t-s}e^{\kappa (t-s)} || h(s)|| _{\theta', \rho ' }d\sigma\\
&\le C\sup _{ 0\le s\le t } || h(s)|| _{\theta', \rho ' }
\end{align*}
for $\theta', \rho'$ such that
\begin{align*}
(\theta')^+\le \frac {\theta+1} {2};\,\,\,\,\frac {\theta+\rho } {2}\le \theta'+\rho '+\frac {1} {2}.
\end{align*}
It follows,
\begin{align*}
\left| \frac {1} {X}\int _0^t(S_\varphi (t-s)h(s))(X)ds-\frac {1} {X'}\int _0^t(S_\varphi (t-s)h(s))(X')ds \right| \le \\
\\ \le C\sup _{ 0\le s\le t }\left(||h(s)|| _{ \theta', \rho ' }+||h(s)|| _{ \theta, \rho }\right)\Omega_ \theta(X^{1/2}, X'^{1/2})
\end{align*}
\end{proof}
\section{Preliminaries to the Proof of Theorem \ref{Mth1}.}
\label{Sg1g2}
\setcounter{equation}{0}
\setcounter{theo}{0}
This Section contains some  preliminary results, necessary for  the proof of Theorem \ref{Mth1}. First of all, equation (\ref{S1E7}) may still be slightly simplified using the  change of  variables
\begin{align}
&g(t)=\overline g(\overline \tau )\nonumber \\
\label{S1E09}
&\overline \tau =\int _0^t\lambda  (s)ds,\,\,\beta (\overline \tau )=\lambda  (t),\,\,\frac {\lambda  '(t)} {\lambda  (t)}=\beta '(\overline \tau )
\end{align}
to give,
\begin{align}
\label{S1E8BB}
\frac {\partial \overline g (\overline\tau )} {\partial \overline\tau } = \mathscr L_\varphi (\overline g(\overline \tau ) )+ \beta  (\overline\tau ) Q_N(\varphi )+\beta  ^{-1}(\overline\tau ) Q_N(\overline g(\overline\tau ) )-\beta '(\overline\tau )\varphi 
\end{align}
that we want to solve with the initial data $\overline g (0, X)=0$.  By some abuse of notation, and trying to avoid the excessive  proliferation of variables,  the function $\overline g(\overline \tau )$ is  still denoted as $g(t)$ in this Section and until  the last part of Section \ref{SMth1}.
\subsection{The function $\tilde g_1$.}
For $\beta \in C([0, T])$ such that $\beta (t)>0$. on $[0, T]$,  consider the function $\tilde g_1$ defined as,
\begin{align}
&\tilde g_1(t, X)=\int _0^t (S_\varphi (t-s) G _{ \beta , g }(s))(X)ds \label{tildeg1}\\
&G _{ \beta , g }(s)=\Big(\beta  (s) Q_N(\varphi )(s)+\beta  ^{-1}(s) Q_N(g(s)) \Big) \label{G1}
\end{align}  
In order to shorten notations, the function $G _{ \beta , g }(s)$ will be written $G(s)$ when no confusion is possible.
\begin{prop}
\label{Prop7.1}
For all $r\in (0, 1/2)$, $q>0$ and $T>0$, there exists a constant $C>0$ such that the function defined in (\ref{G1}) satisfies,
\begin{align}
&G _{ \beta , g }\in  L^\infty ((0, T); X_{ \theta, \rho  }),\,\,\forall \theta \ge \frac {1} {2}-r,\,\,\forall q\ge\rho +\theta-\frac {1} {2},\label{FFGG2}\\
\label{FFGG1}
&||G _{ \beta , g }(s)|| _{ \theta, \rho  }\le C\left(\beta (s)+\beta ^{-1}(s)||g(s)||^2 _{ -r, r+q }\right).
\end{align}
Moreover, $\tilde g_1\in C\left((0, T); X _{ 0, \min(\frac {3} {2}, \rho ) }\right)$, $\partial _t\tilde g_1$ and  $\mathscr L_\varphi (\tilde g_1)$ belong to $L^\infty _{ \text{loc} }((0, \infty)\times (0, \infty))$, $\partial _t\tilde g_1 (t, X)=\mathscr L_\varphi (\tilde g_1)(t, X)+G(t, X)$ for $t>0$, $X>0$
\end{prop}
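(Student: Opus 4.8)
The plan is to obtain the bounds (\ref{FFGG2})--(\ref{FFGG1}) by combining the estimate for $Q_N(\varphi)$ from Proposition \ref{QQ0}, the quadratic estimate for $Q_N(g)$ from Lemma \ref{Lemnonlin}, and the elementary embedding of the $X_{\theta,\rho}$ spaces (cf.\ the remark around (\ref{thetaP})); the regularity of $\tilde g_1$ defined in (\ref{tildeg1}) and the identity $\partial_t\tilde g_1=\mathscr L_\varphi(\tilde g_1)+G$ will then follow by applying Proposition \ref{SFi2} to $F=G_{\beta,g}$ defined in (\ref{G1}).

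First I would treat the two summands of $G_{\beta,g}$ separately. By Proposition \ref{QQ0}, since $\tfrac12-r\ge-\tfrac12$ and $r+q>0$, one has $Q_N(\varphi)\in X_{\frac12-r,\,r+q}$ with $\|Q_N(\varphi)\|_{\frac12-r,\,r+q}\le CR^{-1}+CR^{q}=:C$, a constant depending only on $R$ and $q$; hence $\|\beta(s)Q_N(\varphi)\|_{\frac12-r,\,r+q}\le C\beta(s)$. Next, since $r\in(0,\tfrac12)$ we have $0<2r\le1$, so Lemma \ref{Lemnonlin} applies and yields $Q_N(g(s))\in X_{\frac12-r,\,r+q}$ with $\|Q_N(g(s))\|_{\frac12-r,\,r+q}\le C\|g(s)\|_{-r,\,r+q}^2$, whence $\|\beta^{-1}(s)Q_N(g(s))\|_{\frac12-r,\,r+q}\le C\beta^{-1}(s)\|g(s)\|_{-r,\,r+q}^2$. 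Adding the two gives $G_{\beta,g}(s)\in X_{\frac12-r,\,r+q}$ together with (\ref{FFGG1}) for the particular pair $(\theta,\rho)=(\tfrac12-r,\,r+q)$. For a general admissible pair, i.e.\ $\theta\ge\tfrac12-r$ and $q\ge\rho+\theta-\tfrac12$ (equivalently $\theta\ge\tfrac12-r$ and $\theta+\rho\le(\tfrac12-r)+(r+q)$), the continuous embedding $X_{\frac12-r,\,r+q}\hookrightarrow X_{\theta,\rho}$ (with a bounded embedding constant, obtained by comparing the defining pointwise bounds) delivers (\ref{FFGG2})--(\ref{FFGG1}) in full generality, uniformly in $s\in(0,T)$.

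For the claims about $\tilde g_1$ I would invoke Proposition \ref{SFi2} with $F=G_{\beta,g}$. Fix $\rho_*\in[0,\,r+q]$ with $(\tfrac12-r)+\rho_*<3$; such a $\rho_*$ exists because $\tfrac12-r<3$, and one may moreover take $\rho_*\ge\tfrac32$ whenever $r+q\ge\tfrac32$ (since $\tfrac32<\tfrac52+r$) and $\rho_*=r+q$ otherwise, so that in all cases $\min(3/2,\rho_*)=\min(3/2,r+q)$. Continuity of $s\mapsto G_{\beta,g}(s)\in X_{\frac12-r,\,\rho_*}$ follows from the continuity of $\beta$ on $[0,T]$ together with the local Lipschitz estimate of Proposition \ref{lrds01} applied along $s\mapsto g(s)\in C([0,T];X_{-r,\,r+q})$. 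Since $\theta:=\tfrac12-r\in(0,\tfrac12)$, $\rho_*\ge0$ and $\theta+\rho_*<3$, Proposition \ref{SFi2} applies and gives $\tilde g_1\in C\big((0,T);X_{0,\min(3/2,\rho_*)}\big)=C\big((0,T);X_{0,\min(3/2,\rho)}\big)$, with $\tilde g_1$ and $\partial_t\tilde g_1$ in $L^\infty_{\mathrm{loc}}((0,\infty)\times(0,\infty))$, $\mathscr L_\varphi(\tilde g_1)\in L^\infty_{\mathrm{loc}}$, and $\partial_t\tilde g_1(t,X)=\mathscr L_\varphi(\tilde g_1)(t,X)+G(t,X)$ for $t>0$, $X>0$ by (\ref{E6.5}).

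Most of this is bookkeeping, since the analytically substantial inputs---the semigroup and regularizing estimates for $S_\varphi$ in Propositions \ref{SFi} and \ref{SFi2} (built on Propositions \ref{S7cor1} and \ref{S7cor2}), together with the quadratic bounds for $Q_N$---are already established. The one point that I expect to require a little care, and which is the only mild obstacle here, is the admissibility restriction $\theta+\rho<3$ demanded by Proposition \ref{SFi2}: for large $q$ the natural pair $(\tfrac12-r,\,r+q)$ violates it, so one must first pass to a smaller second index $\rho_*$ via the embedding (\ref{thetaP}) before invoking Proposition \ref{SFi2}, and then check---as done above---that this truncation does not degrade the asserted $C\big((0,T);X_{0,\min(3/2,\rho)}\big)$ regularity of $\tilde g_1$.
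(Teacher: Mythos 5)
Your proof is correct and takes essentially the same route as the paper: Proposition \ref{QQ0} for the $Q_N(\varphi)$ term, Lemma \ref{Lemnonlin} for $Q_N(g(s))$, and Proposition \ref{SFi2} for the regularity of $\tilde g_1$. The paper verifies membership in $X_{\theta,\rho}$ directly from the pointwise bounds rather than passing through the anchor space $X_{\frac12-r,\,r+q}$ and embedding, but this is cosmetic; your careful handling of the $\theta+\rho_*<3$ hypothesis before invoking Proposition \ref{SFi2} (and the check that $\min(3/2,\rho_*)=\min(3/2,r+q)$) fills in a detail the paper leaves implicit.
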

\begin{proof}
By Proposition \ref{QQ0}, 
\begin{align*}
X^{\theta}|Q_N(\varphi )(X)|\le CX^{\theta+\frac {1} {2}}\1 _{ 0<X<M }
\end{align*}
and then $Q_N(\varphi )\in X _{ \theta, \rho  }$  for all $\theta\ge -1/2$ and $\rho >0$
On the other hand, 
by Lemma  \ref{Lemnonlin}, if $g\in X _{ -r, r+q }$ then, 
\begin{align}
\label{S6EQNg}
|Q_N(g(s))|\le C||g(s)||^2 _{ -r, r+q }\left(X^{-\frac {1} {2}+r}\1 _{ 0<X<1 }+X^{-\frac {1} {2}-q}\1 _{ X>1}\right).
\end{align}
If $r\ge \frac {1} {2}-\theta$ and $q+\frac {1} {2}\ge\rho +\theta$, it then follows,
\begin{align*}
\sup _{ X>0 }\left(X^\theta X^{-\frac {1} {2}+r}\1 _{ 0<X<1 }+X^{\theta+\rho }X^{-\frac {1} {2}-q}\1 _{ X>1}\right)<\infty.
\end{align*}
Then,
\begin{align*}
||G(s)|| _{ \theta, \rho  }&\le C\beta (s)||Q_N(\varphi )|| _{ \theta, \rho  }+C\beta ^{-1}(s)||Q_N(g(s)|| _{ \theta, \rho  }\nonumber \\
&\le C\left(\beta (s)+\beta ^{-1}(s)||g(s)||^2 _{ -r, r+q }\right)
\end{align*}
and the  Proposition \ref{Prop7.1} follows.
\end{proof}

By construction, 
\begin{align*}
S_\varphi (t)(h_0)=S(t)h_0+\int _0^tS(t-s)\Big(T[S(s)h_0] \Big)ds
\end{align*}
and then,
\begin{align}
\label{lngrsg1}
\tilde g_1(t, X)&=\int _0^t (S(t-s)G(s))(X)ds+\nonumber\\
&+\int _0^t \int _0^{t-s}S(t-s-\sigma )(T[S _ \varphi (\sigma )G(s)(X)])d\sigma ds
\end{align}
Let us denote now $\eta=\eta(X)$ defined for   $X>0$ such that $\text{supp} \,(\eta)\subset [0, 1]$, $\eta(t, 0)=1$ and $\eta' \le 0$.
\begin{prop}
\label{tildeg1}
Suppose that $g\in X _{ -r, r+q }$ for some $r\in (0, 1/2)$ and $q>0$ and let  $r_0(t, G, X)$ be the function defined as in Proposition \ref{xixixi}  by (\ref{Exixixi}).
\begin{align}
r_0(t, G, X)=\tilde g_1(t, X)-\mathscr W(t, G). \label{tildeg1E1}
\end{align} 
Then,  for $\theta \ge \frac {1} {2}-r$ and $\rho <q+\frac {1} {2}-\theta$, there exists a constant $C>0$ such that for all $t\in(0, 1)$
\begin{align*}
&(i) \text{For}\,\,X\in (0, 1),\\
&|r_0(t, G, X)|\le C\sup _{ 0<s<t } \left(\beta (s)||Q_N(\varphi )|| _{ \theta, \rho  }+\beta ^{-1}(s)||g(s)||^2_{ -r, r+q }\right)X^{\frac {1} {2}}(X^{-\theta}+t^{1-2\theta})
\end{align*}
\begin{align*}
&\hskip -1.2cm (ii) \text{For}\,\,X>1,\\
&\hskip -1.2cm |r_0(t, G, X)|\le  C\sup _{ 0<s<t } \left(\beta (s)||Q_N(\varphi )|| _{ \theta, \rho  }+\beta ^{-1}(s)||g(s)||^2_{ -r, r+q }\right)t^{4-2\theta}X^{-\frac {3} {2} }.
\end{align*}
\end{prop}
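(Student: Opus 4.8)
The plan is to observe that the statement is essentially a specialization of Proposition \ref{xixixi}. Indeed, by construction $\tilde g_1(t,X)=\int_0^t(S_\varphi(t-s)G_{\beta,g}(s))(X)\,ds$ is exactly the function $\int_0^t S_\varphi(t-s)g(s)(X)\,ds$ appearing in Proposition \ref{xixixi}, now with the input $g$ replaced by $G_{\beta,g}$ defined in (\ref{G1}); and $\mathscr W(t,G)$ in (\ref{tildeg1E1}) is precisely the quantity $\mathscr W(t,g)$ of (\ref{Exixixi2}) evaluated at $G_{\beta,g}$. Hence the remainder $r_0(t,G,X)$ in the statement coincides with the remainder $r_0$ of (\ref{Exixixi}) for that input (up to the harmless factor $\eta(X)$, which equals $1$ near the origin and vanishes for $X>1$, and so is invisible in both regimes of the claimed estimate). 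The proof therefore reduces to checking that $G_{\beta,g}$ lies in the right space and then inserting its norm into (\ref{casesr}).

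First I would record, for $\theta\ge\frac12-r$ and $\rho\le q+\frac12-\theta$, that $G_{\beta,g}\in L^\infty((0,T);X_{\theta,\rho})$ with
$\|G_{\beta,g}(s)\|_{\theta,\rho}\le C\bigl(\beta(s)\|Q_N(\varphi)\|_{\theta,\rho}+\beta^{-1}(s)\|g(s)\|^2_{-r,r+q}\bigr)$,
which is (\ref{FFGG1}) of Proposition \ref{Prop7.1} (kept in the sharper form that retains the constant $\|Q_N(\varphi)\|_{\theta,\rho}$), itself resting on Proposition \ref{QQ0} for the $Q_N(\varphi)$ term and on Lemma \ref{Lemnonlin} for the $Q_N(g)$ term. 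Under the standing hypotheses $\theta\ge\frac12-r>0$, $0\le\rho<q+\frac12-\theta$, and (via the restrictions on $r,q$) $\theta+\rho<3$, the pair $(\theta,\rho)$ is admissible for Proposition \ref{xixixi}; applying its conclusion (\ref{casesr}) to $G_{\beta,g}$ gives, for $t\in(0,\min(1,T))$, the bound $|r_0(t,G,X)|\le C\sup_{0\le s\le t}\|G_{\beta,g}(s)\|_{\theta,\rho}\bigl((X^{1/2-\theta}+X^{1/2}t^{1-2\theta})\1_{0<X<t^2}+t^{1-2\theta}\1_{t^2<X<1}\bigr)$ for $X\in(0,1)$, and $|r_0(t,G,X)|\le C\sup_{0\le s\le t}\|G_{\beta,g}(s)\|_{\theta,\rho}\,t^{4-2\theta}X^{-3/2}$ for $X>1$.

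It then remains to substitute the norm bound and tidy up the power-law factors. For $X>1$ this is already $(ii)$ verbatim. For $0<X<t^2$ one writes $X^{1/2-\theta}+X^{1/2}t^{1-2\theta}=X^{1/2}(X^{-\theta}+t^{1-2\theta})$, which is the factor in $(i)$; for $t^2<X<1$ one uses $t\le1$ and $t^{1-2\theta}=(t^2)^{1/2-\theta}\le X^{1/2-\theta}=X^{1/2}X^{-\theta}\le X^{1/2}(X^{-\theta}+t^{1-2\theta})$ (here $\theta<1/2$, which holds in the range relevant to Theorem \ref{Mth1}), so the same factor again dominates; collecting the two regimes yields $(i)$. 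There is no serious obstacle here: the analytic content is entirely contained in Propositions \ref{xixixi} and \ref{Prop7.1}, and the only point that needs a little care is the bookkeeping verifying that the exponent range hypothesised in the present proposition sits inside the intersection of the ranges required by those two results.
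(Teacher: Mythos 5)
Your proof is correct and follows the same route as the paper's: both reduce the statement to a direct application of Proposition~\ref{xixixi} with input $G_{\beta,g}$, bound $\|G_{\beta,g}(s)\|_{\theta,\rho}$ via Proposition~\ref{Prop7.1} (hence Proposition~\ref{QQ0} and Lemma~\ref{Lemnonlin}), and then massage the power-law factors so that the regime $t^2<X<1$ is absorbed into $X^{1/2}(X^{-\theta}+t^{1-2\theta})$. Your remark about the $\eta(X)$ factor in (\ref{Exixixi}) versus (\ref{tildeg1E1}) is a useful clarification of a notational slip in the statement, and is consistent with how the paper's own proof reads.
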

\begin{proof}
By Proposition \ref{xixixi}, for all  $t\in (0, 1)$,
\begin{align*}
&\int _0^t S_\varphi (t-s)G(s)(X)ds-\Bigg( \int _0^tL(t-s, G(s)ds +\nonumber \\
&\qquad +\int _0^t\int _0^{t-s }L\big(t-s-\sigma; T[S_\varphi (\sigma )G(s)]\big)d\sigma ds\Bigg) \eta (X)=r_0(t, G, X).
\end{align*}
For $X\in (0, 1)$,
\begin{align*}
|r_0(t, G, X)|\le C\sup _{ 0\le s\le t }||G(s)|| _{ \theta, \rho  }X^{\frac {1} {2}-\theta}
\end{align*}
and $||G(s)|| _{ \theta, \rho  }$ may be bounded by Proposition \ref{Prop7.1}, from where,
\begin{align*}
&\Bigg| \int _0^t(S_\varphi (t-s)G(s))(X)ds- \mathscr W(t, G)\eta(X)\Bigg|\le  \\
&\le C\sup _{ 0<s<t } \left(\beta (s)||Q_N(\varphi )|| _{ \theta, \rho  }+\beta ^{-1}(s)||g(s)||^2 _{ -r, r+q }\right)X^{\frac {1 } {2}}\left(X^{-\theta}+t^{1-2\theta} \right).
\end{align*}

For $X> 1>t^2$, by Proposition  \ref{xixixi},
\begin{align*}
|r(t, G, X)|\le C\sup _{ 0\le s\le t }||G(s)|| _{ \theta, \rho  } t^{4-2\theta}X^{-\frac {3} {2} },\,\forall X>1
\end{align*}
and the proof of point (ii) follows as for point (i).
\end{proof}
The following estimate of $\mathscr W(t, G)$ is  also useful. 
\begin{prop}
\label{WWW}
For $\theta\in [0, 1/2)$,  $\theta'>-1/2$ and $\rho \ge 0,  \rho'\ge 1 $,
\begin{align*}
|\mathscr W(t, G)|\le C\sup _{ 0\le s\le t }||G(s)|| _{ \theta, \rho  }t^{1-2\theta}+
 C\sup _{ 0\le \sigma \le t } || G(s)|| _{\frac {1} {2}+ \theta', \rho'-1  }t^{2-2\theta'}
\end{align*}
\end{prop}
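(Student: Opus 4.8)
The plan is to estimate the two pieces of $\mathscr W(t,G)$ defined in (\ref{Exixixi2}) separately, using the bounds on $L(\cdot,\cdot)$ already collected in Proposition \ref{S7cor1}(vi), together with the mapping property of $T$ from Remark \ref{lzm453} and Corollary \ref{TT}, and the exponential bound (\ref{minvstr1}) for $S_\varphi$. Recall that
\begin{align*}
\mathscr W(t, G)=\int _0^{t }L(t-s; G(s))ds+\int _0^t \int _0^{t -s}L\big(t-s-\sigma;  T[S_\varphi  (\sigma )G(s)]\big)\,d\sigma \,ds.
\end{align*}

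First I would handle the first term. By estimate $(vi)$ of Proposition \ref{S7cor1}, for $p=\theta$ one has $|L(t-s;G(s))|\le C\|G(s)\|_{\theta,\rho}(t-s)^{-2\theta}$, so integrating in $s$ over $(0,t)$ (legitimate since $2\theta<1$) yields
\begin{align*}
\left|\int _0^{t }L(t-s; G(s))ds\right|\le C\sup _{ 0\le s\le t }\|G(s)\|_{\theta,\rho}\,t^{1-2\theta},
\end{align*}
which is exactly the first term of the claimed bound.

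Next I would treat the double integral. The idea is to apply $(vi)$ of Proposition \ref{S7cor1} but now with exponent $p=\theta'-\tfrac12$ (which satisfies $p>-1$ since $\theta'>-1/2$), applied to $T[S_\varphi(\sigma)G(s)]$. By Corollary \ref{TT} (or Remark \ref{lzm453}), $T$ maps $X_{\theta'+1/2,\rho'-1}$ continuously into $X_{\theta'-1/2+?,\dots}$; more precisely for a function in $X_{\theta'+1/2,\rho'-1}$ the operator $T$ produces something in $X_{\theta',\rho'}$ with $\|T(\cdot)\|_{\theta',\rho'}\le C\|\cdot\|_{\theta'+1/2,\rho'-1}$, so that $\|T[S_\varphi(\sigma)G(s)]\|_{\theta'-1/2,\cdot}\le C\|S_\varphi(\sigma)G(s)\|_{\theta'+1/2,\rho'-1}\le Ce^{\kappa\sigma}\|G(s)\|_{\theta'+1/2,\rho'-1}$, using (\ref{minvstr1}). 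Hence $(vi)$ gives
\begin{align*}
\left|L\big(t-s-\sigma;T[S_\varphi(\sigma)G(s)]\big)\right|\le Ce^{\kappa\sigma}\sup_{0\le s\le t}\|G(s)\|_{\theta'+1/2,\rho'-1}(t-s-\sigma)^{-(2\theta'-1)}.
\end{align*}
Integrating $d\sigma$ over $(0,t-s)$ — the integrand has the integrable singularity $(t-s-\sigma)^{1-2\theta'}$ since $\theta'<1$, and $e^{\kappa\sigma}\le e^{\kappa t}\le C$ for $t\in(0,1)$ — one obtains $C\sup\|G(s)\|_{\theta'+1/2,\rho'-1}(t-s)^{2-2\theta'}$, and a further integration $ds$ over $(0,t)$ produces $C\sup\|G(s)\|_{\theta'+1/2,\rho'-1}\,t^{3-2\theta'}$. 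For $t\in(0,1)$ this is bounded by $C\sup\|G(s)\|_{\theta'+1/2,\rho'-1}\,t^{2-2\theta'}$, giving the second term of the stated estimate. Adding the two contributions completes the proof.

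The main obstacle I anticipate is purely bookkeeping: one must check carefully that the range of the operator $T$ lands in a space $X_{p',\rho''}$ for which estimate $(vi)$ of Proposition \ref{S7cor1} (which requires $p'>-1$) actually applies with the exponent $p'=\theta'-1/2$, and that the resulting power of $(t-s-\sigma)$ is integrable, i.e.\ $\theta'<1$. The condition $\theta'>-1/2$ is exactly what guarantees $p'=\theta'-1/2>-1$, and the hypothesis $\rho'\ge1$ is what lets us subtract $1$ from the second index when applying $T$; so the stated hypotheses are precisely tuned to make both invocations of Proposition \ref{S7cor1}(vi) legitimate. The only other subtlety is absorbing $t^{3-2\theta'}$ into $t^{2-2\theta'}$, which is harmless on $(0,1)$, and absorbing the factor $e^{\kappa t}$ into the constant $C$, likewise harmless for bounded $t$.
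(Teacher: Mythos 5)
Your overall strategy — split $\mathscr W$ into its two pieces, handle the first by applying the pointwise bound for $L$ in Proposition \ref{S7cor1}(vi) and integrating, and handle the second by estimating the inner integral via the same bound applied to $T[S_\varphi(\sigma)G(s)]$ — is essentially the route the paper takes (the paper cites the already time-integrated version, Proposition \ref{S7cor2}(iii), which amounts to the same thing). However, there is a genuine index error in your treatment of the double integral.

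You write that one should apply Proposition \ref{S7cor1}(vi) with $p=\theta'-\tfrac12$ to $T[S_\varphi(\sigma)G(s)]$, claiming $\|T[S_\varphi(\sigma)G(s)]\|_{\theta'-1/2,\,\cdot}\le C\|S_\varphi(\sigma)G(s)\|_{\theta'+1/2,\,\rho'-1}$. That is not what Corollary \ref{TT} or Remark \ref{lzm453} give: the operator $T$ gains only \emph{half} a power near the origin, so $T$ maps $X_{\theta'+1/2,\,\rho'-1}$ into $X_{\theta',\,\rho'}$ — which you actually state correctly one line earlier, and then contradict. A function in $X_{\theta',\rho'}$ is not in general in $X_{\theta'-1/2,\,\cdot}$ (the inclusion (\ref{thetaP}) goes the other way), so you are not entitled to invoke $(vi)$ with $p=\theta'-\tfrac12$. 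The correct exponent is $p=\theta'$, giving
\begin{align*}
\left|L\big(t-s-\sigma;T[S_\varphi(\sigma)G(s)]\big)\right|\le C\,\|T[S_\varphi(\sigma)G(s)]\|_{\theta',\rho'}\,(t-s-\sigma)^{-2\theta'}
\le Ce^{\kappa\sigma}\sup_{0\le s\le t}\|G(s)\|_{\theta'+1/2,\,\rho'-1}\,(t-s-\sigma)^{-2\theta'},
\end{align*}
and then integrating first in $\sigma$ over $(0,t-s)$ and then in $s$ over $(0,t)$ yields $t^{2-2\theta'}$ \emph{directly}, with no need to absorb a spurious $t^{3-2\theta'}$. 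This also corrects your reading of the hypotheses: the condition $\theta'>-1/2$ is not there to guarantee $p=\theta'-\tfrac12>-1$; it is there so that $\tfrac12+\theta'>0$, which is what is needed for $S_\varphi$ to act continuously on $X_{\frac12+\theta',\,\rho'-1}$ via Proposition \ref{SFi} (and (\ref{minvstdr2})), exactly as the paper points out. Likewise the integrability in $\sigma$ requires $-2\theta'>-1$, i.e.\ $\theta'<\tfrac12$, not $\theta'<1$ as you wrote.
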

\begin{proof}
By definition, for all $X\in (0, t^2)$,
\begin{align*}
|\mathscr W(t, G)|\le \int _0^t|L(t-s;G(s))|ds+\int _0^t\int _0^{t-s}\left| L(t-s-\sigma , T[S_\varphi (\sigma )G(s)]\right|d\sigma ds
\end{align*}
By $(iii)$ of  Proposition \ref{S7cor2}  with $p=\theta$,
\begin{align*}
 \int _0^t|L(t-s;G(s))|ds\le  C\sup _{ 0\le s\le t }||G(s)|| _{ \theta, \rho  }t^{1-2\theta}.
\end{align*}

Again by $(iii)$ of Proposition \ref{S7cor2},  now with $p=\theta'$, and by Corollary \ref{TT},
\begin{align*}
\int _0^{t-s}\left| L(t-s-\sigma , T[S_\varphi (\sigma )G(s)]\right|d\sigma \le 
C \sup _{ 0\le \sigma,  s\le t }||T[S_\varphi (\sigma )G(s)]|| _{ \theta', \rho'  } (t-s )^{1-2\theta'}\\
 \le 
C \int _0^{t-s}||S(\sigma )G(s)|| _{\frac {1} {2}+ \theta', \rho'-1  }(t-s-\sigma )^{-2\theta'}d\sigma.
\end{align*}
And,  since $\frac {1} {2}+ \theta'>0$ and  $\rho' \ge 1$ , by the continuity of $S(t)$ on $X _{\frac {1} {2}+ \theta' , \rho '-1}$ and (\ref{minvstdr2}),
\begin{align*}
&\int _0^{t-s}\left| L(t-s-\sigma , T[S_\varphi (\sigma )G(s)]\right|d\sigma \le C e^{\kappa  t}\sup _{ 0\le s \le t } || G(s)|| _{\frac {1} {2}+ \theta', \rho'-1  }(t-s )^{1-2\theta'}
\end{align*}
from where Proposition follows.
\end{proof}

\begin{prop}
\label{WWW2}
Suppose that $g\in C([0, T); X _{ -r, q+r }), g'\in C([0, T]; X _{ -r, q+r })$ for some $r\in [0, 1/2)$, $0\le q<3$ and $\beta \in C([0, T]), \beta' \in C([0, T])$ such that
$|\beta (t)-1|\le 1/4$ and  $|\beta' (t)-1|\le 1/4$ for $t\in [0, T]$.  

Then, for $\theta \in [\frac {1} {2}-r, \frac {1} {2})$, and $\theta+\rho \le q+\frac {1} {2}$, $\theta+\rho \le 3/2$ there exists a non negative continuous function $\tau (t)$ such that
\begin{align*}
&|\mathscr W(t, G)-\mathscr W(t, G')|\le C\tau (t)\Bigg(\sup _{ 0\le s\le t }|(\beta -\beta')(s)|\times \nonumber\\
&\times \left(||Q_N(\varphi )|| _{ \theta, \rho }+\sup _{ 0\le s\le t }||g(s)||^2 _{ -r, r+q }\right)
+\sup _{ 0\le s\le t }||g(s)-g'(s)|| _{ -r, r+q }\times \nonumber \\
&\hskip 4cm \times \sup _{ 0\le s\le t }\left( ||g(s))|| _{ -r, r+q }+||g'(s)|| _{ -r, r+q }\right)\Bigg),
\end{align*}
where $\tau(t)\to 0$ as $t\to 0$ and
\begin{align*}
\sup _{ X >0 }&X^\theta (1+X)^\rho  |r_0(t, G, X)-r_0(t, G', X)|\le
Ct\Bigg(\sup _{ 0\le s\le t }|(\beta -\beta')(s)|\times \nonumber\\
&\times \left(||Q_N(\varphi )|| _{ \theta, \rho }+\sup _{ 0\le s\le t }||g(s)||^2 _{ -r, r+q }\right)
+\sup _{ 0\le s\le t }||g(s)-g'(s)|| _{ -r, r+q }\times \nonumber \\
&\hskip 4cm \times \sup _{ 0\le s\le t }\left( ||g(s))|| _{ -r, r+q }+||g'(s)|| _{ -r, r+q }\right)\Bigg).
\end{align*}
The maps $(g, \beta )\to \mathscr W(t, G)$ and $(g, \beta )\to r_0(\cdot, g, \cdot)$ from  $X _{ -r, r+q }\times C([0, T])$ respectively to $C([0, T])$ and 
$X _{ \theta, \rho }$ are then Lipschitz continuous and there exists  $T_0>0$ small enough, such that  the Lipschitz constant is strictly less than one for $T<T_0$.
\end{prop}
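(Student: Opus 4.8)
The plan is to exploit the fact that both $\mathscr W(t,\cdot)$ and $r_0(t,\cdot,X)$ are \emph{linear} in the source term, since $L$, $T=T_1+T_2$, $S$ and $S_\varphi$ are all linear operators. Writing $G=G_{\beta,g}$ and $G'=G_{\beta',g'}$, this gives $\mathscr W(t,G)-\mathscr W(t,G')=\mathscr W(t,G-G')$ and, because $\int_0^tS_\varphi(t-s)(\cdot)(s)\,ds$ is linear, $r_0(t,G,X)-r_0(t,G',X)=r_0(t,G-G',X)$. Everything then reduces to estimating the single function $G-G'$ in the relevant $X_{\theta,\rho}$ spaces and feeding it into Propositions \ref{WWW} and \ref{xixixi}. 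First I would decompose
\begin{align*}
G_{\beta,g}(s)-G_{\beta',g'}(s)&=(\beta-\beta')(s)\,Q_N(\varphi)(s)+\beta^{-1}(s)\big(Q_N(g(s))-Q_N(g'(s))\big)\\
&\qquad+\big(\beta^{-1}(s)-\beta'^{-1}(s)\big)Q_N(g'(s)).
\end{align*}
Since $|\beta(s)-1|\le\tfrac14$ and $|\beta'(s)-1|\le\tfrac14$ we have $\beta(s),\beta'(s)\in[\tfrac34,\tfrac54]$, hence $\beta^{-1}(s)\le C$ and $|\beta^{-1}(s)-\beta'^{-1}(s)|=|\beta'(s)-\beta(s)|/(\beta(s)\beta'(s))\le C|\beta(s)-\beta'(s)|$.

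Next I would bound the three pieces in $X_{\theta,\rho}$. The first is controlled by $\sup_s|(\beta-\beta')(s)|\,\|Q_N(\varphi)\|_{\theta,\rho}$, and $Q_N(\varphi)\in X_{\theta,\rho}$ for all $\theta\ge-\tfrac12$, $\rho\ge0$ by Proposition \ref{QQ0}. For the quadratic pieces I use Proposition \ref{lrds01}, namely $\|Q_N(g)-Q_N(g')\|_{1/2-r,r+q}\le C\|g-g'\|_{-r,r+q}(\|g\|_{-r,r+q}+\|g'\|_{-r,r+q})$, together with $\|Q_N(g')\|_{1/2-r,r+q}\le C\|g'\|^2_{-r,r+q}$ from Lemma \ref{Lemnonlin}, and then inject $X_{1/2-r,r+q}\hookrightarrow X_{\theta,\rho}$; this injection is exactly licit under the hypotheses $\theta\ge\tfrac12-r$ and $\theta+\rho\le q+\tfrac12$ ($\theta+\rho<\tfrac32$) imposed in the statement, the boundary cases $\theta=\tfrac12-r$ or $\theta+\rho=q+\tfrac12$ being handled directly from the pointwise bounds of Lemma \ref{Lemnonlin}. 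Altogether this produces
\begin{align*}
\sup_{0\le s\le t}\|(G-G')(s)\|_{\theta,\rho}&\le C\Big(\sup_s|(\beta-\beta')(s)|\big(\|Q_N(\varphi)\|_{\theta,\rho}+\sup_s\|g(s)\|^2_{-r,r+q}\big)\\
&\qquad+\sup_s\|g(s)-g'(s)\|_{-r,r+q}\sup_s\big(\|g(s)\|_{-r,r+q}+\|g'(s)\|_{-r,r+q}\big)\Big),
\end{align*}
and the same estimate holds in the auxiliary space $X_{1/2-r,0}$ (take $\theta'=-r>-\tfrac12$, $\rho'=1$) required by Proposition \ref{WWW}.

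Finally I would apply Proposition \ref{WWW} to $G-G'$ to obtain $|\mathscr W(t,G-G')|\le C\big(t^{1-2\theta}+t^{2+2r}\big)\sup_{0\le s\le t}\big(\|(G-G')(s)\|_{\theta,\rho}+\|(G-G')(s)\|_{1/2-r,0}\big)$, and apply Proposition \ref{xixixi} to $G-G'$, multiplying its pointwise bounds by $X^{\theta}(1+X)^{\rho}$: for $X<1$ one uses $\theta<\tfrac12$ so $X^{\theta+1/2}\le1$, and for $X>1$ one uses $\theta+\rho\le\tfrac32$ so $X^{\theta+\rho-3/2}\le1$, giving $\sup_{X>0}X^{\theta}(1+X)^{\rho}|r_0(t,G-G',X)|\le C\big(t^{1-2\theta}+t^{4-2\theta}\big)\sup_{0\le s\le t}\|(G-G')(s)\|_{\theta,\rho}$. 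Setting $\tau(t)=C\big(t^{1-2\theta}+t^{2+2r}+t^{4-2\theta}\big)$, a nonnegative continuous function with $\tau(t)\to0$ as $t\to0$ because $\theta<\tfrac12$, and combining with the $\|G-G'\|$ estimate of the previous paragraph yields both displayed inequalities. The Lipschitz claim is then immediate: on any bounded subset of $(g,\beta)$ — in particular the ball used for the fixed point, where $\sup_s\|g(s)\|_{-r,r+q}$ is controlled and $\beta$ stays in $[\tfrac34,\tfrac54]$ — the Lipschitz constant of each map is at most $\tau(T)$ times a fixed constant, which is strictly less than $1$ once $T<T_0$ with $T_0$ small.

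The main obstacle is purely bookkeeping of exponents: one must pin down the auxiliary pair $\theta',\rho'$ and check the boundary cases of the $X_{\theta,\rho}$-injections so that every invocation of Propositions \ref{QQ0}, \ref{lrds01}, \ref{WWW}, \ref{xixixi} and Lemma \ref{Lemnonlin} is valid over the full admissible range $\theta\in[\tfrac12-r,\tfrac12)$, $\theta+\rho\le\min(q+\tfrac12,\tfrac32)$. No estimate beyond those already proved is needed; the quadratic nonlinearity is absorbed entirely by the bilinear bound of Proposition \ref{lrds01}, and the smallness of the contraction constant comes from the vanishing of $\tau(T)$.
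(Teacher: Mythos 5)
Your proposal is correct and follows essentially the same route as the paper: exploit the linearity of $\mathscr W(t,\cdot)$ and $r_0(t,\cdot,X)$ in the source, decompose $G_{\beta,g}-G_{\beta',g'}$ into a $Q_N(\varphi)$ piece, a $Q_N(g)-Q_N(g')$ piece (handled by Proposition~\ref{lrds01}) and a $\beta^{-1}-\beta'^{-1}$ piece, bound each in the relevant $X_{\theta,\rho}$ spaces, and then feed the result into Propositions~\ref{WWW} and~\ref{xixixi}. Your choice of auxiliary parameters ($\theta'=-r$, $\rho'=1$) for Proposition~\ref{WWW} is consistent with the paper's, and your pointwise verification of $\sup_{X>0}X^\theta(1+X)^\rho|r_0|$ over all three ranges $X<t^2$, $t^2<X<1$, $X>1$ is in fact slightly more careful than the paper's own treatment; the resulting prefactor $C(t^{1-2\theta}+t^{4-2\theta})$ differs from the paper's stated $Ct$, but both vanish as $t\to0$ for $\theta<\tfrac12$, so the Lipschitz-with-small-constant conclusion is identical.
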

\begin{proof}
Let $G $ and $G'$  of the form (\ref{G1}) with $\beta \in C([0, T))$, $\beta' \in C([0, T))$ and $g\in C([0, T); X _{ -r, r+q })$, $g'\in C([0, T); X _{ -r, r+q })$ for some $T>0$. By Proposition \ref{WWW}, for $X\in (0, t^2)$,
\begin{align}
\label{dfceg}
|\mathscr W(t, G -G')|\le C\sup _{ 0\le s\le t }||(G  -G')(s)|| _{ \theta, \rho  }t^{1-2\theta}+\nonumber \\+ C\sup _{ 0\le \sigma \le t } || (G -G')(s)|| _{\frac {1} {2}+ \theta', \rho'-1  }t^{2-2\theta'}.
\end{align}

In order to bound the right hand side of (\ref{dfceg}) we write,
\begin{align}
&|(G-G')(s)|\le |(\beta-\beta')(s)||Q_N(\varphi )|+\nonumber \\
& +|\beta^{-1}(s)-\beta'^{-1}(s)||Q_N(g(s))|+ \beta'^{-1}|Q_n(g(s))-Q_n(g'(s))|.\label{dfceg2}
\end{align} 
Then, by Proposition 7.16 of \cite{m3}, if $\theta \ge \frac {1} {2}-r$, $\theta+\rho \le q+\frac {1} {2}$,
\begin{align}
&||(\beta-\beta')(s)Q_N(\varphi )|| _{ \theta, \rho }\le  |(\beta-\beta')(s)| ||Q_N(\varphi )|| _{ \theta, \rho }\le C |(\beta-\beta')(s)|\label{dfceg3}\\
&||(\beta^{-1}(s)-\beta'^{-1}(s))Q_N(g(s))|| _{ \theta, \rho }\le C |(\beta-\beta')(s)| ||Q_N(g(s))|| _{ \theta, \rho}\nonumber\\
&\hskip 5.7cm \le C |(\beta-\beta')(s)||g(s)||^2 _{ -r, r+q }\nonumber\\
&\hskip -0.1cm ||Q_n(g(s))-Q_n(g'(s))|| _{\theta, \rho }\le C||(g-g')(s)|| _{ -r, r+q }(||g(s)|| _{ -r, r+q }+||g'(s)|| _{ -r, r+q }),\label{dfceg4}
\end{align}
and therefore,
\begin{align}
\label{dfceg6}
\sup _{ 0\le s\le t }&||(G-G ')(s)|| _{ \theta, \rho}\le
C\Bigg(\sup _{ 0\le s\le t }|(\beta -\beta')(s)|\times \nonumber\\
&\times \left( ||Q_N(\varphi )|| _{ \theta, \rho }+\sup _{ 0\le s\le t }||g(s)|| ^2_{ -r, r+q }\right)
+\sup _{ 0\le s\le t }||g(s)-g'(s)|| _{ -r, r+q }\times \nonumber \\
&\hskip 4cm \times \sup _{ 0\le s\le t }\left( ||g(s))|| _{ -r, r+q }+||g'(s)|| _{ -r, r+q }\right)\Bigg).
\end{align}

Similarly, if $\theta'=-r,\rho '=r+q+1$ with $0<r<1/2$,
\begin{align*}
&||(\beta-\beta')(s)Q_N(\varphi )|| _{\frac {1} {2}+ \theta', \rho'-1 }\le  |(\beta-\beta')(s)| ||Q_N(\varphi )|| _{\frac {1} {2}+ \theta', \rho'-1 }\le C |(\beta-\beta')(s)|\\
&||(\beta^{-1}(s)-\beta'^{-1}(s))Q_N(g(s))||  _{\frac {1} {2}+ \theta', \rho' -1}\le C |(\beta -\beta')(s)| ||Q_N(g(s))|| _{\frac {1} {2}+ \theta', \rho'-1 }\\
&\hskip 5.7cm \le C |(\beta -\beta')(s)||g(s)|| ^2_{ -r, r+q }\\
&||Q_n(g(s))-Q_n(g'(s))|| _{\frac {1} {2}+\theta', \rho '-1 }\le C||(g-g')(s)|| _{ -r, r+q }(||g(s)|| _{ -r, r+q }+||g'(s)|| _{ -r, r+q }).
\end{align*}
and then, for all $\theta\ge \frac {1} {2}-r$ and $\theta+\rho \le \frac {1} {2}+q$,
\begin{align}
\label{dfceg7}
&\sup _{ 0\le s\le t }||(G-G')(s)|| _{ \theta, \rho}\le \sup _{ 0\le s\le t }||(G-G')(s)|| _{\frac {1} {2}+ \theta', \rho'-1  }\nonumber\\
&\le C\Bigg(\sup _{ 0\le s\le t }|(\beta -\beta')(s)| \left(||Q_N(\varphi )|| _{\frac {1} {2}+ \theta', \rho'-1 }+\sup _{ 0\le s\le t }||g(s)||^2 _{ -r, r+q }\right)+
 \nonumber\\
&+\sup _{ 0\le s\le t }||g(s)-g'(s)|| _{ -r, r+q }\sup _{ 0\le s\le t }\left( ||g(s))|| _{ -r, r+q }+||g'(s)|| _{ -r, r+q }\right)\Bigg).
\end{align}
From (\ref{dfceg}), (\ref{dfceg6}), (\ref{dfceg7}),

\begin{align}
\label{dfceg9}
&|\mathscr W(t, G -G')|\le C\tau (t)\Bigg(\sup _{ 0\le s\le t }|(\beta -\beta')(s)|\times \nonumber\\
&\times \left(||Q_N(\varphi )|| _{ \theta, \rho }+\sup _{ 0\le s\le t }||g(s)||^2 _{ -r, r+q }\right)
+\sup _{ 0\le s\le t }||g(s)-g'(s)|| _{ -r, r+q }\times \nonumber \\
&\hskip 4cm \times \sup _{ 0\le s\le t }\left( ||g(s))|| _{ -r, r+q }+||g'(s)|| _{ -r, r+q }\right)\Bigg).
\end{align}
where $\tau (t)=t^{1-2\theta}+t^{2-2\theta}\to 0$ as $t\to 0$ if $\theta<1/2$.

On the other hand, since $r_0$ is linear with respect to $G$, by Proposition \ref{xixixi},
\begin{align*}
|r_0(t, G, X)-r_0(t, G', X)|\le C\sup _{ 0\le s\le t }||G(s)-G'(s)|| _{ \theta, \rho  }X^{\frac {1} {2}-\theta},\,\forall X\in (0, 1).
\end{align*}
Using now that $G$ is quadratic in $g$, it follows by  (\ref{dfceg6}), (\ref{dfceg7}), for $X\in (0,1)$, $\theta\ge 0$, $\rho \ge 0$, $\theta+\rho<q+\frac {1} {2}$

\begin{align*}
& |r_0(t, G, X)-r_0(t, G', X)|\le
CX^{\frac {1} {2}-\theta} \Bigg(\sup _{ 0\le s\le t }|(\beta -\beta')(s)|\times \nonumber\\
&\times \left(||Q_N(\varphi )|| _{ \theta, \rho }+\sup _{ 0\le s\le t }||g(s)||^2 _{ -r, r+q }\right)
+\sup _{ 0\le s\le t }||g(s)-g'(s)|| _{ -r, r+q }\times \nonumber \\
&\hskip 4cm \times \sup _{ 0\le s\le t }\left( ||g(s))|| _{ -r, r+q }+||g'(s)|| _{ -r, r+q }\right)\Bigg).
\end{align*}
and
\begin{align}
\label{dfceg10}
\sup _{ 0\le X\le \min(1, t^2)) }&X^\theta |r_0(t, G, X)-r_0(t, G', X)|\le
C t\Bigg(\sup _{ 0\le s\le t }|(\beta -\beta')(s)|\times \nonumber\\
&\times \left(||Q_N(\varphi )|| _{ \theta, \rho }+\sup _{ 0\le s\le t }||g(s)||^2 _{ -r, r+q }\right)
+\sup _{ 0\le s\le t }||g(s)-g'(s)|| _{ -r, r+q }\times \nonumber \\
&\hskip 4cm \times \sup _{ 0\le s\le t }\left( ||g(s))|| _{ -r, r+q }+||g'(s)|| _{ -r, r+q }\right)\Bigg).
\end{align}

On the other hand if $X>1$, by Proposition \ref{xixixi}
\begin{align*}
&|r_0(t, G, X)-r_0(t, G', X)|\le  C\sup _{ 0\le s\le t }||G(s)-G'(s)|| _{ \theta, \rho  }t^{4-2\theta}X^{-3/2}\\
&\le t^{4-2\theta}X^{-3/2}\Bigg(\sup _{ 0\le s\le t }|(\beta -\beta')(s)|\left(||Q_N(\varphi )|| _{ \theta, \rho }+\sup _{ 0\le s\le t }||g(s)||^2 _{ -r, r+q }\right)+\\
&+\sup _{ 0\le s\le t }||g(s)-g'(s)|| _{ -r, r+q } \sup _{ 0\le s\le t }\left( ||g(s))|| _{ -r, r+q }+||g'(s)|| _{ -r, r+q }\right)\Bigg)
\end{align*}
and, since $\boxed{\theta+\rho \le 3/2}$,
\begin{align}
\label{dfceg11}
&X^\theta(1+X)^\rho |r_0(t, G, X)-r_0(t, G', X)|\le
 t^{4-2\theta} \Bigg(\sup _{ 0\le s\le t }|(\beta -\beta')(s)|\times \nonumber\\
&\hskip 0.5cm\times \left(||Q_N(\varphi )|| _{ \theta, \rho }+\sup _{ 0\le s\le t }||g(s)||^2 _{ -r, r+q }\right)
+\sup _{ 0\le s\le t }||g(s)-g'(s)|| _{ -r, r+q }\times \nonumber \\
&\hskip 4cm \times \sup _{ 0\le s\le t }\left( ||g(s))|| _{ -r, r+q }+||g'(s)|| _{ -r, r+q }\right)\Bigg).
\end{align}
We obtain from (\ref{dfceg10}) and (\ref{dfceg11})
\begin{align}
 \label{dfceg12}
&\sup _{ X >0 }X^\theta (1+X)^\rho  |r_0(t, G, X)-r_0(t, G', X)|\le
Ct \Bigg(\sup _{ 0\le s\le t }|(\beta -\beta')(s)|\times \nonumber\\
&\qquad \times \left((||Q_N(\varphi )|| _{ \theta, \rho }+\sup _{ 0\le s\le t }||g(s)||^2 _{ -r, r+q }\right)
+\sup _{ 0\le s\le t }||g(s)-g'(s)|| _{ -r, r+q }\times \nonumber \\
&\hskip 6cm \times \sup _{ 0\le s\le t }\left( ||g(s))|| _{ -r, r+q }+||g'(s)|| _{ -r, r+q }\right)\Bigg).
\end{align}
\end{proof}
\subsection{The function  $\tilde g_2$.}
The function $\tilde g_2$ is not obtained directly solving
\begin{align*}
&\frac {\partial \tilde g_2} {\partial t}=\mathscr L_\varphi ( \tilde g_2 ) -\beta '(t)\varphi  \\
&\tilde g_2(0, X)=0,
\end{align*}
in order to avoid the use of $\beta '$ for the moment. It is then defiined in the following indirect way. 

Let us solve first the following problem,
\begin{align}
\frac {\partial \psi } {\partial t}=\mathscr L_\varphi (\psi ),\,\,\psi (0)=\varphi  \label{Epsi1}
\end{align}
with $\varphi $ defined in (\ref{defvfiR}). To this end let us define the function
\begin{align}
\label{defFi}
\Phi (X)=\mathscr L_\varphi \left( \mathscr L_\varphi (\varphi )\right) (X),\,\,X>0
\end{align} 
and prove the following
\begin{prop}
\label{propFi}
There exists two  constants $C_1>0$ and $C_2>0$ such that for all $R>1$,
\begin{align*}
&|\Phi (X)|\le C_1R^{-3/2}\left(\frac {X} {R}\right)^{\frac {1} {2}}=\frac {C_1X^{1/2}} {R^2},\,\,\forall 0<X<R,\\
&|\Phi (X)|\le C_2R^{-3/2}\left(\frac {X} {R}\right)^{-\frac {3} {2}}=C_2X^{-3/2},\,\,\forall  X>R.
\end{align*}
It follows that $\Phi \in X _{ -\frac {1} {2},\, 2 }$ and for each $\rho \in [0, 2)$
\begin{align*}
||\Phi || _{- \frac {1} {2},\, \rho  }\le \frac {||\Phi || _{ -\frac {1} {2}, 2 } } {R^{2-\rho } }. 
\end{align*}
For all $\theta\ge -1/2$ and $\rho \ge 0$ such $\theta+\rho <3/2$,
\begin{align*}
\forall \rho '\in \left(\theta+\rho +\frac {1} {2}, 2\right),\,\,\,||\Phi || _{ \theta, \rho  }\le ||\Phi || _{ \frac {1} {2}, \rho ' }\underset{R\to \infty}{\rightarrow} 0.
\end{align*}
\end{prop}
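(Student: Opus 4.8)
The plan is to compute $\Phi=\mathscr L_\varphi(\mathscr L_\varphi(\varphi))$ by applying the estimates already established for the building blocks, iterated twice. The crucial observation is that $\mathscr L_\varphi=\mathscr L+T_1+T_2$, and that when we feed in $\varphi=\phi(\cdot/R)$ we can exploit its scaling structure. First I would compute $\mathscr L_\varphi(\varphi)$. Note that $\varphi$ is constant ($\equiv 1$) on $(0,\alpha R)$ and constant ($\equiv 0$) on $(\beta R,\infty)$, so $\mathscr L(\varphi)$ picks up cancellations: on $(0,\alpha R/2)$ the difference $\varphi(Y)-\varphi(X)$ vanishes except for $Y$ near or beyond the support transition, which as in the proof of Proposition \ref{QQ0} and of Proposition \ref{T1} produces a factor $X^{1/2}R^{-1}$ for $X$ small and decay $X^{-3/2}$ for $X$ large. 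Writing $\psi_1:=\mathscr L_\varphi(\varphi)$, I expect to obtain $|\psi_1(X)|\le CR^{-1}(X/R)^{1/2}$ for $0<X<R$ and $|\psi_1(X)|\le CR^{-1}(X/R)^{-3/2}$ for $X>R$; equivalently $\psi_1\in X_{-1/2,2}$ with $\|\psi_1\|_{-1/2,\rho}\le C R^{-1}\cdot R^{-(2-\rho)}\cdot R^{?}$—the precise power of $R$ is exactly what the homogeneity bookkeeping must track, and it is what makes the final $R\to\infty$ limit work.

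Next I would apply $\mathscr L_\varphi$ a second time to $\psi_1$. Here I cannot use the homogeneity of $\phi$ directly because $\psi_1$ is no longer a dilate of a fixed profile, but I can use the mapping properties already proved: by the analysis of $\mathscr L$ in \cite{m,m3} together with Corollary \ref{TT} (which gives $T:X_{\theta,\rho}\to X_{\theta-1/2,\rho+1}$ with norm control), each of $\mathscr L(\psi_1)$, $T_1(\psi_1,\varphi)$, $T_2(\psi_1,\varphi)$ is bounded in an appropriate $X_{\theta,\rho}$ space in terms of $\|\psi_1\|$. The key point is that $\mathscr L$ acting on a function behaving like $X^{1/2}$ near $0$ and $X^{-3/2}$ at $\infty$ again returns something with the same boundary behavior $X^{1/2}$ near zero (the exponent $1/2$ is in the admissible range $(0,3/2)$ where $\mathscr L$ is well-behaved, cf. the conditions in Propositions \ref{S7cor1}--\ref{S7cor2}) and with $X^{-3/2}$ decay at infinity coming from the kernel $\tfrac{1}{|X-Y|}-\tfrac{1}{|X+Y|}=\tfrac{2X}{(Y-X)(X+Y)}$. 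Carefully propagating the powers of $R$—each application of $\mathscr L_\varphi$ to a dilation-structured input costs one more factor $R^{-1}$, as is visible already in Proposition \ref{QQ0} where $\|Q_N(\varphi)\|$ carries $R^{-1}$—gives the claimed bounds $|\Phi(X)|\le C_1 R^{-3/2}(X/R)^{1/2}$ for $0<X<R$ and $|\Phi(X)|\le C_2 R^{-3/2}(X/R)^{-3/2}=C_2X^{-3/2}$ for $X>R$.

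From the pointwise bounds the membership statements are immediate. The bound $|\Phi(X)|\le C_1 R^{-3/2}(X/R)^{1/2}\mathbf 1_{0<X<R}+C_2 R^{-3/2}(X/R)^{-3/2}\mathbf 1_{X>R}$ gives, for $0<X<1<R$, $X^{-1/2}|\Phi(X)|\le C_1 R^{-2}$, and for $X>1$, $X^{-1/2}(1+X)^\rho|\Phi(X)|\le C_2 R^{-3/2}\sup_{X>1}X^{-1/2}(1+X)^\rho (X/R)^{-3/2}\le C R^{\rho-2}$ provided $\rho<2$; taking the supremum over $X>0$ yields $\Phi\in X_{-1/2,\rho}$ with $\|\Phi\|_{-1/2,\rho}\le \|\Phi\|_{-1/2,2} R^{-(2-\rho)}$, which is the displayed inequality. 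The last assertion then follows from the embedding remark \eqref{thetaP}: if $\theta\ge-1/2$, $\rho\ge0$, $\theta+\rho<3/2$, pick any $\rho'\in(\theta+\rho+1/2,2)$ so that $(\theta,\rho)$ satisfies \eqref{thetaP} relative to $(-1/2,\rho')$—indeed $\theta>-1/2$ and $\rho<\rho'+(-1/2)-\theta$ and $\theta+\rho<3<3$—hence $\|\Phi\|_{\theta,\rho}\le\|\Phi\|_{-1/2,\rho'}\le\|\Phi\|_{-1/2,2}R^{-(2-\rho')}\to0$ as $R\to\infty$ since $\rho'<2$.

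The main obstacle I anticipate is the bookkeeping of the powers of $R$ through two compositions: one must check that applying $\mathscr L$ to a profile of the form $R^{-a}\Psi(\cdot/R)$ with $\Psi$ fixed again yields $R^{-a-1}(\text{fixed profile})(\cdot/R)$, and that the corrective operators $T_1,T_2$—which involve $\chi=\varphi-1$ supported near and beyond $\beta R$—do not spoil this, but rather contribute at the same or lower order in $R^{-1}$. This is exactly the kind of computation carried out region-by-region in Sections \ref{SecT} and \ref{Q0}, so the argument is a (somewhat lengthy) repetition of those techniques; the new content is only that it must be done for the twice-iterated operator and that the $X^{1/2}$ behavior near the origin is genuinely reproduced rather than improved, which is consistent with $1/2$ being a fixed point of the map $\theta\mapsto$ (boundary exponent of $\mathscr L$ applied to $X^\theta$) in the relevant range.
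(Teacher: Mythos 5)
Your proposal is a genuinely different route from the paper's. The paper does not iterate the region-by-region estimates of Sections \ref{SecT} and \ref{Q0}: it passes to the Mellin transform. Since $\mathscr L_\varphi(\varphi)=2Q_N(\varphi)\in C^2_c(0,\infty)$ and the exact dilation identity $Q_N(\varphi)(X)=R^{-1/2}Q_N(\phi)(X/R)$ holds, the paper writes $\mathscr M(\Phi)(s)=(s-1)\mathscr M(\mathscr L_\varphi(\varphi))(s-1)=(s-1)R^{s-3/2}\mathscr M(\widetilde Q_N(\phi))(s-1)$, and then the pointwise estimates drop out in one stroke: the residue at the pole $s=-1/2$ gives the $(X/R)^{1/2}$ behavior on $(0,R)$, and the rapid decay of the Mellin transform along vertical lines gives arbitrary polynomial decay for $X>R$. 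In particular, the powers of $R$ never need to be tracked through iterated compositions --- they come out of the single factor $R^{s-3/2}$.

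Your plan, done carefully, could in principle reproduce the pointwise bounds, but as written it has two concrete gaps. First, the assertion that applying $\mathscr L$ to a profile $R^{-a}\Psi(\cdot/R)$ ``again yields $R^{-a-1}(\text{fixed profile})(\cdot/R)$'' is off by a half power: each of $\mathscr L$, $T_1$, $T_2$ commutes with dilations by $R$ up to a factor $R^{-1/2}$, so one application of $\mathscr L_\varphi$ gives $R^{-a-1/2}(\mathscr L_\phi\Psi)(\cdot/R)$. The $R^{-1}$ you read from Proposition \ref{QQ0} is the coefficient of $X^{1/2}$, which already combines the $R^{-1/2}$ drop of the scaled-profile coefficient with the $R^{-1/2}$ that comes from $(X/R)^{1/2}$; iterating an ``$R^{-1}$ per application'' rule would over-count the gain in $R$. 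Second, the heuristic that $1/2$ is ``a fixed point of the map $\theta\mapsto$ boundary exponent of $\mathscr L$ applied to $X^\theta$'' is not correct: by homogeneity $\mathscr L(X^{-a})=c_a X^{-a-\frac{1}{2}}$, so $\mathscr L$ always lowers the small-$X$ exponent by $\tfrac12$, and $\mathscr L(X^{1/2})$ is a constant, not $\sim X^{1/2}$. Iterated region-by-region estimates therefore do \emph{not} automatically show that $\Phi(X)\to 0$ as $X\to 0$; one must identify the cancellation that removes the constant term arising from $\mathscr L(\psi_1)$ with $\psi_1=2Q_N(\varphi)$. The paper side-steps this entirely: the analyticity of $\mathscr M(\Phi)$ on $\{\Re e(s)>-1/2\}$, a consequence of $Q_N(\varphi)$ being smooth and compactly supported together with the structure of $\mathscr L_\varphi$, rules out a constant term from the outset. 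Your closing deduction of the norm bounds from the pointwise ones and the embedding \eqref{thetaP} is correct, except that the embedding requires strict inequality in $\theta$, so the endpoint $\theta=-\tfrac12$ should instead invoke the preceding displayed inequality for $\|\Phi\|_{-1/2,\rho}$ directly.
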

\begin{proof}
Since  $\mathscr L_\varphi (\varphi )=2Q_N(\varphi )\in X _{ \theta, \rho  }$  it follows   from the hypothesis on $\phi $ in (\ref{deffi1}), (\ref{deffi2}) that for all  $s\in \CC$ such that $\Re e (s)>-1/2$,
\begin{align*}
\mathscr M  (\Phi  )(s)&=
(s-1)\mathscr M \Big(\mathscr L_\varphi (\varphi )\Big)(s-1)\\
&=(s-1)\int _0^\infty\mathscr L_\varphi (\varphi )(X)X^{s-2}dX.
\end{align*}
Since $\mathscr L_\varphi (\varphi )\in C_c^\infty([0, \infty))$ it follows from integration by parts that for all $k\in \NN$ there exists $C_k$ such that $\left|\mathscr M  (\Phi )(s)\right|\le C_k(1+|s|^{k})^{-1}$. 

It also  follows from the hypothesis on $\phi $ and the properties of $Q_N(\varphi )$ that $ \mathscr M(\Phi )$ is analytic in  $S_{-1/2, \infty}=\left\{s\in \CC;\, \Re e (s)>-1/2 \right\}$, has  a simple pole at $s=-1/2$ may be extended as meromorphic function in  the complex plane.
By classical results on the Mellin transform,
\begin{align*}
\Phi (X)&=\int  _{ \Re e(s)=\beta  }\mathscr M(\Phi )(s)X^{-s}ds\\
&=\int  _{ \Re e(s)=\beta  }(s-1)\mathscr M \Big(\mathscr L_\varphi (\varphi )\Big)(s-1)X^{-s}ds,\,\,\beta >-1/2,
\end{align*}
where,  since $\mathscr L_\varphi (\varphi )=2Q_N(\varphi )$,
\begin{align*}
\mathscr M \Big(\mathscr L_\varphi (\varphi )\Big)(s-1)=
2\int _0^\infty Q_N(\varphi )(X)X^{s-2}dX\\
=R^{s-\frac {3} {2}}\int _0^\infty \widetilde Q_N(\phi )(X')X'^{s-2}dX'\\
=R^{s-\frac {3} {2}}\mathscr M\left( \widetilde Q_N(\phi ) \right)(s-1).
\end{align*}
It follows thet $\Phi $ may be written as
\begin{align*}
\Phi (X)=\int  _{ \Re e(s)=\beta  }(s-1)R^{s-\frac {3} {2}}\mathscr M\left( \widetilde Q_N \right)(s-1)X^{-s}ds\\
=R^{-\frac {3} {2}}\int  _{ \Re e(s)=\beta  }(s-1)\mathscr M\left( \widetilde Q_N \right)(s-1)
\left(\frac {X} {R}\right)^{-s}ds.
\end{align*}
Then, for all $k\in \NN$,
\begin{align*}
&\Phi (X)= Res\left( \mathscr M(\Phi ), s=-\frac {1} {2} \right) R^{-3/2}\left(\frac {X} {R}\right)^{\frac {1} {2}}+R^{-3/2}\mathcal O\left(\frac {X} {R}\right)^{k},\,\,\forall  X\in (0, R),
\end{align*}
and for all $\ell>0$ there exists a constant $C_\ell>0$ such that
\begin{align*}
|\Phi (X)|\le C _{ \ell }R^{-3/2}\left(\frac {X} {R}\right)^{-\ell}=C _{ \ell }R^{-\frac {3} {2}+\ell}X^{-\ell},\,\,\forall  X>R.
\end{align*}
and  $\Phi \in X _{- \frac {1} {2},\, \ell+\frac {1} {2} }$ for all $\ell>0$. Moreover, and for each $\ell \in [0, 3/2)$ there exists a constant $C_\ell >0$ such that
\begin{align*}
||\Phi || _{- \frac {1} {2},\, \ell+\frac {1} {2}  }\le \frac {C_\ell } {R^{\frac {3} {2}-\ell } } 
\end{align*}
and then, for all $\rho=\ell+\frac {1} {2} \in [0, 2)$ there exists $C_\rho >0$ such that
\begin{align*}
||\Phi || _{ -\frac {1} {2},\,\rho  }\le \frac {C_\rho  } {R^{2-\rho  } } 
\end{align*}
If $\theta\ge -1/2$, $\rho\ge 0$ and $\theta+\rho <3/2$ then for all $\rho '\in \left(\theta+\rho +\frac {1} {2}, 2\right)$,
\begin{align*}
||\Phi || _{ \theta, \rho  }\le ||\Phi || _{- \frac {1} {2}, \rho'  }\le \frac {C_\rho '} {R^{2-\rho '}}.
\end{align*}
\vskip -0.5cm 
\end{proof}
\begin{prop}
For all $\varphi $  as in (\ref{defvfiR}) there exists a function $\psi $, such that $\psi \in C((0, \infty); X _{ 0, \frac {3} {2}  }$ that satisfies (\ref{Epsi1}) for $t >0$, $X> 0$.
Moreover, if $r_2(t, X)$ and $a(t)$ are defined as
\begin{align}
&a(t)=1 +\int _0^t \mathscr W(s, \mathscr L_\varphi \left( \mathscr L_\varphi (\varphi )\right), X)ds\label{S7Da}\\
&r_2(t, X)=\psi (t, X)-a(t)\eta(X)\nonumber
\end{align}
then there exists $C>0$ constant such that for all $t\in (0, 1)$,
\begin{align}
&|r_2(t, X)|\le 
\begin{cases}
\label{estr2}
C\left(||Q_N(\varphi )|| _{ -\frac {1} {2}, \rho  }+ ||\Phi || _{ 0, \frac {3} {2}  }\right)\, tX^{\frac {1} {2}},\,\,\forall X\in (0, 1)\\
 C\left(t||Q_N(\varphi )|| _{ -\frac {1} {2}, \rho  }X^{- 3/2}+  ||\Phi || _{ 0, 3/2  }t^5X^{-3/2}\right),\,\,\forall X>1,
\end{cases}
\end{align}
and
\begin{align}
&|a'(t)|\le C||\Phi || _{ \theta, \rho  }t^{1-2\theta} \label{ap}\\
&\left|\frac {\partial r_2(t, X)} {\partial t}\right|\le 
\begin{cases}
C||\Phi || _{ -\frac {1} {2}, 2  }t^2 +C\frac {X^{1/2}} {R}  ,\,\,X\le1\\
C\frac {X^{1/2}} {R}\1 _{ X<\alpha R }+CX^{-1/2}\1 _{ \alpha R<X<\beta R }+C||\Phi || _{ \theta, \rho  }t^{4}X^{- \frac {3} {2}},\,X>1,
\end{cases}
\label{r2p}
\end{align}
\end{prop}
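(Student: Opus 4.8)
The plan is to realize $\psi$ as the semigroup orbit $\psi(t)=S_\varphi(t)\varphi$ and to analyse it by a second order Duhamel expansion around the (almost stationary) initial datum $\varphi$. Since $\varphi\in C_c^2(0,\infty)$, it belongs to $X_{0,\rho}$ for every $\rho\ge0$, so Proposition~\ref{SFi} (applied with $\theta=0$, $\rho=3/2$) immediately produces $\psi\in C((0,\infty);X_{0,3/2})\cap L^\infty_{\mathrm{loc}}$, with $\psi_t$ and $\mathscr L_\varphi(\psi)$ in $L^\infty_{\mathrm{loc}}$ and $\psi$ solving (\ref{Epsi1}) pointwise for $t>0$, $X>0$. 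This settles the qualitative part of the statement.

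For the estimates I would first differentiate the equation twice in $t$. Using $\partial_t\psi=\mathscr L_\varphi(\psi)$, the commutation of $\mathscr L_\varphi$ with the semigroup $S_\varphi$ on the orbit (legitimate thanks to the $L^\infty_{\mathrm{loc}}$ bounds and the pointwise validity of the equation), and $\mathscr L_\varphi(\varphi)=2Q_N(\varphi)$, one gets $\partial_t^2\psi=\mathscr L_\varphi(\mathscr L_\varphi(\psi))=S_\varphi(t)\Phi$ with $\Phi=\mathscr L_\varphi(\mathscr L_\varphi(\varphi))$ the function of Proposition~\ref{propFi}, hence by Taylor's formula with integral remainder
\begin{align}
\psi(t)=\varphi+2t\,Q_N(\varphi)+\int_0^t(t-s)\,S_\varphi(s)\Phi\,ds.\nonumber
\end{align}
By Fubini the last term equals $\int_0^t v(s)\,ds$ with $v(s)=\int_0^s S_\varphi(s-\sigma)\Phi\,d\sigma$; applying Proposition~\ref{xixixi} to $v$ with the constant source $g\equiv\Phi$ gives $v(s,X)=\mathscr W(s,\Phi)\,\eta(X)+r_0(s,\Phi,X)$, whence
\begin{align}
\psi(t,X)=\varphi(X)+2t\,Q_N(\varphi)(X)+\eta(X)\!\int_0^t\!\mathscr W(s,\Phi)\,ds+\int_0^t\! r_0(s,\Phi,X)\,ds.\nonumber
\end{align}
Comparing with $a(t)\eta(X)=\eta(X)+\eta(X)\int_0^t\mathscr W(s,\Phi)\,ds$, and using that $\eta$ is taken to agree with $\varphi$ on the range where the estimates are claimed (so that $\varphi-\eta$ disappears), I read off $r_2(t,X)=2t\,Q_N(\varphi)(X)+\int_0^t r_0(s,\Phi,X)\,ds$, and therefore $\partial_t r_2(t,X)=2Q_N(\varphi)(X)+r_0(t,\Phi,X)$ and $a'(t)=\mathscr W(t,\Phi)$.

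With these identities the three inequalities (\ref{estr2}), (\ref{ap}), (\ref{r2p}) follow from the results already established: Proposition~\ref{QQ0} controls $Q_N(\varphi)$ and reproduces exactly the $R^{-1}X^{1/2}\1_{X<\alpha R}+X^{-1/2}\1_{\alpha R<X<\beta R}$ structure in (\ref{r2p}); Proposition~\ref{propFi} controls $\Phi$ in the various $X_{\theta,\rho}$ norms; Proposition~\ref{xixixi} bounds $r_0(\cdot,\Phi,\cdot)$, both for $X<1$ and for $X>1$ (yielding the $t^5X^{-3/2}$ tail in (\ref{estr2})); and Proposition~\ref{WWW} bounds $\mathscr W(t,\Phi)$, giving (\ref{ap}) after choosing $\theta'$ so that the $t^{2-2\theta'}$ contribution is absorbed into $t^{1-2\theta}$ for $t<1$. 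The delicate point, which I expect to be the main work, is the parameter bookkeeping: to produce the sharp $t$ powers — in particular the $t^2$ in (\ref{r2p}) — one has to exploit that $\Phi$ vanishes at the origin, i.e.\ $\Phi\in X_{-1/2,2}$, and run the estimates of Propositions~\ref{S7cor2} and \ref{xixixi} down to the regime $\theta=-1/2$, where the extra vanishing of the source improves each $t$ and $X$ exponent by one; for (\ref{estr2}) one must in addition split the $s$–integral at $s=X^{1/2}$ to upgrade the crude $t$ bound on $\int_0^t r_0$ into the claimed $t\,X^{1/2}$. A secondary, purely technical nuisance is to fix the cut-off $\eta$ (one may take $\eta=\varphi$) so that $\varphi-\eta$ genuinely vanishes where needed, which requires enlarging the support of the $\eta$ used in Proposition~\ref{xixixi}.
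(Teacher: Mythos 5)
Your Taylor-with-remainder argument and the paper's route both produce the same second-order expansion
$\psi(t)=\varphi+t\,\mathscr L_\varphi(\varphi)+\int_0^t W(s)\,ds$ with $W(s)=\int_0^s S_\varphi(s-\sigma)\Phi\,d\sigma$,
but the way you get there is genuinely different. You differentiate the equation in $t$ twice and commute $\mathscr L_\varphi$ with $S_\varphi$ on the orbit; the paper instead substitutes $\zeta=\psi-\varphi$, obtains $\zeta$ by Duhamel via Proposition~\ref{SFi2}, then introduces the auxiliary $w=\mathscr L_\varphi(\varphi)+W$, checks that $w$ solves the homogeneous problem with datum $\mathscr L_\varphi(\varphi)$, and identifies $w=S_\varphi(t)\mathscr L_\varphi(\varphi)$ by \emph{uniqueness}. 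The paper's detour is precisely what lets it avoid giving pointwise meaning to $\partial_t^2\psi=\mathscr L_\varphi^2(\psi)$ for the merely mild semigroup $S_\varphi$; your assertion that the commutation is legitimate ``thanks to the $L^\infty_{\mathrm{loc}}$ bounds'' is the step the uniqueness argument was designed to bypass, so if you retain the Taylor route you must justify that $\mathscr L_\varphi(\psi(t))$ lies again in the domain of $\mathscr L_\varphi$ and that $\psi_{tt}$ really is $S_\varphi(t)\Phi$.

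On exponents you are essentially right: the $t^2$ in (\ref{r2p}) does hinge on $\Phi\in X_{-1/2,2}$ and on estimate $(vi)$ of Proposition~\ref{S7cor1} being valid down to $p=-\tfrac12>-1$, which is precisely the paper's mechanism. But you do not need to split the $s$-integral at $s=X^{1/2}$ to get the $tX^{1/2}$ in (\ref{estr2}): with $\theta=0$, Proposition~\ref{xixixi} already yields $|r_0(s,\Phi,X)|\le C\|\Phi\|_{0,3/2}X^{1/2}$ uniformly in $s<1$, $X\in(0,1)$ (on $X>s^2$ the bound $s^{1-2\theta}=s$ is $\le X^{1/2}$), so integrating in $s\in(0,t)$ gives the factor $t$ directly.

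Your closing caveat is the most substantive point. With $\eta$ fixed so that $\mathrm{supp}(\eta)\subset[0,1]$ and $\varphi\equiv1$ on $(0,R/2)$ for $R$ large, $\varphi-\eta$ is a fixed nonzero function on part of $(1/2,1)$, while $r_2(0,X)=\varphi(X)-\eta(X)$; so the announced bound $|r_2(t,X)|\le C\,tX^{1/2}$ cannot hold there as $t\to0$. The paper's own proof passes from its displayed identity for $r_2$ to the inequality $|r_2|\le t|\mathscr L_\varphi(\varphi)|+\cdots$ and silently drops this term, so the gap is present in the source as well, not just in your write-up. Your proposed fix ``take $\eta=\varphi$'' is not a drop-in repair, because the $X>1$ branches here and in Proposition~\ref{xixixi} use $\eta(X)=0$ for $X>1$; enlarging $\mathrm{supp}(\eta)$ to $[0,R]$ would leave a residual $|\mathscr W(t,g)|\,\eta(X)\sim t^{1-2\theta}$ tail on $(1,R)$ that does not decay like $X^{-3/2}$. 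A cleaner correction is to state the $X\in(0,1)$ bound with an additional summand $|\varphi(X)-\eta(X)|$, a bounded function supported in $[1/2,1]$ that is harmless for the subsequent analysis near $X=0$ and $X\to\infty$.
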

\begin{proof}
Under the change of variable $\zeta (t, X)=\psi (t, X)-\varphi (x)$,  problem (\ref{Epsi1}) reads,
\begin{align}
\frac {\partial \zeta } {\partial t}=\mathscr L_\varphi (\zeta )+\mathscr L_\varphi (\varphi ),\,\,\zeta (0)=0. \label{Ezeta1}
\end{align}
Since $\mathscr L_\varphi (\varphi )=2Q_N(\varphi )$ by Proposition \ref{QQ0} $\mathscr L_\varphi (\varphi )\in X _{ \theta, \rho  }$ for all $\theta\ge -1/2$ and $\rho \ge 0$. Then, by Proposition \ref{SFi2} if $\theta\ge 0$ and $\rho \ge 0$   the problem (\ref{Ezeta1}) has a  solution $\zeta $, unique in $C((0, \infty); X _{ \theta-\frac {1} {2}, \rho+1  })\cap L^\infty _{ \text{loc} }( [0, \infty) ; X _{ \theta-\frac {1} {2}, \rho  +1})$ given by,
\begin{align*}
\zeta (t)=\int _0^t S_\varphi (t-s) \Big(\mathscr L_\varphi (\varphi ) \Big)ds.
\end{align*}

Since, by Proposition (\ref{propFi}) $\Phi \in X _{-\frac {1} {2}, 2 }\subset X _{ 0, 3/2  }$ let us define  the function
\begin{align}
\label{defW}
W(t)=\int _0^tS_\varphi(t-s) \Phi (s)ds
\end{align}
By Proposition \ref{SFi2} $W\in L^\infty((0, T); X_{0,  3/2})\cap W\in C\left((0, \infty); X _{ 0,  3/2})\right)$, $\partial _tW\in  L^\infty _{ \text{loc} }(0, \infty)\times (0, \infty))$ and,
\begin{align}
&W_t(t, X)=\mathscr L_\varphi (W(t))(X)+\Phi(X),\,\,t>0, X>0,   \label{WW1}\\
&W(0)=0,\label{WW2}\\
& ||W(t)|| _{0,  3/2}\le C||\Phi || _{ -\frac {1} {2},\,2  }\,\,\forall t>0 \nonumber\\
&\forall \rho \in [0, 3/2),\,\, \exists C_\rho >0;\,\,||W(t)|| _{0,  \rho }\le C||\Phi || _{ -\frac {1} {2}, \rho   }\le C|| \Phi  || _{ -\frac {1} {2}, 2 }R^{2-\rho },\,\forall t>0 \nonumber
\end{align}
where the constants $C>0$  and $C_\rho $ are independent of $\Phi $. Since $\Phi \in X _{ -\frac {1} {2}, \rho  }$ for all $\rho \in [0, 2]$,
\begin{align}
&\forall \rho \in \left[0,  3/2\right),\,\exists C_\rho >0;\,\,\,||W(t)|| _{ 0,\rho  }\le \frac {C} {R^{2-\rho }}||\Phi || _{ \frac {1} {2},\, \rho   }
\,\,\forall t>0.
\end{align}
If the function $r_1(t, X)$ is now defined as,
\begin{align*}
r_1(t, X)=W(t, X)-\mathscr W(t, \Phi )\eta(X)
\end{align*}
By proposition \ref{xixixi}  with $\theta=0$,
\begin{align}
&|r_1(t, \Phi , X)|\le  C ||\Phi || _{ 0, 3/2  }X^{\frac {1} {2}},\,\forall X\in (0, 1)\label{lzm31}\\
&|r_1(t, X)|\le  C ||\Phi || _{ 0, 3/2  }t^{4}X^{-\frac {3} {2}},\,\forall X>1.\label{lzm33}
\end{align}
Therefore, the function 
\begin{align}
\label{defw}
w= \mathscr L_\varphi (\varphi )+W
\end{align} satisfies,
\begin{align}
&w_t=\mathscr L_\varphi (w) \label{defw1}\\
&w(0)=\mathscr L_\varphi (\varphi ) \label{defw2}
\end{align}
from where $w(t)=S_\varphi (t)\Big(\mathscr L_\varphi (\varphi )\Big)$.  On the other hand, since by construction $\zeta (t)=\int _0^t S_\varphi (t-s)\Big(\mathscr L_\varphi (\varphi )\Big)ds$ it follows by uniqueness that $\zeta (t)=\int _0^tw(s)ds=t \mathscr L_\varphi (\varphi )+\int _0^t W(s)ds$ and then
\begin{align}
\psi (t, X)&=\varphi(X) +t \mathscr L_\varphi (\varphi )(X)+\int _0^t W(s, X)ds.\label{defpsi}
\end{align}
(Notice indeed that $\psi _t(t, X)$ is then equal to $\mathscr L_\varphi (\varphi )+W(t, X)$ and $\mathscr L_\varphi (\psi (t))(X)$ is equal  to $\mathscr L_\varphi (\varphi )+t\Phi (X)+\int _0^t\mathscr L_\varphi (W(s))ds$ and then $\psi _t(t, X)=\mathscr L_\varphi (\psi (t))(X)$ using (\ref{WW1}).

If we denote now
\begin{align}
\label{defa}
&\psi (t, X)=a(t)\eta(X)+r_2(t, X)\,\,\,\text{where}\,\,\,\,a(t)=1+\int _0^t \mathscr W(s, \mathscr L_\varphi \left( \mathscr L_\varphi (\varphi )\right))ds
\end{align}
then
\begin{align*}
r_2(t, X)=\varphi (X)-a(t)\eta(X)+ t \mathscr L_\varphi (\varphi )(X)+\int _0^tr_1(s, X)ds.
\end{align*}
By (\ref{lzm31}), when $t<1$ and $X\in (0, 1)$,
\begin{align*}
&\int _0^tr_1(s, X)ds \le C ||\Phi || _{ 0, 3/2  }X^{\frac {1} {2}}\int _0^{t}ds \le C ||\Phi || _{ 0, 3/2  }tX^{1/2}
\end{align*}
If $X>1$, by  (\ref{lzm33}),
\begin{align*}
&\int _0^tr_1(s, X)ds\le  C||\Phi || _{ 0, 3/2  } X^{-\frac {3} {2}} \int _0^t s^{4}ds\le C||\Phi || _{ 0, 3/2  } t^5X^{-\frac {3} {2}}. 
\end{align*}
Use of  Proposition (\ref{QQ0}) yields then, for $\rho\ge 0$,
\begin{align*}
|r_2(t, X)|&\le t \left|\mathscr L_\varphi (\varphi )(X)\right|+C ||\Phi || _{ 0, 3/2  }tX^{1/2}\\
&\le C||Q_N(\varphi )|| _{ -\frac {1} {2}, \rho  }t X^{1/2}+ C ||\Phi || _{ 0, 3/2  }tX^{1/2},\, \,\forall X\in (0, 1),\\
|r_2(t, X)|&\le t \left|\mathscr L_\varphi (\varphi )(X)\right|+C ||\Phi || _{ 0, 3/2  }t^5X^{-3/2}\\
&\le C||Q_N(\varphi )|| _{ -\frac {1} {2}, \rho  }t X^{- 3/2}+ C ||\Phi || _{ 0, 3/2  }t^5X^{-3/2},\, \,\forall X>1.
\end{align*}

We deduce, for $t\in (0, 1)$, $\rho \in (0, 1)$,
\begin{align*}
&|r_2(t, X)|\le 
\begin{cases}
C\left(||Q_N(\varphi )|| _{ -\frac {1} {2}, \rho  } + ||\Phi || _{ 0, \frac {3} {2}  }\right)\, tX^{\frac {1} {2}},\,\,\forall X\in (0, 1)\\
 C\left(t||Q_N(\varphi )|| _{ -\frac {1} {2}, \rho  } X^{- 3/2}+  ||\Phi || _{ 0, 3/2  }t^5X^{-3/2}\right),\,\,\forall X>1.
\end{cases}
\end{align*}

On the other hand, 
\begin{align*}
\mathscr W(t, \mathscr L_\varphi \left( \mathscr L_\varphi (\varphi )\right))&=\int _0^{t }L(t-s,  \Phi )ds+\\
&+\int _0^t \int _0^{t-s }\Big(L(t-s-\sigma ,  T[S(\sigma ) \Phi)d\sigma ds
\end{align*}
with,
\begin{align*}
\left|\int _0^{t }L(t-s,  \Phi )ds\right|\le C||\Phi  || _{ -\frac {1} {2}, 2 }t^2
\end{align*}
and
\begin{align*}
&\left|\int _0^t \int _0^{t-s }\Big(L(t-s-\sigma ,  T[S(\sigma ) \Phi ])\Big)d\sigma ds \right|\le
C\int _0^t \int _0^{t-s }||T(S(\sigma ))\Phi  || _{ \theta , \rho   }\times \\
&\hskip 9cm \times (t-s-\sigma )^{-2\alpha }d\sigma ds
\end{align*}
for $\theta \ge -1/2$, $\theta +\rho  \le 2$, and then
\begin{align*}
&\left|\int _0^t \int _0^{t-s }\Big(L(t-s-\sigma ,  T[S(\sigma ) \Phi ])\Big)d\sigma ds \right|\le\\
&\le C\int _0^t \int _0^{t-s }||S(\sigma )\Phi  || _{ 0 , \frac {3} {2}  } (t-s-\sigma )^{-2\alpha }d\sigma ds\le Ct^{2-2\alpha }||\Phi  || _{ 0, 3/2 }.
\end{align*}
The choice $\theta=-1/2$ gives then
\begin{align*}
\left|\mathscr W(t, \mathscr L_\varphi \left( \mathscr L_\varphi (\varphi )\right))\right|\le 
||\Phi  || _{ -\frac {1} {2}, 2 }t^2,\,\,\forall t\in (0, 1).
\end{align*}

Since by definition,
\begin{align*}
&a'(t)=\mathscr W(t, \mathscr L_\varphi \left( \mathscr L_\varphi (\varphi )\right)),\\
&\frac {\partial r_2(t, X)} {\partial t}=-a'(t)\eta(X)+\mathscr L_\varphi (\varphi )(X)+r_1(t, X).
\end{align*}
it follows,
\begin{align*}
|a'(t)|\le C||\Phi || _{ -\frac {1} {2}, 2  }t^2,\,\,\,\forall t\in (0, 1),
\end{align*}
Using also Proposition \ref{WWW} again, (\ref{lzm31}) and (\ref{lzm33}),  for all $ \theta\ge 1/2, \rho \ge 0;\,\,\rho +\theta\le 3/2$
\begin{align*}
\left|\frac {\partial r_2(t, X)} {\partial t}\right|&\le C |a'(t)|\1 _{ X<1 }+
2|Q_N(\varphi )|+|r_1(t, X)|\\
&\le C||\Phi || _{ -\frac {1} {2}, 2  }t^2\1 _{ X<1 }+C\frac {X^{1/2}} {R}\1 _{ X<\alpha R }+CX^{-1/2}\1 _{ \alpha R<X<\beta R }+\\
&+C||\Phi || _{ -\frac {1} {2}, -2  }tX^{\frac {1} {2}}\1 _{ 0<X<1 }+
C||\Phi || _{ -\frac {1} {2}, 2  }t^{5}X^{- \frac {3} {2}}\1 _{ X>1 }
\end{align*}
and,
\begin{align*}
\left|\frac {\partial r_2(t, X)} {\partial t}\right|\le 
\begin{cases}
C||\Phi || _{ -\frac {1} {2}, 2  }t^2 +C\frac {X^{1/2}} {R}  ,\,\,X\le1\\
C\frac {X^{1/2}} {R}\1 _{ X<\alpha R }+CX^{-1/2}\1 _{ \alpha R<X<\beta R }+C||\Phi || _{ \theta, \rho  }t^{4}X^{- \frac {3} {2}},\,X>1.
\end{cases}
\end{align*}
\end{proof}
The function $\widetilde g_2$ is now defined  as,
\begin{align}
\label{lngrsg2}
\widetilde g_2(t, X)=- \beta (t)\, \varphi(X)+\psi (t, X)-\int _0^tw(t-s, X)\beta (s)ds.
\end{align}
and denote also $R_w$ to be the function such as,
\begin{align}
&w(t-s, X)=\Lambda(t-s; Q_N(\varphi ))+R_w(t-s, Q_N(\varphi ), X) \label{Rwdef} \\
&\Lambda(t-s; Q_N(\varphi ))=L(t-s; Q_N(\varphi ))+ \int _0^{t-s}L(t-s-\sigma ; Q_N(\varphi ))d\sigma 
\label{Ldef}
\end{align}
\begin{prop}
\label{mnst2}
For  $\beta \in C([0, T])$ such that $\sup _{ 0\le t\le T }|\beta (t)-1|<1/4$ and $\widetilde g_2$ defined by (\ref{lngrsg2}), define,
\begin{align}
\label{lngrsg2B}
r_3(t, X)=\widetilde g_2(t, X)+\left( \beta (t)-a(t)+\int _0^t \Lambda(t-s, Q_N(\varphi ))\beta (s)ds\right)\eta(X).
\end{align}
Then, for $\rho \ge 0$ 
\begin{align}
|r_3(t, X)|\le
\begin{cases}
&C\left(||Q_N(\varphi )|| _{ -\frac {1} {2}, \rho  }+ ||\Phi || _{ 0, \frac {3} {2}  }\right)\, tX^{\frac {1} {2}}+\\
&\hskip 2cm +C_\rho \sup _{ 0\le s\le t }|\beta (s)| ||Q_N(\varphi )|| _{ 0, \rho  }X^{\frac {1} {2}},\,\forall X\in (0, 1)\\
 &C\left(t||Q_N(\varphi )|| _{ -\frac {1} {2}, \rho  }X^{- 3/2}+  ||\Phi || _{ 0, 3/2  }t^5X^{-3/2}\right)+\\
 &\hskip 2cm +C\sup _{ 0\le s \le t }|\beta (s)| ||Q_N(\varphi )|| _{ 0, \rho  }tX^{-3/2},\,\,\forall X>1
\end{cases}
\end{align}
and,
Moreover, the map:
\begin{align*}
\beta \to \widetilde g_2(\beta)
\end{align*}
is such that, for a constant $C=C(T, ||Q_N(\varphi ) || _{ 0, \frac {3} {2} }, ||\varphi ||_\infty)>0$,
\begin{align*}
\sup_{\substack { X>0\\ 0\le t\le T }}\left|\widetilde g_2(\beta _1; t, X)-\widetilde g_2(\beta _2; t, X)\right|(1+X)^{\theta+\rho +\frac {1} {2}}\le
C(T, ||\Phi || _{ \theta, \rho  }, ||\varphi ||_\infty) \sup _{ 0\le t\le T }|\beta _1(t)-\beta _2(t)|.
\end{align*}
\end{prop}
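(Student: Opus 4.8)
The plan is to substitute into (\ref{lngrsg2}) the known expansion $\psi(t,X)=a(t)\eta(X)+r_2(t,X)$ (definition (\ref{defa})) and the decomposition (\ref{Rwdef})--(\ref{Ldef}) of the kernel $w$, and to observe that, since $\Lambda(\cdot;Q_N(\varphi))$ is $X$--independent, the $a(t)\eta(X)$--term and the scalar $\int_0^t\Lambda(t-s;Q_N(\varphi))\beta(s)\,ds$ cancel against the $\eta$--coefficient of (\ref{lngrsg2B}). A direct computation then leaves
\begin{align*}
r_3(t,X)=-\beta(t)\big(\varphi(X)-\eta(X)\big)+r_2(t,X)-\int_0^t R_w(t-s,Q_N(\varphi),X)\,\beta(s)\,ds-\Big(\int_0^t\Lambda(t-s;Q_N(\varphi))\,\beta(s)\,ds\Big)\big(1-\eta(X)\big),
\end{align*}
a sum of four remainder terms (for $X>1$, where $\eta\equiv0$, the last two combine simply to $-\int_0^t w(t-s,X)\beta(s)\,ds$).

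Each term is then estimated. On $(0,1)$: $\varphi-\eta$ is a fixed $C^1$ function equal to the constant $1$ near the origin, so $-\beta(t)(\varphi-\eta)=O(\sup_s|\beta(s)|\,X)$; $r_2$ is bounded by (\ref{estr2}); by Proposition \ref{S7cor1}$(vi)$ with $p=-1/2$ together with (\ref{Ldef}) one has $|\Lambda(\sigma;Q_N(\varphi))|\le C\|Q_N(\varphi)\|_{-1/2,\rho}\,\sigma$, so the scalar is $\le C\sup_s|\beta(s)|\,\|Q_N(\varphi)\|_{-1/2,\rho}\,t^2$, multiplied by $1-\eta=O(X)$; and $\int_0^t R_w\,\beta\,ds$ is handled via $w(\sigma)=S(\sigma)\mathscr L_\varphi(\varphi)+\int_0^\sigma S(\sigma-\tau)\,T[S_\varphi(\tau)\mathscr L_\varphi(\varphi)]\,d\tau$, applying Proposition \ref{S7cor1}$(v)$ with $\theta=-1/2$ to the free-semigroup part (which removes the $\sigma^{-1}$ time singularity of the remainder) and Proposition \ref{S7cor2}$(iv)$ with $\theta=0$ to the $T$--correction part, with $\|T[S_\varphi(\tau)\mathscr L_\varphi(\varphi)]\|$ controlled through Remark \ref{lzm453} and (\ref{minvstr1}); this gives $|R_w(\sigma,Q_N(\varphi),X)|\le C\big(\|Q_N(\varphi)\|_{-1/2,\rho}+\|Q_N(\varphi)\|_{0,\rho}\big)X^{1/2}$ on $(0,1)$, and integration in $s$ over $(0,t)\subset(0,1)$ produces the $\sup_s|\beta(s)|\,\|Q_N(\varphi)\|_{0,\rho}\,X^{1/2}$ contribution. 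For $X>1$ one uses (\ref{estr2}) for $r_2$, the compact support of $\varphi$, and the fact that $\mathscr L_\varphi(\varphi)=2Q_N(\varphi)$ is compactly supported so $w(t-s)=S_\varphi(t-s)\mathscr L_\varphi(\varphi)\in X_{0,3/2}$ with $\|w(t-s)\|_{0,3/2}\le Ce^{\kappa T}\|Q_N(\varphi)\|_{0,3/2}$, whence $|w(t-s,X)|\le Ce^{\kappa T}\|Q_N(\varphi)\|_{0,3/2}X^{-3/2}$, giving the $X^{-3/2}$ bound. Bounding $\|Q_N(\varphi)\|_{-1/2,\rho}$ by Proposition \ref{QQ0} assembles the estimate for $|r_3|$. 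I expect this weight-bookkeeping to be the main obstacle --- matching the exponents $X^{1/2}$ and $X^{-3/2}$ and the weights $\|Q_N(\varphi)\|_{0,\rho}$, $\|Q_N(\varphi)\|_{-1/2,\rho}$, $\|\Phi\|_{0,3/2}$, which forces different choices of the index $\theta$ in Propositions \ref{S7cor1}--\ref{S7cor2} for the free-semigroup and $T$--correction pieces of $w$, and in particular controlling the interplay of the cutoffs $\varphi,\eta$ and the smallness $|1-\beta|<1/4$ for $X$ large.

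For the Lipschitz statement, the key observation is that $\widetilde g_2$ is \emph{affine} in $\beta$ --- $\psi$ does not involve $\beta$ --- so from (\ref{lngrsg2}),
\begin{align*}
\widetilde g_2(\beta_1;t,X)-\widetilde g_2(\beta_2;t,X)=-(\beta_1-\beta_2)(t)\,\varphi(X)-\int_0^t w(t-s,X)\,(\beta_1-\beta_2)(s)\,ds .
\end{align*}
Since $\varphi$ is compactly supported, $\sup_X|\varphi(X)|(1+X)^{\theta+\rho+1/2}\le C\|\varphi\|_\infty$; and by (\ref{defw}), $w(t-s)=\mathscr L_\varphi(\varphi)+W(t-s)$ with $\mathscr L_\varphi(\varphi)=2Q_N(\varphi)\in X_{\theta,\rho}$ (Proposition \ref{QQ0}) and $W(t-s)\in X_{0,3/2}$ bounded uniformly for $t-s\le T$ in terms of $\|\Phi\|_{\theta,\rho}$ (the bounds on $W$ obtained after (\ref{defW})); hence, using $\theta+\rho+1/2<3/2$, $\sup_{X>0,\,s\le t}|w(t-s,X)|(1+X)^{\theta+\rho+1/2}\le C(T,\|\Phi\|_{\theta,\rho},\|\varphi\|_\infty)$. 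Multiplying the identity above by $(1+X)^{\theta+\rho+1/2}$ and taking suprema over $X>0$ and $t\in[0,T]$ yields the claimed bound, with a constant which is $O(T)$ as $T\to0$ and hence strictly less than one for $T$ small.
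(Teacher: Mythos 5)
Your proposal follows essentially the same route as the paper's own proof: substitute the expansion $\psi=a\eta+r_2$ from (\ref{defa}) and $w=\Lambda+R_w$ from (\ref{Rwdef}) into (\ref{lngrsg2B}), use the cancellation of the $\eta$-coefficient, and then bound the pieces with (\ref{estr2}), (\ref{mntrvll64})--(\ref{mntrvll65}), Propositions \ref{S7cor1}, \ref{S7cor2} and \ref{QQ0}; for the Lipschitz part both you and the paper exploit that $\widetilde g_2$ is affine in $\beta$ and bound $\varphi$ by $\|\varphi\|_\infty$ and $\int_0^t|w|$ via (\ref{mntrvll64})--(\ref{mntrvll65}) (or, equivalently, via $w=\mathscr L_\varphi(\varphi)+W$).

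One difference worth noting is that your algebra is more explicit than the paper's. You isolate all four residual terms $-\beta(\varphi-\eta)$, $r_2$, $-\int R_w\beta$ and $-(1-\eta)\int\Lambda\beta$, whereas the paper writes simply $r_3=r_2-\int R_w\beta$ on $(0,1)$ and $r_3=\widetilde g_2=r_2-\int w\beta$ on $(1,\infty)$, tacitly suppressing the $\beta(\varphi-\eta)$ (resp.\ $-\beta\varphi$) term and the $(1-\eta)\int\Lambda\beta$ term. Two small comments on your write-up. First, a typo: $\varphi-\eta$ vanishes near the origin (both equal $1$ there); it is $\varphi$, not $\varphi-\eta$, that is "equal to $1$ near the origin." Second — and this is exactly the "weight-bookkeeping obstacle" you flag — your bound $-\beta(\varphi-\eta)=O(\sup_s|\beta(s)|\,X)$ is correct but carries no factor $\|Q_N(\varphi)\|$, $\|\Phi\|$ or $t$, while every term in the claimed estimate does, and those factors are precisely what make $r_3$ small for $R$ large and $t$ small in the fixed-point argument of Section~\ref{SMth1}. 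This is not a gap you introduced: the same tension is already present in the paper's derivation of (\ref{estr2}) (where the $\varphi-\eta$ contribution to $r_2=\varphi-\eta+t\mathscr L_\varphi(\varphi)+\int_0^t r_1$ is not displayed in the bound), and it dissolves only under an implicit compatibility between the cut-offs $\eta$ and $\varphi$ (effectively reading $\eta$ so that $\varphi-\eta\equiv 0$ wherever the weights could fail). So your proposal is, if anything, a more honest accounting of the same argument.
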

\begin{proof}
Since by definition $w(t, X)=S_\varphi (t)Q_N(\varphi )$, by Corollary 7.14 of \cite{m3}
\begin{align*}
&w(t-s, X)=\Lambda(t-s, Q_N(\varphi ))+R_w(t-s, Q_N(\varphi ), X) \\
&R_w(t-s, Q_N(\varphi ), X)=R_2(t-s, Q_N(\varphi ), X)+ \int _0^{t-s}R_2(t-s-\sigma , Q_N(\varphi ), X)d\sigma
\end{align*}

The same argument as in the proof of  Proposition \ref{WWW} shows that,
\begin{align}
\label{mntrvll64}
\int _0^t \left|\Lambda(t-s; Q_N(\varphi ))\right|ds\le 
C|||Q_N(\varphi )|| _{ 0, \frac {3} {2} }t 
\end{align}
and then
\begin{align*}
\left|\int _0^t \beta (s)\Lambda(t-s; Q_N(\varphi ))ds\right|\le 
C\sup _{ 0\le s\le t }|\beta (s)|||Q_N(\varphi )|| _{ 0, \frac {3} {2} }t
\end{align*}
And the same argument as in the proof of Proposition \ref{xixixi} gives, for all $\rho >0$
\begin{align}
\label{mntrvll65}
\int _0^t|R_w(t-s, Q_N(\varphi ), X)|ds\le 
\begin{cases}
\displaystyle{C_\rho ||Q_N(\varphi )|| _{ 0, \rho  }X^{1/2},\,\forall X\in (0, 1)}\\
\displaystyle{ C_\rho ||Q_N(\varphi )|| _{ 0, \rho }t^4X^{-3/2},\,\forall X>1.}
\end{cases}
\end{align}

Therefore, when $X\in (0, 1)$ and $t\in (0, 1)$,
\begin{align*}
&r_3(t, X)=r_2(t, X)-\int _0^t R_w(t-s, Q_N(\varphi )) \beta (s)ds\\
&|r_3(t, X)|\le C\left(||Q_N(\varphi )|| _{ -\frac {1} {2}, \rho  }+ ||\Phi || _{ 0, \frac {3} {2}  }\right)\, tX^{\frac {1} {2}}+
C_\rho \sup _{ 0\le s\le t }|\beta (s)| ||Q_N(\varphi )|| _{ 0, \rho  }X^{\frac {1} {2}},\,\forall \rho \ge 0.
\end{align*}
For $X>1$, by the definition of $r_3$, (\ref{defa}) and (\ref{estr2}),
\begin{align*}
&r_3(t, X)=\tilde g_2(t, X)=r_2(t, X)-\int _0^tw(t-s, X)\beta (s)ds\\
&|r_3(t, X)|\le  C\left(t||Q_N(\varphi )|| _{ -\frac {1} {2}, \rho  } X^{-1/2}+  ||\Phi || _{ 0, 3/2  }t^5X^{-3/2}\right)+\\
&\hskip 6cm +\sup _{ 0\le s \le t }|\beta (s)|\int _0^t|w(t-s, X)|ds\\
&\quad \le  C\left(t||Q_N(\varphi )|| _{ -\frac {1} {2}, \rho  } X^{-1/2}+  ||\Phi || _{ 0, 3/2  }t^5X^{-3/2}\right)+\\
&\hskip 5cm+C\sup _{ 0\le s \le t }|\beta (s)|\int _0^t|S_\varphi (t-s)(Q_N(\varphi ))(X)|ds.
\end{align*}
Arguing as in the proof of Proposition \ref{SFi2} it follows that for all $\rho\ge 3/2$ there exists $C>0$,
\begin{align*}
|r_3(t, X)| \le &C\left(t||Q_N(\varphi )|| _{ -\frac {1} {2}, \rho  }X^{-1/2}+  ||\Phi || _{ 0, 3/2  }t^5X^{-3/2}\right)+\\
&\qquad +C\sup _{ 0\le s \le t }|\beta (s)| ||Q_N(\varphi )|| _{ 0, \rho  }tX^{-3/2},\,\,\forall X>1,
\end{align*}
Since the functions $a$ and  $w$ are independent of $\beta $, we may write,
\begin{align*}
|\widetilde g_2(\beta _1; t, X)-\widetilde g_2(\beta _2; t, X)| \le
|\beta _1(t)-\beta _2(t)||\varphi (X)|+\int _0^t|w(t-s, X)||\beta _1(s)-\beta _2(s)|ds.
\end{align*}
where, by hypothesis,
\begin{align*}
|\beta _1(t)-\beta _2(t)||\varphi (X)|\le \sup _{ 0\le t\le T }|\beta _1(t)-\beta _2(t)| ||\varphi || _{ \infty },\,\,\forall X>0.
\end{align*}
On the other hand,
\begin{align*}
\int _0^t|w(t-s, X)||\beta _1(s)-\beta _2(s)|ds\le \sup _{ 0\le t\le T }|\beta _1(t)-\beta _2(t)|\int _0^t|w(t-s, X)|ds.
\end{align*}
By (\ref{Rwdef}),  (\ref{mntrvll64}) and (\ref{mntrvll65}), if  $X\in (0, 1)$,
\begin{align*}
\int _0^t|w(t-s, X)|ds\le C|||Q_N(\varphi )|| _{ 0, \frac {3} {2} }t+C_\rho ||Q_N(\varphi )|| _{ 0, \rho  }X^{1/2}
\end{align*}
for all $\rho >0$, and then,
\begin{align*}
\sup_{\substack { 0<X<1\\ 0\le t\le T }}\left|\widetilde g_2(\beta _1; t, X)-\widetilde g_2(\beta _2; t, X)\right|&\le
C\sup _{ 0\le t\le T }|\beta _1(t)-\beta _2(t)|\times \\
&\times \left( ||\varphi ||_\infty+ |||Q_N(\varphi )|| _{ 0, \frac {3} {2} }(T+X^{1/2})  \right).
\end{align*}

When $X>1$, by (\ref{Rwdef}),  (\ref{mntrvll64}) and (\ref{mntrvll65}) again
\begin{align*}
\int _0^t|w(t-s, X)|ds\le C|||Q_N(\varphi )|| _{ 0, \frac {3} {2} }t+ C_\rho ||Q_N(\varphi )|| _{ 0, \rho }t^4X^{-3/2},\,\forall X>1.
\end{align*}
and
\begin{align*}
\sup_{\substack { X>1\\ 0\le t\le T }}\Big|\widetilde g_2(\beta _1; t, X)-&\widetilde g_2(\beta _2; t, X)\Big|\le
C\sup _{ 0\le t\le T }|\beta _1(t)-\beta _2(t)|\times \\
&\times \left( ||\varphi ||_\infty+   ||Q_N(\varphi )|| _{ 0, \frac {3} {2} }\left(T+T^4X^{-3/2}\right)\right).
\end{align*}
\end{proof}
\section{Proof of Theorem \ref{Mth1}}
\label{SMth1}
\setcounter{equation}{0}
\setcounter{theo}{0}
As indicated at the beginning of Section \ref{Sg1g2},  the function $\overline g(\overline \tau )$ introduced in (\ref{S1E09}) is, with some abuse of notation still denoted $g(t)$ until Subsection  \ref{endp} at the end  of  this Section. 

Let us first of all  denote, for $T>0, K>0$, 
\begin{align*}
&B _{ T, X _{ -r, r+q } }(0,  K)=\left\{h\in C([0, T); X _{ -r, r+q }); \sup _{ 0\le s\le T }||h(s)|| _{ -r, r+q }< K
\right\}\\
&\text{and}\,\,B _{ T}(1, 1/4)=\left\{\beta \in C([0, T]);\,\,\sup _{ 0\le s\le T }|\beta (s)-1|<1/4 \right\}
\end{align*}

\subsection{Overview of the argument.}
\label{overvw}
The local classical solution $f(t, X)$ of  equation (\ref{lzm75})  in Theorem \ref{Mth1} is obtained in the following Subsection as
\begin{align*}
f(t, X)=\widetilde f(\overline \tau , X)=\frac{\beta (\overline \tau )\varphi  _0(X)+\overline g(\overline \tau , X)}{X},\,\,\forall \overline \tau \in (0, T^*),\,\forall X>0
\end{align*}
for some $T^*>0$, where  $t$ and $\overline \tau $ are related by (\ref{S1E09}) and $\overline g$ is obtained in the following Subsection through a fixed point argument applied to an operator defined as follows.\\
For any given  function $g\in B _{ T, X _{ -r, r+q } }(0,  K)$ we obtain a function $\tilde \beta \in B _{ T}(1, 1/4)$ such that
\begin{align}
&\tilde \beta (t)=\mathscr W(t, G; X)+a(t)-\int _0^t \Lambda(t-s, Q_N(\varphi ))\tilde\beta (s)ds,\label{pnyx}\\
&G(t)=\Big(\tilde\beta  (t) Q_N(\varphi ) +\tilde \beta  ^{-1}(t) Q_N(g(t)) \Big)\label{mnst3}
\end{align}
For that function $\tilde \beta $, the two functions   $\widetilde g_1$, $\widetilde g_2$ (introduced respectively in (\ref{tildeg1}), (\ref{lngrsg2})) satisfy (by Proposition \ref{tildeg1} and Proposition \ref{mnst2}),
\begin{align}
\widetilde g_1(t, X)+\widetilde g_2(t, X)&=\left(\mathscr W(t, G; X)- \tilde \beta (t)+a(t)-\int _0^t\Lambda(t-s, Q_N(\varphi ) )\tilde \beta (s)ds\right)\times \nonumber  \\
&\hskip 4.5cm \times \eta(t, X)+r_0(t, G, X))+r_3(t, X)\label{mnst4} \\
&=r_0(t, G, X))+r_3(t, X)\label{mnst4b}
\end{align}
The map, $g\mapsto \tilde \beta $ is proved to be a contraction from $B _{ T, X _{ -r, r+q } }(0,  K)$ to $B _{ T}(1, 1/4)$.\\
 We then denote $\tilde g=\widetilde g_1+\widetilde g_2 $ and define the operator $\mathcal T$ as $\mathcal T(g)=\widetilde g$. A fixed point argument is  applied to the operator $\mathcal T$ in order to obtain a function $\overline g$ such that  $T(\overline g)=\overline g$.\\
  For  the last step of the proof  one must recover the notation $\overline \tau$ introduced in (\ref{S1E09}) and then denote as $\overline{g}(\overline\tau )$ the function obtained through the fixed point argument on $\mathcal T$.\\
The   $ f(t)$ local classical solution $ f(t)$  of  (\ref{lzm75})  in Theorem \ref{Mth1},  is then obtained, first defining
\begin{align*}
\widetilde f(\overline \tau , X)=\frac{\beta (\overline \tau )\varphi  _0(X)+\overline g(\overline \tau , X)}{X},\,\,\forall \overline \tau \in (0, T^*),\,\forall X>0
\end{align*}
for some $T^*>0$ and then inverting the change of time variable (\ref{S1E09}).

\subsection{Existence of the function $\overline g$.}
\label{fpa}
In this Subsection we  prove the existence of the function $\overline g $. To this end, as described in Subsection \ref{overvw} we first obtain the following.
\begin{prop} 
\label{pnyxB}
Given $K>0$, $r\in [0, 1/2)$, $0\le q<3$,

(i) There exists $T_1$ sufficiently small  such that  there exists $\beta $ that satisfies (\ref{pnyx}) for $t\in (0, T_1)$,  for $g$ such that $||g|| _{ -r, r+q }\le K$.

(ii)  There exists $T_2$ such that the map $g\to \beta $ is a contraction from  $B _{T, X _{ -r, r+q } }(0,  K)$ into $C([0, T])$ for $T\le T_2$.
\end{prop}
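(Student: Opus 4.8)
The plan is to prove Proposition \ref{pnyxB} by setting up a Banach fixed point argument for the map $\mathcal B: \beta \mapsto \tilde\beta$ defined by the right-hand side of (\ref{pnyx}), where $g\in B_{T, X_{-r,r+q}}(0,K)$ is held fixed and $G=G_{\beta,g}$ depends on $\beta$ through (\ref{mnst3}). First I would fix the exponents: choose $\theta\ge \tfrac12-r$ and $\rho\ge 0$ with $\theta+\rho\le q+\tfrac12$ and $\theta+\rho<3/2$ and $\theta<1/2$, which is possible since $r<1/2$ and $q<3$ (e.g. $\theta=\tfrac12-r$, $\rho$ small); these are exactly the hypotheses needed to invoke Proposition \ref{Prop7.1}, Proposition \ref{WWW}, Proposition \ref{WWW2} and the estimates on $\mathscr W(t,\mathscr L_\varphi(\mathscr L_\varphi(\varphi)))$ and on $a(t)$ in the preliminary section. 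I would work on the complete metric space $B_T(1,1/4)=\{\beta\in C([0,T]):\sup_{[0,T]}|\beta-1|<1/4\}$ with the sup norm.

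The two things to check are that $\mathcal B$ maps $B_T(1,1/4)$ into itself and that it is a contraction, for $T$ small enough depending on $K$. For the self-map property: write $\mathcal B(\beta)(t)-1 = \big(\mathscr W(t,G)-\mathscr W(t,\mathscr L_\varphi(\mathscr L_\varphi(\varphi)))\big) + (a(t)-1) + \big(\mathscr W(t,\mathscr L_\varphi(\mathscr L_\varphi(\varphi))) + a(t)-1\big)\cdots$ — more cleanly, estimate each of the three terms on the right of (\ref{pnyx}) separately. By Proposition \ref{Prop7.1}, for $\beta\in B_T(1,1/4)$ one has $\|G_{\beta,g}(s)\|_{\theta,\rho}\le C(\beta(s)+\beta^{-1}(s)\|g(s)\|_{-r,r+q}^2)\le C(1+K^2)$; by Proposition \ref{WWW} this gives $|\mathscr W(t,G)|\le C(1+K^2)(t^{1-2\theta}+t^{2-2\theta'})$ for suitable $\theta'$, which tends to $0$ as $t\to 0$. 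The term $a(t)-1=\int_0^t\mathscr W(s,\mathscr L_\varphi(\mathscr L_\varphi(\varphi)))ds$ is bounded by $C\|\Phi\|_{-1/2,2}t^3$ using the estimate $|\mathscr W(t,\mathscr L_\varphi(\mathscr L_\varphi(\varphi)))|\le\|\Phi\|_{-1/2,2}t^2$; and the Volterra term $\int_0^t\Lambda(t-s,Q_N(\varphi))\tilde\beta(s)ds$ is bounded by $C\|Q_N(\varphi)\|_{0,3/2}\,t\cdot\sup|\tilde\beta|$ via (\ref{mntrvll64}). So for $T$ small (depending on $K$, on $\|\Phi\|_{-1/2,2}$ and $\|Q_N(\varphi)\|_{0,3/2}$) each piece is $<1/12$, giving $\sup_{[0,T_1]}|\mathcal B(\beta)-1|<1/4$. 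A fixed point $\beta$ of $\mathcal B$ on $[0,T_1]$ then satisfies (\ref{pnyx}); alternatively, since (\ref{pnyx}) is already a linear Volterra equation in $\tilde\beta$ once $\beta$ inside $G$ is frozen, one could instead solve it directly by Picard iteration on the Volterra kernel and then feed the result back — but the cleanest route is the contraction below, which simultaneously yields existence and the dependence on $g$.

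For the contraction and the dependence on $g$: given $g_1,g_2\in B_{T,X_{-r,r+q}}(0,K)$ and $\beta_i=\mathcal B(\beta_i;g_i)$, subtract the two copies of (\ref{pnyx}). The difference splits into $\mathscr W(t,G_1)-\mathscr W(t,G_2)$, which by Proposition \ref{WWW2} is bounded by $C\tau(t)\big(\sup|\beta_1-\beta_2|(\|Q_N(\varphi)\|_{\theta,\rho}+K^2) + \sup\|g_1-g_2\|_{-r,r+q}\cdot 2K\big)$ with $\tau(t)\to 0$; plus the Volterra term $\int_0^t\Lambda(t-s,Q_N(\varphi))(\beta_1-\beta_2)(s)ds$, bounded by $Ct\,\|Q_N(\varphi)\|_{0,3/2}\sup|\beta_1-\beta_2|$. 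Absorbing the $\sup|\beta_1-\beta_2|$ terms into the left side for $T=T_2$ small enough (Lipschitz constant $<1$, using $\tau(T_2), T_2$ small) gives $\sup_{[0,T_2]}|\beta_1-\beta_2|\le \tfrac12\sup\|g_1-g_2\|_{-r,r+q}$, which is both the uniqueness/existence statement for fixed $g$ (take $g_1=g_2$, two solutions $\beta_1,\beta_2$) and the contraction property (ii). The main obstacle is purely bookkeeping: checking that the triple of constraints on $(\theta,\rho,\theta')$ required by Propositions \ref{WWW} and \ref{WWW2} can be met simultaneously with $\theta<1/2$ (so the powers of $t$ are positive and the smallness in $T$ genuinely kicks in) under the given ranges $r\in[0,1/2)$, $q\in[0,3)$ — but since $\theta=\tfrac12-r\in(0,1/2]$ can be pushed strictly below $1/2$ by enlarging it slightly (using the injection (\ref{thetaP})) and $\rho$ can be taken as small as we like, there is enough room, and the rest is routine estimation with the $L^\infty_t$-in-time, $X_{\theta,\rho}$-in-space norms already assembled in the preceding sections.
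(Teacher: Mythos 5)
Your proposal is correct and follows essentially the same route as the paper: a Banach fixed point argument for the map $\beta \mapsto \mathscr W(t,G_{\beta,g}) + a(t) - \int_0^t\Lambda(t-s,Q_N(\varphi))\beta(s)\,ds$ on the ball $B_T(1,1/4)$, using Proposition \ref{Prop7.1} to control $\|G_{\beta,g}\|_{\theta,\rho}$, Proposition \ref{WWW} for $|\mathscr W(t,G)|$, the bound $|a(t)-1|\lesssim \|\Phi\|_{-1/2,2}t^3$, and (\ref{mntrvll64}) for the Volterra term, followed by the Lipschitz estimate in $(\beta,g)$ to get both the contraction in $\beta$ for fixed $g$ (hence existence, point (i)) and the Lipschitz dependence on $g$ (point (ii)). The only cosmetic difference is that you invoke Proposition \ref{WWW2} directly for the difference $\mathscr W(t,G_1)-\mathscr W(t,G_2)$, whereas the paper redoes the underlying $X_{\theta,\rho}$ and $X_{\frac12+\theta'',\rho''-1}$ norm estimates inline; and one should be slightly careful that existence requires the contraction of $\mathcal B(\cdot;g)$ itself (i.e.\ bounding $\mathcal B(\beta_1;g)-\mathcal B(\beta_2;g)$, not merely the difference of two pre-existing fixed points), but your estimates deliver exactly that.
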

\begin{proof}
The function $\beta $ is obtained as a fixed point in $C([0, T])$ of
\begin{align*}
\mathscr T (\beta )=\mathscr W(t, G  _{ \beta , g } )+a(t)-\int _0^t \Lambda(t-s, Q_N(\varphi ) )\beta (s)ds.
\end{align*}
First, for all $t\in [0, T]$,
\begin{align*}
\mathscr T (\beta )(t)-1=\mathscr W(t, G  _{ \beta , g } )+(a(t)-1)-\int _0^t \Lambda(t-s,  Q_N(\varphi ))|\, |\beta (s)|ds.
\end{align*}
By Proposition  \ref{WWW},  if $\theta\ge \frac {1} {2}-r$ and $\rho +\theta \le q+\frac {1} {2}$, then (\ref{FFGG1}) holds, and
\begin{align*}
|\mathscr W(t, G  _{ \beta , g })|\le C\sup _{ 0\le s\le t }||G  _{ \beta , g } (s)|| _{ \theta, \rho  }t^{1-2\theta}+
 C\sup _{ 0\le \sigma \le t } || G  _{ \beta , g } (s)|| _{\frac {1} {2}+ \theta, \rho-1  }t^{2-2\theta}\\
 \le  C\left( t^{1-2\theta}+ t^{2-2\theta}\right)\sup _{ 0\le s\le t }\left( \beta (s)+\beta ^{-1}(s)||g(s)||^2 _{ -r, r+q }\right)\\
 \le C\left( T^{1-2\theta}+ T^{2-2\theta}\right)\left( 1+\sup _{ 0\le s\le T }||g(s)||^2 _{ -r, r+q }\right).
\end{align*}
By definition of $a$ and Proposition \ref{WWW}
\begin{align*}
|a(t)-1|&\le C(\varphi )t+\int _0^t |\mathscr W(s, \mathscr L_\varphi \left( \mathscr L_\varphi (\varphi )\right))|ds\le C(\varphi )t+C||\Phi  || _{ -\frac {1} {2}, 2 }t^3
\end{align*}

By  (\ref{mntrvll64}),
\begin{align}
\int _0^t|\Lambda (t-s, Q_N(\varphi ))||\beta (s)|ds\le
C\sup _{ 0\le s\le t }|\beta (s)||||Q_N(\varphi )|| _{ 0, \frac {3} {2} }t.\label{lmns}
\end{align}

All this shows that for $T<1$,
\begin{align}
\sup _{ 0\le t\le T }\left|\mathscr T(\beta )(t)-1\right|\le   CT^{1-2\theta}\left( 1+\sup _{ 0\le s\le T }||g(s)|| ^2_{ -r, r+q }\right)+\\
+C(\varphi )T+ C||\Phi  || _{ -\frac {1} {2}, 2 }t^3+C\sup _{ 0\le s\le t }|\beta (s)||||Q_N(\varphi )|| _{ 0, \frac {3} {2} }T
\end{align}

If we denote,
\begin{align*}
B _{ T}(1, 1/4)=\left\{\beta \in C([0, T]);\,\,\sup _{ 0\le s\le T }|\beta (s)-1|<1/4 \right\}
\end{align*}
for $\theta\in (0, 1/2)$, and $T$ sufficiently small,
\begin{align*}
&CT^{1-2\theta}\left( 1+\sup _{ 0\le s\le T }||g(s)|| ^2_{ -r, r+q }\right)+\\
&+C(\varphi )T+ C||\Phi  || _{ -\frac {1} {2}, 2 }t^3+C\sup _{ 0\le s\le t }|\beta (s)||||Q_N(\varphi )|| _{ 0, \frac {3} {2} }T<1
\end{align*}
the map $\mathscr T$ sends then the ball $B_T(1, 1/4)$ into itself. 

On the other hand, since $\mathscr W$ is linear with respect to $G $, for $\beta_1$ and $\beta _2$ in  the ball $B(1, 1/4)$  and $t\in (0, T)$,
\begin{align}
\label{lngrs}
|\mathscr T(\beta _1-\beta _2)(t)|\le |\mathscr W(t, G _{ \beta _1, g }-G _{ \beta _2, g })|+
\int _0^t |\Lambda (t-s, \Phi)| |\beta_1 (s)-\beta _2(s)|ds.
\end{align}
By Proposition \ref{WWW},
\begin{align*}
|\mathscr W(t, G _{ \beta _1, g }-G _{ \beta _2, g }; X)|\le C\sup _{ 0\le s\le t }||(G _{ \beta _1, g }-G _{ \beta _2, g })(s)|| _{ \theta, \rho  }t^{1-2\theta}+\\+
 C\sup _{ 0\le \sigma \le t } || (G _{ \beta _1, g }-G _{ \beta _2, g })(s)|| _{\frac {1} {2}+ \theta, \rho-1  }t^{2-2\theta}.
\end{align*}
By definition of $G$, if  $\theta\ge \frac {1} {2}-r$ and $\rho +\theta \le q+\frac {1} {2}$, 
\begin{align*}
&||(G _{ \beta _1, g }-G _{ \beta _2, g })(s)|| _{ \theta, \rho  }\le |\beta _1(s)-\beta _2(s)|||Q_N(\varphi )||_{ \theta, \rho  }+\\
&\hskip 5cm +\frac {|\beta _1(s)-\beta _2(s)|} {\beta _1(s)\beta _2(s)}||Q_N(g(s))||_{ \theta, \rho  }\\
&\le  |\beta _1(s)-\beta _2(s)|||Q_N(\varphi )||_{ \theta, \rho  }+C\frac {|\beta _1(s)-\beta _2(s)|} {\beta _1(s)\beta _2(s)}||g(s)||^2 _{ -r, r+q }\\
&\le  |\beta _1(s)-\beta _2(s)|||Q_N(\varphi )||_{ \theta, \rho  }+C|\beta _1(s)-\beta _2(s)|\,||g(s)||^2 _{ -r, r+q }
\end{align*}
and
\begin{align*}
&||(G _{ \beta _1, g }-G _{ \beta _2, g })(s)||   _{\frac {1} {2}+ \theta, \rho-1  }\le |\beta _1(s)-\beta _2(s)|||Q_N(\varphi )||  _{\frac {1} {2}+ \theta, \rho-1  }+\\
&\hskip 5cm +\frac {|\beta _1(s)-\beta _2(s)|} {\beta _1(s)\beta _2(s)}||Q_N(g(s))||  _{\frac {1} {2}+ \theta, \rho-1  }\\
&\le  |\beta _1(s)-\beta _2(s)|||Q_N(\varphi )||  _{\frac {1} {2}+ \theta, \rho-1  }+C\frac {|\beta _1(s)-\beta _2(s)|} {\beta _1(s)\beta _2(s)}||g(s)||^2 _{ -r, r+q }\\
&\le  |\beta _1(s)-\beta _2(s)|||Q_N(\varphi )||  _{\frac {1} {2}+ \theta, \rho-1  }+C |\beta _1(s)-\beta _2(s)|\,||g(s)||^2 _{ -r, r+q }.
\end{align*}
It follows,
\begin{align}
|\mathscr W(t, G _{ \beta _1, g }-&G _{ \beta _2, g })|\le  C\sup _{ 0\le s\le t }|\beta _1(s)-\beta _2(s)|T^{1-2\theta}\times \nonumber \\
&\times \Bigg( ||Q_N(\varphi )|| _{ \theta, \rho  }+
 ||Q_N(\varphi )|| _{ \frac {1} {2}+\theta, \rho -1 }+\sup _{ 0\le s\le T }||g(s)||^2 _{ -r, r+q } \label{mntrvll}
\Bigg).
\end{align}
By (\ref{mntrvll64}), the second term in the right hand side of (\ref{lngrs} is estimated using  (\ref{mntrvll64}),
\begin{align}
\int _0^t |\Lambda (t-s, Q_N(\varphi ) )| |\beta_1 (s)-\beta _2(s)|ds& \le C\sup _{ 0\le s\le t }|\beta _1(s)-\beta _2(s)|
\int _0^t | \Lambda (t-s, Q_N(\varphi ) )|ds \nonumber\\
& \le C\sup _{ 0\le s\le t }|\beta _1(s)-\beta _2(s)|||Q_N(\varphi )|| _{ 0, \frac {3} {2} }T.\label{lrds}
\end{align}
Estimates (\ref{mntrvll}) and (\ref{lrds}) show that for $T$ sufficiently small the map $\mathscr T$ is a contraction from the ball $B_T(1, 1/4)$  into itself and has then a fixed point $\beta $. This shows point (i).

Suppose now that  $g\in B _{ T, X _{ -r, r+q } }(0, K)$ and $h\in B _{T,  X _{ -r, r+q } }(0,  K)$ and denote $\beta _g$ and $\beta _h$ the solutions of
\begin{align*}
\beta _g=\mathscr W(t, G _{ g, \beta _g })+a(t)-\int _0^t\Lambda(t-s, Q_N(\varphi ))\beta_g (s)ds\\
\beta _h=\mathscr W(t, G _{ h, \beta _h })+a(t)-\int _0^t  \Lambda(t-s, Q_N(\varphi ))\beta_h (s)ds
\end{align*}
and then,
\begin{align}
\label{drn}
\beta _g-\beta _h=\mathscr W(t, G _{ g, \beta _g }-G _{ h, \beta _h })-
\int _0^t\Lambda(t-s, Q_N(\varphi ))(\beta_g-\beta _h) (s)ds.
\end{align}
The second term in the right hand side  of (\ref{drn}) is easily bounded using again (\ref{mntrvll64}),
\begin{align}
\label{lzm1}
\left|\int _0^t\Lambda(t-s, Q_N(\varphi ))(\beta_g-\beta _h) (s)ds\right|\le \sup _{ 0\le s\le t }|(\beta_g-\beta _h)(s)|||Q_N(\varphi )|| _{ 0, \frac {3} {2} }T
\end{align}

On the other hand, by Proposition \ref{WWW}, for $\theta' \in [0, 1/2)$, $\theta'' >-1$, $\rho' >1$ and $\rho'' >2$, the first term  in the right hand side  of (\ref{drn})  
\begin{align}
&|\mathscr W(t, G _{ g, \beta _g }-G _{ h, \beta _h })|\le  C\sup _{ 0\le s\le t }||(G _{ g, \beta _g }-G _{ h, \beta _h })(s)|| _{ \theta', \rho'  }t^{1-2\theta'}+\nonumber\\
&\hskip 4cm + C\sup _{ 0\le \sigma \le t } || (G _{ g, \beta _g }-G _{ h, \beta _h })(s)|| _{\frac {1} {2}+ \theta''\!,\,  \rho''-1  }t^{2-2\theta''},\label{lmns1}
\end{align}
In order to bound the right hand side of (\ref{lmns1}) we write,
\begin{align}
&|(G _{ g, \beta _g }-G _{ h, \beta _h })(s)|\le |(\beta _g-\beta _h)(s)||Q_N(\varphi )|+\nonumber \\
& +|\beta _g^{-1}(s)-\beta _h^{-1}(s)||Q_N(g(s))|+ \beta _h^{-1}|Q_n(g(s))-Q_n(h(s))|.\label{lmns}
\end{align} 
Then, by Proposition 7.15 of \cite{m3}, if $\theta'=\frac {1} {2}-r$, $\rho '=r+q$,
\begin{align}
&||(\beta _g-\beta _h)(s)Q_N(\varphi )|| _{ \theta', \rho ' }\le  |(\beta _g-\beta _h)(s)| ||Q_N(\varphi )|| _{ \theta', \rho ' }\le C |(\beta _g-\beta _h)(s)|\label{lzm2}\\
&||(\beta _g^{-1}(s)-\beta _h^{-1}(s))Q_N(g(s))|| _{ \theta', \rho ' }\le C |(\beta _g-\beta _h)(s)| ||Q_N(g(s))|| _{ \theta', \rho ' }\nonumber\\
&\hskip 5.7cm \le C |(\beta _g-\beta _h)(s)||g(s)|| ^2_{ -r, r+q }.\label{lzm3}\
\end{align}
 By Proposition 7.15 of \cite{m3}, the same choices of $\theta'$   and $\theta''$ give,
\begin{align}
||Q_n(g(s))-Q_n(h(s))|| _{\theta', \rho ' }\le C||(g-h)(s)|| _{ -r, r+q }(||g(s)|| _{ -r, r+q }+||h(s)|| _{ -r, r+q }),\label{lzm4}
\end{align}
and therefore,
\begin{align}
\label{lzm5}
\sup _{ 0\le s\le t }||(G _{ g, \beta _g }-G _{ h, \beta _h })(s)|| _{ \theta', \rho'  }t^{1-2\theta'}\le
C t^{1-2\theta'}\Bigg(\sup _{ 0\le s\le t }|(\beta _g-\beta _h)(s)|\times \nonumber\\
\times \left(1+\sup _{ 0\le s\le t }||g(s)||^2 _{ -r, r+q }\right)
+\sup _{ 0\le s\le t }||g(s)-h(s)|| _{ -r, r+q }\times \nonumber \\
\hskip 4cm \times \sup _{ 0\le s\le t }\left( ||g(s))|| _{ -r, r+q }+||h(s)|| _{ -r, r+q }\right)\Bigg)
\end{align}
Similarly, if $\theta''=-r,\rho ''=1+r+q=1-\theta''+q$ with $0<r<1/2$,
\begin{align*}
&||(\beta _g-\beta _h)(s)Q_N(\varphi )|| _{\frac {1} {2}+ \theta'', \rho'' }\le  |(\beta _g-\beta _h)(s)| ||Q_N(\varphi )|| _{\frac {1} {2}+ \theta'', \rho'' }\le C |(\beta _g-\beta _h)(s)|\\
&||(\beta _g^{-1}(s)-\beta _h^{-1}(s))Q_N(g(s))||  _{\frac {1} {2}+ \theta'', \rho'' }\le C |(\beta _g-\beta _h)(s)| ||Q_N(g(s))|| _{\frac {1} {2}+ \theta'', \rho'' }\\
&\hskip 5.7cm \le C |(\beta _g-\beta _h)(s)||g(s)||^2 _{ -r, r+q }.
\end{align*}
and
\begin{align*}
||Q_n(g(s))-Q_n(h(s))|| _{\frac {1} {2}+\theta'', \rho ''-1 }\le C||(g-h)(s)|| _{ -r, r+q }(||g(s)|| _{ -r, r+q }+||h(s)|| _{ -r, r+q }).
\end{align*}
and then,
\begin{align}
\label{lzm6}
\sup _{ 0\le s\le t }||(G _{ g, \beta _g }-G _{ h, \beta _h })(s)|| _{ \theta'', \rho''  }t^{1-2\theta''}\le
C t^{1-2\theta''}\Bigg(\sup _{ 0\le s\le t }|(\beta _g-\beta _h)(s)|\times \nonumber\\
\times \left(1+\sup _{ 0\le s\le t }||g(s)||^2 _{ -r, r+q }\right)
+\sup _{ 0\le s\le t }||g(s)-h(s)|| _{ -r, r+q }\times \nonumber\\
\hskip 4cm \times \sup _{ 0\le s\le t }\left( ||g(s))|| _{ -r, r+q }+||h(s)|| _{ -r, r+q }\right)\Bigg).
\end{align}
It follows from (\ref{lmns1}), (\ref{lzm5}) and (\ref{lzm6}),

\begin{align}
\label{lzm7}
&|\mathscr W(t, G _{ g, \beta _g }-G _{ h, \beta _h })|\le C \left(t^{1-2\theta'}+t^{1-2\theta''}\right)\Bigg(\sup _{ 0\le s\le t }|(\beta _g-\beta _h)(s)|\times \nonumber\\
&\times \left(1+\sup _{ 0\le s\le t }||g(s)|| ^2_{ -r, r+q }\right)
+\sup _{ 0\le s\le t }||g(s)-h(s)|| _{ -r, r+q }\times \nonumber\\
&\hskip 4cm \times \sup _{ 0\le s\le t }\left( ||g(s))|| _{ -r, r+q }+||h(s)|| _{ -r, r+q }\right)\Bigg).
\end{align}
By (\ref{drn}), (\ref{lzm1}) and (\ref{lzm7}), for $t\in (0, T)$ small, so that $t^{1-2\theta''}<t^{1-2\theta'}$,
\begin{align}
\label{drn3}
\sup _{ 0\le s\le t }&|(\beta _g-\beta _h)(s)|\le   C T^{1-2\theta'}\Bigg(\sup _{ 0\le s\le t }|(\beta _g-\beta _h)(s)|\left(1+ K^2\right)\nonumber\\
&+R\sup _{ 0\le s\le t }||g(s)-h(s)|| _{ -r, r+q }\Bigg)+\sup _{ 0\le s\le t }|(\beta_g-\beta _h)(s)|
||Q_N(\varphi )|| _{ 0, \frac {3} {2} }T
\end{align}
If
\begin{align}
\label{pnyx2}
C T^{1-2\theta'}(1+ K^2)+||Q_N(\varphi )|| _{ 0, \frac {3} {2} }T<\frac {1} {2}
\end{align}
then,
\begin{align}
\label{drn3}
\sup _{ 0\le s\le t }|(\beta _g-\beta _h)(s)|\le  & C T^{1-2\theta'} K\sup _{ 0\le s\le t }||g(s)-h(s)|| _{ -r, r+q }
\end{align}
and  the point (ii) follows if $T_2$ satisfies  (\ref{pnyx2}) and $C T_2^{1-2\theta'} K<1$.
\end{proof}

It is now possible to prove the existence of the function $\overline g$ applying a fixed point argument to the operator $\mathcal T$, mentioned in Subsection \ref{overvw}),  defined as follows.
Define now the operator $\mathcal T$ (mentioned in Subsection \ref{overvw})  on the set  $B _{ T, X _{ -r, q+r } }(0, M)$ as follows:

(i) given $g\in B _{ T, X _{ -r, q+r } }(0, M)$ let $\tilde \beta $ the solution of equation (\ref{pnyx}) given by Proposition \ref{pnyxB}

(ii) define  then $\tilde g_1$  by (\ref{lngrsg1}) and  $\tilde g_2$ by (\ref{lngrsg2}) with $\beta $ replaced by $\tilde \beta $ in formulas  (\ref{lngrsg1}) and (\ref{lngrsg2}), 

(iii) define finally $\tilde g=\tilde g_1+\tilde g_2$ and $T(g)=\tilde g$.
 \begin{align*}
\mathcal T(g)=\widetilde g=\widetilde g_1+\widetilde g_2.
\end{align*}
Let us prove now that  the operator $\mathcal T$ has  a fixed point on  $B _{ T, X _{ -r, q+r } }(0, M)$ under suitable conditions on the parameters $M$ and $T$, and also on the parameter $R$ that appears in the function $\varphi $ (cf. (\ref{defvfiR}))
\begin{prop}
\label{InnD+1}
There exists $M^*>0$, $T^*>0$ and $R^*>0$ such that  the map  $\mathcal T$ has a fixed point in $B _{ T^*, X _{ -r, q+r } }(0, M^*)$.
\end{prop}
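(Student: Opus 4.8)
The plan is to run a Banach fixed point argument for $\mathcal T$ on the complete metric space $B_{T,X_{-r,q+r}}(0,M)$ with the $\sup_t\|\cdot\|_{-r,r+q}$ metric, choosing the parameters $M$, $T$ and $R$ in a specific order so that $\mathcal T$ both maps the ball into itself and is a strict contraction. First I would fix $\theta=\frac12-r$ (so $\theta\in(0,\frac12)$ since $r\in(0,\frac12)$) and $\rho$ with $\theta+\rho\le\min(q+\frac12,\frac32)$, as required by Propositions \ref{tildeg1}, \ref{mnst2} and \ref{pnyxB}. By the identity (\ref{mnst4b}), for $g\in B_{T,X_{-r,q+r}}(0,M)$ and $\tilde\beta$ the solution of (\ref{pnyx}) furnished by Proposition \ref{pnyxB}(i), we have $\mathcal T(g)=\widetilde g_1+\widetilde g_2=r_0(t,G,X)+r_3(t,X)$, so the whole estimate of $\mathcal T(g)$ reduces to the pointwise bounds on $r_0$ and $r_3$ already established.

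The self-mapping step is the bookkeeping: I would estimate $\|\mathcal T(g)(t)\|_{-r,r+q}=\sup_X X^{-r}(1+X)^{q}|r_0(t,G,X)+r_3(t,X)|$ using Proposition \ref{tildeg1} and Proposition \ref{mnst2}. For $X\in(0,1)$ the bounds are of the form $C(\cdots)X^{1/2}(X^{-\theta}+t^{1-2\theta})$, and since $\theta=\frac12-r$ one gets $X^{1/2-\theta}=X^r$, so $X^{-r}X^{1/2}(X^{-\theta}+t^{1-2\theta})=1+X^{1/2}t^{1-2\theta}$, which is bounded on $X\in(0,1)$; for $X>1$ the decay $X^{-3/2}$ beats $X^{q-r}$ provided $q-r<3/2$, which holds since $q<3$... more precisely one needs $\rho$ chosen with $q+r\le\rho+\theta+\cdots$; here the constraint $\theta+\rho\le q+\frac12$ together with $q-3/2<q+\frac12$ makes the $X>1$ part finite. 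The key point is that \emph{all} the constants multiplying these powers carry either a positive power of $T$ (from $t^{1-2\theta}$, $t^{4-2\theta}$, $t$, etc.) or a negative power of $R$ (from $\|Q_N(\varphi)\|_{\theta,\rho}\le CR^{-1/2+\theta+\rho}$ in Proposition \ref{QQ0}, from $\|\Phi\|_{\theta,\rho}\to0$ as $R\to\infty$ in Proposition \ref{propFi}, and from the extra $R^{-1}$ factors in $r_3$). The $\beta^{-1}(s)\|g(s)\|^2_{-r,r+q}$ terms are bounded by $C M^2$ since $\tilde\beta\in B_T(1,1/4)$. Hence $\sup_{0\le t\le T}\|\mathcal T(g)(t)\|_{-r,r+q}\le C(T^{\epsilon}+R^{-\delta})(1+M^2)$ for some $\epsilon,\delta>0$; choosing first $M$ large enough relative to the $R$-independent, $T$-independent part of the constant, then $R$ large, then $T$ small, makes this $\le M$.

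For the contraction step, given $g,h\in B_{T,X_{-r,q+r}}(0,M)$ with associated $\tilde\beta_g,\tilde\beta_h$, I would use Proposition \ref{pnyxB}(ii) to get $\sup_t|\tilde\beta_g-\tilde\beta_h|\le CT^{1-2\theta}M\sup_t\|g-h\|_{-r,r+q}$, then feed this into the Lipschitz estimates of Proposition \ref{WWW2} (for $r_0$, i.e. $\widetilde g_1$) and the Lipschitz estimate of Proposition \ref{mnst2} (for $\widetilde g_2$), which both give a bound of the form $Ct^{\epsilon}\big(\sup|\tilde\beta_g-\tilde\beta_h|(\|Q_N(\varphi)\|_{\theta,\rho}+M^2)+\sup\|g-h\|_{-r,r+q}\cdot M\big)$. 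Substituting the $\tilde\beta$-difference estimate and absorbing, one gets $\sup_t\|\mathcal T(g)-\mathcal T(h)\|_{-r,r+q}\le C(T^{\epsilon}+R^{-\delta})(1+M^2)\sup_t\|g-h\|_{-r,r+q}$, so shrinking $T$ further (after $M$ and $R$ are fixed) makes the constant $<1$. By the Banach fixed point theorem there is a unique $\overline g\in B_{T^*,X_{-r,q+r}}(0,M^*)$ with $\mathcal T(\overline g)=\overline g$. I expect the main obstacle to be purely organizational: verifying that the admissible ranges of $(\theta,\rho)$ in the three propositions \ref{tildeg1}, \ref{mnst2}, \ref{pnyxB} (and the secondary indices $\theta',\rho',\theta'',\rho''$ appearing there) are simultaneously compatible with $r\in(0,1/2)$, $q\in(1,3/2)$ — in particular the constraints $\theta\ge\frac12-r$, $\theta+\rho\le q+\frac12$, $\theta+\rho\le\frac32$, $\rho'\ge1$, $\rho''>2$ — and that after fixing those indices every error term genuinely carries a small factor $T^\epsilon$ or $R^{-\delta}$, with no "exchange rate" term of the form $R^{+\text{power}}T^{-\text{power}}$ that cannot be made small; the choice $q<3/2$ in the hypothesis of Theorem \ref{Mth1} (stronger than the $q<3$ of Theorem \ref{Mth2.0}) is presumably exactly what guarantees this.
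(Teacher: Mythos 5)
Your overall plan — show $\mathcal T$ maps the ball into itself and is a contraction by assembling the pointwise bounds on $r_0$ and $r_3$ from Propositions \ref{tildeg1}, \ref{mnst2}, \ref{QQ0}, \ref{propFi}, together with the $\tilde\beta$-contraction of Proposition \ref{pnyxB} and the Lipschitz estimates of Propositions \ref{WWW2} and \ref{mnst2} — is exactly the paper's proof. The choice $\theta=\tfrac12-r$, $\theta+\rho\le\min(q+\tfrac12,\tfrac32)$ is also the paper's.

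The one genuine error is in the self-mapping bookkeeping. You assert that \emph{every} constant in the estimate of $\|\mathcal T(g)(t)\|_{-r,r+q}$ carries a positive power of $T$ or a negative power of $R$, arrive at a bound of the form $C(T^\epsilon+R^{-\delta})(1+M^2)$, and then propose to choose $M$ \emph{large} first. This is not what Proposition \ref{tildeg1} gives. For $X\in(0,1)$ that proposition yields $|r_0(t,G,X)|\lesssim\sup_s(\beta\|Q_N(\varphi)\|_{\theta,\rho}+\beta^{-1}\|g(s)\|_{-r,r+q}^2)\,X^{1/2}(X^{-\theta}+t^{1-2\theta})$, and after multiplying by $X^{-r}$ with $\theta=\tfrac12-r$ the factor $X^{1/2-\theta-r}$ equals $1$: the $M^2$ contribution coming from $\beta^{-1}\|Q_N(g)\|_{\theta,\rho}\le CM^2$ has \emph{no} small factor of $T$ or $R$ attached to it. The paper's own computation confirms this,
\begin{align*}
\|r_0(t)\|_{-r,q+r}+\|r_3(t)\|_{-r,q+r}\le C\left(R^{-r+\rho}+M^2+\|\Phi\|_{-\frac12,2}\,t\,R^{-\frac12}\right),
\end{align*}
where the $M^2$ stands on its own. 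Consequently the inequality $C(R^{-\delta}+M^2+\cdots)\le M$ forces $M$ to be \emph{small} (so that $CM\le \tfrac12$), after which one takes $R$ large (so $CR^{-\delta}\le M/3$) and finally $T$ small; this is the order the paper uses ("$R^*$ sufficiently large and $T^*$ and $M^*$ sufficiently small"). Choosing $M$ large first, as you propose, cannot work against a quadratic term $CM^2$ with an $O(1)$ coefficient. This does not change the structure of the argument, but it is a real misreading of where the smallness comes from; it is worth correcting before asserting that the fixed-point map closes.
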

\begin{proof}
Suppose that $g\in B _{ T, X _{ -r, q+r } }(0, M)$ and $\beta \in B_T(1, 1/4)$) and consider the function
\begin{align*}
\mathcal T(g)=\widetilde g=\widetilde g_1+\widetilde g_2
\end{align*}
where $\tilde g_1$ is given by (\ref{lngrsg1}),  $\tilde g_2$ by (\ref{lngrsg2}),
and with  $\widetilde \beta $ solution to (\ref{pnyx}) given by Proposition \ref{pnyxB}. It follows
\begin{align*}
\widetilde g(t, X)=r_0(t, G, X)+r_3(t, X)
\end{align*}
We first claim that for $T$ small enough $\widetilde g(t)\in B _{ T, X _{ -r, q+r } }(0, M)$. By Proposition \ref{tildeg1}, for $\theta \ge \frac {1} {2}-r$ and $\rho <q+\frac {1} {2}-\theta$, there exists a constant $C>0$ such that, for $t \in (0, T)$, $T<1$ 
\begin{align*}
|r_0(t, G, X)|\le 
\begin{cases}
C\displaystyle{\sup _{ 0<s<t }} \left(|\beta (s)|||Q_N(\varphi )|| _{ \theta, \rho  }+|\beta ^{-1}(s)|||g(s)||^2_{ -r, r+q }\right)t^{\frac {1} {2}-\theta},\,X\in (0, 1)\\
C\displaystyle{\sup _{ 0<s<t }} \left(|\beta (s)|||Q_N(\varphi )|| _{ \theta, \rho  }+|\beta ^{-1}(s)|||g(s)||^2_{ -r, r+q }\right)t^{4-2\theta}X^{-\frac {3} {2} },\,\,X>1,
\end{cases}
\end{align*}
from where, if moreover
\begin{align*}
\frac {1} {2}-\theta\ge r\,\,\text{and}\,\,
q<\frac {3} {2},
\end{align*}
\begin{align*}
&||r_0(t)|| _{ -r, q+r }\le \sup _{ 0<X<1 }|r_0(t, X)|X^{-r}+\sup _{ X>1 }|r_0(t, X)|X^{q}\\
&\le C\sup _{ 0<s<t }\left(|\beta (s)|\,||Q_N(\varphi )|| _{ \theta, \rho  }+|\beta ^{-1}(s)|\,||g(s)||^2_{ -r, r+q }\right)\sup _{ 0<X<1 }X^{\frac {1} {2}-\theta-r}+\\
&+C\sup _{ 0<s<t } \left(|\beta (s)|||Q_N(\varphi )|| _{ \theta, \rho  }+|\beta ^{-1}(s)|\,||g(s)||^2_{ -r, r+q }\right)t^{4-2\theta}\sup _{ X>1 }X^{-\frac {3} {2} }X^{q}\\
&\le C\sup _{ 0<s<t }\left(|\beta (s)|\,|||Q_N(\varphi )|| _{ \theta, \rho  }+|\beta ^{-1}(s)|\,||g(s)||^2_{ -r, r+q }\right)\left(1+t^{4-2\theta}\right)\\
&\le  C\left(\left(R^{-1}+R^{-\frac {1} {2}+\theta+\rho }\right)+M^2\right)\left(1+T^{4-2\theta}\right).
\end{align*}
On the other hand,   if $0<r<\frac {1} {2}$ and $0<q< \frac {3} {2}$ and using Proposition \ref{QQ0},
\begin{align*}
||r_3(t)|| _{ -r, q+r } & \le C\sup _{ 0<X<1 } X^{\frac {1} {2}-r}\left(||Q_N(\varphi )|| _{ -\frac {1} {2}, \rho  }
\left(t+\sup _{ 0\le s\le T }|\beta (s)|\right)+Ct||\Phi || _{ 0, \frac {3} {2} }\right)\\
&+C\sup _{ X>1 } X^{-3/2+q}\left(||Q_N(\varphi )|| _{ -\frac {1} {2}, \rho  }
\left(t+t\sup _{ 0\le s\le T }|\beta (s)|\right)+Ct^5||\Phi || _{ 0, \frac {3} {2} }\right)\\
& \le C\left(||Q_N(\varphi )|| _{ -\frac {1} {2}, \rho  }
\left(t+ \frac {5} {4}\right)+Ct||\Phi || _{ 0, \frac {3} {2} }\right)\\
& \le C\left(\left(R^{-1}+R^{-1+\rho }\right)
\left(t+ \frac {5} {4}\right)+||\Phi || _{ -\frac {1} {2}, 2 } t R^{-\frac {1} {2} }\right).
\end{align*}
Therefore, if  $r\in (0, 1/2)$, $\frac {1} {2}-\theta= r$,  $0<\rho <r$, $0<q<3/2$, then
\begin{align*}
||r_0(t)|| _{ -r, q+r }+||r_3(t)|| _{ -r, q+r }&\le C\left(R^{-1}+R^{-\frac {1} {2}+\theta+\rho } +R^{-1+\rho }+M^2+t||\Phi || _{ 0, \frac {3} {2} }\right) \\
&\le C\left(R^{-r+\rho }+M^2+||\Phi || _{ -\frac {1} {2}, 2 } t R^{-\frac {1} {2} } \right).
\end{align*}
It follows that for $R^* $ sufficiently large and $T^* $ and $M^* $ sufficiently  small, $r_0+r_3 \in B _{ T^*, X _{ -r, q+r } }(0, M^*)$. By the contractivity of the map $g\to \beta $, proved in (ii) of Proposition \ref{pnyxB}, and the Lipschitz property of the maps  $(g, \beta )\to r_0(\cdot, g, \cdot)$  and $\beta \to \widetilde g_2$ proved respectively  in Proposition \ref{WWW2}  and  Proposition \ref{mnst2},
if $T$ is small enough the map $\mathcal T$ is a contraction  and  has then a fixed point $g\in B _{ T^*, X _{ -r, q+r } }(0, M^*)$.

We  now  prove  that  $g$ satisfies (\ref{Mth1ELD}) and (\ref{l2e1E2}).
Let us show tfirst  that $\beta \in W^{1, \infty}(0, T^*)$, using the expression
\begin{align*}
&\beta(t) =\mathscr W(t, G  _{ \beta , g } )+a(t)-\int _0^t \Lambda(t-s, Q_N(\varphi ) )\beta (s)ds\\
&\Lambda(t-s; Q_N(\varphi ))=L(t-s; Q_N(\varphi ))+ \int _0^{t-s}L(t-s-\sigma ; Q_N(\varphi ))d\sigma 
\end{align*}
where $\Lambda(t; v)$ is defined in  (\ref{Ldef}) and $L(t, v)$ in Proposition \ref{S7cor1}
We have first,
\begin{align*}
\frac {d} {dt}\int _0^t \Lambda(t-s, Q_N(\varphi ) )\beta (s)ds&=\Lambda(0, Q_N(\varphi ) )\beta (t)-
\int _0^{t }\frac {d} {dt}\Lambda(t-s, Q_N(\varphi ) )\beta (s)\\
&=L(0; Q_N(\varphi ))-\int _0^{t }\frac {d} {dt}\Lambda(t-s, Q_N(\varphi ) )\beta (s)
\end{align*}
and by the definition of $L(t-s; Q_N(\varphi ))$ in Proposition \ref{S7cor1},
\begin{align*}
L(0, Q_N(\varphi ) )=\lim _{ t\to 0 } \frac {6 B(1)}{2i\pi ^3}\int  _{ \mathscr Re(r )=\beta  }\frac {\Gamma(r )} {B(r )}t^{-r }
\left(\int _0^{t} Q_N(\varphi )(\zeta^2 )\zeta ^{-1+ r }d\zeta  \right)dr=0.
\end{align*}
On the other hand,
\begin{align}
\frac {d} {dt} \Lambda(t-s, Q_N(\varphi ) )=\frac {d} {dt}L(t-s; Q_N(\varphi ))+L(0, Q_N(\varphi ))+ \nonumber\\
+ \int _0^{t-s}\frac {d} {dt}L(t-s-\sigma ; Q_N(\varphi ))d\sigma \label{S8EBder}
\end{align}
where
\begin{align}
&\frac {d} {dt}L(t-s; Q_N(\varphi ))=- \frac {6 B(1)r} {2i\pi ^3}\int  _{ \mathscr Re(r )=\beta  }\frac {\Gamma(r )} {B(r )}(t-s)^{-r-1 }\times \nonumber \\
&\hskip 7cm \times \left(\int _0^{t-s} Q_N(\varphi )(\zeta^2 )\zeta ^{-1+ r }d\zeta  \right)dr+\nonumber\\
&\hskip 4cm + \frac {6 B(1)} {2i\pi ^3}\int  _{ \mathscr Re(r )=\beta  }\frac {\Gamma(r )} {B(r )}(t-s)^{-1 } \left( Q_N(\varphi )((t-s)^2 ) \right)dr
\label{S8EBder1}
\end{align}
where, by Proposition \ref{QQ0}, $|Q_N(\varphi )(\zeta ^2|\le C \zeta $ and the integrals in right hand side of (\ref{S8EBder1}) are uniformly convergent. Moreover
\begin{align*}
\left| \frac {d} {dt}L(t-s-\sigma ; Q_N(\varphi ))\right|\le  C\int  _{ \mathscr Re(r )=\beta  }\frac {|\Gamma(r )|} {|B(r )|}|dr|<\infty.
\end{align*}
and the second integral in the right hand side of (\ref{S8EBder}) is uniformly convergent and bounded on $[0, T^*]$. This and classical measure theory arguments  show that
$\int _0^t \Lambda(t-s, Q_N(\varphi ) )\beta (s)ds\in W^{1, \infty}(0, T)$. The same method shows $\mathscr W(t, G  _{ \beta , g } )\in W^{1, \infty}(0, T)$ and since  the function $a$ defined in (\ref{S7Da}) satisfies  (\ref{ap}) we conclude $\beta \in W^{1, \infty}(0, T)$.\\
The function $g$ satisfies, $g=\mathcal T(g)=g_1+g_2$where  the function $g_2$,  by (\ref{lngrsg2}] satisfies 
\begin{align*}
g_2(t, X)=- \beta (t)\, \varphi(X)+\psi (t, X)-\int _0^tw(t-s, X)\beta (s)ds.
\end{align*}
Taking times derivatives yields,
\begin{align*}
{g_2}_t(t, X)&=- \beta' (t)\, \varphi(X)+\psi_t (t, X)-w(0, X)\beta (t)-\int _0^tw_t(t-s, X)\beta (s)ds\\
&=- \beta' (t)\, \varphi(X)+\mathscr L_\varphi (\psi (t))(X)-\mathscr L_\varphi (\varphi )\beta (t)-\int _0^t \mathscr L_\varphi (w(t-s))( X)\beta (s)ds\\
&=- \beta' (t)\, \varphi(X)+\mathscr L_\varphi (\psi (t))(X)-\mathscr L_\varphi \left(\beta (t)\varphi +\int _0^t (w(t-s))( X)\beta (s)ds\right)\\
&=-\beta' (t)\, \varphi(X)+\mathscr L_\varphi (\psi (t))(X)+\mathscr L_\varphi (g_2(t, X)-\psi (t, X))\\
&=-\beta' (t)\, \varphi(X)+\mathscr L_\varphi (g_2(t, X)).
\end{align*}
It follows that, $g=g_1+g_2$ satisfies
\begin{align*}
g(t, X) = \int _0^t S _{ \varphi  }(t-s)\left(\beta  (s) Q_N(\varphi )+\beta  ^{-1}(s) Q_N( g(t) )-\beta '(s)\varphi\right)ds 
\end{align*}
Since $\beta  Q_N(\varphi )+\beta  ^{-1} Q_N(g)-\beta '\varphi \in X _{ \theta, \rho  }$, it follows from Proposition \ref{S7cor2}, (i) that  $\left|\frac {\partial \overline g } {\partial \overline \tau} \right|+\left| \mathscr L(\overline g )\right|\in L^\infty _{ \text{loc} }((0, \infty)\times (0, T^*))$.
Therefore $g$  satisfies  
\begin{align}
\frac {\partial  g (t)} {\partial t } = \mathscr L_\varphi (  g(t ) )+ \beta  (t) Q_N(\varphi )+\beta  ^{-1}(t) Q_N( g(t) )-\beta '(t)\varphi  \label{mm3tb}
\end{align}
pointwise almost everywhere in $(0, T^*)\times (0, \infty)$ and by (\ref{Mth1ELD}), equation (\ref{mm3tb}) is  satisfied in  $L _{ \text{loc} }^\infty([0, T^{**}); L^1 _{ \text{loc} }([0, \infty))$ since all its  terms  belong to that space.

\subsection{End of the Proof of Theorem \ref{Mth1}.}
\label{endp}
In order to finish  the proof of Theorem \ref{Mth1}  the notation $\overline g$, and $\overline \tau $ for the time variable  introduced in (\ref{S1E09}) must be recovered.  (In our abuse of notation they have been denoted $g$ and  $t$ up to now). 
Of course, the function $\overline g$ still satisfies (\ref{Mth1ELD}) and (\ref{Mth1ELD0})--(\ref{Mth1ELD}) and solves  in  $L _{ \text{loc} }^\infty([0, T^{**}); L^1 _{ \text{loc} }([0, \infty); dX)$ since all its  terms  belong to that space. 

Define now, 
\begin{align*}
\widetilde f(\overline \tau , X)=\frac{\beta (\overline \tau )\varphi  _0(X)+\overline g(\overline \tau , X)}{X},\,\,\forall \overline \tau \in (0, T^*),
\end{align*}
where $\overline g\in B _{ T^*, X _{ -r, q+r } }(0, M^*) $ and $\beta\in  B_T^*(1, 1/4) $ are the functions given by Proposition \ref{InnD+1}. 
Since $\overline g=\mathcal T(\overline g)=\overline g_1+\overline g_2$.
By definition,
\begin{align}
\widetilde f(\overline \tau , X)&=X^{-1}\Big(\beta (\overline \tau )\varphi (X)+\overline g_1(\overline \tau , X)+\overline g_2(\tau , X)\Big)\nonumber\\
&=X^{-1}\Big( \overline g_1(\overline \tau , X) +\psi (\overline \tau,  X)-\int _0^{\overline \tau }w(\overline \tau -s, X)\beta (s)ds\Big)
\label{foverlp}
\end{align}
where $\psi $ is as in (\ref{defpsi}), $w$ as in (\ref{defw}) and $\overline g_1$ as in  Proposition \ref{tildeg1}. Now,

\noindent
1.- \underline{The function $ \widetilde f(\overline \tau , X)$ is differentiable with respect to $\overline \tau $.} Because so are all the  terms  at the right hand side of  (\ref{foverlp}):  $ \overline g_1$ from Proposition \ref{Prop7.1}, $\psi $ from  (\ref{ap}), (\ref{r2p}), (\ref{defa}) and $w$ from the properties of the function $W$ in (\ref{defW}). \\

\noindent
2.- \underline{The function $\widetilde f$ satisfies:} $\beta({ \overline \tau }) \widetilde f_{ \overline \tau }({ \overline \tau }, X)=X^{-1}Q_N(\widetilde f({ \overline \tau }))(X)$ a.e. in $\overline \tau \in (0, T^*), X>0$. By construction and using (\ref{defw1}), (\ref{defw2}), for almost every $\overline \tau \in (0, T^*), X>0$,
\begin{align*}
X\widetilde f_{\overline \tau} (\overline \tau , X)&=(\overline g_1)_{\overline \tau}  +\psi_{\overline \tau}  (\overline \tau , X)-\left(\int _0^tw(t-s, X)\beta (s)ds\right) _{ \overline \tau }\\
&=(\overline g_1)_{ \overline \tau } +\psi_{ \overline \tau } ({ \overline \tau }, X)-\beta ({ \overline \tau })\mathscr L_\varphi (\varphi )-\mathscr L_\varphi \left(\int _0^{ \overline \tau }w({ \overline \tau }-s, X)\beta (s)ds \right)\\
&=(\overline g_1)_{ \overline \tau } +\psi_{ \overline \tau } ({ \overline \tau }, X)-\beta ({ \overline \tau })\mathscr L_\varphi (\varphi )-\mathscr L_\varphi \left(\overline g_1 +\psi ({ \overline \tau }, X)-X\widetilde f({ \overline \tau })\right)\\
&=(\widetilde g_1)_{ \overline \tau } -\mathscr L_\varphi \left(\overline g_1 \right)+\psi_{ \overline \tau } ({ \overline \tau }, X)-\mathscr L_\varphi \left( \psi ({ \overline \tau }, X)\right) -\beta ({ \overline \tau })\mathscr L_\varphi (\varphi )+\mathscr L_\varphi \left(X\widetilde f({ \overline \tau })\right)
\end{align*}
By Proposition \ref{Prop7.1}, and (\ref{Epsi1}) (proved just below (\ref{defpsi})),
\begin{align*}
X\widetilde f_{\overline \tau} (\overline \tau , X)= \beta  ({ \overline \tau }) Q_N(\varphi )+\beta  ^{-1}({ \overline \tau }) Q_N(\widetilde g({ \overline \tau })) -\beta ({ \overline \tau })\mathscr L_\varphi (\varphi )+\mathscr L_\varphi \left(X\widetilde f({ \overline \tau })\right).
\end{align*}
But, as shown by (\ref{S1InnD6}) and  (\ref{S1E6Y1}), (\ref{S1E6Y2}),
\begin{align}
Q_N(\widetilde f(\overline \tau ))=\beta \mathscr L_\varphi (\overline g)+Q_N(\overline g)+\beta ^2Q_N(\varphi ) \label{S1E6Y4}
\end{align}
and then,
\begin{align*}
&\beta  ({ \overline \tau }) Q_N(\varphi )({ \overline \tau })+\beta  ^{-1}({ \overline \tau }) Q_N(\overline g({ \overline \tau })) -\beta ({ \overline \tau })\mathscr L_\varphi (\varphi )+\mathscr L_\varphi \left(X\widetilde f({ \overline \tau })\right)=\\
&\beta  ({ \overline \tau }) Q_N(\varphi )({ \overline \tau })-\mathscr L_\varphi (\overline g)+\beta  ^{-1}({ \overline \tau }) Q_N(\widetilde f({ \overline \tau }))-\beta ({ \overline \tau })Q_N(\varphi )-\beta ({ \overline \tau })\mathscr L_\varphi (\varphi )+\mathscr L_\varphi \left(X\widetilde f({ \overline \tau })\right)
\end{align*}
Since $\mathscr L_\varphi  (\overline g)+\beta \mathscr L(\varphi )=\mathscr L(X\widetilde f)$, it follows
\begin{align*}
\beta({ \overline \tau }) \frac {\partial \widetilde f} {\partial \overline \tau}({ \overline \tau }, X)=X^{-1}Q_N(\widetilde f({ \overline \tau }))(X),\,\,a. e. \, (0, T^*)\times (0, \infty).
\end{align*}

3.- Inverse now the change of variables in (\ref{S1E09}) and, using  the time variable $t$ such that $dt=\beta ^{-1}( \overline \tau)d\overline \tau $, define the functions 
\begin{align}
&t=\int _0^{\overline{\tau }}\frac {d\sigma } {\beta (\sigma )},\,\,\,T^{**}=\int _0^{T^*}\frac {d\sigma } {\beta (\sigma )}\\
&f(t, X)=\widetilde f({ \overline \tau }, X),\,\,\forall t\in (0, T^{**})\label{deff}\\
&\lambda (t)=\beta (\overline \tau),\,\,\,g(t, X)=\overline g(\overline \tau, X),\,\,\forall t\in (0, T^{**})\label{deflam}
\end{align}
Then $f, \lambda $ satisfy equation (\ref{lzm75}), in $L _{ \text{loc} }^\infty([0, T^{**}); L^1 _{ \text{loc} }([0, \infty);XdX)$.
 This ends the proof of Theorem \ref{Mth1}.
\end{proof}
\section{Proof of Theorem \ref{Mth2.0}}
\label{SMth2.0}
\setcounter{equation}{0}
\setcounter{theo}{0}
Theorem \ref{Mth2.0} will follow from Theorem \ref{Mth1} if the function $n$ may be defined and the change of time variable (\ref{S2NewTime}) may be inverted.
\begin{prop}
\label{Psphn}
Let  $f$ be the function defined in (\ref{deff}). Then for all $t\in (0, T^{**})$,
\begin{equation*}
\int _0^\infty \widetilde Q(f(t), f(t))(X)X^{1/2}dX=\lim _{ \delta \to 0 }\int _\delta ^\infty \widetilde Q(f(t), f(t))(X)X^{1/2}dX
=-\frac {\pi ^2} {3}
\end{equation*}
\end{prop}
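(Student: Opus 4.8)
The plan is to compute the flux $\int_0^\infty X^{1/2}\widetilde Q(f,f)(X)\,dX$ through the symmetrised form of $\widetilde Q$, keeping honest track of the boundary terms that the formal computation in $(\ref{S1EFNG})$ discards. For a regular test function $\psi$ one has $\int_0^\infty\psi\,\widetilde Q(f,f)\,dX=\int\!\!\int_{0<Y<X}R(X,Y)\big(\tfrac{\psi(X)}{\sqrt X}-\tfrac{\psi(Y)}{\sqrt Y}-\tfrac{\psi(X-Y)}{\sqrt{X-Y}}\big)\,dY\,dX$, where $R(X,Y)=f(X-Y)f(Y)-f(X)f(X-Y)-f(X)f(Y)$; with $\psi(X)=X^{1/2}$ the bracket is identically $-1$ (the analogous bracket for $\psi=X^{3/2}$ is identically $0$, which is $(\ref{Mth2.0E3})$), so the flux is $-\int\!\!\int_{0<Y<X}R$, an expression that vanishes when $f$ is regular but is divergent here because $f=f(t,\cdot)$ is singular like $\lambda(t)X^{-1}$ at the origin; the whole content of the statement is that, interpreted as $\lim_{\delta\to0}\int_\delta^\infty$, it equals $-\tfrac{\pi^2}{3}\lambda(t)^2$. (Together with $(\ref{S1E021})$ and the change of variables $(\ref{S2NewTime})$ this is the differential identity behind $(\ref{SE1Ennn})$; at $t=0$, where $\lambda(0)=1$, the constant is $-\pi^2/3$.) I would record at the outset the three facts that drive the argument: $f\in C(0,\infty)$; $f$ decays like $X^{-1-r-q}$ at infinity; and on $(0,R/2)$ one has $f(X)=\lambda(t)X^{-1}+h(X)$ with $h=g(t,\cdot)/X$, so $|h(X)|\le CX^{r-1}$ near $0$, and — crucially — by $(\ref{mm3tb})$ (whose right-hand side lies in some $L^\infty(X_{\theta,\rho})$) and Lemma \ref{blabla}, $h$ has the modulus-of-continuity bound $\int_0^d\int_0^{2X}\tfrac{|h(X)-h(Y)|}{|X-Y|}\,dY\,dX<\infty$.

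Next, using that the far-field integrand of $(\ref{S1E01})$ equals $-R(Y,X)$ for $Y>X$, I would carry out the symmetrisation on $I_\delta:=\int_\delta^\infty X^{1/2}\widetilde Q(f,f)(X)\,dX$ with the cut-off kept honest (Fubini on the absolutely convergent pieces and direct manipulation, using the cancellations in $R$, on the rest; the far-field decay of $f$ handles infinity), together with the symmetry $R(X,Y)=R(X,X-Y)$ to fold the $Y$-variable. This should reduce $I_\delta$ to $-\int\!\!\int R(X,Y)\,dY\,dX$ over $\{X>2\delta,\ \delta<Y<X-\delta\}$ plus explicit boundary pieces supported where $\min(Y,X-Y)<\delta$. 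The heart of the matter is then to show that as $\delta\to0$ only the leading $X^{-1}$-singularity of $f$ survives. Splitting the $X$-integral at a small $\varepsilon$: on $\{X>\varepsilon\}$ the integrand extends continuously across $Y=0$ and $Y=X$ (the apparent poles cancel pointwise) and is absolutely integrable, so that part converges and is $o_\varepsilon(1)$ once one notices it would vanish were $f$ regular; on $\{2\delta<X<\varepsilon\}$ one inserts $f=\lambda X^{-1}+h$ and uses the pointwise identity $\tfrac{\lambda^2}{(X-Y)Y}-\tfrac{\lambda^2}{X(X-Y)}-\tfrac{\lambda^2}{XY}=0$, leaving a part quadratic in $h$ (with $\int_0^X|R_{\mathrm{bil}}|\,dY\le CX^{2r-1}$, hence $o_\varepsilon(1)$) and a part linear in $h$ whose $Y$-integral is made of $\int_0^X h/X$ and $\int_0^X\tfrac{h(Y)-h(X)}{X-Y}\,dY$, the former trivial, the latter integrable in $X$ near $0$ precisely by the bound above coming from Lemma \ref{blabla}. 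Letting $\varepsilon\to0$ yields $\lim_{\delta\to0}I_\delta=\lambda(t)^2\,D$, and the same localisation shows $D=\lim_{\delta\to0}\int_\delta^\infty X^{1/2}\widetilde Q(X^{-1}\eta,X^{-1}\eta)(X)\,dX$ is independent of the smooth rapidly decaying $\eta$ with $\eta\equiv1$ near $0$.

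It remains to evaluate $D$. With a convenient model, e.g.\ $\eta(X)=e^{-X}$, the previous reduction writes $D$ as one explicit convergent double integral; carrying it out by scaling $Y=Xz$ and using $\int_0^1\tfrac{-\log(1-z)}{z}\,dz=\mathrm{Li}_2(1)=\pi^2/6$ gives $D=-\pi^2/3$, hence $\int_0^\infty\widetilde Q(f(t),f(t))(X)X^{1/2}\,dX=-\tfrac{\pi^2}{3}\lambda(t)^2$. The step I expect to be most delicate is the localisation of the second paragraph: isolating the genuinely divergent boundary contributions so that $\lim_{\delta\to0}$ exists, and proving that the cross term between $\lambda X^{-1}$ and the a priori $\varphi$- and $g$-dependent correction $h$ contributes nothing in the limit — this is exactly where the regularising effect of $\mathscr L$ enters, as announced just after $(\ref{S1RP0})$. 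The closing explicit computation of $D$ is then routine.
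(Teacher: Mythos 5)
Your proposal follows essentially the same route as the paper's proof: subtract the leading singular profile $\lambda(t)X^{-1}$ from $f$ near the origin, use the $L^1$-integrability and the symmetry of the collision operator to dispose of the contributions from the regular part and from the tail, invoke the modulus-of-continuity estimate coming from Lemma \ref{blabla} (in the form of Corollary \ref{cor9.3}, applied term by term to $\overline g_1$ and $\overline g_2$) to annihilate in the limit the cross terms between the singular profile and the correction $h=g/X$, and finally evaluate the surviving pure $X^{-1}$ contribution with the cutoff honestly kept, which reduces to a dilogarithm. The paper packages the decomposition a little differently --- it splits $f=f_<+f_>$ at a scale $d$, expands $A_\delta(f_<,f_<)$ bilinearly in $F_1=X^{-1}\mathbf 1_{(0,d)}$ and $h$, and sends $\delta\to0$ then $d\to0$, rather than working directly with the kernel $R(X,Y)$ and the $\psi\mapsto\psi(X)X^{-1/2}-\psi(Y)Y^{-1/2}-\psi(X-Y)(X-Y)^{-1/2}$ bracket --- but the two decompositions isolate the same pieces and the crucial tool (the integrability condition $(\ref{l2e1E3})$ on $\Omega_{2\theta}$ via Proposition \ref{P52}) is identical. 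The second paragraph, as you yourself flag, is a sketch rather than a complete argument; the delicate localisation step is exactly what the paper's explicit $A_\delta^{(1)},A_\delta^{(2)}$ bookkeeping carries out.

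One substantive remark in your favour. You obtain $\int_0^\infty \widetilde Q(f(t),f(t))X^{1/2}dX=-\tfrac{\pi^2}{3}\lambda(t)^2$, whereas the Proposition as printed states the time-independent constant $-\tfrac{\pi^2}{3}$. Your answer is the consistent one: the paper's own bilinear expansion carries the prefactor $\lambda(t)^2$ on $A_\delta(F_1,F_1)$ and the remaining terms vanish, so the limit is $-\lambda(t)^2\pi^2/3$; and only with the $\lambda^2$ does $(\ref{S1E021})$ produce the claimed formula $(\ref{SE1Ennn})$, since that formula gives $n'(\tau)/n(\tau)=\tfrac{\pi^2}{3}\lambda^2(\tau)$, not $\tfrac{\pi^2}{3}$. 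The factor $\lambda^2$ has evidently been dropped both in the displayed statement of the Proposition and in the line $n'(\tau)/n(\tau)=\pi^2/3$ at the end of Section \ref{SMth2.0}; since $\lambda(0)=1$ this is invisible at $\tau=0$ but matters for $\tau>0$. So this is not a gap in your proof but rather a point where your version corrects the statement.
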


\begin{proof}
We  closely follow   the argument, and some of the notations of Proposition 2 in  \cite{S}. 

Notice first that  $f(t)\in L^1(\delta , \infty)$ for all $t\in (0, T^{**})$ and for all $\delta >0$  by  the behavior of $\widetilde g(\overline \tau , X)$ as $X\to \infty$. Define then for $d>0$, 
\begin{align*}
f_<(t, X)=f(t, X)\1 _{ \{0<X<d \}};\,\,\,f_>(t, X)=f(t, X)\1 _{ \{X>d \}}
\end{align*}
and for $\delta >0$,
\begin{align*}
&A_\delta (h , \psi )=\int  _{ \delta  }^\infty  \widetilde Q(h, \psi )(X)\sqrt XdX=A^{(1)}_\delta (h , \psi )+A^{(2)}_\delta (h , \psi )\\
&A^{(1)}_\delta (h , \psi )=\int _\delta ^\infty \widetilde Q(h , \psi )(X)\1 _{ \{0<Y<X\} } \sqrt X dX\\
&A^{(2)}_\delta (h , \psi )=2\int _\delta ^\infty  \widetilde Q(h , \psi )(X)\1 _{ \{Y>X\} } \sqrt X dX.
\end{align*}
Of course, 
\begin{align*}
&A^{(1)}_\delta (h , \psi )=\int _\delta ^\infty dX\int _0^X dY\Bigg(
h (X-Y)\psi (Y)-h(X)\psi (X-Y)-h (X)\psi (Y)\Bigg)\\
&A^{(2)}_\delta (h , \psi )=2\int _\delta ^\infty dX\int _X^\infty dY\Bigg(
h (X)\psi (Y) +h (Y-X)\psi (Y)-h (X)\psi (Y-X)\Bigg).
\end{align*}
Then, 
\begin{align*}
A_\delta (f , f)= A_\delta (f_< , f_<)+A_\delta (f_< , f_>)+A_\delta (f_> , f_<)+A_\delta (f_> , f_>).
\end{align*}
and by the integrability properties of $f(t)$ one has 
\begin{align*}
&\int _0^\infty \left|\widetilde Q(f_>(t) , f_>(t) )(X)\right| \sqrt XdX<\infty\\
&\int_0^\infty \left|\widetilde Q(f_<(t) , f_>(t) )(X)\right| \sqrt XdX+\int_0^\infty \left|\widetilde Q(f_>(t) , f_<(t) )(X)\right| \sqrt XdX<\infty
\end{align*}
and 
\begin{align*}
&\lim _{ \delta \to 0 }A_\delta (f_>(t), f_>(t))=A_0 (f_>(t), f_>(t))=0\\
&\lim _{ \delta \to 0 }\left(A_\delta (f_<(t), f_>(t))+ A_\delta (f_>(t), f_<(t))\right)=\\
&\qquad =\left(A_ 0(f_<(t), f_>(t))+ A_0 (f_>(t), f_<(t))\right)=0.
\end{align*}
We are then left with $A_\delta (f_<(t), f_<(t))$, where by construction, when $d <R$,
\begin{align*}
f_<(t, X)&= \frac {\lambda (t)\varphi (X)}{X}\1 _{ \{0<X<d\} }+\frac{g(t, X) } {X}\1 _{ \{0<X<d\} }\\
&= \frac {\lambda (t) }{X}\1 _{ \{0<X<d\} }+\frac{g(t, X) } {X}\1 _{ \{0<X<d\} } \equiv \lambda (t)F_1(X)+h(t, X)
\end{align*}
where $F_1=X^{-1}$ as in the Introduction. The terms in $A_\delta (f_<(t), f_<(t))$ may then reorganized as follows,
\begin{align*}
A_\delta (f_<(t), f_<(t))=\lambda (t)^2A_\delta ( F_1, \,& F_1 )+\lambda (t)A_\delta ( F_1, h(t))+\\
&+\lambda (t)A_\delta ( h(t), F_1)+A_\delta (h(t), h(t)).
\end{align*}
An explicit computation gives now, for all $d<R$
\begin{align*}
\lim _{ \delta \to 0 }A^{(1)}_\delta ( F_1, \,& F_1 )=\frac {\pi ^2} {3};\,\,\,\,\lim _{ \delta \to 0 }A^{(2)}_\delta ( F_1, F_1 )=-2\frac {\pi ^2} {3}.
\end{align*}
It follows from the properties of $g$ that $h(t)\in L^1(0, \infty)$ and then $A_\delta (h(t), h(t))\to A_0 (h(t), h(t))=0$ as $\delta \to 0$.
The terms in the sum $ A_\delta ( F_1, h(t))+ A_\delta ( h(t), F_1)$ may be rearranged as follows,
\begin{align*}
 A_\delta ( F_1, h(t))+ A_\delta ( h(t), F_1)= \Big(A^{(1)}_\delta ( F_1, h(t))+ A^{(1)}_\delta ( h(t), F_1)\Big)+\\
 +\Big(A^{(1)}_\delta ( F_1, h(t))+ A^{(1)}_\delta ( h(t), F_1)\Big)
\end{align*}
and
\begin{align*}
A^{(1)}_\delta ( F_1, h(t))+ &A^{(1)}_\delta ( h(t), F_1)=\int _\delta ^\infty dX\int _0^X dY\Bigg( (h(t, X-Y)-h(t, X))F_1(Y)+\\
&+(h(t, Y)-h(t, X))F_1(X-Y)-(h(t, X-Y)+h(t, Y))F_1(X)
\Bigg)
\end{align*}
But since $F_1(X)=h(t, X)=0$ when $X>d$, then for all $\delta\in (0, d)$, as it may be seen in a simple picture,
\begin{align*}
&\left| A^{(1)}_\delta ( F_1, h(t))+ A^{(1)}_\delta ( h(t), F_1)\right| \le J_1+J_2\\
&J_1=\int _0 ^d dX \int _0^X dY\Theta (X,Y)=\iint  _{ D_1 }\Theta (X,Y)dX dY\\
&J_2=\int _d^{2d} dX \int _{X-d}^d dY\Theta(X, Y)=\iint  _{ D_2 }\Theta (X,Y)dX dY\\
&\Theta(X, Y)=\Bigg( |h(t, X-Y)-h(t, X)|F_1(Y)+\\
&+|h(t, Y)-h(t, X)|F_1(X-Y)-|h(t, X-Y)+h(t, Y)|F_1(X)\Bigg)
\end{align*}
where
\begin{align}
&D_1=\left\{(X, Y)\in \RR^2;\, 0<X<d,\,0<Y<X \right\}\label{S9D1}\\
&D_2=\left\{(X, Y)\in \RR^2;\, d<X<2d,\,X-d<Y<d \right\}.\label{S9D2}
\end{align}
By construction $h(t, X)\equiv g(t, X)/X$, $F_1(X)=X^{-1}$ on $D_1$ and moreover,
\begin{align*}
&g(t, X)=\overline g(\overline \tau , X)=\mathcal T(\overline g(\overline \tau ))(X)=\overline g_1(\overline \tau , X)+\overline g_1(\overline \tau , X)\\
&\overline g_1(\overline \tau , X)=\int _0^tS_\varphi (t-s)G _{ \beta, \overline g }(s)(X)ds\\
&\overline g_2(\overline \tau , X)=- \beta (t)\, \varphi(X)+\psi (t, X)-\int _0^tw(t-s, X)\beta (s)ds.
\end{align*}
Since $g\in L^\infty ((0, T^{**}); X _{ -r, r+q })$  it follows  $G _{ \beta , \overline g }\in L^\infty((0, T^{**}); X _{ \frac {1} {2}-r, r+q }$ and then 
$G _{ \beta , \overline g }\in L^\infty((0, T^{**}); X _{ \frac {1} {2}-r, r+\tilde q }$ for all $\tilde q\in (0, 1)$.
By Corollary \ref{cor9.3}, with $\theta=\frac {1} {2}-r$, $\rho =r+\tilde q$ where $\tilde q\in (0, 1)$, and $\theta'=\theta$, $\rho '=\rho $,
\begin{align}
\left| \frac {1} {X}\overline g_1(\overline \tau , X)-  \frac {1} {X'}\overline g_1(\overline \tau , X') \right| \le
C\sup _{ 0\le s\le \overline \tau }\Big(||G _{ \beta, \overline g }(s)|| _{ \theta', \rho ' }+\nonumber\\
+||G _{ \beta, \overline g }(s)|| _{ \theta, \rho }\Big)\Omega_ {2\theta}(X^{1/2}, X'^{1/2})\nonumber \\
=C\sup _{ 0\le s\le \overline \tau }||G _{ \beta, \overline g }(s)|| _{ \theta, \rho }\Omega_ {2\theta}(X^{1/2}, X'^{1/2}).\label{g1E1}
\end{align}

On the other hand,  since $\varphi (X)=\varphi (X')=1$ for all  $X\in (0,  2d)$ if $2d <R$,
\begin{align}
&\left| \frac {1} {X}\overline g_2(\overline \tau , X)-  \frac {1} {X'}\overline g_2(\overline \tau , X') \right| \le
\left| \frac {\psi (t, X)} {X}-\frac {\psi (t, X')} {X'} \right|+\nonumber\\
&+\left|\frac {1} {X}\int _0^tw(t-s, X)\beta (s)ds-\frac {1} {X'}\int _0^tw(t-s, X')\beta (s)ds \right|\label{w1}
\end{align}
By (\ref{WW1}) and (\ref{defpsi}), 
\begin{align*}
&\psi (t, X)=\varphi(X) +t \mathscr L_\varphi (\varphi )(X)+\int _0^t  \int _0^s\mathscr (S _{ \varphi  }(s-\sigma ) \Phi)(X) d\sigma ds
\end{align*}
where $  \mathscr L_\varphi (\varphi )=2Q_N(\varphi ) =0$ if $X<\alpha R$ and $\Phi $ is defined in (\ref{defFi}). Since $\Phi \in X _{ -\frac {1} {2}, 2 }\subset X _{ \frac {1} {2}-r, r+\tilde q }$ for $\tilde q\in (0, 1)$, by Corollary \ref{cor9.3}, again with $\theta=\frac {1} {2}-r$, $\rho =r+\tilde q$ where $\tilde q\in (0, 1)$, and $\theta'=\theta$, $\rho '=\rho $,
\begin{align}
&\left|\frac {1} {X}\int _0^t  \int _0^s\mathscr (S _{ \varphi  }(s-\sigma ) \Phi)(X) d\sigma ds
- \frac {1} {X'}\int _0^t  \int _0^s\mathscr (S _{ \varphi  }(s-\sigma ) \Phi)(X) d\sigma ds\right|\le \nonumber  \\
&\quad \le \int _0^t  \left|\frac {1} {X}\int _0^s\mathscr (S _{ \varphi  }(s-\sigma ) \Phi)(X) d\sigma 
- \frac {1} {X'}  \int _0^s\mathscr (S _{ \varphi  }(s-\sigma ) \Phi)(X) d\sigma \right|ds \nonumber\\
&\quad \le \left( ||\Phi || _{\theta, \rho  }+||\Phi || _{\theta', \rho'  } \right)\Omega_{2\theta} (X^{1/2}, X'^{1/2})
\le C ||\Phi || _{\theta, \rho  }\Omega_{2\theta} (X^{1/2}, X'^{1/2}). \label{g2E1}
\end{align}
By (\ref{defw1}),  since    $\mathscr L_\varphi (\varphi )=2Q_N(\varphi )\in  X _{ \frac {1} {2}-r, r+\tilde q }$ too for the same $\theta$ and $\tilde q$ as above, it follows using Corollary \ref{cor9.3} in  the last term at the right hand side of  (\ref{w1}),
\begin{align}
&\left|\frac {1} {X}\int _0^tw(t-s, X)\beta (s)ds-\frac {1} {X'}\int _0^tw(t-s, X')\beta (s)ds \right|=\nonumber\\
&=\left|\frac {1} {X}\int _0^t \mathscr S_\varphi (t-s)(\mathscr L_\varphi (\varphi ))(X)\beta (s)ds-
\frac {1} {X'}\int _0^t  \mathscr S_\varphi (t-s)(\mathscr L_\varphi (\varphi ))(X')\beta (s)ds \right| \nonumber\\
&\le C \left(\sup _{ 0\le s \le t }||\beta (s)\mathscr L_\varphi (\varphi )|| _{ \theta, \rho  }+
||\beta (s)\mathscr L_\varphi (\varphi )|| _{ \theta', \rho'  }\right)\Omega  _{ 2\theta }(X^{1/2}, X'^{1/2})\nonumber\\
&= C \sup _{ 0\le s \le t }||\beta (s)\mathscr L_\varphi (\varphi )|| _{ \theta, \rho  }\Omega  _{ 2\theta }(X^{1/2}, X'^{1/2}). \label{g2E2}
\end{align}
From (\ref{g1E1}), (\ref{g2E1}) and (\ref{g2E2}), 
\begin{align*}
\left| \frac {1} {X}  g(t , X)-  \frac {1} {X'} g(t , X') \right|& \le C\sup _{ 0\le s\le t }\Big(||G _{ \beta, \overline g }(s)|| _{ \theta, \rho }+\nonumber\\
&+ ||\Phi || _{\theta, \rho  }+||\beta (s)\mathscr L_\varphi (\varphi )|| _{ \theta, \rho  }
\Big)\Omega_ {2\theta}(X^{1/2}, X'^{1/2})
\end{align*}
and it follows from  Proposition \ref{P52} that $ \Theta(X, Y) \in L^1(D_1\cup D_2)$ and,

\begin{align*}
\lim _{d\to 0}\left| A^{(1)}_\delta ( F_1, h(t))+ A^{(1)}_\delta ( h(t), F_1)\right|=0.
\end{align*}
A similar argument shows,
\begin{align*}
\lim _{d\to 0}\left| A^{(2)}_\delta ( F_1, h(t))+ A^{(2)}_\delta ( h(t), F_1)\right|=0.
\end{align*}
\vskip -0.5cm 
\end{proof}
\subsection{End of the proof of Theorem \ref{Mth2.0}}
Let us define now the new time variable $\tau $,
\begin{align*}
\tau = \frac {3} {\pi ^2}\log \left(\frac {\pi ^2 t} {3} +1\right),\,\forall t\ge 0.
\end{align*}
Since $t\to \tau $ is a strictly increasing bijection between $(0, T^{**})$ and $(0,  T)$ with
\begin{align*}
 T=\frac {3} {\pi ^2}\log \left(\frac {\pi ^2 T^{**}} {3} +1\right)
\end{align*}
define also,
\begin{align*}
&n(\tau )=e^{\frac {\pi ^2 \tau } {3}},\forall \tau >0,\\
&F(\tau , X)=f(t, X),\,\,\forall \tau \in (0,  T),\\
&\text{where},\,\, t=\frac {3} {\pi ^2}\left( e^{\frac {\pi ^2 \tau } {3}}-1\right).
\end{align*}
Then, for $X>0$ and $\tau \in (0,  T)$,
\begin{align*}
\frac {\partial F(\tau, X )} {\partial \tau }&=\frac {\partial f(t, X)} {\partial t}\frac {dt} {d\tau }= n(\tau )\widetilde Q_N(F(\tau ), F(\tau ))(X)
\end{align*}
and by Proposition \ref{Psphn}, 
\begin{align*}
\frac {n'(\tau )} {n(\tau)}=\frac {\pi ^2} {3}=-\int _0^\infty \widetilde Q(F(\tau ), F(\tau ))(X)X^{1/2}dX.
\end{align*}
Since $g(t)\in X _{ -r, r+q }$ for all $t\in T^{**}$, property (\ref{Mth2.0E1}) is fulfilled. 

Property (\ref{Mth2.0E2}) follows from the very definition of $n(\tau )$. As for property (\ref{Mth2.0E3}), let us prove first that the function
\begin{align*}
\Phi (X, Y)=X\Big(|F(|X-Y|)|(|F(Y)|+|F(X)|)+ |F(X)||F(Y)|)\Big)\1 _{ Y>X/2 }
\end{align*}
belongs to $\Phi \in L^1((0, \infty)\times (0, \infty))$. To this end, we first write,

\begin{align*}
&\int _0^\infty \int _0^\infty \Phi (X, Y) dYdX=I_1+I_2\\
&I_1=\int _0^1 \int  _{ X/2 }^\infty  \Phi (X, Y) dYdX,\,\,\,I_2=\int _1^\infty \int  _{ X/2 }^\infty  \Phi (X, Y) dYdX.
\end{align*}
In the first term $I_1$,
\begin{align*}
I_1\le\int _0^1 \int  _{ X/2 }^2  \Phi (X, Y) dYdX+\int _0^1 \int _2^\infty \Phi (X, Y)dYdX=I _{ 1,1 }+I _{ 1,2 }
\end{align*}
Since in $I _{ 1, 1 }$,
\begin{align*}
|F(Y)|\le CX^{r-1},\,\,|F(|X-Y|)|\le C|X-Y|^{r-1},\,\,\,|F(X)|\le CX^{r-1},
\end{align*}
it follows,
\begin{align*}
I _{ 1,1 }\le 
\int _0^1\int _{ \frac {X} {2} }^2X\Big(|F(|X-Y|)||F(Y)
-F(X)|+|F(X)||F(Y)|\Big)dYdX<\infty
\end{align*}
and the same conclusion holds for $I _{ 1, 2 }$ since in that integral,
\begin{align*}
|F(Y)|\le CY^{-1-q},\,\,|F(|X-Y|)|\le C|Y|^{-1-q},\,\,\,|F(X)|\le CX^{r-1}.
\end{align*}
In the term $I_2$, with a similar argument
\begin{align*}
I_2\le \int _1^\infty \int _{ \frac {X} {2} }^{2X}\Phi (X, Y)dYdX+\int _1^\infty X^{3/2}\int  _{ 2X }^\infty\Phi (X, Y)dYdX=I _{ 2, 1 }+I _{ 2,2 }
\end{align*}
In the  integral $I _{ 2, 1 }$,
\begin{align*}
|F(Y)|\le CY^{-1-q},\,\,|F(|X-Y|)|\le CY^{r-1},\,\,\,|F(X)|\le CX^{-1-q}.
\end{align*}
from where,
\begin{align*}
I _{ 2, 1 }\le C\int _1^\infty  X\int _{ \frac {X} {2} }^{2X} 
\Big(Y^{-1+r} (Y^{-q-1}+ X^{-q-1})+\\
+X^{-q-1}Y^{-q-1}\Big)dYdX\le C\int _1^\infty X^{-q+r}dX 
\end{align*}
and  $I _{ 2, 1 }$ finite if $q-r>1$. In the integral  $I _{ 2, 2 }$,
\begin{align*}
|F(Y)|\le CY^{-1-q},\,\,|F(|X-Y|)|\le CY^{r-1},\,\,\,|F(X)|\le CX^{-1-q}.
\end{align*}
from where  $I _{ 2, 2 }<\infty$ and the property $\Phi \in L^1((0, \infty)\times (0, \infty))$ is proved. Now, (\ref{Mth2.0E3})
follows  from the symmetry properties of the collision integral $\widetilde Q(F, F)$.
\section{Appendix}
\setcounter{equation}{0}
\setcounter{theo}{0}
\subsection{From Nordheim equation to system (\ref{S1E00})--(\ref{S1E01}). }
\label{deduction}
The Nordheim equation was obtained by L. W. Nordheim (cf \cite{No}). It describes the particle's density $G$ of a non condensed, weakly interacting, gas of bosons. In the simplest situation of a spatially 
homogeneous, isotropic gas of bosons, it reads  after some adimensionalization  as  follows,
\begin{align}
&\frac {\partial G (\tau , X)} {\partial \tau }=C _{ q }(G(\tau ), G(\tau ))(X),\,\,\forall \tau >0,\,X>0,\label{SANoE}\\
&C _{ q }(G, G)(X)=\int  _{ D(X) }\frac{\min (\sqrt X, \sqrt {X_1}, \sqrt {X_2}, \sqrt {X_3})}{\sqrt X}q(G) dX_2 dX_3\nonumber \\
&q(G)(X)=G(X_2)G(X_3)(1+G(X))(1+G(X_1))-G(X)G(X_1)(1+G(X2))(1+G(X_3)) \nonumber\\
&D(X)=\left\{(X_2, X_3)\in \RR_+^2;\,\,X_2+X_3>X\right\},\,\,X_1\equiv X_2+X_3-X. \nonumber
\end{align}
If only cubic terms are kept in $q$, then $q(G)$ is replaced by
\begin{align*}
p(G)(X)=G(X_2)G(X_3)(G(X) +G(X_1))-G(X)G(X_1)(G(X2+G(X_3))
\end{align*}
and obtain  the corresponding equation, with the obvious notation for $C_p$,
\begin{align*}
&\frac {\partial G (\tau , X)} {\partial \tau }=C _{ p }(G(\tau ), G(\tau ))(X),\,\,\forall \tau >0,\,X>0.
\end{align*}
That is the kinetic wave equation describing  a nonlinear system of interacting  waves satisfying the cubic Schr\"odinger. \\
 Below the critical temperature,  correlations appear in the condensed Bose gas  between the superfluid 
component and the normal fluid part corresponding to the excitations.  As a consequence, in the hydrodynamic regime, a collision integral $C _{ 1, 2 }  $ describing $1\leftrightarrow 2$ splitting of an excitation into two others in the presence of the condensate is needed. A kinetic equation in a uniform Bose gas which includes these processes was obtained in references \cite{Kirkpatrick} and \cite{Eckern} based on a microscopic description of the Bose gas. However, as explained in \cite{SK1} for the operator $C _{ q }$, a formal  calculation gives the correct expression of the term $C _{ 1, 2 }$ in the limit where the condensate is very small. The same argument works with  the collision integral $C_p$. It  is enough to this end to suppose that the density $G$ may be split as $G(\tau, X )=\sigma (\tau, X )+F(\tau, X )$, where $\sigma (\tau, X )=n(\tau )\delta _0(X)$, $n(t)$ is a function, $\delta _0$ is the Dirac measure at the origin  and $F$ is a function, to plug this expression in the collision integral $C _{ 2, 2 }$ and perform the Dirac's delta integration with some care. Let us drop the $\tau $ variable that does not play any role here, and denote $F_k=F(X_k)$, $\alpha _k=n\delta _0(X_k)$,
\begin{align*}
p(G)=p(F) + p_1(F,n) + p_2(F,n)
\end{align*}
where, {deduction}
\begin{align}
p_1(F,n)&= \alpha _1 [  F_2 \, F_3 - F (F_2+F_3) ]+ \alpha _2 [ (F +F_1) \, F_3 - F \, F_1]+ \nonumber \\
&+ \alpha _3 [ (F +F_1) \, F_2 - F \, F_1 ]\nonumber \\
p_2(F,n)&= \alpha   \left[F_2 F_3 - F_1 (F_2+F_3)\right].\label{S10.1Ep2}
\end{align}
One may plug  these expressions in the collision integral $C_p(G)$ and, using the  symmetry of the resulting integrals with respect to the indices $2$ and $3$, deduce the system,
\begin{align*}
&\frac {\partial F} {\partial \tau }(\tau , X)=C_p(F(\tau ))(X)+n(\tau )\widetilde Q(F(\tau ), F(\tau ))(X)\\
&n'(\tau )=-n(\tau )\int _0^\infty \widetilde Q(F(\tau ), F(\tau ))(X)\, \sqrt XdX.
\end{align*}
System  (\ref{S1E00})--(\ref{S1E01}) then follows after dropping  $C_p(F)$,  neglecting in that way the interactions involving only particles in the normal gas. Following the logic of our notation, one could have denoted $\tilde Q$ as $\tilde  Q_p$. The collision integral that would be denoted $\tilde Q_q$  is the one considered for example in \cite{CE}.
\begin{align}
&\tilde Q_q(F)(X)=\frac {1} {\sqrt X}\int_0^X \Big(f(X-Y)f(Y)-f(X)\big[1+f(X-Y)+f(Y)\big]\Big)dY +\nonumber\\
&\hskip 1cm +\frac {2} {\sqrt X}\int_X^\infty \Big(f(Y)\big[1+f(Y-X)+f(x)\big]-f(Y-X) f(x)\Big) dY.\label{PR2}
\end{align}

\subsection{Deduction of equation (\ref{S1E7}).}

It is suitable  to write the operator $\widetilde Q$ in   equation (\ref{S1E00}) or (\ref{lzm75})  in a more suitable way. 
In the first integral at the right hand side of (\ref{S1E01}), the integrand is invariant through the change of variables $Z=X-Y$ and may be written as
\begin{align}
\label{S1E02}
\int _0^X\Big(F(X-Y)F(Y)&-F(X)(F(X-Y)+F(Y))\Big)dY=\nonumber\\
&2\int  _{ X/2 }^X\Big(F(X-Y)F(Y)-F(X)(F(X-Y)+F(Y))\Big)dY
\end{align}
and  the operator $\widetilde Q$ may be written,
\begin{align}
\widetilde Q(F, F)&=\frac {2} {X^{1/2}}\int _{X/2}^\infty\Big(F(|X-Y|)(F(Y)-F(X))+\text{sign}(Y-X)F(X)F(Y))\Big)dY.\label{S1E03}
\end{align}

The  solution $f$  to (\ref{lzm75})  is looked for under the form,
\begin{align}
\label{S1E04}
f(t, X)=\frac {V (t, X)} {X}
\end{align}
for which,
\begin{align}
\widetilde Q(f, f)=2X^{-1/2}\int _{X/2}^\infty\Big(\frac {V(|X-Y|)} {|X-Y|}\left(\frac {V(Y)} {Y}-\frac {V(X)} {X} \right)+\nonumber \\
+\text{sign}(Y-X)\frac {V(X)V(Y)} {XY}\Big)dY. \label{S1EinnD}
\end{align}
The function $V$ is now split as
\begin{align}
\label{S1EinnD2b}
V(t, X)=j (t, X)+g(t, X)
\end{align}
where $j$ and $g$ play very different roles in what follows. Then,
\begin{align*}
\widetilde Q(f, f)&=\widetilde Q(j,j)+\widetilde Q_L(j, g)+\widetilde Q(g, g)\\
\widetilde Q_L(j, g)&=2X^{-1/2}\int _{X/2}^\infty\Bigg[\frac {j (t, |X-Y|)} {|X-Y|}
\left(\frac {g(t, Y)} {Y}-\frac { g(t, X)} {X} \right)+\\
&+\frac { g(t, |X-Y|)} {|X-Y|} \left(\frac {j  (t, Y)} {Y}-\frac { j  (t, X)} {X} \right)+\\
&+\text{sign}(Y-X)\frac {j  (t, X)g(t, Y)+j  (t, Y)g(t, X)} {XY}\Bigg]dY
\end{align*}

The term $\widetilde Q_L$ is now rewritten as follows. Use first,
\begin{align*}
\frac {g(t, Y)} {Y}-\frac { g(t, X)} {X} =\frac {g(t, Y)-g(t, X)} {X}+g(t, Y) \frac {X-Y} {XY}
\end{align*}
to obtain
\begin{align*}
{\frac {j (t, |X-Y|)} {|X-Y|}\left(\frac {g(t, Y)} {Y}-\frac { g(t, X)} {X} \right)}=\frac {j (t, |X-Y|)(g(t, Y)-g(t, X))} {|X-Y|X}+\\
+\text{sign}(X-Y)\frac {j (t, |X-Y|)g(t, Y)} {XY}
\end{align*}
and then
\begin{align*}
\widetilde Q_L(j, g)=2X^{-1/2}\int _{X/2}^\infty\Bigg( \frac {j (t, |X-Y|)(g(t, Y)-g(t, X))} {|X-Y|X}+\\
+\textcolor{blue}{\text{sign}(X-Y)\frac {j (t, |X-Y|)g(t, Y)} {XY}}+\\
+\frac { g(t, |X-Y|)} {|X-Y|} \left(\frac {j  (t, Y)} {Y}-\frac { j  (t, X)} {X} \right)+\\
+\text{sign}(Y-X)\frac {\textcolor{blue}{j  (t, X)g(t, Y)}+j  (t, Y)g(t, X)} {XY}\Bigg)dY
\end{align*}
Piece then together the blue terms to obtain,
\begin{align*}
\widetilde Q_L(j, g)=2X^{-1/2}\int _{X/2}^\infty\Bigg( \frac {j (t, |X-Y|)(g(t, Y)-g(t, X))} {|X-Y|X}+\\
+\textcolor{blue}{\frac {\text{sign}(X-Y)g(t, Y)} {XY}\left(j (t, |X-Y|)-j (t, X) \right)
}+\\
+\textcolor{magenta}{\frac { g(t, |X-Y|)} {|X-Y|} \left(\frac {j  (t, Y)} {Y}-\frac { j  (t, X)} {X} \right)}+\\
+\text{sign}(Y-X)\frac {j  (t, Y)g(t, X)} {XY}\Bigg)dY.
\end{align*}
Similarly, write now in the magenta term,
\begin{align*}
& \frac {j  (t, Y)} {Y}-\frac { j  (t, X)} {X} =j (t, Y)\left(\frac {1} {Y}-\frac {1} {X} \right)+\frac {j (t, Y)-j (t, X)} {X}\Longrightarrow\\
&\frac {g(t, |X-Y|)} {|X-Y|}\left(\frac {j  (t, Y)} {Y}-\frac { j  (t, X)} {X} \right)=\frac {\text{sign}(X-Y)j (t, Y)g(t, |X-Y|)} {XY}+\\
&\hskip 6cm +\frac {(j (t, Y)-j (t, X))g(t, |X-Y|)} {X|X-Y|}
\end{align*}
and then
\begin{align}
\widetilde Q_L(j, g)=2X^{-1/2}\int _{X/2}^\infty\Bigg( \frac {j (t, |X-Y|)(g(t, Y)-g(t, X))} {|X-Y|X}+\nonumber\\
+ \frac {\text{sign}(X-Y)g(t, Y)} {XY}\left(j (t, |X-Y|)-j (t, X) \right)+\nonumber\\
+\frac {\text{sign}(X-Y)j (t, Y)(g(t, |X-Y|)-g(t, X))} {XY}+\nonumber\\
+\frac {(j (t, Y)-j (t, X))g(t, |X-Y|)} {X|X-Y|}\Bigg)dY.\label{mnst1}
\end{align}
The third term at the right hand side of (\ref{mnst1}) may be written as follows

\begin{align*}
&\int _{X/2}^\infty\frac {\text{sign}(X-Y)j (t, Y)(g(t, |X-Y|)-g(t, X))} {XY}dY=\\
&\int _{X/2}^X \frac{ j (t, Y)(g(t, |X-Y|)-g(t, X))} {XY}dY
-\int _{X}^\infty\frac { j (t, Y)(g(t, |X-Y|)-g(t, X))} {XY}dY\\
&\hskip 4.5cm =\int _{0}^{X/2} \frac{ j (t, X-Y)(g(t, X-Y)-g(t, X))} {X(X-Y)}dY\\
&\hskip 5.8cm-\int _{0}^\infty\frac { j (t, X+Y)(g(t, X+Y)-g(t, X))} {X(X+Y)}dY
\end{align*}
from where,
\begin{align*}
\widetilde Q_L(j, g)
=2 X^{-3/2}\int _{0}^\infty\Bigg( (g(t, Y)-g(t, X))\left(\frac {j (t, |X-Y|)} {|X-Y|}-\frac {j (t, |X+Y|)} {|X+Y|}\right)+\\
+ \frac {\text{sign}(X-Y)g(t, Y)} { Y}\left(j (t, |X-Y|)-j (t, X) \right)+\\
+\frac {(j (t, Y)-j (t, X))g(t, |X-Y|)} { |X-Y|}\Bigg)dY.
\end{align*}
As the last step of this long transformation of the linear term  $\widetilde Q$, denote for $j\equiv 1$,
\begin{align*}
\widetilde Q_L(1, g)=2 X^{-1}\mathscr L(g(t))(X)
\end{align*}
where $\mathscr L$ is defined in (\ref{S1InnD8}).
The linear term $\widetilde Q_L(j, g)$ may then finally  be written,
\begin{align*}
&\widetilde Q_L(j, g)=2 X^{-1}\mathscr L(g(t))(X)+\\
&2 X^{-3/2}\int _{0}^\infty\Bigg[ (g(t, Y)-g(t, X))\left(\frac {j (t, |X-Y|)-1} {|X-Y|}-\frac {j (t, |X+Y|)-1} {|X+Y|}\right)+\\
&+ \frac {\text{sign}(X-Y)g(t, Y)} { Y}\left(j (t, |X-Y|)-j (t, X) \right)+\\
&\hskip 6cm +\frac {(j (t, Y)-j (t, X))g(t, |X-Y|)} { |X-Y|}\Bigg]dY.
\end{align*}

On the other hand, in the nonlinear term  $\widetilde Q_(g, g)$,
\begin{align*}
\widetilde Q(g, g)=2X^{-1/2}\int _{X/2}^\infty\Bigg(\frac { g(t, |X-Y|)} {|X-Y|} \left(\frac { g(t, Y)} {Y}-\frac { g(t, X)} {X} \right)+\\
+\text{sign}(Y-X)\frac { g(t, X)  g(t, Y)} {XY}\Bigg)dY
\end{align*}
write,

\begin{align*}
\frac { g(t, Y)} {Y}-\frac { g(t, X)} {X} =\frac {g(t, Y)-g(t, X)} {Y}&+g(t, X)\left(\frac {1} {Y}-\frac {1} {X} \right)\Longrightarrow\\
\frac { g(t, |X-Y|)} {|X-Y|} \left(\frac { g(t, Y)} {Y}-\frac { g(t, X)} {X} \right)=&
\frac { g(t, |X-Y|)(g(t, Y)-g(t, X))} {|X-Y|Y}+&\\
&+ \text{sign}(X-Y)\frac { g(t, |X-Y|)g(t, X)} { XY}
\end{align*}
and then,
\begin{align*}
\widetilde Q(g, g)&=2X^{-1/2}\int _{X/2}^\infty\Bigg(\frac { g(t, |X-Y|)(g(t, Y)-g(t, X))} {|X-Y|Y}+\\
&+\text{sign}(X-Y)\frac { g(t, X)} {XY}\left( g(t, |X-Y|)-g(t, Y)\right)\Bigg)dY.
\end{align*}
In terms of the new functions $j $ and $g$ the equation (\ref{S1E00}) reads,
\begin{align}
\label{S1E6}
\frac {\partial f} {\partial t}&=X^{-1}\left(\frac {\partial j } {\partial t}+\frac {\partial g} {\partial t}\right)=\nonumber\\
&=\widetilde Q(f, f)=\left(\widetilde Q(j, j)+\widetilde Q_L(j, g)+\widetilde Q(g, g)\right)
\end{align}
If the function $j$ is now  
\begin{align}
\label{SAInnD9}
j (t, X)=\lambda (t)\varphi (X)
\end{align}
with  $\varphi \in C_c^1(0,\infty)$ as in (\ref{defvfiR})--(\ref{deffi2}), the function $f$  given by (\ref{S1E04}) and (\ref{S1EinnD2}) becomes (\ref{S1EinnD2}), with initial data $f(0, X)=\frac {\varphi (X)+g(0, X)} {X}$. If the constant $R>0$ in (\ref{defvfiR}) is large enough and $g(0)\equiv 0$,
then $f(0, X)$ is  a small perturbation of the function $X^{-1}$.\\

For   a function $j $ as in (\ref{SAInnD9}), the  terms $\widetilde Q(j)$ and  $\widetilde Q_L(j, g)$  in (\ref{S1E6}) become:
\begin{align}
\widetilde Q(j, j)=\lambda ^2(t)X^{-1}Q_N(\varphi ),\,\,\,\text{with}\,\,\,Q_N(\varphi )=X\widetilde Q(\varphi, \varphi  ) \label{S1E6Y1} \\
\widetilde Q_L(j, g)=2\lambda (t)X^{-1}\left(\mathscr L(g)+T_1(g, \varphi )+T_2(g, \varphi ) \right) \label{S1E6Y2}
\end{align}
where $Q_N$ and $\mathscr L_\varphi $ are defined in, (\ref{S1InnD6}) and (\ref{S1InnD7})

In these new variables, the equation   (\ref{S1E6}) gives equation (\ref{S1E7}). 

\subsection{Some integrals}
\begin{prop}
\label{P52}
$\Omega  _{ 2\theta }\in L^1(\tilde D_1\cup D_2), \forall \theta\in (0, 1/2)$, where $D_2$ is defined in (\ref{S9D2}) and
\begin{align}
\tilde D_1=\left\{(X, Y)\in \RR^2;\, 0<X<d,\,0<Y<2X \right\}\label{S9DT1}
\end{align}
(Of course, by (\ref{S9D1}),  $D_1\subset \tilde D_1$).
\end{prop}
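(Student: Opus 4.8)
The plan is to reduce the statement to two elementary pointwise bounds on $\Omega_{2\theta}$ and then to a direct one–variable integration. Recall from the proof of Lemma \ref{blabla} that, for $u,v>0$,
\[
\Omega_{2\theta}(u,v)=\min\big\{\,\widetilde\Omega_{2\theta}(u,v),\ \widetilde\Omega_{2\theta}(v,u)\,\big\},
\]
and that $\widetilde\Omega_{2\theta}(x,x')$ is a series, with coefficients $D_k$ and $D_{k,n}$, over the discrete family of exponents $a\in\{\alpha_{k,j}\}\cup\{\gamma_{k,n}\}$, of the two atoms $\big|x^{-a}-x'^{-a}\big|\,x^{\,a-1-2\theta}$ and $\big|x^{\,a-1-2\theta}-x'^{\,a-1-2\theta}\big|\,x'^{-a}$. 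Every such exponent satisfies $a>2$, hence $a-1-2\theta>0$ because $\theta<1/2$, so $t\mapsto t^{\,a-1-2\theta}$ is increasing. First I would check that for $0<u\le v$ and any admissible $a$ both atoms are bounded by $\min\big\{\,u^{-1-2\theta},\ a\,u^{-2-2\theta}(v-u)\,\big\}$, the first bound in the $\min$ coming from $0\le 1-(u/v)^{a}\le 1$ together with $u\le v$, the second from the mean value theorem. Summing over the family of exponents and using the convergence of the coefficient series $\sum_k|D_k|<\infty$ and $\sum_{k,n}|D_{k,n}|<\infty$ — which is already implicit in the estimates of the proof of Lemma \ref{blabla} and ultimately rests on the decay properties of the function $B$ recorded in \cite{m} — this produces a constant $C=C(\theta)>0$ with
\[
\Omega_{2\theta}(u,v)\le C\,(\min(u,v))^{-1-2\theta}\qquad\forall\,u,v>0 ,
\]
and, provided in addition $\sum_k a\,|D_k|+\sum_{k,n}a\,|D_{k,n}|<\infty$, the sharper near–diagonal bound $\Omega_{2\theta}(u,v)\le C\,(\min(u,v))^{-2-2\theta}\,|u-v|$.

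Passing to the variables $x=X^{1/2}$, $y=Y^{1/2}$, the first estimate reads $\Omega_{2\theta}(X^{1/2},Y^{1/2})\le C\,(\min(X,Y))^{-(1+2\theta)/2}$, and this is the only place where the hypothesis $\theta<1/2$ is used: it guarantees $(1+2\theta)/2<1$. On $\tilde D_1$ I would split according to $0<Y<X$ and $X\le Y<2X$; in the first case $\int_0^d\!\big(\int_0^X Y^{-(1+2\theta)/2}\,dY\big)dX=c\int_0^d X^{(1-2\theta)/2}\,dX<\infty$, and in the second $\int_0^d X\cdot X^{-(1+2\theta)/2}\,dX=\int_0^d X^{(1-2\theta)/2}\,dX<\infty$. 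On $D_2$ one has $Y<d<X$, so $\min(X,Y)=Y$ and $\int_{D_2}Y^{-(1+2\theta)/2}\,dX\,dY=\int_d^{2d}\!\big(\int_{X-d}^{d}Y^{-(1+2\theta)/2}\,dY\big)dX$, whose inner integral is finite and bounded uniformly in $X\in(d,2d)$ since $(1+2\theta)/2<1$. This gives $\Omega_{2\theta}\in L^{1}(\tilde D_1\cup D_2)$ and proves the Proposition.

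Finally I would record that combining the two bounds of the first step in the obvious way — the crude bound away from the diagonal and near the origin, the near–diagonal bound elsewhere, together with $|X^{1/2}-Y^{1/2}|=|X-Y|/(X^{1/2}+Y^{1/2})$ — yields, by the same elementary integrations, the stronger conclusion that $\Omega_{2\theta}(X^{1/2},Y^{1/2})/|X-Y|\in L^{1}(\tilde D_1\cup D_2)$, which is the form in which the estimate is actually invoked (for (\ref{l2e1E3}) in the proof of Lemma \ref{blabla}, and in Proposition \ref{Psphn}). All the integrations involved are routine; the only genuinely non‑trivial point is the summability of the coefficient sequences $\{D_k\}$ and $\{D_{k,n}\}$ (and, for the refined version, of $\{a\,D_k\}$ and $\{a\,D_{k,n}\}$), and I expect extracting this from the residue/asymptotic expansions of the fundamental solution $\Lambda$ in \cite{m} to be the main obstacle.
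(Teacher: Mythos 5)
Your route is correct and genuinely different from the paper's. The paper works atom by atom on $\widetilde\Omega_{2\theta}$: for each exponent pair $(a,b)$ it bounds the one-variable weighted integrals (\ref{P52E1}) and (\ref{P52E2}) — both of which already carry the singular factor $(\xi-\zeta)^{-1}$ — by $C(a,b)\,\xi^{-a}$ with $a=\tfrac12+\theta<1$, the constants being expressed through incomplete Beta and Harmonic Number functions, and then integrates $\xi^{-a}$ over $(0,d)$. You instead first collapse the entire series to the two pointwise bounds $C(\min(u,v))^{-1-2\theta}$ and $Ca\,(\min(u,v))^{-2-2\theta}|u-v|$ before any integration, so that the remaining double integrals involve a single explicit power of $\min(X,Y)$ and no per-atom special functions; this is cleaner, and it records along the way uniform pointwise estimates on $\Omega_{2\theta}$ that the paper never states. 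Both routes hinge on the same unaddressed point — summability of $\sum_k|D_k|$ and $\sum_{k,n}|D_{k,n}|$ (and, for your refined near-diagonal bound, of $\sum_k a_k|D_k|$, etc.) inherited from the expansion of $\Lambda$ in \cite{m} — so you are right to flag it, and it is fair to note that the paper's own proof is equally silent here. Finally, your closing observation is the one that matters most: the statement of Proposition \ref{P52} records only $\Omega_{2\theta}\in L^1(\tilde D_1\cup D_2)$, but (\ref{l2e1E3}), which it is cited to justify, carries the extra weight $|X-X'|^{-1}$. The paper's proof estimates the weighted integrals from the start, and your sketch of the stronger conclusion — crude bound away from the diagonal, near-diagonal bound plus $|X^{1/2}-Y^{1/2}|=|X-Y|/(X^{1/2}+Y^{1/2})$ elsewhere — is what actually lines up with the estimate the paper establishes and uses downstream in Lemma \ref{blabla} and Proposition \ref{Psphn}.
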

\begin{proof} 
Consider first the integral on $\tilde D_1$. 
For $0<a<1$, $ b>0$ and all $\xi \in (0, 2)$
\begin{align}
&\int _0^{2\xi}  |\left(\zeta ^{-b}-\xi ^{-b}\right)\zeta ^{b-a}(\xi -\zeta )^{-1}|d\zeta \le C(a, b)\xi ^{-a} \label{P52E1}\\
&C(a, b)= B\left[2, 1-a, 0 \right]+B\left[2, 1-a+b, 0 \right]+2(|H(b-a)|+|H(-a)|),\nonumber
\end{align}
and
\begin{align}
&\int _0^\xi  \left(\xi ^{b-a}-\zeta ^{b-a}\right)\xi ^{-b}(\xi -\zeta )^{-1}d\zeta \le C'(b-a)\xi ^{-a} \label{P52E2}\\
&C'[z]=|H(z)|+|B(2, 1+z, 0)|+|H(z)|
\end{align}
where $B$ denotes here  the incomplete Beta function and $H(\cdot)$ denotes the Harmonic Number function. Integrability of  $\Omega  _{ 2\theta }$  on $\tilde D_1$ follows since the right hand sides of (\ref{P52E1}) and  (\ref{P52E2}) are integrable on $(0, d)$. 

On the other hand, by Fubini's Theorem,
\begin{align*}
\int _1^2d\xi \int  _{ \xi -1 }^1d\zeta =\int _0^1d\zeta \int_1^{1+\zeta }d\xi.
\end{align*}
Now, for $0<a<1$, $ b>0$ and all $\zeta  \in (0, 1)$
\begin{align*}
&\int _1^{1+\zeta }  \left(\zeta ^{-b}-\xi ^{-b}\right)\zeta ^{b-a}(\xi -\zeta )^{-1}d\xi  =C_1(\zeta ; a, b)\zeta  ^{-a}\\
&C_1(\zeta ; a, b)=B\left[\frac {\zeta } {1+\zeta }, b, 0\right]-B[\zeta , b, 0]-Log(1-\zeta ),\\
&\int _1^{1+\zeta }  \left(\xi  ^{b-a}-\zeta  ^{b-a}\right)\xi  ^{-b}(\xi -\zeta )^{-1}d\xi  =C_2(\zeta ; a, b)\zeta  ^{-a}\\
&C_2(\zeta ; a, b)=-B[\zeta , a, 0]+B[\zeta , b, 0]-B\left[\frac {\zeta } {1+\zeta }, a, 0\right]+B\left[\frac {\zeta } {1+\zeta }, b, 0\right]
\end{align*}
where $B$ is the incomplete Beta functions. There exists then a constant $C>0$ depending on $a$ and $b$ such that
\begin{align}
&|C_1(\zeta ; a, b)|\le C\left(\zeta+\zeta ^{1+b}\right) ,\,\,\forall \zeta \in [0,1], \label{P52E3}\\
&|C_2(\zeta ; a, b)|\le C\left(\zeta ^{1+a}+\zeta ^{1+b}\right),\,\,\forall \zeta \in [0,1], \label{P52E4}
\end{align}
and  the right hand sides of (\ref{P52E3}) and (\ref{P52E4}) are integrable on $(0, 1)$.
\vskip -0.5cm
\end{proof}

\subsection{The function $B$}
\label{FB}
For any $\beta \in (0, 1)$ and $s\in \CC$ such that $\Re e(s)\in (\beta, \beta +1)$ define
\begin{align}
\label{S3PBE0}
B(s)&=\exp\left(\int  _{ {\mathscr Re} (\rho) =\beta  }  \log (-W(\rho ))\left( \frac {1} {1-e^{2i\pi (s-\rho) } }-\frac {1} {1+e^{-2i\pi \rho }}\right)d\rho\right)\\
W(\rho )&=-2\gamma_e -2\psi \left(\frac {\rho } {2} \right)-\pi \cot \left(\frac {\pi \rho } {4} \right),\,\,\,\forall \rho \in \CC,\, \Re e(\rho )\in -2, 4 ) \label{S4E7}
\end{align}
where $\gamma _e$ is the Euler constant and $\psi (z)=\Gamma '(z)/\Gamma (z)$ is the Digamma function.  The following Proposition was proved in \cite{m} (Proposition 2.4).
\begin{prop}
\label{S3PBP}
The function $B$ is  analytic on the domain $s\in \CC,\,\Re e (s)\in \mathcal S _{\, 0, 2 }$ where it is given by the integral in (\ref{S3PBE0}) for some $\beta \in (0, 1)$ such that $\beta <\mathscr Re s<\beta +1$. It is extended to a meromorphic on the complex plane by the following relation, 
\begin{equation}
B(s)=-W(s-1)B(s-1),\forall s\in \CC. \label{S5EBX5}
\end{equation}
It has a sequence of poles and a sequence of zeros, determined by the zeros and poles of the function $W$ as follows.\\
1.-Poles. The poles of the function $B$  are located at $s=0$, $s=-1$,  at $\{4n+1, n=1, 2, 3, \cdots\}$ and at  $\{\sigma _n^*,\,\,n=1, 2, 3, \cdots \}$.\\
2.-Zeros. The zeros of the function $B$ are at $s=3$, $s=4$ at $\{-n, \,\,\,n=6, 7, 8, \cdots\}$  and at $\{\sigma _n+1,\,\,\,n=1, 2, \cdots\}$.
\end{prop}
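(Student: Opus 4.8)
\textbf{Plan of the proof of Proposition~\ref{S3PBP}.}

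The statement concerns the function $B$ defined by the contour integral in \eqref{S3PBE0}, together with the auxiliary function $W$ in \eqref{S4E7}. The plan is to follow the argument already given in \cite{m} (Proposition~2.4), reproducing its main steps here. First I would establish analyticity of $B$ on the vertical strip $\mathscr Re(s)\in(0,2)$. The defining integral runs over the line $\mathscr Re(\rho)=\beta$ with $\beta\in(0,1)$ and $\beta<\mathscr Re(s)<\beta+1$; the integrand involves $\log(-W(\rho))$ times the kernel $\left(\tfrac{1}{1-e^{2i\pi(s-\rho)}}-\tfrac{1}{1+e^{-2i\pi\rho}}\right)$. The first task is to check that $-W(\rho)$ stays in a region where a branch of $\log$ is well defined along the contour $\mathscr Re(\rho)=\beta$, using the explicit expression of $W$ in terms of the Digamma function and $\cot$, and then to verify the decay of the integrand as $|\mathscr Im(\rho)|\to\infty$ so that the integral converges absolutely and depends analytically on $s$ for $s$ in a small strip around $\mathscr Re(s)=\beta+1/2$. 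Since $\beta\in(0,1)$ is arbitrary, shifting $\beta$ shows $B$ is analytic on all of $\mathcal S_{0,2}=\{0<\mathscr Re(s)<2\}$, and one checks the value is independent of the choice of admissible $\beta$ by a contour-deformation/residue computation in $\rho$.

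Next I would prove the functional equation \eqref{S5EBX5}, namely $B(s)=-W(s-1)B(s-1)$. This is the engine of the whole result: it is obtained by taking the ratio $B(s)/B(s-1)$ from the integral representation, writing it as $\exp$ of the difference of the two integrals, and evaluating that difference by deforming the $\rho$-contour and collecting the residue picked up at $\rho=s-1$ (the pole of the kernel $\tfrac{1}{1-e^{2i\pi(s-\rho)}}$), which produces exactly $\log(-W(s-1))$. This identity, proved initially on the strip where both sides are defined, then serves to extend $B$ meromorphically to the whole complex plane: given $B$ on $\mathcal S_{0,2}$, relation \eqref{S5EBX5} defines $B$ on $\mathcal S_{2,4}$ (and on $\mathcal S_{-2,0}$ by solving for $B(s-1)$), and iterating one covers all of $\CC$, with the extension being meromorphic because $W$ is meromorphic.

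Finally, the location of poles and zeros of $B$ is read off from \eqref{S5EBX5} by induction on the strip index. One needs the poles and zeros of $W(\rho)=-2\gamma_e-2\psi(\rho/2)-\pi\cot(\pi\rho/4)$: the term $\psi(\rho/2)$ has simple poles at $\rho=0,-2,-4,\dots$, the term $\cot(\pi\rho/4)$ has simple poles at $\rho\in 4\ZZ$, and the zeros of $W$ must be located (these are the points $\sigma_n$, $\sigma_n^*$, $3$, $4$ appearing in the statement, together with the negative integers $-n$, $n\ge 6$), requiring a careful analysis of where the combination $-2\gamma_e-2\psi(\rho/2)-\pi\cot(\pi\rho/4)$ vanishes. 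Then, starting from the fact that $B$ is analytic and non-vanishing on $\mathcal S_{0,2}$, each application of $B(s)=-W(s-1)B(s-1)$ multiplies by a factor $-W(s-1)$ whose poles become poles of $B$ and whose zeros become zeros of $B$; tracking this through successive strips (to the right using \eqref{S5EBX5}, to the left using $B(s-1)=-B(s)/W(s-1)$, so that poles and zeros of $W$ swap roles) yields precisely the two advertised sequences.

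I expect the main obstacle to be the bookkeeping in this last step — correctly matching the poles of $W$ against its zeros and propagating them through the functional equation without sign or indexing errors, and in particular pinning down the zero sets $\{\sigma_n\}$, $\{\sigma_n^*\}$ of $W$. The analyticity and the residue computation giving the functional equation are technically delicate (branch choices for $\log(-W)$, contour shifts, uniform decay estimates) but essentially standard once set up; the combinatorial classification of the resulting singularities is where care is most needed. Since all of this is carried out in detail in \cite{m}, the cleanest route is to invoke that reference for the analytic facts and to present here only the functional-equation-driven induction that produces the pole/zero lists.
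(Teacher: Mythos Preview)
Your plan is in line with the paper, which in fact gives no proof of this proposition at all: the paper simply states the result and refers to \cite{m}, Proposition~2.4, for the proof. Your outline --- analyticity on the strip from the integral representation, the functional equation \eqref{S5EBX5} via a residue/contour-shift argument, and then the bookkeeping of poles and zeros by iterating the functional equation strip by strip --- is precisely the structure of the argument in \cite{m}, so there is nothing to correct and your proposal is actually more detailed than what the present paper provides.
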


\noindent
\textbf{Acknowledgments.}
The research of the author is supported by grant PID2020-112617GB-C21 of MCIN, grant  IT1247-19 of the Basque Government and grant RED2022-134784-T funded by MCIN/AEI/10.13039/501100011033.\\
The author is  grateful to the anonymous referees for their accurate comments and indications.\\
\textbf{Conflicts of interests.} The author declares that there were no conflicts of interest related to this work. \\
\textbf{Data Availability.} No datasets were generated or analysed during the current study.

\end{document}